\newcommand{\Author}{Zev Chonoles}
\newcommand{\Subject}{}
\newcommand{\Keywords}{zev chonoles, thesis, university of chicago, uchicago, math, algebraic topology, equivariant, classifying space}
\title{The $\RO(G)$-Graded Cohomology of the Equivariant \protect\\Classifying Space $B_{G}\SU(2)$}
\author{Zev Chonoles}
\date{June 2018}
\let\originalleft\left
\let\originalright\right
\renewcommand{\left}{\mathopen{}\mathclose\bgroup\originalleft}
\renewcommand{\right}{\aftergroup\egroup\originalright}
\let\eucal\mathcal  \let\cal\CMcal  \let\mathcal\cal
\newcommand{\Z}{\mathbb{Z}}
\newcommand{\R}{\mathbb{R}}
\newcommand{\C}{\mathbb{C}}
\renewcommand{\H}{\mathbb{H}}
\newcommand{\irredC}{\Phi}
\newcommand{\irredH}{\Psi}
\newcommand{\repextension}[1]{\mathrm{e}#1}
\newcommand{\represtriction}[1]{\mathrm{r}#1}
\newcommand{\Irr}{\operatorname{Irr}}
\newcommand{\unitsphere}[1]{\mathbf{S}(#1)}
\newcommand{\diskletter}{\mathbf{D}}
\newcommand{\unitdisk}[1]{\diskletter(#1)}
\let\sectionsymb\S
\renewcommand{\S}[1]{S^{#1}}
\newcommand{\RO}{\mathrm{RO}}
\newcommand{\signrep}{\sigma}
\newcommand{\SO}{\mathrm{SO}}
\newcommand{\SU}{\mathrm{SU}}
\newcommand{\U}{\mathrm{U}}
\newcommand{\catnamestyle}[1]{\textsf{#1}}
\newcommand{\orbitcategory}[1]{\mathcal{O}_{#1}}
\newcommand{\op}{\textsf{op}}
\newcommand{\burnside}[1]{\mathcal{B}_{#1}}
\newcommand{\Ab}{\catnamestyle{Ab}}
\newcommand{\kletter}{k}
\newcommand{\kmod}{\kletter\catnamestyle{-mod}}
\newcommand{\mackey}[1]{\catnamestyle{Mack}_{#1}}
\newcommand{\Hhreduced}{\widetilde{H}}
\newcommand{\GG}{\text{\upshape{$\bullet$}}}
\newcommand{\Ge}{\text{\upshape{$\sun$}}}
\newcommand{\HH}{}
\newcommand{\HHreduced}{}
\DeclareRobustCommand{\HH}{\underline{H\mkern-2mu}\mkern2mu}
\DeclareRobustCommand{\HHreduced}{\underline{\widetilde{H}\mkern-2mu}\mkern2mu}
\newcommand{\placeholder}[1]{\delta_{#1}}
\newcommand{\coeffs}{}
\DeclareRobustCommand{\coeffs}{\@ifstar\@coeffs\@@coeffs}
\def\@coeffs#1{;#1}
\def\@@coeffs#1{}
\renewcommand{\P}{\mathrm{P}}
\newcommand{\HP}{\H\P}
\newcommand{\PC}{}
\newcommand{\PH}{}
\newcommand{\flagstyle}[1]{\eucal{#1}}
\newcommand{\cellstyle}[1]{\mathfrak{#1}}
\def\@mypiece#1{#1}
\DeclareRobustCommand\PC{\@ifnextchar[{\@with}{\@without}}
  \def\@with[#1]{\P_{\C}^{\mkern1.5mu\@mypiece{#1}}}
  \def\@without{\P_{\C}}
\DeclareRobustCommand\PH{\@ifnextchar[{\@witha}{\@withouta}}
  \def\@witha[#1]{\P_{\H}^{\mkern1.5mu\@mypiece{#1}}}
  \def\@withouta{\P_{\H}}  
\DeclareRobustCommand\Ctotop{\@ifnextchar[{\@withb}{\@withoutb}}
  \def\@withb[#1]#2{A_{#2}^{\@mypiece{#1}}}
  \def\@withoutb#1{A_{#1}}
\DeclareRobustCommand\Ctonext{\@ifnextchar[{\@withc}{\@withoutc}}
  \def\@withc[#1]#2{a_{#2}^{\@mypiece{#1}}}
  \def\@withoutc#1{a_{#1}}
\DeclareRobustCommand\Ctowhole{\@ifnextchar[{\@withd}{\@withoutd}}
  \def\@withd[#1]#2{p_{#1}^{\@mypiece{#2}}}
  \def\@withoutd#1{p^{\@mypiece{#1}}}
\DeclareRobustCommand\Htotop{\@ifnextchar[{\@withe}{\@withoute}}
  \def\@withe[#1]#2{B_{#2}^{\@mypiece{#1}}}
  \def\@withoute#1{B_{#1}}
\DeclareRobustCommand\Htonext{\@ifnextchar[{\@withf}{\@withoutf}}
  \def\@withf[#1]#2{b_{#2}^{\@mypiece{#1}}}
  \def\@withoutf#1{b_{#1}}
\DeclareRobustCommand\Htowhole{\@ifnextchar[{\@withg}{\@withoutg}}
  \def\@withg[#1]#2{q_{#1}^{\@mypiece{#2}}}
  \def\@withoutg#1{q^{\@mypiece{#1}}}
\DeclareRobustCommand\Htocofiber{\@ifnextchar[{\@withh}{\@withouth}}
  \def\@withh[#1]#2{\pi_{#2}^{\@mypiece{#1}}}
  \def\@withouth#1{\pi_{#1}}
\DeclareRobustCommand\Htowholecofiber{\@ifnextchar[{\@withi}{\@withouti}}
  \def\@withi[#1]#2{\chi_{#2}^{\@mypiece{#1}}}
  \def\@withouti#1{\chi_{#1}}
\DeclareRobustCommand\Hcofiber{\@ifnextchar[{\@withj}{\@withoutj}}
  \def\@withj[#1]#2{Z_{#2}^{\@mypiece{#1}}}
  \def\@withoutj#1{Z_{#1}}
\DeclareRobustCommand\HtoGe{\@ifnextchar[{\@withk}{\@withoutk}}
  \def\@withk[#1]{\rho_{#1}}
  \def\@withoutk{\rho}
\DeclareRobustCommand\HparttoGe{\@ifnextchar[{\@withl}{\@withoutl}}
  \def\@withl[#1]#2{\rho_{#1}^{#2}}
  \def\@withoutl#1{\rho^{#1}}
\newcommand{\Hcofiberrep}[1]{\cellstyle{w}_{#1}}
\newcommand{\HcofibertoGe}[1]{\hat{\rho}_{#1}}
\newcommand{\id}{\mathrm{id}}
\newcommand{\pt}{\mathrm{pt}}
\newcommand{\gsetstyle}[1]{\mathbf{#1}}
\renewcommand{\restriction}[1]{r_{#1}}
\newcommand{\transfer}[1]{t_{#1}}
\newcommand{\defstyle}[1]{\textbf{\textit{#1}}}
\newcommand{\from}{\colon}
\newcommand{\boxprod}{\mathbin{\square}}
\newcommand{\externalboxprod}{\mathbin{\overline{\square}}}
\newcommand{\tonatural}{\Rightarrow}
\newcommand{\gen}{\mathsf{c}}
\newcommand{\Gen}{\mathsf{C}}
\newtheorem{theorem}{Theorem}[chapter]
\newaliascnt{lemmacnt}{theorem}
\newtheorem{lemma}[lemmacnt]{Lemma}
\newaliascnt{corollarycnt}{theorem}
\newtheorem{corollary}[corollarycnt]{Corollary}
\newaliascnt{propositioncnt}{theorem}
\newtheorem{proposition}[propositioncnt]{Proposition}
\theoremstyle{definition}
\newaliascnt{definitioncnt}{theorem}
\newtheorem{definition}[definitioncnt]{Definition}
\newaliascnt{examplecnt}{theorem}
\newtheorem{example}[examplecnt]{Example}
\newaliascnt{remarkcnt}{theorem}
\newtheorem{remark}[remarkcnt]{Remark}
\def\thm@space@setup{\thm@preskip=\parskip \thm@postskip=0pt} \makeatother
\makeatletter \renewenvironment{proof}[1][\proofname]{\pushQED{\qed}\normalfont
\partopsep=\z@skip \topsep=\z@skip \trivlist \item[\hskip\labelsep\itshape #1\@addpunct{.}]
\ignorespaces}{\popQED\endtrivlist\@endpefalse} \makeatother
\definecolor{myred}{rgb}{0.9,0.2,0.2}
\definecolor{mygreen}{rgb}{0.2,0.6,0.2}
\definecolor{myblue}{rgb}{0.2,0.2,1}
\pgfplotsset{compat=1.13}
\tikzstyle{mypoint}=[inner sep=0pt,outer sep=0pt,minimum size=5pt,fill,circle]
\patchcmd{\@chapter}{\addtocontents{lof}{\protect\addvspace{10\p@}}}{}{}{}
\patchcmd{\@chapter}{\addtocontents{lot}{\protect\addvspace{10\p@}}}{}{}{}
\begin{document}

\thispdfpagelabel{i}
\pagenumbering{roman}
\setcounter{page}{1}
\maketitle

\makecopyright
\makededication
\makeepigraph


\setcounter{tocdepth}{1}
\tableofcontents


\listoffigures


\acknowledgments

I am deeply grateful to my advisor, Professor Peter May, for all of his patient guidance and support as I learned about the enticing world of equivariant algebraic topology. 

My sincere appreciation also goes to Professor Glenn Stevens of Boston University, who helped me grow so much as mathematician during my time at PROMYS.

My father, Michael Chonoles, set me on my academic course by never missing an opportunity to share a fun and esoteric piece of knowledge with me, mathematical or otherwise. 

I would not be here at all, much less be the person I am today, without Isabel Leal as my friend. Thank you for everything.


\abstract

We compute the additive structure of the $\RO(C_n)$-graded Bredon equivariant cohomology of the equivariant classifying space $B_{C_n}\SU(2)$, for any $n$ that is either prime or a product of distinct odd primes, and we also compute its multiplicative structure for $n=2$. In particular, as an algebra over the cohomology of a point, we show that the cohomology of $B_{C_2}\SU(2)$ is generated by two elements subject to a single relation: writing $\sigma$ for the sign representation of $C_2$ in $\RO(C_2)$, the generators are an element $\gen$ in dimension $4\signrep$ and an element $\Gen$ in dimension $4+4\signrep$, satisfying the relation $\gen^2=\epsilon^4\gen+\xi^2\Gen$, where $\epsilon$ and $\xi$ are elements of the cohomology of a point. Throughout, we take coefficients in the Burnside ring Mackey functor $A$.

The key tools used are equivariant ``even-dimensional freeness'' and ``multiplicative comparison'' theorems for  $G$-cell complexes, both proven by Lewis in \cite{lewis_complex} and subsequently refined by Shulman in \cite{megan}, and with the former theorem extended by Basu and Ghosh in \cite{basu_ghosh}. The latter theorem enables us to compute the multiplicative structure of the cohomology of $B_{C_2}\SU(2)$ by embedding it in a direct sum of cohomology rings whose structure is more easily understood. Both theorems require the cells of the $G$-cell complex to be attached in a well-behaved order, and a significant step in our work is to give $B_{C_n}\SU(2)$ a satisfactory $C_n$-cell complex structure.  


\mainmatter


\chapter{Introduction}

\section{Overview}

Equivariant algebraic topology seeks to generalize classical topological notions such as the cohomology of a space $X$ to a context where the space in question has an action by a group $G$.  While the Borel cohomology of a $G$-space $X$ is an equivariant generalization that can be computed readily and is adequate for some purposes, it still loses much of the information encoded by the action. One striking example is that, as measured by Borel cohomology, the characteristic classes of a $G$-equivariant principal $\Pi$-bundle are in a certain precise sense determined entirely by non-equivariant information about $G$ and the bundle with its $G$-action forgotten (see \cite{may1987characteristic}). In contrast, Bredon cohomology, graded not just on $\Z$ but on the real represenation ring $\RO(G)$, retains far more information about the $G$-action on $X$ and has a richer internal structure.

However, the price paid for these benefits is that computations of $\RO(G)$-graded Bredon cohomology are surprisingly difficult to perform, so that very little is known about equivariant characteristic classes in this cohomology theory. In fact, it is a struggle to calculate the cohomology of a point, with its necessarily-trivial $G$-action. This was first achieved by Stong in \cite{stong_letters} for $G=C_p$, the cyclic group of prime order $p$, taking coefficients in the Burnside ring Mackey functor $A$, the equivariant analog of integer coefficients. Since then, there has only been progress for certain other cyclic groups in \cite{hhr} and \cite{basu_ghosh}, and very recently, for $G=S_3$ in \cite{kriz}; in \cite{hhr} the cohomology was not computed using coefficients in $A$, and in \cite{basu_ghosh} only the additive structure was computed. Any calculation of this cohomology for some new $G$-space $X$ helps relieve the current paucity of examples, so that eventually we might better see the general landscape.

The few computations of $\RO(G)$-graded Bredon cohomology which have been done so far have focused on the classifying spaces $B_G\Pi$ of $G$-equivariant principal $\Pi$-bundles, which are of interest because the cohomology of a classifying space determines the characteristic classes of that type of principal bundle, just as in the non-equivariant setting. Lewis provided the first example by computing the $\RO(C_p)$-graded Bredon cohomology of $B_{C_p}\U(1)$ in the paper \cite{lewis_complex}, which also established two crucial tools, an ``even-dimensional freeness'' theorem and a ``multiplicative comparison'' theorem, of which more later. Shulman did the same for $B_{C_p}\mathrm{O}(2)$ in \cite{megan}, and here the major tool was the equivariant Serre spectral sequence as developed in \cite{moerdijksvensson} and \cite{kronholm}. As mentioned above, in the paper \cite{basu_ghosh} Basu and Ghosh computed the additive structure of the $\RO(G)$-graded Bredon cohomology of a point when $G=C_{pq}$ for distinct odd primes $p,q$, and they also extended the even-dimensional freeness theorem to this case, and so were able to compute the additive structure of the cohomology of $B_{C_{pq}}\U(1)$.

\section{Main results}

In this thesis, we compute the additive structure of the $\RO(C_n)$-graded Bredon cohomology of the equivariant classifying space $B_{C_n}\SU(2)$, for any $n$ that is either prime or a product of distinct odd primes, and we also compute its multiplicative structure for $n=2$. 
Throughout, we use cohomology with coefficients in the Burnside ring Mackey functor $A$. 
The work is organized as follows:

In Chapter 2, we first recall some basic facts about equivariant algebraic topology, as well as two important results which first appeared in \cite{lewis_complex}, an ``even-dimensional freeness'' theorem and a ``multiplicative comparison'' theorem. Both have corrected proofs given in \cite{megan}.  These results consider a $C_p$-cell complex $X$ with even-dimensional cells whose cells are attached in a sufficiently well-behaved order. The former states that the cohomology of $X$ is free as a module over the cohomology of a point, and the latter states that the multiplicative structure of the cohomology of $X$ can be embedded in a direct sum of cohomology rings whose structure is more easily understood. As mentioned above, Basu and Ghosh in \cite{basu_ghosh} extended the even-dimensional freeness theorem to the group $C_{pq}$ for distinct odd primes $p,q$.

In Chapter 3, we establish several lemmas regarding quaternionic representations and their projective spaces. The key results here are cofiber sequences which, given a quaternionic $C_n$-representation $W$, describe how the $C_n$-cell complex structure of both its projective space $\PH(W)$ and the fixed points $\PH(W)^{dC_n}$ thereof depend on the order in which the irreducibles of $W$ are added.

In Chapter 4, we show that the projective space $\PH(Q)$ of a complete quaternionic $C_n$-universe $Q$ provides a model for the equivariant classifying space $B_{C_n}\SU(2)$, and we then use our work from Chapter 3 to create a $C_n$-cell complex structure on $\PH(Q)$ which is sufficiently well-behaved, so that the even-dimensional freeness and multiplicaive comparison theorems apply.

In Chapter 5, we take our work from Chapter 4 and reap the benefits of the even-dimensional freeness theorem, finding the additive structure of the $\RO(C_n)$-graded Bredon cohomology of $B_{C_n}\SU(2)$ for all $C_n$ for which the freeness theorem has been established, i.e., when $n$ is prime or a product of distinct odd primes.

Finally, in Chapter 6, we perform the harder work of computing the multiplicative structure of the $\RO(C_2)$-graded Bredon cohomology of $B_{C_2}\SU(2)$. The reason for the restriction to $n=2$ is that the multiplicative structure of the cohomology of a point is quite convoluted for odd primes $n$ (see \cite[p.65]{lewis_complex}), and completely unknown for composite $n$ (which would at any rate not be sufficient, since we also lack a multiplicative comparison theorem for composite $n$). The general approach for our computation is similar to that of Lewis in \cite{lewis_complex}: we construct our generators explicitly, and show that they generate the cohomology of $B_{C_2}\SU(2)$ by proving inductively that they generate the cohomology of each piece of the filtration of $B_{C_2}\SU(2)$ by its $C_2$-cell complex structure.



\chapter{Preliminaries}


Throughout, let $G$ be a finite group.

\section{Equivariant analogs of cell complexes}

\subsection{$G$-CW complexes}

\begin{definition}
A \defstyle{$G$-CW complex} is a $G$-space $X$ and an increasing filtration of sub-$G$-spaces $X_n\subseteq X$ with the following properties.
\begin{itemize}
\item $X_0$ is a discrete $G$-set.
\item For each $n$, the subspace $X_{n+1}$ is formed from $X_n$ by attaching cells of the form $G/K \times \diskletter^{n+1}$ along boundary $G$-maps $G/K\times \S{n}\to X_n$, where $K\subseteq G$ is a subgroup that can differ for different cells, and $\diskletter^{n+1}$ and $\S{n}$ have the trivial $G$-action.
\item $X$ is the colimit of the increasing subspaces $X_0\subseteq X_1\subseteq\cdots$.
\end{itemize}
\end{definition}

\begin{remark}
A $G$-CW-complex $X$ may be thought of as an ordinary CW-complex
equipped with a cellular $G$-action $G\times X\to X$ such that, for each $n$-cell $\diskletter^n$ and $g\in G$, the action of $g$ on $X$ either fixes $\diskletter^n$ pointwise or gives a homeomorphism from $\diskletter^n$ to a distinct second $n$-cell. Note also that maps $G/K\times\S{n}\to X$ are in bijective correspondence with maps $\S{n}\to X^K$.
\end{remark}

The following are some illustrative examples of $G$-CW complexes.

\begin{example}
The space $X=\S{1}$ with the action of $G=C_2=\langle g\rangle$ where $g$ acts via rotation by $\pi$.
\begin{itemize}
\item $X_0$ is comprised by a single $0$-cell $\alpha\cong C_2/\{e\}\times \pt$ (indicated by circle).
\item $X_1$ is formed by attaching a single $1$-cell $\beta\cong C_2/\{e\}\times \diskletter^1$ (indicated by line) via the map $\varphi\from C_2/\{e\}\times \S{0}\to X_0$ defined by $\varphi(h,0)=h$ and $\varphi(h,1)=gh$.
\end{itemize}
\begin{center}
\begin{tikzpicture}[scale=1.5]
\draw[black,very thick] (0,0) circle (1);
\node[draw=white,fill=black,inner sep=2pt,circle] at (0,1) {};
\node[draw=white,fill=black,inner sep=2pt,circle] at (0,-1) {};
\end{tikzpicture}
\end{center}
\end{example}
\begin{example}
The space $X=\diskletter^2$ with the action of $G=S_3=\langle s,r\rangle$ where $s$ acts via a reflection and $r$ acts via rotation by $2\pi/3$.
\begin{itemize}
\item $X_0$ is comprised by three $0$-cells, $\alpha_0\cong S_3/S_3\times \pt$ (indicated by circle) and $\alpha_1,\alpha_2\cong S_3/\langle s\rangle\times \pt$ (indicated by diamond, star).
\item $X_1$ is formed by attaching three $1$-cells, $\beta_0\cong S_3/\{e\}\times \diskletter^1$ (indicated by solid line) and $\beta_1,\beta_2\cong S_3/\langle s\rangle\times \diskletter^1$ (indicated by dashed, dotted line).
\item $X_2$ is formed by attaching a single $2$-cell $\gamma\cong S_3/\{e\}\times \diskletter^2$ (indicated by shaded area).
\end{itemize}
\begin{center}
\begin{tikzpicture}[scale=1.5]
\filldraw[fill=gray!20,draw=black,very thick] (0,0) circle (1);
\draw[dashed,very thick] (0,0) -- (90:1);
\draw[dashed,very thick] (0,0) -- (210:1);
\draw[dashed,very thick] (0,0) -- (330:1);
\draw[dotted,very thick] (0,0) -- (30:1);
\draw[dotted,very thick] (0,0) -- (150:1);
\draw[dotted,very thick] (0,0) -- (270:1);
\node[draw=white,fill=black,inner sep=2pt,circle] at (0,0) {};
\node[draw=white,fill=black,inner sep=2pt,diamond] at (90:1) {};
\node[draw=white,fill=black,inner sep=2pt,diamond] at (210:1) {};
\node[draw=white,fill=black,inner sep=2pt,diamond] at (330:1) {};
\node[draw=white,fill=black,inner sep=3pt,star, star points=5, star point ratio=.6] at (30:1) {};
\node[draw=white,fill=black,inner sep=3pt,star, star points=5, star point ratio=.6] at (150:1) {};
\node[draw=white,fill=black,inner sep=3pt,star, star points=5, star point ratio=.6] at (270:1) {};
\end{tikzpicture}
\end{center}
\end{example}

\subsection{$G$-cell complexes}

\begin{definition}
Let $V$ be a real $G$-representation. It is a classical result from representation theory that we may put an inner product on $V$ such that the $G$-representation is orthogonal, i.e., elements of $G$ act by norm-preserving maps. Then
\begin{itemize}
\item $\unitsphere{V}$, the unit sphere of $V$,
\item $\unitdisk{V}$, the unit disk of $V$, and
\item $\S{V}$, the one-point compactification of $V$
\end{itemize}
each inherit an action of $G$ from its action on $V$.
\end{definition}

\begin{definition}
Given a right $G$-set $X$ and a left $G$-set $Y$, their \defstyle{balanced product} $X\times_G Y$ is  $X\times Y/{\sim}$ where $(xg,y)\sim (x,gy)$ for $g\in G$.
\end{definition}

\begin{example}
Given a group $G$ and a subgroup $K\subseteq G$, consider $G$ as a right $K$-set. Naturally, $G$ is also a left $G$-set. Then given a left $K$-set $X$, we can form the left $G$-set $G\times_K X$.
\end{example}

\begin{definition}
A \defstyle{$G$-cell complex} is a $G$-space $X$ and an increasing filtration of sub-$G$-spaces $X_n\subseteq X$ with the following properties.
\begin{itemize}
\item $X_0$ is a discrete $G$-set.
\item For each $n$, the subspace $X_{n+1}$ is formed from $X_n$ by attaching cells of the form $G\times_K \unitdisk{V}$ along boundary $G$-maps $G\times_K\unitsphere{V}\to X_n$, where $K\subseteq G$ is a subgroup that can differ for different cells, and $V$ is a real $K$-representation.
\item $X$ is the colimit of the increasing subspaces $X_0\subseteq X_1\subseteq\cdots$.
\end{itemize}
\end{definition}

\begin{remark}
The cells attached to form the $n$th filtration $X_n$ are not required to have any particular dimension, in contrast to the usual convention for CW complexes. 
\end{remark}

Here is an illustrative example of a $G$-cell complex.

\begin{example}
The space $X=\S{V}$, where $V=\R^2$ with the action of $G=S_3=\langle s,r\rangle$ where $s$ acts via a reflection and $r$ acts via rotation by $2\pi/3$.
\begin{itemize}
\item $X_0$ is comprised by the cell $\alpha\cong S_3\times_{S_3}\unitdisk{\mathbb{R}^0}\cong \unitdisk{\mathbb{R}^0}$, i.e., a point (indicated by circle).
\item $X_1$ is formed by attaching a cell $\beta\cong S_3\times_{S_3} \unitdisk{V}\cong \unitdisk{V}$ (indicated by sphere).
\end{itemize}

%
%

\begin{center}
\begin{tikzpicture}
\fill[ball color=gray!60!white] (0,0) circle (1.3); 
\begin{scope}[every label/.style={black!60!white}]
\node[inner sep=0pt,outer sep=0pt,black!60!white] [label=$\beta$]  at (225:1.95) {};
\end{scope}
\begin{scope}[every label/.style={gray!20}]
\node[fill,circle,inner sep=0pt,outer sep=0pt,minimum size=5pt,gray!20] [label=355:$\alpha$]  at (0,1) {};
\end{scope}
\end{tikzpicture}
\end{center}
\end{example}

\section{Mackey functors}


\subsection{Burnside category}

\begin{definition}
\label{def:orbit-category}
An \defstyle{orbit} of a finite group $G$ is a $G$-set of the form $G/H$ for a subgroup $H\subseteq G$. The \defstyle{orbit category} $\orbitcategory{G}$ of a finite group $G$ has the orbits of $G$ as its objects, and morphisms are morphisms of $G$-sets.
\end{definition}

\begin{remark}
\label{rem:subconjugacy}
There exists a map $G/H\to G/K$ in $\orbitcategory{G}$ if and only if $H$ is subconjugate to $K$, i.e., there exists some $g\in G$ for which $g^{-1}Hg\subseteq K$ (note that if $\alpha\from G/H\to G/K$ has $\alpha(eH)=gK$, then $g^{-1}Hg\subseteq K$).
\end{remark}

\begin{definition}
\label{def:coefficient-system}
A covariant \defstyle{coefficient system} is a functor $\orbitcategory{G}\to \Ab$, and a contravariant coefficient system is a functor $\orbitcategory{G}^\op\to \Ab$.
\end{definition}

One important example is the following:

\begin{example}
\label{ex:g-homotopy-group}
Given a based $G$-space $X$, we can make a homotopy group coefficient system by defining $\underline{\pi\!}_{\,n}(X)(G/H)=\pi_n(X^H)$, and where the map $X^K\to X^H$ induced by $\alpha\from G/H\to G/K$ with $\alpha(eH)=gK$ sends $x\in X^K$ to $gx\in X^H$.
\end{example}

For an overview of the equivariant stable homotopy category, 
see \cite[\sectionsymb 2]{greenless_may}. In particular, recall that for any $G$-space $X$, we can form a suspension $G$-spectrum $\Sigma^\infty X$.


\begin{definition}
\label{def:burnside-original}
The \defstyle{Burnside category} $\burnside{G}$ of a group $G$ is the full subcategory of the equivariant stable homotopy category on the objects $\Sigma^\infty(\gsetstyle{b}_{+})$, where $\gsetstyle{b}$ is a finite $G$-set.
\end{definition}

\begin{remark}
Equivalently, one can define $\burnside{G}$ to be the category whose objects are  finite $G$-sets $\gsetstyle{b}$ and whose morphisms are stable $G$-maps
\[\burnside{G}(\gsetstyle{b},\gsetstyle{c})=\{\gsetstyle{b}_{+},\gsetstyle{c}_{+}\}_{G}=\operatornamewithlimits{colim}\limits_{V\subset U}[\Sigma^V(\gsetstyle{b}_{+}),\Sigma^V(\gsetstyle{c}_{+})]_G\]
where $U$ is a given complete $G$-universe and the colimit is over the finite dimensional sub-$G$-representations of $U$.
\end{remark}

The following definition gives us a more concrete, useful way of working with the Burnside category.

\begin{definition}
\label{def:useful-burnside}
For any two finite $G$-sets $\gsetstyle{x}$ and $\gsetstyle{y}$, let $S(\gsetstyle{x},\gsetstyle{y})$ be the isomorphism classes of spans $\gsetstyle{x}\leftarrow \gsetstyle{z}\to\gsetstyle{y}$. 
Note that $S(\gsetstyle{x},\gsetstyle{y})$ is an abelian monoid under the operation of $\sqcup$, which takes spans $\gsetstyle{x}\leftarrow \gsetstyle{z}\to\gsetstyle{y}$ and $\gsetstyle{x}\leftarrow \gsetstyle{w}\to\gsetstyle{y}$ and produces the span $\gsetstyle{x}\leftarrow \gsetstyle{z}\sqcup\gsetstyle{w}\to\gsetstyle{y}$. 

Define the category $\burnside{G}'$ to have as its objects the finite $G$-sets, and for its morphisms, $\burnside{G}'(\gsetstyle{x},\gsetstyle{y})$ is the abelian group formed by applying the Grothendieck construction to $S(\gsetstyle{x},\gsetstyle{y})$. The composition of $\gsetstyle{x}\leftarrow \gsetstyle{u}\to\gsetstyle{y}$ and $\gsetstyle{y}\leftarrow \gsetstyle{v}\to\gsetstyle{z}$ is the span $\gsetstyle{x}\leftarrow \gsetstyle{w}\to\gsetstyle{z}$ formed by taking the pullback:
\begin{center}
\begin{tikzcd}[row sep = 0.5cm]
{} &  & \gsetstyle{w} \ar{rd} \ar{ld} & & \\
 & \gsetstyle{u}\ar{rd} \ar{ld} & & \gsetstyle{v}\ar{rd} \ar{ld} & \\
\gsetstyle{x} & & \gsetstyle{y} & & \gsetstyle{z}
\end{tikzcd}
\end{center}
\end{definition}

The theorem from \cite{alaska} below establishes the relationship between the two categories. From now on we will refer to either of the categories from \autoref{def:burnside-original} or  \autoref{def:useful-burnside} as $\burnside{G}$.

\begin{theorem}
The Burnside category $\burnside{G}$ is equivalent to the category $\burnside{G}'$ from \autoref{def:useful-burnside}.
\end{theorem}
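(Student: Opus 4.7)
The plan is to construct an explicit functor $F \from \burnside{G}' \to \burnside{G}$ that is the identity on objects, then show that it is fully faithful by invoking the standard Segal--tom Dieck splitting of stable $G$-maps between suspension spectra of finite $G$-sets.

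On objects I set $F(\gsetstyle{x}) = \Sigma^\infty \gsetstyle{x}_+$. On morphisms, the functor sends a span $\gsetstyle{x} \xleftarrow{f} \gsetstyle{z} \xrightarrow{g} \gsetstyle{y}$ to the composite
\[ \Sigma^\infty \gsetstyle{x}_+ \xrightarrow{\tau_f} \Sigma^\infty \gsetstyle{z}_+ \xrightarrow{\Sigma^\infty g_+} \Sigma^\infty \gsetstyle{y}_+, \]
where $\tau_f$ is the stable transfer associated to the finite cover $f$. Concretely, after choosing a $G$-equivariant embedding of $\gsetstyle{z}$ into a finite-dimensional sub-$G$-representation $V \subset U$, the induced $G$-embedding $\gsetstyle{z} \hookrightarrow \gsetstyle{x} \times V$ has an equivariant tubular neighborhood, and $\tau_f$ is the resulting Pontryagin--Thom collapse.

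Next I would verify that $F$ is a functor. Well-definedness on isomorphism classes of spans follows from the naturality of transfers under isomorphisms of covers. Disjoint union of spans corresponds to addition in the stable category, so the resulting abelian monoid map extends uniquely to a group homomorphism out of the Grothendieck group of $S(\gsetstyle{x}, \gsetstyle{y})$. For composition, I would use the ``base-change'' formula for transfers: given a pullback square of finite $G$-sets, the transfer of the pulled-back cover composed with the induced map equals the induced map composed with the transfer of the original cover. Applied to the pullback diagram of \autoref{def:useful-burnside}, this says precisely that composition of spans matches composition of stable maps, so $F$ is a well-defined functor.

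The main obstacle is showing that $F$ is fully faithful, i.e., that the map
\[ \burnside{G}'(\gsetstyle{x}, \gsetstyle{y}) \longrightarrow \{\gsetstyle{x}_+, \gsetstyle{y}_+\}_G \]
is an isomorphism for every pair of finite $G$-sets. This is the content of the Segal--tom Dieck splitting. Surjectivity relies on equivariant transversality: after sufficient suspension, any representative of a stable $G$-map from $\gsetstyle{x}_+$ to $\gsetstyle{y}_+$ may be made transverse to the zero section of a suitable trivial bundle over $\gsetstyle{y}$, and the preimage exhibits the stable map as a Pontryagin--Thom collapse of a span. Injectivity uses an equivariant normal cobordism argument showing that stably homotopic spans differ by a boundary that vanishes in the Grothendieck group. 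Since both halves depend on delicate equivariant transversality arguments in the complete $G$-universe $U$, I would appeal to the detailed treatment of these matters in \cite{alaska} rather than reproduce them in full.
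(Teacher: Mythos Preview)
The paper does not give its own proof of this theorem: it simply states the result and attributes it to \cite{alaska}. Your proposal therefore goes well beyond what the paper does. Your sketch is the standard route---define the functor via transfers and Pontryagin--Thom collapse, check functoriality via the base-change formula for transfers, and invoke the Segal--tom Dieck identification of $\{\gsetstyle{x}_+,\gsetstyle{y}_+\}_G$ for full faithfulness---and it is correct in outline, though as you acknowledge, the hard part (equivariant transversality in a complete universe) is deferred to \cite{alaska} just as the paper defers the entire statement there. So there is no discrepancy in approach to flag; you have simply unpacked what the paper leaves as a citation.
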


Some useful classes of morphisms in $\burnside{G}$ are the restrictions and transfers.

\begin{definition}
For any map of finite $G$-sets $\alpha\from \gsetstyle{x}\to\gsetstyle{y}$, we will define the \defstyle{restriction} $\restriction{\alpha}\in\burnside{G}(\gsetstyle{x},\gsetstyle{y})$ associated to $\alpha$ to be the span
\begin{center}
\begin{tikzcd}[row sep = 0.5cm]
 & \gsetstyle{x}\ar{rd}{\alpha} \ar{ld}[swap]{\id} & \\
\gsetstyle{x} & & \gsetstyle{y}
\end{tikzcd}
\end{center}
and the \defstyle{transfer} $\transfer{\alpha}\in\burnside{G}(\gsetstyle{y},\gsetstyle{x})$ associated to $\alpha$ to be the span
\begin{center}
\begin{tikzcd}[row sep = 0.5cm]
 & \gsetstyle{x}\ar{rd}{\id} \ar{ld}[swap]{\alpha} & \\
\gsetstyle{y} & & \gsetstyle{x}
\end{tikzcd}
\end{center}
\end{definition}

\begin{remark}
\label{rem:orbit-burnside-embedding}
There are two embeddings of the orbit category $\orbitcategory{G}$ into the Burnside category $\burnside{G}$, one covariant and the other contravariant; on objects an orbit $G/H$ is sent to itself, and a morphism $\alpha\from G/H\to G/K$ in $\orbitcategory{G}$ can be sent to either $\restriction{\alpha}$ or $\transfer{\alpha}$ for the covariant or contravariant embedding, respectively.
\end{remark}
\begin{remark}
\label{rem:span-decomp}
Observe that the disjoint union $\sqcup$ is both the product and coproduct in $\burnside{G}$, and that any finite $G$-set is a disjoint union of orbits; therefore, any span $\gsetstyle{x}\leftarrow \gsetstyle{z}\to\gsetstyle{y}$ can be decomposed into spans of the form $G/H \xleftarrow{\alpha} G/L\xrightarrow{\beta} G/K$. In fact, we can decompose one step further and note that such a span is itself the composition  $\transfer{\alpha}\circ\restriction{\beta}$.
\end{remark}


\subsection{Mackey functors}

\begin{definition}
A \defstyle{Mackey functor} is an additive functor $\burnside{G}^{\op}\to \Ab$. The collection of Mackey functors for a group $G$ forms a category $\mackey{G}$, whose morphisms are the natural transformations of such functors.
\end{definition}

%


\begin{remark}
\label{rem:mackey-additive}
Because Mackey functors are additive, a Mackey functor $M$ is determined by its values $M(G/H)$ on orbits, and similarly a morphism $M \to N$ of Mackey functors is determined by the maps $M(G/H)\to N(G/H)$. Moreover, to know what a Mackey functor does on morphisms, by \autoref{rem:span-decomp} it suffices to know what it does on restrictions $\restriction{\alpha}$ and transfers $\transfer{\alpha}$ of maps $\alpha\from G/H\to G/K$ between orbits. Thus, by \autoref{rem:orbit-burnside-embedding}, the Mackey functor $M\from\burnside{G}^\op\to\Ab$ determines a covariant coefficient system $M_*\from\orbitcategory{G}\to\Ab$ and a contravariant coefficient system $M^*\from\orbitcategory{G}^\op\to\Ab$, and a Mackey functor is determined by such a pair of coefficient systems, as long as they agree on objects and satisfy a compatibility condition that arises from composition in $\burnside{G}$.
\end{remark}

Here we give two simple examples of Mackey functors.

%

\begin{example}
\label{def:constant-mackey}
For any abelian group $C$, the \defstyle{constant Mackey functor} of $C$ is the Mackey functor $\underline{C}\from\burnside{G}^\op\to\Ab$ where $\underline{C}(G/H)=C$ for any orbit $G/H$, and for any $\alpha\from G/H\to G/K$, the behavior on the restriction $\underline{C}(\restriction{\alpha})\from C\to C$ is the identity, while the behavior on the transfer $\underline{C}(\transfer{\alpha})\from C\to C$ is multiplication by $|H|/|K|$ (which is an integer because, by \autoref{rem:subconjugacy}, $H$ is subconjugate to $K$). 
\end{example}

\begin{example}
\label{def:burnside-mackey}
The \defstyle{Burnside ring Mackey functor} $A\from\burnside{G}^\op\to\Ab$ is $\burnside{G}({-},G/G)$, the representable functor represented by the $G$-orbit $G/G$.
\end{example}

Although any Mackey functor can be seen as having two underlying coefficient systems, not every coefficient system extends to a Mackey functor, which is an important point considering \autoref{thm:extends-to-mackey}.


Mackey functors also appear outside of algebraic topology, for example in \cite[\sectionsymb 2, p.298]{boltje}:

\begin{example}
Let $L/K$ be a finite Galois extension of number fields with $G=\mathrm{Gal}(L/K)$, and let $\mathrm{cl}(F)$ denote the ideal class group of a number field $F$. We can form a $G$-Mackey functor $M$ with $M(G/H)=\mathrm{cl}(L^H)$. Given a map $\alpha\from G/H\to G/K$ with $\alpha(eH)=gK$, for restriction $M(\restriction{\alpha})$ we define the map $\mathrm{cl}(L^H)\to\mathrm{cl}(L^K)$ by the appropriate combination of acting by $g$ and extending fractional ideals, and for transfer $M(\transfer{g})$ we define the map $\mathrm{cl}(L^K)\to\mathrm{cl}(L^H)$ by the appropriate combination of acting by $g$ and taking norms.
\end{example}


\subsection{Box product of Mackey functors}

\begin{definition}
The \defstyle{box product $M\boxprod N$} of two Mackey functors $M,N\from \burnside{G}^\op\to\Ab$ is defined to be the Day tensor product of such functors. That is, if we let $M\externalboxprod N$ be the ``external'' product
\[M\externalboxprod N\from \burnside{G}^\op\times\burnside{G}^\op\to \Ab,\qquad (M\externalboxprod N)(\gsetstyle{x},\gsetstyle{y})=M(\gsetstyle{x})\otimes N(\gsetstyle{y})\]
then the box product $M\boxprod N$ is the left Kan extension of $M\externalboxprod N$ along the cartesian product functor,
\begin{center}
\begin{tikzcd}
\burnside{G}^\op\times\burnside{G}^\op \ar{rr}{M\externalboxprod N} \ar{rd}[swap]{\times} & & \kmod\\
& \burnside{G}^\op \ar{ru}[swap]{M\boxprod N}
\end{tikzcd}
\end{center}
Thus, $M\boxprod N$ is characterized by the universal property that for any $P\from \burnside{G}^\op\to \Ab$,
\[[\burnside{G}^\op,\Ab](M\boxprod N,P)\cong[\burnside{G}^\op\times\burnside{G}^\op,\Ab](M\externalboxprod N,P\circ{\times})\]
\end{definition}

\begin{remark}
The category $\mackey{G}$ is a symmetric monoidal category under $\boxprod$, with unit $A$.
\end{remark}

\begin{definition}
\label{def:shifted-mackey}
Let $M$ be a Mackey functor and let $\gsetstyle{b}$ be a $G$-set. Then the \defstyle{shifted Mackey functor} $M_{\gsetstyle{b}}$ is defined on objects by
\[M_{\gsetstyle{b}}(\gsetstyle{c})=M(\gsetstyle{b}\times\gsetstyle{c})\]
and then with the natural behavior on morphisms. An alternative characterization of this is that $M_{\gsetstyle{b}}=M\boxprod A_{\gsetstyle{b}}$, where $A$ is the Burnside ring Mackey functor.
\end{definition}

\subsection{Mackey functors with extra structure}

\begin{definition}
A \defstyle{Green functor} is a monoid in the category $\mackey{G}$, i.e., a Mackey functor $T\from\burnside{G}^\op\to\Ab$ together with natural transformations $\mu\from T\boxprod T\tonatural T$ and $\eta\from A\tonatural T$, satisfying the standard commutative diagrams
\begin{center}
\begin{tikzcd}
A\boxprod T\ar{r}{\eta\boxprod \alpha} \ar{rd}[swap]{\cong} & T\boxprod T \ar{d}{\mu} & T\boxprod A \ar{l}[swap]{{\id}\boxprod \eta} \ar{ld}{\cong} \\
& T 
\end{tikzcd}
\qquad
\begin{tikzcd}
T\boxprod T\boxprod T \ar{d}[swap]{\id\boxprod\mu} \ar{r}{\mu\boxprod\id} & T\boxprod T \ar{d}{\mu}\\
T\boxprod T \ar{r}[swap]{\mu} & T
\end{tikzcd}
\end{center}
A Green functor is \defstyle{commutative} when the following diagram also commutes:
\begin{center}
\begin{tikzcd}
T\boxprod T \ar{rr}{\text{swap}} \ar{rd}[swap]{\mu} & & T \boxprod T \ar{ld}{\mu}\\
 & T
\end{tikzcd}
\end{center}
\end{definition}

\begin{definition}
A \defstyle{module over a Green functor} $T$ is a Mackey functor $M\from\burnside{G}^\op\to\Ab$ with an action natural transformation $\phi\from T\boxprod M\Rightarrow M$, satisfying the standard commutative diagrams
\begin{center}
\begin{tikzcd}
A\boxprod M \ar{rr}{\eta\boxprod\id} \ar{rd}[swap]{\cong} & & T \boxprod M \ar{ld}{\phi}\\
 & M
\end{tikzcd}
\qquad
\begin{tikzcd}
T\boxprod T\boxprod M \ar{r}{\mu\boxprod\id} \ar{d}[swap]{\id\boxprod\phi} & T \boxprod M \ar{d}{\phi}\\
T\boxprod M \ar{r}[swap]{\phi} & M
\end{tikzcd}
\end{center}
\end{definition}

\begin{definition}
A module over a Green functor $T$ is \defstyle{free} when it is isomorphic to $T\boxprod A_{\gsetstyle{b}}\cong T_{\gsetstyle{b}}$ for some $G$-set $\gsetstyle{b}$. (See \cite[\sectionsymb 2.2.7]{megan} for justification.)
\end{definition}

\begin{definition}
An \defstyle{$\RO(G)$-graded Mackey functor} $M^*$ is just a collection of Mackey functors $M^\alpha$ for each $\alpha\in \RO(G)$. We can define a graded box product $M^*\boxprod N^*$ by
\[(M^*\boxprod N^*)^\alpha=\bigoplus_{\beta_1+\beta_2=\alpha}(M^{\beta_1}\boxprod N^{\beta_2})\]
and the unit for $\boxprod$ is the graded Burnside ring Mackey functor $A^*$, which is just $A$ concentrated in degree $0$. The category of graded Mackey functors is then a symmetric monoidal category under $\boxprod$. One can then define $\RO(G)$-graded Green functors and $\RO(G)$-graded modules over $\RO(G)$-graded Green functors using the same diagrams as above.
\end{definition}

\begin{remark}
For more details about what it means for a graded Green functor to be \textit{commutative}, see the discussion in \cite[\sectionsymb2.2.5 and \sectionsymb2.5.1]{megan}.
\end{remark}

\begin{definition}
\label{def:tensoring-mackey-with-group}
If $M^*$ is an $\RO(G)$-graded Mackey functor and $B^*$ is a $\Z$-graded abelian group, then $M^*\otimes B^*$ is the $\RO(G)$-graded Mackey functor defined by
\[(M^*\otimes B^*)^\alpha=\bigoplus_{\beta+n=\alpha}M^\beta\otimes B^n.\]
\end{definition}

\subsection{$C_p$-Mackey functors}

\begin{remark}
\label{rem:dot-and-sun}
Observe that there are only two $C_p$-orbits: the singleton $C_p/C_p$, and the orbit $C_p/\{e\}$ that is isomorphic to $C_p$ itself. Throughout this work, we will often follow a notational convention from \cite{megan} regarding them: instead of $C_p/C_p$ we will write $\GG$, and instead of writing $C_p/\{e\}$ we will write $\Ge$, with the symbols $\GG$ and $\Ge$ intended to represent the structure of the relevant orbits.
\end{remark}

\begin{remark}
In addition to the fact that there are only two $C_p$-orbits, $\GG$ and $\Ge$, the morphisms between them in $\burnside{C_p}$ are also easy to describe. Observe that there is a single map $\rho\from \Ge\to \GG$, and no maps $\GG\to\Ge$. There are $p$ maps $\Ge\to\Ge$, corresponding to the actions of each element of $C_p$. Thus, by \autoref{rem:mackey-additive}, to describe a $C_p$-Mackey functor $M$ on objects, we only need to know $M(\GG)$ and $M(\Ge)$, and to describe it on morphisms, we only need to know how $M$ acts on $\restriction{\rho}$, $\transfer{\rho}$, and the $p$ elements $\Ge\xleftarrow{\id}\Ge\xrightarrow{g}\Ge$ in $\burnside{G}(\Ge,\Ge)$. The latter piece of information manifests itself as a $C_p$-action on $M(\Ge)$ (see more in \cite[\sectionsymb2.4.1]{megan}).

The following notational convention for $C_p$-Mackey functors originated in \cite{lewis_complex}:
\begin{center}
\begin{tikzpicture}
\node (a) at (0,2) {$M(\GG)$};
\node (b) at (0,0) {$M(\Ge)$};
\draw[->] (a) [bend right] to node[left] {$\scriptstyle M(\restriction{\rho})$} (b);
\draw[->] (b) [bend right] to node[right] {$\scriptstyle M(\transfer{\rho})$} (a);
\draw[->] (b) [out=235,in=315,looseness=5] to node[pos=0.5,shift={(0,-0.2)}] {$\scriptstyle C_p\text{-action}$} (b);
\end{tikzpicture}
\end{center}
\end{remark}

With $G=C_p$ for a prime $p$, the following Mackey functors will play an important role.

\begin{definition}
\label{def:important-mackeys}
The $C_p$-Mackey functors $A$, $A[d]$, $R$, $L$, $R_{-}$, $L_{-}$, $\langle C\rangle$.
\begin{center}
\begin{tikzpicture}
\begin{scope}[shift={(0,0)}]
\path (-1.5,-1.5) rectangle (1.5,3.5);
\node at (0,3) {$A$};
\node (a) at (0,2) {$\Z\oplus\Z$};
\node (b) at (0,0) {$\Z$};
\draw[->] (a) [bend right] to node[left] {$\scriptstyle\left[\begin{smallmatrix}
1\, & \,p
\end{smallmatrix}\right]$} (b);
\draw[->] (b) [bend right] to node[right] {$\scriptstyle\left[\begin{smallmatrix}
0\\ 1^{\rule{0pt}{5pt}}
\end{smallmatrix}\right]$} (a);
\draw[->] (b) [out=235,in=315,looseness=5] to node[pos=0.5,shift={(0,-0.2)}] {$\scriptstyle\text{triv}$} (b);
\end{scope}
\begin{scope}[shift={(4,0)}]
\path (-1.5,-1.5) rectangle (1.5,3.5);
\node at (0,3) {$R$};
\node (a) at (0,2) {$\Z$};
\node (b) at (0,0) {$\Z$};
\draw[->] (a) [bend right] to node[left] {$\scriptstyle 1$} (b);
\draw[->] (b) [bend right] to node[right] {$\scriptstyle p\strut$} (a);
\draw[->] (b) [out=235,in=315,looseness=5] to node[pos=0.5,shift={(0,-0.2)}] {$\scriptstyle\text{triv}$} (b);
\end{scope}
\begin{scope}[shift={(8,0)}]
\path (-1.5,-1.5) rectangle (1.5,3.5);
\node at (0,3) {$L$};
\node (a) at (0,2) {$\Z$};
\node (b) at (0,0) {$\Z$};
\draw[->] (a) [bend right] to node[left] {$\scriptstyle p\strut$} (b);
\draw[->] (b) [bend right] to node[right] {$\scriptstyle 1$} (a);
\draw[->] (b) [out=235,in=315,looseness=5] to node[pos=0.5,shift={(0,-0.2)}] {$\scriptstyle\text{triv}$} (b);
\end{scope}
\begin{scope}[shift={(12,0)}]
\path (-1.5,-1.5) rectangle (1.5,3.5);
\node at (0,3) {$\langle C\rangle$};
\node (a) at (0,2) {$C$};
\node (b) at (0,0) {$0$};
\draw[->] (a) [bend right] to node[left] {$\scriptstyle 0$} (b);
\draw[->] (b) [bend right] to node[right] {$\scriptstyle 0$} (a);
\draw[->] (b) [out=235,in=315,looseness=5] to node[pos=0.5,shift={(0,-0.2)}] {$\scriptstyle\text{triv}$} (b);
\end{scope}
\end{tikzpicture}
\end{center}
\end{definition}

The ``twisted Burnside'' Mackey functor arises in $\HHreduced_{C_p}^*(\S{0}\coeffs*{A})$ when $p$ is odd.

\begin{center}
\begin{tikzpicture}
\begin{scope}[shift={(0,0)}]
\path (-1.5,-1.5) rectangle (1.5,3.5);
\node at (0,3) {$A[d]$};
\node (a) at (0,2) {$\Z\oplus\Z$};
\node (b) at (0,0) {$\Z$};
\draw[->] (a) [bend right] to node[left] {$\scriptstyle\left[\begin{smallmatrix}
d\, & \,p
\end{smallmatrix}\right]$} (b);
\draw[->] (b) [bend right] to node[right] {$\scriptstyle\left[\begin{smallmatrix}
0\\ 1^{\rule{0pt}{5pt}}
\end{smallmatrix}\right]$} (a);
\draw[->] (b) [out=235,in=315,looseness=5] to node[pos=0.5,shift={(0,-0.2)}] {$\scriptstyle\text{triv}$} (b);
\end{scope}
\end{tikzpicture}
\end{center}

The ``signed'' versions of $R$ and $L$ are arise in $\HHreduced_{C_2}^*(\S{0}\coeffs*{A})$.

\newcommand{\ghostquot}{\makebox[0pt][l]{$\scriptstyle \smash{\text{quotient}}$}}
\begin{center}
\begin{tikzpicture}
\begin{scope}[shift={(0,0)}]
\path (-1.5,-1.5) rectangle (1.5,3.5);
\node at (0,3) {$R_{-}$};
\node (a) at (0,2) {$0$};
\node (b) at (0,0) {$\Z$};
\draw[->] (a) [bend right] to node[left] {$\scriptstyle 0$} (b);
\draw[->] (b) [bend right] to node[right] {$\scriptstyle 0\strut$} (a);
\draw[->] (b) [out=235,in=315,looseness=5] to node[pos=0.5,shift={(0,-0.2)}] {$\scriptstyle -1$} (b);
\end{scope}
\begin{scope}[shift={(4,0)}]
\path (-1.5,-1.5) rectangle (1.5,3.5);
\node at (0,3) {$L_{-}$};
\node (a) at (0,2) {$\Z/2$};
\node (b) at (0,0) {$\Z$};
\draw[->] (a) [bend right] to node[left] {$\scriptstyle 0\strut$} (b);
\draw[->] (b) [bend right] to node[right] {$\scriptstyle\text{quot}$} (a);
\draw[->] (b) [out=235,in=315,looseness=5] to node[pos=0.5,shift={(0,-0.2)}] {$\scriptstyle -1$} (b);
\end{scope}
\end{tikzpicture}
\end{center}

\section{\texorpdfstring{$\RO(G)$-graded equivariant cohomology}{RO(G)-graded equivariant cohomology}}





We forgo a more motivated treatment of $\RO(G)$-graded equivariant cohomology and simply list the axioms and certain basic facts we will need to refer to. We use the formulation in \cite[\sectionsymb 2.3.2]{megan}.

Let $G\catnamestyle{-Top}_{*}$ denote the category of based $G$-spaces, where the basepoint is required to be $G$-fixed. Then  the homotopy category  $\catnamestyle{ho-}G\catnamestyle{-Top}_{*}$ is formed by localizing at the collection of weak $G$-equivalences, i.e., the $G$-equivariant maps $f\from X\to Y$ for which $f_*^H\from\underline{\pi}_n(X)\to\underline{\pi}_n(Y)$ is an isomorphism of coefficient systems for all $n$ (see \autoref{ex:g-homotopy-group}).

\begin{definition}
A \defstyle{reduced ordinary equivariant cohomology theory} $\HHreduced_{G}^*$ indexed on $\RO(G)$ consists of functors $\HHreduced_G^\alpha\from \catnamestyle{ho-}G\catnamestyle{-Top}_{*}\to \mackey{G}$ satisfying the following properties:
\begin{itemize}
\item \textit{Weak equivalence}: Weak $G$-equivalences induce isomorphisms on $\Hhreduced_G^*$.
\item \textit{Exactness}: If $i\from A\to X$ is a $G$-cofibration with cofiber $X/A$, then for each $\alpha\in\RO(G)$
\[\HHreduced_{G}^\alpha(X/A)\to \HHreduced_{G}^\alpha(X)\to \HHreduced_{G}^\alpha(A)\]
is an exact sequence in the abelian category $\mackey{G}$.
\item \textit{Additivity}: If $X=\bigvee_i X_i$ as based $G$-spaces, then the inclusions $X_i\to X$ induce an isomorphism $\HHreduced_{G}^*(X)\to \prod_i \HHreduced_{G}^*(X_i)$.
\item \textit{Suspension}: For each $\alpha\in\RO(G)$ and real $G$-representation $V$, there is a natural isomorphism
\[\Sigma^V\from \HHreduced_{G}^\alpha(X)\to\HHreduced_{G}^{\alpha+V}(\Sigma^VX)=\HHreduced_{G}^{\alpha+V}(\S{V}\wedge X).\]
\item \textit{Dimension}: $\HHreduced_{G}^n(\S{0})=0$ for all integers $n\in\Z$, $n\neq 0$.
\item \textit{Bookkeeping}: $\Sigma^0$ is the identity natural transformation, and the suspension isomorphism 
\[\Sigma^V\from\HHreduced_{G}^\alpha(X)\to \HHreduced_{G}^{\alpha+V}(\Sigma^VX)\]
is covariantly natural in $V$ and contravariantly natural in $X$. Moreover, if $U,V,W$ are real $G$-representations with $V\cong W$, and $f\from \S{V}\to\S{W}$ is the stable homotopy class of the map associated to a particular isomorphism $V\to W$, then $\Sigma^U\Sigma^V\cong \Sigma^{U\oplus V}$, and the following diagrams commute:
\begin{center}
\begin{tikzcd}
\HHreduced_{G}^\alpha(X) \ar{r}{\Sigma^V} \ar{d}[swap]{\Sigma^{W}} & \HHreduced_{G}^{\alpha+V}(\Sigma^VX) \ar{d}{\HHreduced_{G}^{\id+f}(\id)}\\
\HHreduced_{G}^{\alpha+W}(\Sigma^WX)\ar{r}[swap]{\Sigma^f\id} & \HHreduced_{G}^{\alpha+W}(\Sigma^VX)
\end{tikzcd}
\end{center}
\end{itemize}
\end{definition}

%
%
%
%
%

One can define ordinary $\Z$-graded equivariant cohomology using a cellular chain complex, with coefficients that are a coefficient system. The following result from \cite{lewis_may_mcclure} establishes when such a cohomology theory comes from one with an $\RO(G)$-grading.

\begin{theorem}
\label{thm:extends-to-mackey}
The ordinary $\Z$-graded cohomology $\Hhreduced_G^*({-};M)$ extends to an $\RO(G)$-grading if and only if $M$ is the underlying contravariant coefficient system of a Mackey functor.
\end{theorem}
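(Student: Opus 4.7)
The plan is to prove the biconditional by constructing the ``missing'' structure in each direction: a Mackey functor in the ``only if'' direction, and a representing spectrum in the ``if'' direction.

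For the ``only if'' direction, assume an $\RO(G)$-graded extension $\HHreduced_G^*$ exists. The underlying coefficient system $M$ is already built into the $\Z$-graded data via $M(G/H) = \Hhreduced_G^0(G/H_+; M)$, together with the restrictions coming from $G$-maps of orbits. What I would supply is the transfer on each subgroup inclusion $H \subseteq K$. The key input is the equivariant Pontryagin--Thom construction: embedding $G/H$ into a real $G$-representation $V$ produces a stable $G$-map $\S{V} \to G/H_+ \wedge \S{W}$ for an appropriate complementary representation $W$. Applying $\HHreduced_G^{*}$ and then using the $\RO(G)$-graded suspension isomorphism to return to degree zero converts this stable map into the desired transfer. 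By \autoref{rem:span-decomp}, every morphism in $\burnside{G}$ is a composite of restrictions and transfers of the form just described, so producing these two families of maps and verifying the double-coset compatibility (which becomes automatic from the stable Mackey decomposition of compositions of spans) upgrades the coefficient system to a full Mackey functor.

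For the ``if'' direction, given a Mackey functor $M$, the plan is to produce a $G$-spectrum $HM$ with $\underline{\pi}_0(HM) = M$ and $\underline{\pi}_n(HM) = 0$ for $n \neq 0$, and then define the $\RO(G)$-graded theory by $\HHreduced_G^\alpha(X) = [X, \Sigma^\alpha HM]_G$. Once $HM$ is in hand, the $\RO(G)$-graded axioms (exactness, additivity, suspension, dimension, bookkeeping) are all formal consequences of the stable category structure, and restricting the grading to $\Z$ recovers the original $\Z$-graded theory $\Hhreduced_G^*(-; M)$ by construction. The spectrum $HM$ is built inductively via a Postnikov/killing-homotopy tower: one starts with a wedge of orbit suspension spectra realizing $M$ on $\pi_0$, then iteratively attaches cells $G/H_+ \wedge \S{n+1}$ to kill higher homotopy Mackey functors.

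The main obstacle is showing that the obstructions to extending this tower at each stage actually vanish, and this is precisely the point at which the \emph{Mackey functor} hypothesis (as opposed to mere coefficient-system data) is essential. The obstructions are naturally valued in Ext-groups computed inside $\mackey{G}$, and the transfers in $M$ are exactly what allows one to realize the attaching maps by stable maps between wedges of orbits. If $M$ were only a coefficient system, one would have enough structure to build a $\Z$-graded theory out of the $\orbitcategory{G}$-cellular chain complex, but not enough to produce stable $G$-maps between cells of different orbit types, which is what the $\RO(G)$-grading (via suspension by arbitrary representations $V$ of mixed isotropy) demands.
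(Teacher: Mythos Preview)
The paper does not prove this theorem: it is stated with attribution to \cite{lewis_may_mcclure} and no argument is given in the text. So there is no in-paper proof to compare against; your sketch is being measured against the standard literature argument, and on that score it is broadly correct.

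Your ``only if'' direction---manufacture transfers from the Pontryagin--Thom collapse together with the $\RO(G)$-graded suspension isomorphism, then assemble restrictions and transfers into a functor on $\burnside{G}$---is the right idea, though the double-coset relation is not quite ``automatic''; it follows from the geometry of the collapse maps but does require an argument. For the ``if'' direction, building $HM$ by starting from a wedge of orbit spectra and attaching cells is standard and works, but your framing in terms of ``obstructions that vanish'' is slightly off. In the bottom-up killing-homotopy construction there are no obstructions: once you have a $G$-spectrum with $\underline{\pi}_0 = M$, you can always attach cells $G/H_+\wedge S^{n+1}$ to kill the higher homotopy Mackey functors one at a time. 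The place the Mackey hypothesis actually enters is earlier and more structural: $\underline{\pi}_0$ of any genuine $G$-spectrum is automatically a Mackey functor, not merely a coefficient system, so if $M$ does not extend to a Mackey functor there is no spectrum to start from. Equivalently, the free objects in a presentation of $M$ must be free \emph{Mackey} functors $\underline{\pi}_0(\Sigma^\infty G/H_+)$, and a coefficient system that is not the restriction of a Mackey functor admits no such presentation.
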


\begin{remark}
Given a map of $G$-spaces $f:X\to Y$, we will abuse the notation $f^*$ and let it denote the induced map in cohomology at any level of generality, e.g.,
\begin{itemize}
\item the map of $\RO(G)$-graded $\HHreduced_{G}^*(\S{0};T)$-algebras $\HHreduced_{G}^*(Y;T)\to\HHreduced_{G}^*(X;T)$
\item the map of Mackey functors $\HHreduced_{G}^\alpha(Y;M)\to\HHreduced_{G}^\alpha(X;M)$
\item the map of abelian groups $\HHreduced_{G}^\alpha(Y;M)(\gsetstyle{b})\to\HHreduced_{G}^\alpha(X;M)(\gsetstyle{b})$
\end{itemize}
where $T$ is a Green functor, $M$ is a Mackey functor, $\alpha\in\RO(G)$, and $\gsetstyle{b}$ is a $G$-set.
\end{remark}

\section{The structure of \texorpdfstring{$\HHreduced_{G}^*(\S{0}\coeffs*{A})$}{H\_G*(S0)}}

The computation of the $\RO(G)$-graded equivariant cohomology of a point is a non-trivial undertaking. Stong made the first such computation in unpublished correspondence \cite{stong_letters}, though the results were finally published in \cite[Appendix]{lewis_complex}, with $G=C_p$ for $p$ prime and coefficients in the Burnside ring Mackey functor $A$. That was the state of this question until the groundbreaking paper \cite{hhr}, where in the course of their work, Hill, Hopkins, and Ravenel performed some calculations of the $\RO(C_{2^n})$-graded cohomology of a point, although with the coefficients being the constant Mackey functor $\underline{\Z}$. More recently, Basu and Ghosh computed the additive structure of the $\RO(C_{pq})$-graded cohomology of a point in \cite{basu_ghosh} using methods similar to Stong's, with the situation made more intricate due to the additional complexity of $\RO(C_{pq})$. 

\subsection{The case of $C_2$}

We will describe here some of the structure of the cohomology ring $\HHreduced_{C_2}^*(\S{0}\coeffs*{A})$. For a full accounting of it, see the description in \cite{megan}, or the proof in \cite{lewis_complex}.

\begin{definition}
Let $\signrep$ be a particular copy of the sign representation of $C_2$ of dimension one. Then every $\alpha\in\RO(C_2)$ has a representative of the form $m+n\signrep$ for $m,n\in\Z$. 

\end{definition}

\begin{figure}
\newcommand{\ghostminus}{_{-}}
\begin{center}
\begin{tikzpicture}[xscale=1.0,yscale=0.85, every node/.style={fill=white,inner sep=3pt,outer sep=4pt}]
\path[use as bounding box] (-6.9,-5.1) rectangle (6.9,5.9);
\draw[gray!50,<->] (-6,0) -- (6,0);
\draw[gray!50,<->] (0,-5) -- (0,5);

\node at (0,5.5) {$|\alpha|$};
\node at (6.5,0) {$|\alpha^{C_2}|$};

\node[text=gray] at (-5,4)  {$\cdot$};
\node[text=gray] at (-5,3)  {$\cdot$};
\node[text=gray] at (-5,2)  {$\cdot$};
\node[text=gray] at (-5,1)  {$\cdot$};
\node[text=gray] at (-5,-4) {$\cdot$};
\node[text=gray] at (-5,-3) {$\cdot$};
\node[text=gray] at (-5,-2) {$\cdot$};
\node[text=gray] at (-5,-1) {$\cdot$};

\node at (-4,4) {\strut$\langle \mathbb{Z}/2\rangle$};
\node at (-4,3) {\strut$\langle \mathbb{Z}/2\rangle$};
\node at (-4,2) {\strut$\langle \mathbb{Z}/2\rangle$};
\node at (-4,1) {\strut$\langle \mathbb{Z}/2\rangle$};

\node[text=gray] at (-4,-4) {$\cdot$};
\node[text=gray] at (-4,-3) {$\cdot$};
\node[text=gray] at (-4,-2) {$\cdot$};
\node[text=gray] at (-4,-1) {$\cdot$};

\node[text=gray] at (-3,4)  {$\cdot$};
\node[text=gray] at (-3,3)  {$\cdot$};
\node[text=gray] at (-3,2)  {$\cdot$};
\node[text=gray] at (-3,1)  {$\cdot$};
\node[text=gray] at (-3,-4) {$\cdot$};
\node[text=gray] at (-3,-3) {$\cdot$};
\node[text=gray] at (-3,-2) {$\cdot$};
\node[text=gray] at (-3,-1) {$\cdot$};

\node at (-2,4) {\strut$\langle \mathbb{Z}/2\rangle$};
\node at (-2,3) {\strut$\langle \mathbb{Z}/2\rangle$};
\node at (-2,2) {\strut$\langle \mathbb{Z}/2\rangle$};
\node at (-2,1) {\strut$\langle \mathbb{Z}/2\rangle$};

\node[text=gray] at (-2,-4) {$\cdot$};
\node[text=gray] at (-2,-3) {$\cdot$};
\node[text=gray] at (-2,-2) {$\cdot$};
\node[text=gray] at (-2,-1) {$\cdot$};

\node[text=gray] at (-1,4)  {$\cdot$};
\node[text=gray] at (-1,3)  {$\cdot$};
\node[text=gray] at (-1,2)  {$\cdot$};
\node[text=gray] at (-1,1)  {$\cdot$};
\node[text=gray] at (-1,-4) {$\cdot$};
\node[text=gray] at (-1,-3) {$\cdot$};
\node[text=gray] at (-1,-2) {$\cdot$};
\node[text=gray] at (-1,-1) {$\cdot$};

\node at (-5,0) {\strut$R\ghostminus$};
\node at (-4,0) {\strut$R$};
\node at (-3,0) {\strut$R\ghostminus$};
\node at (-2,0) {\strut$R$};
\node at (-1,0) {\strut$R\ghostminus$};
\node at (0,0)  {\strut$A$};
\node at (1,0)  {\strut$R\ghostminus$};
\node at (2,0)  {\strut$L$};
\node at (3,0)  {\strut$L\ghostminus$};
\node at (4,0)  {\strut$L$};
\node at (5,0)  {\strut$L\ghostminus$};

\node at (0,4) {\strut$\langle \mathbb{Z}\rangle$};
\node at (0,3) {\strut$\langle \mathbb{Z}\rangle$};
\node at (0,2) {\strut$\langle \mathbb{Z}\rangle$};
\node at (0,1) {\strut$\langle \mathbb{Z}\rangle$};
\node at (0,-4) {\strut$\langle \mathbb{Z}\rangle$};
\node at (0,-3) {\strut$\langle \mathbb{Z}\rangle$};
\node at (0,-2) {\strut$\langle \mathbb{Z}\rangle$};
\node at (0,-1) {\strut$\langle \mathbb{Z}\rangle$};

\node[text=gray] at (1,4)  {$\cdot$};
\node[text=gray] at (1,3)  {$\cdot$};
\node[text=gray] at (1,2)  {$\cdot$};
\node[text=gray] at (1,1)  {$\cdot$};
\node[text=gray] at (1,-4) {$\cdot$};
\node[text=gray] at (1,-3) {$\cdot$};
\node[text=gray] at (1,-2) {$\cdot$};
\node[text=gray] at (1,-1) {$\cdot$};

\node[text=gray] at (2,4)  {$\cdot$};
\node[text=gray] at (2,3)  {$\cdot$};
\node[text=gray] at (2,2)  {$\cdot$};
\node[text=gray] at (2,1)  {$\cdot$};
\node[text=gray] at (2,-4) {$\cdot$};
\node[text=gray] at (2,-3) {$\cdot$};
\node[text=gray] at (2,-2) {$\cdot$};
\node[text=gray] at (2,-1) {$\cdot$};

\node[text=gray] at (3,4)  {$\cdot$};
\node[text=gray] at (3,3)  {$\cdot$};
\node[text=gray] at (3,2)  {$\cdot$};
\node[text=gray] at (3,1)  {$\cdot$};
\node at (3,-4) {\strut$\langle \mathbb{Z}/2\rangle$};
\node at (3,-3) {\strut$\langle \mathbb{Z}/2\rangle$};
\node at (3,-2) {\strut$\langle \mathbb{Z}/2\rangle$};
\node at (3,-1) {\strut$\langle \mathbb{Z}/2\rangle$};

\node[text=gray] at (4,4)  {$\cdot$};
\node[text=gray] at (4,3)  {$\cdot$};
\node[text=gray] at (4,2)  {$\cdot$};
\node[text=gray] at (4,1)  {$\cdot$};
\node[text=gray] at (4,-4) {$\cdot$};
\node[text=gray] at (4,-3) {$\cdot$};
\node[text=gray] at (4,-2) {$\cdot$};
\node[text=gray] at (4,-1) {$\cdot$};

\node[text=gray] at (5,4)  {$\cdot$};
\node[text=gray] at (5,3)  {$\cdot$};
\node[text=gray] at (5,2)  {$\cdot$};
\node[text=gray] at (5,1)  {$\cdot$};

\node at (5,-4) {\strut$\langle \mathbb{Z}/2\rangle$};
\node at (5,-3) {\strut$\langle \mathbb{Z}/2\rangle$};
\node at (5,-2) {\strut$\langle \mathbb{Z}/2\rangle$};
\node at (5,-1) {\strut$\langle \mathbb{Z}/2\rangle$};

\end{tikzpicture}
\end{center}
\caption{The cohomology $\HHreduced_{C_2}^\alpha(\S{0}\coeffs*{A})$}
\label{figure-HS0-2}
\end{figure}

\begin{figure}
\begin{center}
\begin{tikzpicture}[xscale=1.0,yscale=0.85, every node/.style={fill=white,inner sep=3pt,outer sep=4pt}]
\path[use as bounding box] (-3.9,-3.1) rectangle (3.9,3.1);
\draw[gray!50,<->] (-3,0) -- (3,0);
\draw[gray!50,<->] (0,-3) -- (0,3);

\node at (0,3.5) {$|\alpha|$};
\node at (3.6,0) {$|\alpha^{C_2}|$};

\node[text=gray] at (-2,2) {$\cdot$};
\node[text=gray] at (-2,1) {$\cdot$};

\node[text=gray] at (-2,-2) {$\cdot$};
\node[text=gray] at (-2,-1) {$\cdot$};

\node[text=gray] at (-1,2)  {$\cdot$};
\node[text=gray] at (-1,1)  {$\cdot$};
\node[text=gray] at (-1,-2) {$\cdot$};
\node[text=gray] at (-1,-1) {$\cdot$};

\node at (-2,0) {\strut$\xi$};
\node at (0,0)  {\strut$1$};

\node at (0,1) {\strut$\epsilon$};

\node[text=gray] at (1,2)  {$\cdot$};
\node[text=gray] at (1,1)  {$\cdot$};
\node[text=gray] at (1,-2) {$\cdot$};
\node[text=gray] at (1,-1) {$\cdot$};

\node[text=gray] at (2,2)  {$\cdot$};
\node[text=gray] at (2,1)  {$\cdot$};
\node[text=gray] at (2,-2) {$\cdot$};
\node[text=gray] at (2,-1) {$\cdot$};

\node at (0,-3) {$\HHreduced_{C_2}^\alpha(\S{0})(\GG)$};
\end{tikzpicture}
\begin{tikzpicture}[xscale=1.0,yscale=0.85, every node/.style={fill=white,inner sep=3pt,outer sep=4pt}]
\path[use as bounding box] (-3.9,-3.1) rectangle (3.9,3.1);
\draw[gray!50,<->] (-3,0) -- (3,0);
\draw[gray!50,<->] (0,-3) -- (0,3);

\node at (0,3.5) {$|\alpha|$};
\node at (3.6,0) {$|\alpha^{C_2}|$};

\node[text=gray] at (-2,2) {$\cdot$};
\node[text=gray] at (-2,1) {$\cdot$};

\node[text=gray] at (-2,-2) {$\cdot$};
\node[text=gray] at (-2,-1) {$\cdot$};

\node[text=gray] at (-1,2)  {$\cdot$};
\node[text=gray] at (-1,1)  {$\cdot$};
\node[text=gray] at (-1,-2) {$\cdot$};
\node[text=gray] at (-1,-1) {$\cdot$};

\node at (-1,0) {\strut$\iota^{-1}$};
\node at (0,0)  {\strut$\restriction{\rho}(1)$};
\node at (1,0)  {\strut$\iota$};


\node[text=gray] at (1,2)  {$\cdot$};
\node[text=gray] at (1,1)  {$\cdot$};
\node[text=gray] at (1,-2) {$\cdot$};
\node[text=gray] at (1,-1) {$\cdot$};

\node[text=gray] at (2,2)  {$\cdot$};
\node[text=gray] at (2,1)  {$\cdot$};
\node[text=gray] at (2,-2) {$\cdot$};
\node[text=gray] at (2,-1) {$\cdot$};

\node at (0,-3) {$\HHreduced_{C_2}^\alpha(\S{0})(\Ge)$};
\end{tikzpicture}
\end{center}
\caption{The location of some important elements of $\HHreduced_{C_2}^\alpha(\S{0}\coeffs*{A})$}
\label{figure-HS0-2-important-elements}
\end{figure}

\begin{definition}
The element $1\in \HHreduced_{C_2}^0(\S{0}\coeffs*{A})(\GG)\cong A(\GG)$ is the image of the identity element of $A(\GG)$ under the unit map $A\to \HHreduced_{C_2}^*(\S{0}\coeffs*{A})$ of the graded Green functor $\HHreduced_{C_2}^*(\S{0}\coeffs*{A})$.
\end{definition}

\begin{definition}
\label{def:epsilon-for-2}
The element $\epsilon\in \HHreduced_{C_2}^{\signrep}(\S{0}\coeffs*{A})(\GG)\cong\langle \Z\rangle(\GG)$ is the image of $1\in\HHreduced_{C_2}^0(\S{0}\coeffs*{A})$ under the map
\[A\cong\HHreduced_{C_2}^0(\S{0})\cong\HHreduced_{C_2}^{\signrep}(\S{\signrep})\to\HHreduced_{C_2}^{\signrep}(\S{0})\cong\langle\Z\rangle\] 
induced by the inclusion $\S{0}\hookrightarrow\S{\signrep}$. Observe that the inclusion $\S{0}\hookrightarrow\S{n\signrep}$ therefore represents the element $\epsilon^n\in\HHreduced_{C_2}^{n\signrep}(\S{0}\coeffs*{A})(\GG)$, and more generally $\S{a+b\signrep}\hookrightarrow\S{a+c\signrep}$ represents $\epsilon^{c-b}$.
\end{definition}

\begin{definition}
\label{def:iota-for-2}
Fixing a non-equivariant identification of $\S{\signrep}$ with $\S{1}$ induces isomorphisms
\[\HHreduced_{C_2}^0(\S{0})(\Ge)\cong\HHreduced_{C_2}^1(\S{1})(\Ge)\to\HHreduced_{C_2}^1(\S{\signrep})\cong\HHreduced_{C_2}^{1-\signrep}(\S{0})(\Ge)\]
and
\[\HHreduced_{C_2}^0(\S{0})(\Ge)\cong\HHreduced_{C_2}^{\signrep}(\S{\signrep})(\Ge)\to\HHreduced_{C_2}^\signrep(\S{1})\cong\HHreduced_{C_2}^{\signrep-1}(\S{0})(\Ge)\]
Define $\iota$ and $\iota^{-1}$ to be the images of $\restriction{\rho}(1)\in\HHreduced_{C_2}^0(\S{0})(\Ge)$ under these maps, where $\rho\from \Ge\to\GG$ is the projection map.
\end{definition}

\begin{definition}
\label{def:xi-for-2}
Define $\xi\in\HHreduced_{C_2}^{2\signrep-2}(\S{0})(\GG)$ to be the unique element whose image under $\restriction{\rho}$ is $(\iota^{-1})^2$.
\end{definition}


\subsection{The case of $C_p$ and $C_{pq}$ for odd primes $p,q$}

For the full details of the cohomology $\HHreduced_{C_p}^*(\S{0}\coeffs*{A})$ when $p$ is odd, again see the description in \cite{megan}, or the proof in \cite{lewis_complex}. However, one key observation to make about it, as can be seen in \autoref{figure-HS0-p}, is that the isomorphism class of the Mackey functor $\HHreduced_{C_p}^{\alpha}(\S{0}\coeffs*{A})$ no longer depends completely on the values $(|\alpha^{C_p}|,|\alpha|)$; when $|\alpha|=|\alpha^{C_p}|=0$, the Mackey functor $\HHreduced_{C_p}^{\alpha}(\S{0}\coeffs*{A})$ depends on an integer $d_\alpha$ which is determined in a convoluted way described in \cite[p.65]{lewis_complex}. As Lewis comments there,
\begin{quote}
The major source of unpleasantness in the description of the multiplicative structure of the equivariant cohomology of a point and of complex projective spaces is this lack of a canonical choice for $d$.
\end{quote}
This is the case for quaternionic projective spaces as well.

Basu and Ghosh compute the additive structure of the cohomology $\HHreduced_{C_{pq}}^*(\S{0}\coeffs*{A})$ for distinct odd primes $p$ and $q$ in \cite{basu_ghosh}. They find that a similar problem arises regarding dependence on the specific $\alpha\in\RO(C_{pq})$, although it is also limited in scope to particular $\alpha$ which are analogous the case of $C_p$: in \cite[Theorem 6.5]{basu_ghosh}, we see that

\begin{theorem}
Suppose $\alpha\in\RO(C_{pq})$ is such that at least one of $|\alpha^H|$ or $|\alpha^K|$ is non-zero whenever $(K,H)\in\{(C_p,e),(C_q,e),(C_{pq},C_p),(C_{pq},C_q)\}$. Then, up to isomorphism, the Mackey functor $\HHreduced_{C_{pq}}^\alpha(\S{0})$ depends only on the fixed points $|\alpha|,|\alpha^{C_p}|,|\alpha^{C_q}|,|\alpha^{C_{pq}}|$.
\end{theorem}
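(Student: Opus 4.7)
The plan is to follow the Stong--Lewis inductive method as adapted by Basu and Ghosh in their earlier sections: represent $\alpha\in\RO(C_{pq})$ as a virtual difference $V-W$ of actual orthogonal $C_{pq}$-representations, then use the suspension axiom to identify $\HHreduced_{C_{pq}}^\alpha(\S{0}\coeffs*{A})\cong\HHreduced_{C_{pq}}^V(\S{W}\coeffs*{A})$ and reduce to computing cohomology of representation spheres. The key input is the cofiber sequence $\unitsphere{U}_+\to\S{0}\to\S{U}$ smashed with the sphere of any sub-representation, giving long exact sequences of Mackey functors that build $\HHreduced_{C_{pq}}^V(\S{W}\coeffs*{A})$ out of cohomology of free $(C_{pq}/\ker(U))$-spaces together with cohomology of spheres of smaller representations.

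First I would catalogue the real irreducibles of $C_{pq}$: the trivial representation, a family inflated from non-trivial $C_p$-irreducibles, a family inflated from non-trivial $C_q$-irreducibles, and the faithful $2$-dimensional characters. The kernels are respectively $C_{pq}$, $C_p$, $C_q$, $\{e\}$, so the fixed-point tuple $(|\alpha|,|\alpha^{C_p}|,|\alpha^{C_q}|,|\alpha^{C_{pq}}|)$ determines $V$ and $W$ up to the choice of which non-trivial character of each kernel type to use, and up to reshuffling trivials between $V$ and $W$. The content of the theorem is that in the hypothesized region none of these choices affect the isomorphism class of the resulting Mackey functor.

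Next, I would induct on $\dim(W)$, at each step adjoining an irreducible summand $U$ and invoking the long exact sequence induced by $\unitsphere{U}_+\to\S{0}\to\S{U}$. The connecting map is multiplication by an Euler class $e_U\in\HHreduced_{C_{pq}}^U(\S{0}\coeffs*{A})$, and the hypothesis that for each pair $(K,H)\in\{(C_p,e),(C_q,e),(C_{pq},C_p),(C_{pq},C_q)\}$ at least one of $|\alpha^H|,|\alpha^K|$ is non-zero is precisely the condition under which each relevant Euler multiplication is either injective or surjective at the level $H$ and $K$ of the underlying coefficient systems. Hence the long exact sequence splits into short exact sequences whose extension problem is forced by the Mackey-functor (restriction/transfer) structure read off from the Burnside-category description in the excerpt. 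Matching at each inductive step against the finite catalogue of possible $\HHreduced_{C_{pq}}^\bullet(\S{0}\coeffs*{A})$ shows that only the four fixed-point integers enter the recursion.

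The hard part will be showing independence of the decomposition $\alpha=V-W$: two such decompositions must yield isomorphic Mackey functors, not merely isomorphic abelian groups at the orbit level. The suspension axiom supplies the abelian-group identification $\HHreduced_{C_{pq}}^V(\S{W}\coeffs*{A})\cong\HHreduced_{C_{pq}}^{V+U}(\S{W+U}\coeffs*{A})$ for any irreducible $U$, but compatibility with $\restriction{\rho}$ and $\transfer{\rho}$ at each subgroup inclusion requires tracing naturality of the cofiber long exact sequence and of Euler-class multiplication through the Burnside category. This is exactly where the hypothesis becomes essential: in the excluded region (both $|\alpha^H|$ and $|\alpha^K|$ zero for some pair) the splitting degenerates and a choice-dependent Mackey functor of the form $A[d]$ can appear, as already occurs for $C_p$; ruling this out in the allowed region, and hence pinning the isomorphism class to the fixed-point tuple, is the main obstacle and the heart of the argument.
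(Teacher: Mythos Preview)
The paper does not prove this theorem. It is stated without proof, explicitly as a citation of \cite[Theorem 6.5]{basu_ghosh}; the surrounding paragraph makes clear that the author is quoting a result of Basu and Ghosh rather than establishing it independently. There is therefore no ``paper's own proof'' to compare your proposal against.

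As for your sketch on its own merits: the overall architecture (write $\alpha=V-W$, induct on $W$ via cofiber sequences $\unitsphere{U}_+\to\S{0}\to\S{U}$, and track how the answer depends only on fixed-point dimensions) is indeed the Stong--Lewis method that Basu and Ghosh adapt, so you are pointed in the right direction. However, two of your claims are doing more work than your justification supports. First, the assertion that the hypothesis on the pairs $(K,H)$ is ``precisely the condition under which each relevant Euler multiplication is either injective or surjective'' is not obvious and is not something you have argued; in the actual computation this comes out of a case-by-case analysis of the Mackey functors that can occur, not from a clean Euler-class criterion. Second, saying the extension problems are ``forced by the Mackey-functor structure'' is where the real content lies, and you have not indicated how you would carry this out beyond matching against a catalogue you have not produced. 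A genuine proof requires first computing enough of $\HHreduced_{C_{pq}}^*(\S{0})$ to have that catalogue in hand, and your proposal treats this as already done. In short: the strategy is correct in outline, but the proposal is closer to a description of what a proof would look like than to a proof.
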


\begin{figure}[ht]
\begin{center}
\begin{tikzpicture}[xscale=1.0, yscale=0.85,
every node/.style={fill=white,inner sep=2pt,outer sep=2pt}]
\path[use as bounding box] (-6.9,-5.1) rectangle (6.9,5.9);
\draw[gray!50,<->] (-6,0) -- (6,0);
\draw[gray!50,<->] (0,-5) -- (0,5);

\node at (0,5.5) {$|\alpha|$};
\node at (6.5,0) {$|\alpha^{C_p}|$};

\node[text=gray] at (1,1)  {$\cdot$};
\node[text=gray] at (1,3)  {$\cdot$};
\node[text=gray] at (2,2)  {$\cdot$};
\node[text=gray] at (2,4)  {$\cdot$};
\node[text=gray] at (3,1)  {$\cdot$};
\node[text=gray] at (3,3)  {$\cdot$};
\node[text=gray] at (4,2)  {$\cdot$};
\node[text=gray] at (4,4)  {$\cdot$};
\node[text=gray] at (5,1)  {$\cdot$};
\node[text=gray] at (5,3)  {$\cdot$};

\node[text=gray] at (-1,-1) {$\cdot$};
\node[text=gray] at (-1,-3) {$\cdot$};
\node[text=gray] at (-2,-2) {$\cdot$};
\node[text=gray] at (-2,-4) {$\cdot$};
\node[text=gray] at (-3,-1) {$\cdot$};
\node[text=gray] at (-3,-3) {$\cdot$};
\node[text=gray] at (-4,-2) {$\cdot$};
\node[text=gray] at (-4,-4) {$\cdot$};
\node[text=gray] at (-5,-1) {$\cdot$};
\node[text=gray] at (-5,-3) {$\cdot$};

\node[text=gray] at (-1,1) {$\cdot$};
\node[text=gray] at (-1,3) {$\cdot$};
\node[text=gray] at (-3,1) {$\cdot$};
\node[text=gray] at (-3,3) {$\cdot$};
\node[text=gray] at (-5,1) {$\cdot$};
\node[text=gray] at (-5,3) {$\cdot$};

\node[text=gray] at (1,-1) {$\cdot$};
\node[text=gray] at (1,-3) {$\cdot$};
\node[text=gray] at (2,-2) {$\cdot$};
\node[text=gray] at (2,-4) {$\cdot$};
\node[text=gray] at (4,-2) {$\cdot$};
\node[text=gray] at (4,-4) {$\cdot$};

\node at (-4,4) {\strut$\langle \mathbb{Z}/p\rangle$};
\node at (-4,2) {\strut$\langle \mathbb{Z}/p\rangle$};

\node at (-2,4) {\strut$\langle \mathbb{Z}/p\rangle$};
\node at (-2,2) {\strut$\langle \mathbb{Z}/p\rangle$};

\node at (5,-3) {\strut$\langle \mathbb{Z}/p\rangle$};
\node at (5,-1) {\strut$\langle \mathbb{Z}/p\rangle$};

\node at (3,-3) {\strut$\langle \mathbb{Z}/p\rangle$};
\node at (3,-1) {\strut$\langle \mathbb{Z}/p\rangle$};

\node at (-4,0) {\strut$R$};
\node at (-2,0) {\strut$R$};
\node at (0,0)  {\strut$A[d_\alpha]$};
\node at (2,0)  {\strut$L$};
\node at (4,0)  {\strut$L$};

\node at (0,4) {\strut$\langle \mathbb{Z}\rangle$};
\node at (0,2) {\strut$\langle \mathbb{Z}\rangle$};
\node at (0,-4) {\strut$\langle \mathbb{Z}\rangle$};
\node at (0,-2) {\strut$\langle \mathbb{Z}\rangle$};

\end{tikzpicture}
\end{center}
\caption{The cohomology $\HHreduced_{C_p}^\alpha(\S{0}\coeffs*{A})$ for $p$ odd}
\label{figure-HS0-p}
\end{figure}


\section{Even-dimensional freeness}


In this section we will look at a result which generalizes the standard result in the non-equivariant setting that the cohomology of a cell complex built only out of even-dimensional cells must be free.

\begin{definition}
\label{def:even-rep}
An element $\alpha\in\RO(G)$ is said to be even if $|\alpha^H|$ is even for all subgroups $H\leq G$.
\end{definition}

\begin{definition}
\label{def:even-cell}
Let $V$ be a representation of $H\leq G$. A cell $G\times_{H} \unitdisk{V}$ of a $G$-cell complex is called even if $V$ is even in $\RO(H)$.
\end{definition}

\begin{definition}
Given representations $W$ and $V$ of $G$, we say that $W\ll V$ if, whenever $|W^S|<|V^S|$ for a subgroup $S\leq G$, we also have $|W^T|\leq|V^T|$ for all subgroups $S\leq T\leq G$.
\end{definition}

\begin{definition}
\label{def:even-monotone}
A $G$-cell complex structure on a space $X$ is said to be \defstyle{even}, or of \defstyle{even type}, when
\begin{itemize}
\item[(a)] every cell is even (as in \autoref{def:even-cell}).
\end{itemize}
We will further say it is \defstyle{properly even} when it also satisfies
\begin{itemize}
\item[(b)] if the cell $G\times_H \unitdisk{W}$ is attached before the cell $G\times_H \unitdisk{V}$, then $W\ll V$, and
\item[(c)] for any positive integer $N$, there are only finitely many cells $G\times_{H} \unitdisk{V}$ where $|V^K|\leq N$ for all subgroups $K\leq G$, and
\item[(d)] each piece $X_n$ of the filtration is a finite $G$-cell complex.
\end{itemize}
\end{definition}

\begin{theorem}
\label{thm:even-dim-free}
Let $G=C_p$ for any prime $p$, or $C_{pq}$ for any odd primes $p$ and $q$.
If $X$ is a properly even $G$-cell complex, then the cohomology $\HHreduced_{G}(X\coeffs*{A})$ is free as a module over the Green functor $\HHreduced_{G}(\S{0}\coeffs*{A})$. Furthermore, $\HHreduced_{G}(X\coeffs*{A})$ decomposes as a direct sum, with summands
\begin{itemize}
\item one copy of $\HHreduced_{G}^{*}({X_0}_{+}\coeffs*{A})$, and 
\item one copy of $\Sigma^V\HHreduced_{G}^{*}(G/H_{+}\coeffs*{A})$ for each cell of the form $G\times_{H}\unitdisk{V}$.
\end{itemize}
\end{theorem}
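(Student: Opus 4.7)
The plan is to induct on the filtration $X_0\subseteq X_1\subseteq\cdots$ of $X$, establishing for each finite stage $X_n$ the claimed free decomposition of $\HHreduced_{G}^{*}(X_{n+}\coeffs*{A})$ together with the further compatibility that the inclusion $X_{n-1}\hookrightarrow X_n$ induces the projection onto the summands indexed by the cells of $X_{n-1}$. The base case is straightforward: $X_0$ is a discrete $G$-set, so ${X_0}_{+}$ is a wedge of based orbits $G/H_{i+}$, and the additivity axiom gives $\HHreduced_{G}^{*}({X_0}_{+}\coeffs*{A})\cong\bigoplus_i \HHreduced_{G}^{*}(G/H_{i+}\coeffs*{A})$ directly. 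For the inductive step, write $X_{n+1}$ as $X_n$ with cells $G\times_{H_j}\unitdisk{V_j}$ attached; the cofiber sequence
\[
X_{n+}\longrightarrow X_{n+1+}\longrightarrow X_{n+1}/X_n\simeq\bigvee_j G_{+}\wedge_{H_j}\S{V_j}
\]
produces a long exact sequence in $\HHreduced_{G}^{*}$, and a Wirthm\"uller-type identification gives $\HHreduced_{G}^{\alpha}(X_{n+1}/X_n\coeffs*{A})\cong\bigoplus_j \Sigma^{V_j}\HHreduced_{G}^{*}(G/H_{j+}\coeffs*{A})$.

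The goal then becomes to show that the two connecting maps $\partial$ flanking $\HHreduced_{G}^{\alpha}(X_{n+1}/X_n\coeffs*{A})$ vanish, so that the long exact sequence breaks into a short exact sequence which splits compatibly with the induction hypothesis. By the induction hypothesis, $\HHreduced_{G}^{*}(X_{n+}\coeffs*{A})$ is a direct sum of summands $\Sigma^{W}\HHreduced_{G}^{*}(G/K_{+}\coeffs*{A})$, one for each cell $G\times_K\unitdisk{W}$ of $X_n$ (plus one copy of $\HHreduced_{G}^{*}({X_0}_{+}\coeffs*{A})$); each orbit summand is concentrated in a specific subset of $\RO(G)$-degrees that can be read off from the tables of $\HHreduced_{G}^{*}(\S{0}\coeffs*{A})$ from \cite{stong_letters,lewis_complex,basu_ghosh} recalled earlier. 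The hypothesis that each $V_j$ is even places the target summand $\Sigma^{V_j}\HHreduced_{G}^{*}(G/H_{j+}\coeffs*{A})$ only in $\RO(G)$-degrees whose fixed-point parities match those of $V_j$; the hypothesis $W\ll V_j$, which by assumption holds for every cell attached before the $V_j$-cell, then forces a dimensional gap between the potentially nonzero piece of $\Sigma^{W}\HHreduced_{G}^{*}(G/K_{+}\coeffs*{A})$ in degree $\alpha-1$ and the target piece of $\Sigma^{V_j}\HHreduced_{G}^{*}(G/H_{j+}\coeffs*{A})$ in degree $\alpha$. A summand-by-summand comparison of supports then forces $\partial=0$, and the resulting short exact sequence splits because its kernel is free over the Green functor $\HHreduced_{G}^{*}(\S{0}\coeffs*{A})$.

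The main obstacle is precisely this support comparison, which explains the restriction on $G$: for $G=C_p$ the support patterns of $\HHreduced_{C_p}^{*}(\S{0}\coeffs*{A})$ are combinatorially manageable and the gap argument was carried out by Lewis in \cite{lewis_complex} and corrected/refined by Shulman in \cite{megan}; for $G=C_{pq}$ with distinct odd primes, Basu and Ghosh in \cite{basu_ghosh} extend the analysis using their description of $\HHreduced_{C_{pq}}^{*}(\S{0}\coeffs*{A})$, whose more intricate dependence on fixed-point dimensions makes verifying the required gap substantially harder and in practice limits the currently known generalizations. Finally, once the result holds for every finite $X_n$, one passes to $X=\operatornamewithlimits{colim}_n X_n$ using conditions (c) and (d) of \autoref{def:even-monotone}: together they force the tower $\{\HHreduced_{G}^{\alpha}(X_{n+}\coeffs*{A})\}_n$ to be eventually constant in each $\RO(G)$-degree, so $\HHreduced_{G}^{*}(X_{+}\coeffs*{A})$ inherits the decomposition termwise without $\lim^{1}$ obstructions.
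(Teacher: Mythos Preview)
Your outline matches the paper's treatment: the paper does not prove this theorem itself but cites Lewis \cite{lewis_complex} (corrected by Shulman \cite{megan}) for $G=C_p$ and Basu--Ghosh \cite{basu_ghosh} for $G=C_{pq}$, and sketches exactly the strategy you describe---induct up the filtration, use the cofiber sequences $X_n\to X_{n+1}\to X_{n+1}/X_n$, and show the boundary maps vanish by a support comparison against the known structure of $\HHreduced_G^*(\S{0};A)$, with condition~(c) controlling the passage to the colimit.

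Two small corrections. First, your reason for the splitting is backwards: in the short exact sequence
\[
0\longrightarrow \HHreduced_G^*(X_{n+1}/X_n)\longrightarrow \HHreduced_G^*(X_{n+1+})\longrightarrow \HHreduced_G^*(X_{n+})\longrightarrow 0,
\]
freeness of the \emph{kernel} does not by itself force a splitting; what you want is that the \emph{quotient} $\HHreduced_G^*(X_{n+})$ is free (by induction), hence projective, so the sequence splits. Second, you state that $W\ll V_j$ holds for \emph{every} earlier cell, but condition~(b) of \autoref{def:even-monotone} only asserts this for cells $G\times_H\unitdisk{W}$ and $G\times_H\unitdisk{V}$ sharing the same isotropy $H$; the cross-orbit-type boundary maps require a separate (easier) argument, handled in the cited references. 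Neither issue affects the overall correctness of your plan.
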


This theorem was proven for $G=C_p$ by Lewis in \cite{lewis_complex}, with his proof corrected by Shulman in \cite{megan}, and this theorem was proven for $G=C_{pq}$ by Basu and Ghosh in \cite{basu_ghosh}.

The basic idea in each proof is the same as in the non-equivariant case: to show that the boundary maps are all zero in the long exact sequence in cohomology arising from the cofiber sequences
\[X_{n}\to X_{n+1}\to X_{n+1}/X_n.\]
One of the corrections that Shulman made to Lewis' proof was to introduce condition (c) from the definition of properly even, which is necessary to ensure that $\HHreduced_{G}^{*}(X_{+}\coeffs*{A})$, which  \textit{a priori} would be an infinite product, does in fact agree with the corresponding coproduct and therefore decomposes as a direct sum.

\begin{remark}
Ferland, in his thesis \cite{ferland}, established a freeness result for the cohomology of $G$-cell complexes $X$ which are even, but not necessarily properly even. In this situation, we have to consider non-trivial boundary maps in the long exact sequence in cohomology, which curiously do not change the fact that the cohomology is free, but which do change the dimensions in which generators occur, so that the they no longer necessarily live in the dimensions corresponding to the representation cells that comprise $X$. 
\end{remark}

\section{\texorpdfstring{Multiplicative comparison theorem for $C_p$}{Multiplicative comparison theorem for C\_p}}

\subsection{Statement of the theorem}

The following surprising result first appeared in \cite{lewis_complex} but with a flawed proof; it was subsequently corrected by Shulman in \cite[Theorem 3.10, p.55]{megan}. We have termed it a ``multiplicative comparison'' theorem because it allows us to compare the multiplicative structure of $\HHreduced_{C_p}^*(X\coeffs*{A})$ with that of other objects which are more easily understood.

\begin{theorem}
\label{thm:mult-comparison}
Let $X$ be a properly even $C_p$-cell complex. Let $i\from X^{C_p}\to X$ be the inclusion of its fixed points. Let {\upshape$\rho\from\Ge\to\GG$} be the projection, which induces a map {\upshape$\HHreduced_{C_p}^*(X_{+})\to\HHreduced_{C_p}^*(X_{+}\wedge {C_p}_{+})\cong \HHreduced_{C_p}^*(X_{+})_{\Ge}$}. If $\alpha\in \RO(C_p)$ is even, the map {\upshape
\[\rho\oplus i^*:\HH_{C_p}^\alpha(X\coeffs*{A})\to\HH_{C_p}^\alpha(X\coeffs*{A})_{\Ge}\oplus \HH_{C_p}^\alpha(X^{C_p}\coeffs*{A})\]}
is injective.
\end{theorem}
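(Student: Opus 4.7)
The plan is to leverage the even-dimensional freeness theorem to decompose both sides of the map and verify injectivity summand-by-summand. Since $X$ is properly even, that theorem supplies a splitting
\[
  \HHreduced^{*}_{C_p}(X_+\coeffs*{A}) \cong \HHreduced^{*}_{C_p}({X_0}_+\coeffs*{A}) \oplus \bigoplus_c \Sigma^{V_c} \HHreduced^{*}_{C_p}({C_p/H_c}_+\coeffs*{A}),
\]
where $c$ ranges over the cells $C_p \times_{H_c} \unitdisk{V_c}$ of $X$. These cells come in two flavors: \emph{free} cells (with $H_c=e$), whose summands are $\Sigma^{V_c} A_{\Ge}$, and \emph{trivial} cells (with $H_c=C_p$), whose summands are $\Sigma^{V_c} \underline{A}$ with $\underline{A}=\HHreduced^{*}_{C_p}(\S{0}\coeffs*{A})$. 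A first step is to observe that $X^{C_p}$ inherits the structure of a properly even $C_p$-cell complex with cells $\unitdisk{V_c^{C_p}}$ coming from the trivial cells of $X$: evenness is immediate because $V_c$ even implies $V_c^{C_p}$ is even-dimensional, and the $\ll$-ordering condition is preserved under passage to fixed points. This yields a matching freeness decomposition of $\HHreduced^{*}_{C_p}(X^{C_p}_+\coeffs*{A})$ and identifies $i^*$ as the map induced cell-by-cell by the inclusions $\S{V_c^{C_p}} \hookrightarrow \S{V_c}$.

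Second, I would analyze $\rho$ and $i^*$ on each summand. For a free summand $\Sigma^{V_c} A_{\Ge}$, the map $i^*$ vanishes since the cell has empty fixed points, while $\rho$ is injective because the free Mackey functor $A_{\Ge}$ is faithfully detected after shifting by $\Ge$ -- this is visible on both orbits, since the induced maps $A_{\Ge}(\GG)\to A_{\Ge}(\Ge)$ and $A_{\Ge}(\Ge)\to A_{\Ge}(\Ge\times\Ge)$ are injections of free abelian groups. For a trivial summand $\Sigma^{V_c} \underline{A}$, the $i^*$-component is induced by $\S{V_c^{C_p}} \hookrightarrow \S{V_c}$ and is essentially multiplication by an equivariant Euler class $e(V_c-V_c^{C_p})\in\underline{A}$ (a power of $\epsilon$ when $p=2$), while the $\rho$-component, evaluated on the orbit $\Ge$, corresponds to the underlying non-equivariant cohomology of the summand.

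Finally, suppose $x \in \HH^{\alpha}_{C_p}(X\coeffs*{A})$ lies in the kernel of $\rho \oplus i^*$. Decompose $x = x_{\text{free}} + x_{\text{triv}}$ according to the splitting above; the free part vanishes by the injectivity already established, so it remains to show that each trivial-cell component $x_c \in \HHreduced^{\alpha - V_c}_{C_p}(\S{0}\coeffs*{A})$ is zero given that it is simultaneously annihilated by the Euler class $e(V_c-V_c^{C_p})$ and restricts trivially to the orbit $\Ge$. This is the \emph{main obstacle}, and reduces to a case analysis of the Mackey functors populating the even-graded part of $\HHreduced^{*}_{C_p}(\S{0}\coeffs*{A})$. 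Using the tabulated structure recorded above, one verifies that in each even-dimensional Mackey functor that can appear -- essentially $R$, $L$, $\langle \Z \rangle$, and the constant functor on $\Z$ -- at least one of the restriction $\rho$ or the Euler-class action is injective, so any class in the joint kernel must vanish. The evenness hypothesis on $\alpha$ is crucial here: for odd $p$ it keeps $\alpha-V_c$ away from the irregular $A[d_\alpha]$-diagonal in $\RO(C_p)$ on which the Mackey functor structure depends on more than just the dimensions of fixed points, and for $p=2$ it controls the interplay among $\epsilon$, $\iota$, and $\xi$. A small additional point to check is that the splittings on $X$ and on $X^{C_p}$ can be chosen compatibly with $i^*$, which one arranges by first fixing a splitting on $X^{C_p}$ and extending it across the free cells of $X$.
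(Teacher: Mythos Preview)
First, note that the paper does not itself prove this theorem; it quotes the result and defers to Shulman's corrected proof \cite[Theorem 3.10]{megan} of Lewis's original argument. Your overall strategy---reduce to one cell at a time and then run a case check on the Mackey functors appearing in $\HHreduced_{C_p}^*(\S{0})$---is the same one those references use. The execution, however, has two gaps.

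The more serious one is the ``small additional point'' about compatible splittings. You want $i^*$ to be block-diagonal with respect to the freeness decompositions so that you can argue component by component, and you propose arranging this by choosing the splittings carefully. In general you cannot: when a trivial cell is attached, the obstruction to matching a splitting of $\HHreduced_{C_p}^*((X_{n+1})_+)\to\HHreduced_{C_p}^*((X_n)_+)$ with one of $\HHreduced_{C_p}^*((X_{n+1}^{C_p})_+)\to\HHreduced_{C_p}^*((X_n^{C_p})_+)$ is a map $\HHreduced_{C_p}^*((X_n)_+)\to\HHreduced_{C_p}^*(\S{V^{C_p}})$ that need not factor through either $i_n^*$ or the Euler-class map, so neither splitting can absorb it. What actually saves the argument is that diagonality is unnecessary: because $i^*$ and $\rho$ respect the cellular filtration, they are automatically \emph{triangular} with respect to any choice of splittings, and a triangular map with injective diagonal blocks is injective. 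Equivalently---and this is how the cited proofs are organized---one inducts on the filtration and applies the four-lemma to the map of short exact sequences coming from each cofiber sequence, never choosing compatible splittings at all. Your sentence ``each trivial-cell component $x_c$ is simultaneously annihilated by the Euler class'' is therefore only literally true of the \emph{first} nonvanishing component in the filtration order, and the argument must proceed by induction on that index.

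The case analysis also needs tightening. You list $R$, $L$, $\langle\Z\rangle$ and ``the constant functor on $\Z$'', but both $A$ (or $A[d]$ for odd $p$) and $\langle\Z/p\rangle$ occur at even gradings and must be handled. In particular your claim that evenness of $\alpha$ keeps $\alpha-V_c$ off the $A[d]$-diagonal is false: nothing prevents $|\alpha-V_c|=|(\alpha-V_c)^{C_p}|=0$. For $A$ one checks directly that the intersection of $\ker(\text{restriction})$ with $\ker(\text{Euler class}\cdot{-})$ is zero in $A(\GG)$; for $\langle\Z/p\rangle$ the restriction map vanishes, so one must verify that the Euler class acts injectively there, which is a statement about the module structure of the positive cone in $\HHreduced_{C_p}^*(\S{0})$ rather than something the evenness hypothesis hands you for free.
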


\begin{remark}
Observe that in Lewis's original formulation of this result, \cite[Corollary 2.7, p.70]{lewis_complex}, the map $\rho$ used here is instead thought of as the map $M(\restriction{\rho})\from M(\GG)\to M(\Ge)$, where $M=\HHreduced_{C_p}^*(X\coeffs*{A})$. This is ultimately equivalent to Shulman's version, as described in \cite[p.54]{megan}.
\end{remark}

\subsection{Relationships with non-equivariant cohomology}

We have seen in \autoref{thm:mult-comparison} that, for certain $C_p$-cell complexes $X$ with ``even-dimensional'' cells, the graded Green functor $\HHreduced_{C_p}^*(X\coeffs*{A})$ can be embedded in the direct sum of
\begin{itemize}
\item $\HHreduced_{C_p}^*(X\coeffs*{A})_{\Ge}$, the same Green functor but shifted by the $C_p$-set $\Ge$ as in \autoref{def:shifted-mackey}, and 
\item $\HHreduced_{C_p}^*(X^{C_p}\coeffs*{A})$, the cohomology of the fixed points of $X$.
\end{itemize}
Crucially, both of these pieces can be understood in terms of non-equivariant cohomology, which makes \autoref{thm:mult-comparison} a powerful tool in computing the $\RO(C_p)$-graded cohomology of such spaces.

\begin{remark}
\label{nonequivariant}
In order to understand the first term $\HHreduced_{C_p}^*(X\coeffs*{A})_{\Ge}$, the shifted cohomology of $X$, we first make a simple observation about shifting $C_p$-Mackey functors. Recall the notation $\GG$ and $\Ge$ from \autoref{rem:dot-and-sun}. Then for any $C_p$-Mackey functor $M$, we have by \autoref{def:shifted-mackey}  that
\[M_{\Ge}(\GG)=M(\Ge),\qquad M_{\Ge}(\Ge)=M(\Ge\times\Ge) \cong\oplus_{g\in C_p}M(\Ge).\]
Thus, $\HHreduced_{C_p}^*(X\coeffs*{A})_{\Ge}$ depends only on $\HHreduced_{C_p}^*(X\coeffs*{A})(\Ge)$.

Now, by \cite[Example 1.1g, p.60]{lewis_complex}, for any $C_p$-space $X$ and any $\alpha\in\RO(C_p)$, 
we have \[\HHreduced_G^\alpha(X\coeffs*{A})(\Ge)\cong\widetilde{H}^{|\alpha|}(X\coeffs*{\Z}),\] which establishes an elegant relationship with non-equivariant cohomology. Therefore, we identify $\HHreduced_G^*(X\coeffs*{A})(\Ge)$ with $\underline{H}^{|*|}(X\coeffs*{\Z})$, i.e., the $\RO(G)$-graded abelian group whose value at $\alpha\in\RO(G)$ is the group $H^{|\alpha|}(X\coeffs*{Z})$. As Shulman points out in \cite[p.30]{megan},
\begin{quote}
The identification of $\HHreduced_{C_p}^\alpha(X;M)(\Ge)$ with the nonequivariant cohomology group shown uses the adjunction $[{C_p}_{+}\wedge X,HM]_{C_p}\cong [X,HM]$ together with the fact that the underlying nonequivariant spectrum of $HM$ is a $H(M(\Ge))$. The $C_p$-action comes from the action on the $k[C_p]$-module $M(\Ge)$.
\end{quote}
\end{remark}

The second term in \autoref{thm:mult-comparison} is $\HHreduced_{C_p}^*(X^{C_p}\coeffs*{A})$, which makes it valuable to understand what the $\RO(C_p)$-graded cohomology of the space $X$ with ``even-dimensional'' cells looks like when $X$ has a trivial $C_p$-action. The following result from \cite{lewis_complex} shows that it is just the $\HHreduced_{C_p}^*(\S{0}\coeffs*{A})$-algebra generated by the non-equivariant cohomology of $X$.

\begin{theorem}
\label{thm:trivial-even-cohomology}
If a CW complex $X$ with cells only in even dimensions is regarded as a $C_p$-space with trivial $C_p$-action, then there is an isomorphism of $\RO(C_p)$-graded Mackey functors
\[\HHreduced_{C_p}^*(X\coeffs*{A})\cong \HHreduced_{C_p}^*(\S{0}\coeffs*{A})\otimes \Hhreduced^*(X\coeffs*{\Z})\]
which preserves cup products (here, $\otimes$ is in the sense of \autoref{def:tensoring-mackey-with-group}).
\end{theorem}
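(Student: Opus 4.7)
The plan is to combine the even-dimensional freeness theorem (\autoref{thm:even-dim-free}) with an explicit multiplicative lift of non-equivariant cohomology classes to equivariant ones.

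First, I would give $X$ a properly even $C_p$-cell complex structure by interpreting each non-equivariant $2n$-cell $\unitdisk{\mathbb{R}^{2n}}$ as an equivariant cell $C_p\times_{C_p}\unitdisk{\mathbb{R}^{2n}}$ attached at the orbit $\GG$. Evenness and the ordering and finiteness conditions of \autoref{def:even-monotone} hold automatically, since $|(\mathbb{R}^{2n})^H|=2n$ for all $H\leq C_p$ and these data are inherited from the CW structure on $X$. Applying \autoref{thm:even-dim-free}, $\HHreduced_{C_p}^*(X\coeffs*{A})$ decomposes as a direct sum of shifts $\Sigma^{2n_i}\HHreduced_{C_p}^*(\S{0}\coeffs*{A})$, one per cell; matching these summands with the basis of $\Hhreduced^*(X\coeffs*{\Z})$ dual to the cells yields the claimed additive isomorphism.

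To upgrade this to an isomorphism of graded Green functors, I would define the comparison map $\Phi(a\otimes c)=p^*(a)\cup\hat{c}$, where $p\from X_+\to\S{0}$ is the projection and $\hat{c}$ is a multiplicative lift constructed as follows. Since $X$ has trivial action, with every cell attached at the orbit $\GG$ and no cells in odd dimensions, the integer-graded piece of $\HHreduced_{C_p}^*(X\coeffs*{A})(\GG)$ is computed by a cellular cochain complex with coefficients $A(\GG)$ and all differentials zero, giving $\HHreduced_{C_p}^n(X\coeffs*{A})(\GG)\cong H^n(X\coeffs*{A(\GG)})$ naturally as rings. The unit of the Burnside ring, i.e.\ the ring homomorphism $\Z\to A(\GG)$ sending $1\mapsto[C_p/C_p]$, induces a ring homomorphism $H^*(X\coeffs*{\Z})\to H^*(X\coeffs*{A(\GG)})$, and I take $\hat{c}$ to be the image of $c$ under this map. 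By construction $\restriction{\rho}(\hat{c})=c$, and naturality of $p^*$ together with Frobenius reciprocity ensures that $\Phi$ is a map of Mackey functors in each $\RO(C_p)$-grading.

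Multiplicativity of $\Phi$ reduces via graded commutativity and the multiplicativity of $p^*$ to showing $\widehat{c_1\cup c_2}=\hat{c}_1\cup\hat{c}_2$, which is immediate because $c\mapsto\hat{c}$ is a ring homomorphism by construction (the key point being that $[C_p/C_p]$ is the multiplicative unit of $A(\GG)$, so the lift of a product is the product of lifts). Since $\Phi$ is additively an isomorphism by the freeness theorem, this establishes the desired isomorphism of graded Green functors. The main obstacle is the identification $\HHreduced_{C_p}^n(X\coeffs*{A})(\GG)\cong H^n(X\coeffs*{A(\GG)})$ as rings for trivial-action even CW complexes; once this standard fact about the Mackey functor cohomology of trivial-action spaces is in hand, together with its compatibility with the diagonal-induced cup product, the remainder of the argument is formal.
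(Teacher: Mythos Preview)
The paper does not supply its own proof of this statement; it is simply quoted from Lewis \cite{lewis_complex}, so there is nothing in the paper to compare your argument against directly.

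Your outline is in the right spirit, but one step is overstated. You assert that the finiteness and ordering conditions of \autoref{def:even-monotone} hold automatically. Condition~(b) is indeed automatic for trivial representations, but conditions~(c) and~(d) require $X$ to have finite type (finitely many cells in each dimension), which the theorem statement does not assume. This is easily repaired by first treating the finite-type case and then passing to a colimit, but it is not free.

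The multiplicative part is essentially correct. Your construction of the lift $c\mapsto\hat{c}$ via the unit $\Z\to A(\GG)$ is the natural one, and the ring identification $\HHreduced_{C_p}^n(X;A)(\GG)\cong H^n(X;A(\GG))$ that you flag as the crux does follow from the Bredon cellular cochain description for trivial-action spaces together with the fact that the Green-functor unit induces a map of multiplicative cohomology theories. One point you should make explicit: to conclude that $\Phi$ is an additive isomorphism, you must check that the classes $\hat{c}_i$ coming from a cellular basis coincide with the free module generators produced by \autoref{thm:even-dim-free}. This holds because both are characterized as the images of $1\in\HHreduced_{C_p}^{2n_i}(\S{2n_i})$ under the cofiber maps, but it deserves a sentence.
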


\begin{remark}
\label{rem:mult-comparison-useful}
We will see in \autoref{thm:fixed-points-c} and \autoref{thm:fixed-points-h} that when $X$ is the projective space of either a complex or a quaternionic $C_p$-representation, the fixed points $X^{C_p}$ are a disjoint union of complex or quaternionic projective spaces, whose non-equivariant cohomology is known classically, so that via \autoref{thm:trivial-even-cohomology}, the codomain of $i^*$ is a direct sum of easily-understood $\HHreduced_{C_p}^*(\S{0}\coeffs*{A})$-algebras. We will therefore find it useful in \autoref{chap:mult} to break up $i^*$ into components $i_k^*$, one for each in the direct sum. Observe that in this case, to specify an element of $\HHreduced_{C_p}^\alpha(X\coeffs*{A})$ uniquely, it suffices to describe its image under $\rho$ and each $i_k^*$, though one must first verify that an element exists with those images.
\end{remark}



\chapter{Representations and projective spaces}

\section{Generalities about representations}

\begin{definition}
Let $G$ be a finite group, and let $R$ be a ring. Define a $G$-representation over $R$ to be a left $R$-module $V$ together with an action $\rho:G\times V\to V$ such that the action of each element $p_g:V\to V$ is left $R$-linear. By abuse of notation, we will refer to the representation as $V$.
\end{definition}

\begin{remark}
\label{rep-notation-terminology}
In this work the only rings $R$ we will use will be $\R$, $\C$, or $\H$, in which case we refer to a representation over $R$ as real, complex, or quaternionic, respectively.
\end{remark}

Much of classical representation theory over $\R$ and $\C$ carries over to $\H$. For example, if $G$ is compact and $W$ is a quaternionic $G$-representation, then $W$ has a $G$-invariant inner product. As in the real and complex cases, this implies $W$ decomposes as a direct sum of irreducibles. See \cite{gtm98}.

\begin{definition}
For any representation $V$, no matter whether it is introduced as a real, complex, or quaternionic representation, the notation $|V|$ will always be defined to mean the dimension $|V|_{\R}$ of $V$ considered as a real vector space.
\end{definition}

\begin{definition}
Given a complex $G$-representation $V$, define $\repextension{V}$ to be the \defstyle{extension} of $V$, which is the quaternionic $G$-representation $\H\otimes_\C V$, where $\H$ is seen as a right $\C$-module via right multiplication:
\[q\otimes zv=qz\otimes v\qquad q\in\H,\;z\in\C,\;v\in V\]
Given a quaternionic $G$-representation $W$, define $\represtriction{W}$ to be the \defstyle{restriction} of $W$, which is the quaternionic $G$-representation $W$ viewed as a complex $G$-representation. We will often write $W$ for $\represtriction{W}$, only using the latter for emphasis.
\end{definition}

\begin{definition}
We say that a complex $G$-representation $V$ has
\begin{itemize}
\item \defstyle{real type} if $V$ admits a conjugate-linear $G$-map $J:V\to V$ with $J^2=\mathrm{id}$,
\item \defstyle{complex type} if $V\not\cong\overline{V}$, and
\item \defstyle{quaternionic type} if $V$ admits a conjugate-linear $G$-map $J:V\to V$ with $J^2=-\mathrm{id}$.
\end{itemize}
\end{definition}

\begin{definition}
For $K\in\{\R,\C,\H\}$, define $\mathrm{Irr}(G,K)$ to be the set of isomorphism classes of irreducible $G$-representations over $K$. We also specify these subsets of $\Irr(G,\C)$:
\begin{align*}
\Irr(G,\C)_\R&=\{V\in\Irr(G,\C):V\text{ has real type}\}\\
\Irr(G,\C)_\C&=\{V\in\Irr(G,\C):V\text{ has complex type}\}\\
\Irr(G,\C)_\H&=\{V\in\Irr(G,\C):V\text{ has quaternionic type}\}
\end{align*}
as well as these subsets of $\Irr(G,\H)$:
\begin{align*}
\Irr(G,\H)_\R&=\{\repextension{V}:V\in\Irr(G,\C)_\R\}\\ \Irr(G,\H)_\C&=\{\repextension{V}:V\in\Irr(G,\C)_\C\}\\ \Irr(G,\H)_\H&=\{W:\represtriction{W}\in\Irr(G,\C)_\H\}
\end{align*}
\end{definition}
Observe that, by definition, $V\in\Irr(G,\C)_\C$ if and only if $V\not\cong\overline{V}$, so the elements of $\Irr(G,\C)_\C$ can be paired off, each with its conjugate.
\begin{proposition}
\label{prop:reps-have-unique-type}
Let $\frac{1}{2}\Irr(G,\C)_\C$ be a subset of $\Irr(G,\C)_\C$ containing exactly one element from each conjugate pair. If $\Irr(G,\C)_\H$ is empty, then extension $\repextension{}$ is a bijection \[\Irr(G,\C)_\R\sqcup\tfrac{1}{2}\Irr(G,\C)_\C\xrightarrow{\;\;\repextension{}\;\;}\Irr(G,\H).\]
\end{proposition}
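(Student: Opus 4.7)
The plan is to verify the bijection by analyzing both sides via complex restriction. The central calculation is the identification
\[\represtriction{\repextension{V}}\cong V\oplus\overline{V}\]
as complex $G$-representations, valid for any complex $G$-representation $V$; it follows from the decomposition $\H=\C\oplus\C j$ as a right $\C$-module together with the relation $jz=\overline{z}j$ in $\H$, which twists the right $\C$-action on the summand $\C j$ by complex conjugation.

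Using this, I would first check that extension lands in $\Irr(G,\H)$ and descends to $\frac{1}{2}\Irr(G,\C)_\C$. For $V\in\Irr(G,\C)_\R$, one may write $V\cong V_\R\otimes_\R\C$ for a real-irreducible $G$-representation $V_\R$, so that $\repextension{V}\cong\H\otimes_\R V_\R$; Schur's lemma applied to $V_\R$ then yields $\H$ as the quaternionic endomorphism algebra of $\repextension{V}$, giving quaternionic-irreducibility. For $V\in\Irr(G,\C)_\C$, the only complex subrepresentations of $V\oplus\overline{V}$ are $0$, $V$, $\overline{V}$, and the whole space, and left multiplication by $j$ exchanges the middle two, so the only quaternionic subrepresentations are $0$ and $V\oplus\overline{V}$. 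The same setup also gives $\repextension{V}\cong\repextension{\overline{V}}$, so the map descends to the quotient $\frac{1}{2}\Irr(G,\C)_\C$.

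Injectivity follows from the identification $\represtriction{\repextension{V}}\cong V\oplus\overline{V}$, which recovers the unordered multiset of complex-irreducible summands: this is $\{V,V\}$ for real-type $V$ and $\{V,\overline{V}\}$ with $V\not\cong\overline{V}$ for complex-type $V$. Hence the real-type and complex-type images are disjoint in $\Irr(G,\H)$, and within each case the multiset determines the representative in $\Irr(G,\C)_\R$ or $\frac{1}{2}\Irr(G,\C)_\C$.

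For surjectivity, which is where the hypothesis $\Irr(G,\C)_\H=\emptyset$ is essential, I would take $W\in\Irr(G,\H)$ and any complex-irreducible subrepresentation $V\subseteq\represtriction{W}$; the hypothesis forces $V$ to be of real or complex type. Left multiplication by $j$ on $W$ is conjugate-linear over $\C$, so $jV$ is again a complex subrepresentation, and $V+jV$ is closed under all of $\H$, hence equals $W$ by quaternionic-irreducibility of $W$. The intersection $V\cap jV$ is a complex subrepresentation of $V$ preserved by $j$; if it were all of $V$, then $V$ would be a quaternionic subrepresentation of $W$, forcing $V$ to be of quaternionic type and contradicting the hypothesis. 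Hence $V\cap jV=0$ and $\represtriction{W}=V\oplus jV$, and the $\H$-linear $G$-equivariant map $\H\otimes_\C V\to W$ defined by $q\otimes v\mapsto qv$ is an isomorphism by dimension count. The main obstacle is precisely this dichotomy argument for $V\cap jV$: one must invoke the mutual exclusivity of the three Frobenius--Schur types of complex irreducibles to exclude the possibility that a real- or complex-type $V$ is closed under $j$.
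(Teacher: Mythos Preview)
Your argument is correct and complete. The approach, however, differs markedly from the paper's. The paper's proof is essentially a sequence of citations to Br\"ocker--tom Dieck: it invokes the trichotomy theorem that every irreducible (complex or quaternionic) has a unique type, then cites directly that extension gives bijections $\Irr(G,\C)_\R\to\Irr(G,\H)_\R$ and $\tfrac{1}{2}\Irr(G,\C)_\C\to\Irr(G,\H)_\C$, and finally observes that $\Irr(G,\H)_\H$ is empty because any element would restrict to an element of the assumed-empty $\Irr(G,\C)_\H$. Your proof instead builds the bijection by hand from the single identity $\represtriction{\repextension{V}}\cong V\oplus\overline{V}$: irreducibility of $\repextension{V}$ is checked case-by-case, injectivity is read off from the multiset of complex constituents, and surjectivity comes from the $V\oplus jV$ decomposition of an arbitrary $W$. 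The paper's route is shorter but opaque unless one has the reference at hand; yours is longer but self-contained, with the only external input being the mutual exclusivity of the Frobenius--Schur types, which you correctly flag as the crux of the $V\cap jV$ dichotomy.
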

\begin{proof}
By {\cite[Theorem II.6.3, p.97]{gtm98}}, any $V\in\Irr(G,\C)$ or  $W\in\Irr(G,\H)$ has exactly one type, so that
\[\Irr(G,\C)=\Irr(G,\C)_\R \sqcup \Irr(G,\C)_\C \sqcup \Irr(G,\C)_\H\]
and
\[\Irr(G,\H)=\Irr(G,\H)_\R \sqcup \Irr(G,\H)_\C \sqcup \Irr(G,\H)_\H.\]
By \cite[Exercise II.6.10.7, p.100]{gtm98}, extension gives bijections
\[\Irr(G,\C)_\R\xrightarrow{\;\;\repextension{}\;\;}\Irr(G,\H)_\R, \qquad\qquad\tfrac{1}{2}\Irr(G,\C)_\C\xrightarrow{\;\;\repextension{}\;\;}\Irr(G,\H)_\C.\]
Finally, the set $\Irr(G,\H)_\H$ must be empty because any element of $\Irr(G,\H)_\H$ by definition would have to restrict to an element of $\Irr(G,\C)_\H$, which we have assumed to be empty.
\end{proof}

This result lets us more easily identify the type of an irreducible complex $G$-representation.

\begin{proposition}[{\cite[Theorem II.6.8, p.100]{gtm98}}]
\label{prop:rep-type-integral}
For any $V\in\Irr(G,\C)$ we have
\[\frac{1}{|G|}\sum_{g\in G}\chi_V(g^2)=\begin{cases}
\hphantom{-{}}1 &\iff V\text{ has real type},\\
\hphantom{-{}}0 &\iff V\text{ has complex type},\\
-1 &\iff V\text{ has quaternionic type}.
\end{cases}\]
\end{proposition}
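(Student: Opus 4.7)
The plan is to use the classical Frobenius--Schur argument, which identifies $\frac{1}{|G|}\sum_{g\in G}\chi_V(g^2)$ with a difference of dimensions of $G$-invariants in the symmetric and exterior squares of $V$. First I would decompose the tensor square as $V\otimes V\cong\mathrm{Sym}^2 V\oplus\Lambda^2 V$ and apply the standard character identities
\[\chi_{\mathrm{Sym}^2 V}(g)=\tfrac{1}{2}\bigl(\chi_V(g)^2+\chi_V(g^2)\bigr),\qquad \chi_{\Lambda^2 V}(g)=\tfrac{1}{2}\bigl(\chi_V(g)^2-\chi_V(g^2)\bigr).\]
Subtracting these and averaging over $G$ yields
\[\frac{1}{|G|}\sum_{g\in G}\chi_V(g^2)=\dim(\mathrm{Sym}^2 V)^G-\dim(\Lambda^2 V)^G.\]

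Next, I would interpret $(V\otimes V)^G$ as the space of $G$-invariant bilinear forms on $V^\ast$, equivalently as $\mathrm{Hom}_G(V,V^\ast)$, which by Schur's lemma (applied to the irreducible $V$) has dimension $1$ when $V\cong V^\ast$ and $0$ otherwise. Since a $G$-invariant Hermitian inner product produces an isomorphism $V^\ast\cong\overline{V}$, the condition $V\cong V^\ast$ is equivalent to $V\cong\overline{V}$, i.e., to $V\notin\Irr(G,\C)_\C$. This already handles the complex-type case: both invariants vanish and the sum is $0$.

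In the remaining cases, there is, up to scalar, a unique nonzero $G$-invariant bilinear form $B$ on $V$. The transpose $B^T$ is another such form, so $B^T=\lambda B$; iterating gives $\lambda=\pm 1$, so $B$ is either symmetric (contributing $+1$ to the Frobenius--Schur sum) or antisymmetric (contributing $-1$). To match these two subcases with real and quaternionic type, I would fix a $G$-invariant Hermitian inner product $h$ on $V$ and define a conjugate-linear $G$-map $J\from V\to V$ by the equation $h(u,Jv)=B(u,v)$. By Schur, $J^2=c\cdot\mathrm{id}$ for some scalar $c$, and a short calculation shows $c>0$ when $B$ is symmetric and $c<0$ when $B$ is antisymmetric; rescaling $J$ by an appropriate positive real number then produces $J^2=\mathrm{id}$ (real type) or $J^2=-\mathrm{id}$ (quaternionic type) as desired.

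I expect the main obstacle to be precisely this last matching step --- verifying that the sign of $J^2$ tracks the symmetry type of $B$ --- since the earlier reductions are routine applications of character theory and Schur's lemma. Since the statement is a classical result of representation theory, one could alternatively just invoke \cite[Theorem II.6.8, p.100]{gtm98}, as the proposition's citation already does.
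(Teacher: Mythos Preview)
Your proof sketch is the standard Frobenius--Schur argument and is correct. Note, however, that the paper does not prove this proposition at all: it is stated with a citation to \cite[Theorem II.6.8, p.100]{gtm98} and no proof is given in the paper itself. So there is no approach in the paper to compare against; you have simply supplied the classical proof that the paper chose to omit by citing the reference.
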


\section{\texorpdfstring{$C_n$-representations}{C\_n-representations}}

\subsection{Classification of irreducibles}

Throughout this section, we will let $\mathbf{1}_n\in C_n$ denote the usual generator of $C_n$, and let $\zeta_n=e^{2\pi i/n}$.

\begin{definition}
\label{def:irred-C}
For any integer $r$, define the complex $C_n$-representation ${}_n\irredC_r$ (or $\irredC_r$, if the choice of $n$ is understood) to be $\C$ with the action $\rho\from C_n\times\C\to\C$ defined by $\rho(\mathbf{1}_n,z)=z\cdot \zeta_n^r$.
\end{definition}

\begin{remark}
If $r\equiv s\bmod n$ we have $\zeta_n^r=\zeta_n^s$, and hence $\irredC_r= \irredC_s$. Nevertheless, it will be convenient to allow the subscript to be any integer. Also, observe that for any $r$, the conjugate representation $\overline{\irredC_r}$ is $\C$ with $\mathbf{1}_n$ acting as $\overline{\zeta_n^r}=\zeta_n^{-r}$, which is just $\irredC_{-r}$.
\end{remark}

The following classification is a standard result.

\begin{theorem}
\label{thm:class-of-irred-C}
The irreducible complex $C_n$-representations are, up to isomorphism,  \[\irredC_0,\ldots,\irredC_{n-1}.\]
Moreover,
\[\irredC_r\cong \irredC_s\iff  r\equiv s\bmod n.\]
\end{theorem}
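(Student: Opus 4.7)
The plan is to handle the two assertions separately: first that the listed representations exhaust the irreducibles up to isomorphism, and second that the only coincidences among them are the ones dictated by $r \equiv s \bmod n$.

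For the classification, I would first note that $\irredC_r$ is one-dimensional over $\C$, hence automatically irreducible. To see that every irreducible complex $C_n$-representation is isomorphic to some $\irredC_r$, I would invoke the fact that $C_n$ is abelian, so by Schur's lemma every irreducible complex representation is one-dimensional. A one-dimensional complex representation of $C_n$ is the same data as a group homomorphism $\chi \from C_n \to \C^\times$, and such a homomorphism is determined by $\chi(\mathbf{1}_n)$, which must be an element of $\C^\times$ of order dividing $n$, i.e., an $n$-th root of unity $\zeta_n^r$ for some $r \in \{0,1,\ldots,n-1\}$. This character is precisely the character of $\irredC_r$, so every irreducible is isomorphic to some $\irredC_r$.

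For the isomorphism criterion, since both $\irredC_r$ and $\irredC_s$ are one-dimensional, an isomorphism $\irredC_r \to \irredC_s$ is a nonzero $\C$-linear map $\varphi \from \C \to \C$ (hence multiplication by some $\lambda \in \C^\times$) that intertwines the two actions. The intertwining condition applied to $\mathbf{1}_n$ gives $\lambda \cdot \zeta_n^r = \zeta_n^s \cdot \lambda$, which reduces to $\zeta_n^r = \zeta_n^s$, i.e., $r \equiv s \bmod n$. Conversely, if $r \equiv s \bmod n$ then the representations are literally equal.

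There is no serious obstacle here; the only thing requiring a smidgen of care is justifying that abelian groups have only one-dimensional irreducible complex representations, which one can either take as a standard citation (e.g., to \cite{gtm98}) or prove in a line from Schur's lemma: for any irreducible $V$ and any $g \in C_n$, the action of $g$ is a $C_n$-equivariant endomorphism of $V$ (since $C_n$ is abelian), hence a scalar, so every one-dimensional subspace is invariant, forcing $\dim_\C V = 1$.
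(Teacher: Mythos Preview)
Your proof is correct and complete. The paper, however, does not actually prove this theorem: it simply introduces it with the sentence ``The following classification is a standard result'' and states it without proof. So your argument via Schur's lemma (abelian $\Rightarrow$ one-dimensional irreducibles $\Rightarrow$ characters $C_n\to\C^\times$ $\Rightarrow$ $n$-th roots of unity) is more than the paper itself supplies, and is exactly the standard justification one would give if asked to fill in the details.
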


Now we will want a classification of irreducible quaternionic $C_n$-representations.

\begin{definition}
\label{def:irred-H}
For any integer $r$, define the quaternionic $C_n$-representation ${}_n\irredH_r$ (or $\irredH_r$, if the choice of $n$ is understood) to be $\H$ with the action $\rho\from C_n\times\H\to\H$ defined by $\rho(\mathbf{1}_n,q)=q\cdot \zeta_n^k$.
\end{definition}

\begin{remark}
Note that each map $\rho(g,\,\cdot\,)\from\H\to\H$ is left $\H$-linear, as required by the definition. Also note that $\irredH_r$ is the extension $\repextension{\irredC_r}$.
\end{remark}

\begin{lemma}
\label{lem:irredH-as-irredC}
There is an isomorphism of complex $C_n$-representations $\irredH_r\cong \irredC_r\oplus \irredC_{-r}$.
\end{lemma}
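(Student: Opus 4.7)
The plan is to compute directly with the definitions, exploiting the decomposition of $\H$ as a left $\C$-module.

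First I would unwind the definitions. By construction $\irredH_r=\repextension{\irredC_r}=\H\otimes_\C\irredC_r$, where $\H$ is viewed as a right $\C$-module via right multiplication. The map $q\mapsto q\otimes 1$ identifies this with $\H$ itself, and under this identification the $C_n$-action becomes right multiplication by $\zeta_n^r$, because
\[\mathbf{1}_n\cdot(q\otimes 1)=q\otimes(\mathbf{1}_n\cdot 1)=q\otimes \zeta_n^r=(q\zeta_n^r)\otimes 1,\]
using the right $\C$-module structure on $\H$ to pull $\zeta_n^r$ across the tensor.

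Next I would choose the standard decomposition $\H=\C\oplus \C j$ as a \emph{left} $\C$-module, where $j$ is the standard quaternion with $j^2=-1$ and $jz=\bar{z}j$ for all $z\in\C$ (this is what determines the complex structure $\represtriction{(\,\cdot\,)}$). Each summand is $C_n$-invariant: for $z\in\C$ we have $\mathbf{1}_n\cdot z=z\zeta_n^r\in \C$, and for $zj\in \C j$ we have
\[\mathbf{1}_n\cdot(zj)=(zj)\zeta_n^r=z(j\zeta_n^r)=z\bar{\zeta_n^r}j=(z\zeta_n^{-r})j\in\C j.\]

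Finally, I would read off the isomorphism types. The summand $\C\cdot 1$ is a one-dimensional complex $C_n$-representation on which $\mathbf{1}_n$ acts as multiplication by $\zeta_n^r$, so it is $\irredC_r$. The summand $\C\cdot j$, with basis $j$ over $\C$, has $\mathbf{1}_n$ acting as multiplication by $\zeta_n^{-r}$, so it is $\irredC_{-r}$. Combining these yields $\represtriction{\irredH_r}\cong\irredC_r\oplus\irredC_{-r}$.

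The only real obstacle is notational, namely keeping the left/right $\C$-module structures on $\H$ straight and remembering to use the identity $jz=\bar{z}j$ at the key step; the algebra itself is elementary.
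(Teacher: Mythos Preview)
Your proof is correct and takes essentially the same approach as the paper: both decompose $\irredH_r=\H$ as the direct sum of the complex subspaces $\C$ and $\C j$ (equivalently, $\{a+bi:a,b\in\R\}$ and $\{cj+dk:c,d\in\R\}$), check $C_n$-invariance, and identify the summands as $\irredC_r$ and $\irredC_{-r}$. Your version adds the extra detail of first unwinding the tensor product $\H\otimes_\C\irredC_r$ and explicitly using $j\zeta_n^r=\overline{\zeta_n^r}\,j$ to see the $\zeta_n^{-r}$-action on $\C j$, whereas the paper works directly from its concrete definition of $\irredH_r$ as $\H$ with right multiplication by $\zeta_n^r$ and leaves those verifications implicit.
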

\begin{proof}
The complex subspaces of $\irredH_r$
\[\{a+bi+cj+ck\in \irredH_r:c=d=0\},\qquad \{a+bi+cj+ck\in \irredH_r:a=b=0\}\]
are complements of each other in $\irredH_r$, they are $C_n$-invariant, and they are isomorphic to $\irredC_r$ and $\irredC_{-r}$, respectively.
\end{proof}

\begin{theorem}
\label{thm:class-of-irred-H}
The irreducible quaternionic $C_n$-representations are, up to isomorphism,
\[\irredH_0,\irredH_1,\ldots,\irredH_{\lfloor n/2\rfloor}.\]
Moreover,
\[\irredH_r\cong \irredH_s\iff  r\equiv \pm s\bmod n.\]
\end{theorem}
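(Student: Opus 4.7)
The plan is to obtain the classification as a direct consequence of Proposition \ref{prop:reps-have-unique-type} applied to $G = C_n$, together with the known classification of irreducible complex $C_n$-representations in Theorem \ref{thm:class-of-irred-C}. To invoke Proposition \ref{prop:reps-have-unique-type}, the first task is to verify that $\Irr(C_n,\C)_{\H}$ is empty. The cleanest route is to note that every irreducible complex representation of the abelian group $C_n$ is one-dimensional, whereas a quaternionic-type structure $J\from V\to V$ with $J^2 = -\id$ forces $|V|_{\C}$ to be even; alternatively one could apply Proposition \ref{prop:rep-type-integral} directly to the $\irredC_r$.

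Next I would sort the irreducibles $\irredC_0,\ldots,\irredC_{n-1}$ into real and complex types. Since $\overline{\irredC_r} = \irredC_{-r}$, an irreducible $\irredC_r$ has real type exactly when $2r \equiv 0 \pmod{n}$ and complex type otherwise. For $n$ odd this gives the real-type set $\{\irredC_0\}$, and for $n$ even it gives $\{\irredC_0,\irredC_{n/2}\}$; the remaining irreducibles fall into conjugate pairs $\{\irredC_r,\irredC_{-r}\}$, and a natural choice of half-set $\tfrac{1}{2}\Irr(C_n,\C)_{\C}$ consists of those $\irredC_r$ with $1 \leq r < n/2$. Combining the real-type set with this half-set and applying extension produces, in either parity case, exactly the list $\irredH_0,\irredH_1,\ldots,\irredH_{\lfloor n/2\rfloor}$, which establishes the first assertion.

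For the isomorphism criterion, I would use Lemma \ref{lem:irredH-as-irredC}: since the underlying complex representation of $\irredH_r$ is $\irredC_r \oplus \irredC_{-r}$, any quaternionic isomorphism $\irredH_r \cong \irredH_s$ forces $\irredC_r \oplus \irredC_{-r} \cong \irredC_s \oplus \irredC_{-s}$ as complex representations, hence $\{r,-r\} \equiv \{s,-s\} \pmod{n}$ by Theorem \ref{thm:class-of-irred-C}, which is exactly $r \equiv \pm s \pmod n$. For the converse, the case $r \equiv s$ is immediate from the definition, while the case $r \equiv -s$ can be handled by exhibiting an explicit isomorphism $\irredH_r \to \irredH_{-r}$, for instance right multiplication $q \mapsto qj$: this map is left $\H$-linear, and the identity $\zeta_n^r j = j \zeta_n^{-r}$ in $\H$ shows it intertwines the two $C_n$-actions.

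I expect the bulk of the work to be essentially bookkeeping; the only substantive point is checking $\Irr(C_n,\C)_{\H} = \emptyset$, and this is easy once one notes the dimension obstruction. The main subtlety is simply making sure the enumeration is correct in both parities of $n$ so that the listed range $0 \leq r \leq \lfloor n/2 \rfloor$ matches the images of $\Irr(C_n,\C)_{\R} \sqcup \tfrac{1}{2}\Irr(C_n,\C)_{\C}$ under $\repextension{}$ without duplication.
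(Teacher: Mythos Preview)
Your proposal is correct and follows essentially the same route as the paper: invoke Proposition~\ref{prop:reps-have-unique-type} after checking $\Irr(C_n,\C)_\H=\varnothing$, sort the $\irredC_r$ into real and complex types via $\overline{\irredC_r}=\irredC_{-r}$, choose the half-set $\{\irredC_r:1\le r<n/2\}$, and for the isomorphism criterion use Lemma~\ref{lem:irredH-as-irredC} in one direction and the explicit map $q\mapsto qj$ in the other. The only cosmetic difference is that the paper verifies $\Irr(C_n,\C)_\H=\varnothing$ by computing the Frobenius--Schur indicator $\frac{1}{n}\sum_k\zeta_n^{2rk}$ via Proposition~\ref{prop:rep-type-integral}, whereas you lead with the dimension obstruction (a quaternionic structure forces even complex dimension); both are fine and you mention the indicator route as well.
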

\begin{proof}
The character of the representation $\irredC_r$ is $\chi_{\irredC_r}(k)=\zeta_n^{rk}$, and
\[\frac{1}{|C_n|}\sum_{k=0}^{n-1}\chi_{\irredC_r}(2k)=\frac{1}{n}\sum_{k=0}^{n-1}(\zeta_n^{2r})^k=\begin{cases}
1& \text{ if }2r\equiv 0 \bmod n,\\
0&\text{ otherwise.}
\end{cases}\]
By \autoref{prop:rep-type-integral}, this implies that $\Irr(C_n,\C)_\H$ is always empty, that
\[\Irr(C_n,\C)_\R=\begin{cases}
\irredC_0&\text{ if }n\text{ is odd},\\
\irredC_0,\irredC_{\frac{n}{2}}&\text{ if }n\text{ is even},
\end{cases}\]
and that $\Irr(C_n,\C)_\C$ contains the rest, which can be grouped into conjugate pairs as $(\irredC_r,\irredC_{n-r})$.

We can choose a subset $\frac{1}{2}\Irr(G,\C)_\C$ containing exactly one element from each pair, 
\[\tfrac{1}{2}\Irr(C_n,\C)_\C=\begin{cases}
\irredC_1,\irredC_2,\ldots,\irredC_{\lfloor\frac{n}{2}\rfloor}&\text{ if }n\text{ is odd},\\
\irredC_1,\irredC_2,\ldots,\irredC_{\frac{n}{2}-1}&\text{ if }n\text{ is even.}
\end{cases}\]
Because $\irredH_r$ is the extension $\repextension{\irredC_r}$, by \autoref{prop:reps-have-unique-type} we conclude that regardless of the parity of $n$, \[\Irr(C_n,\H)=\{\irredH_0,\irredH_1,\ldots,\irredH_{\lfloor\frac{n}{2}\rfloor}\}.\]

If $\irredH_r\cong \irredH_s$, then by \autoref{lem:irredH-as-irredC}, we have an isomorphism of complex $C_n$-representations \[\irredC_r\oplus \irredC_{-r}\cong \irredC_s\oplus \irredC_{-s}.\] The decomposition of a complex $C_n$-representation into irreducibles is unique (\autoref{prop:isotypical-decomposition}) and $\irredC_r\cong \irredC_s\iff r\equiv s\bmod n$ (\autoref{thm:class-of-irred-C}), so we must have $r\equiv \pm s\bmod n$. For the other direction, it is clear that $\irredH_r\cong \irredH_s$ when $r\equiv s\bmod n$, so assume that $r\equiv -s\bmod n$. Then there is a left $\H$-linear $C_n$-equivariant isomorphism $f\from\irredH_r\to \irredH_s$ defined by $f(q)=qj$, so that $\irredH_r\cong \irredH_s$.
\end{proof}

\subsection{Isotypical components}

\begin{definition}
\label{def:isotypical}
For a complex $C_n$-representation $V$ and integer $r$, define $V(r;\C)$ to be the isotypical component of $V$ associated to the irreducible complex representation $\irredC_r$:
\[V(r;\C)=\bigoplus_{\substack{U\subseteq V\\ U\cong \irredC_r}}U.\]
Similarly, for a quaternionic $C_n$-representation $W$ and integer $r$, define $W(r;\H)$ to be the isotypical component of $W$ associated to the irreducible quaternionic representation $\irredH_r$:
\[W(r;\H)=\bigoplus_{\substack{U\subseteq W\\ U\cong \irredH_r}}U.\]
\end{definition}

\begin{proposition}
\label{prop:isotypical-decomposition}
Any complex or quaternionic $C_n$-representation decomposes as an internal direct sum into its isotypical components:
\[V=\bigoplus_{r=0}^{n-1}V(r;\C),\qquad\qquad W=\bigoplus_{r=0}^{\lfloor n/2\rfloor}W(r;\H).\]
\end{proposition}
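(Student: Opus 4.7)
The plan is to derive the isotypical decomposition directly from complete reducibility plus the classification of irreducibles given in \autoref{thm:class-of-irred-C} and \autoref{thm:class-of-irred-H}. I will treat the complex and quaternionic cases in parallel, since the argument is formally identical.

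First, I would invoke the complete reducibility statement recalled at the start of this section: since $C_n$ is finite (in particular compact), any complex or quaternionic $C_n$-representation $V$ (resp.\ $W$) admits a $C_n$-invariant inner product and hence decomposes as an internal direct sum of irreducible sub-representations
\[V=\bigoplus_{i\in I}U_i,\qquad W=\bigoplus_{j\in J}U_j',\]
for some (possibly infinite) index sets $I,J$. By \autoref{thm:class-of-irred-C}, each $U_i$ is isomorphic to some $\irredC_{r(i)}$ with $0\le r(i)\le n-1$, and by \autoref{thm:class-of-irred-H}, each $U_j'$ is isomorphic to some $\irredH_{r(j)}$ with $0\le r(j)\le\lfloor n/2\rfloor$.

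Next I would group these summands according to their isomorphism class, setting
\[I_r=\{i\in I:U_i\cong \irredC_r\},\qquad J_r=\{j\in J:U_j'\cong \irredH_r\},\]
so that in the ambient representations,
\[\bigoplus_{i\in I_r}U_i\subseteq V(r;\C),\qquad \bigoplus_{j\in J_r}U_j'\subseteq W(r;\H),\]
by the very definition of the isotypical components in \autoref{def:isotypical}. Summing over $r$ recovers all of $V$ (resp.\ $W$), so
\[V=\sum_{r=0}^{n-1}V(r;\C),\qquad W=\sum_{r=0}^{\lfloor n/2\rfloor}W(r;\H),\]
as internal sums of sub-representations.

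The only remaining point is that these sums are in fact \emph{direct}. For this, I would observe that any sub-representation of $V(r;\C)$ is isotypical of type $\irredC_r$: indeed, decomposing such a sub-representation into irreducibles and applying Schur's lemma to project onto any irreducible summand of $V(r;\C)$ shows that the only irreducible type that can appear is $\irredC_r$. Hence $V(r;\C)\cap V(s;\C)=0$ for $r\neq s$ (with $0\le r,s\le n-1$), because any common sub-representation would need to be isotypical of both types, but $\irredC_r\not\cong \irredC_s$ by the uniqueness clause of \autoref{thm:class-of-irred-C}. The same Schur-lemma argument, using the uniqueness clause of \autoref{thm:class-of-irred-H}, handles the quaternionic case. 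This gives the desired internal direct sum decomposition.

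There is no real obstacle here; the proposition is essentially a bookkeeping consequence of complete reducibility and the classification. The only subtlety worth articulating carefully is the use of Schur's lemma to ensure that distinct isotypical components intersect trivially, which is what makes ``sum'' upgrade to ``direct sum.''
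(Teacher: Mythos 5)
Your proof is correct, and it is the same argument the paper has in mind: the paper's entire proof is the one-liner ``The group rings $\C[C_n]$ and $\H[C_n]$ are semisimple,'' and you have simply unpacked what that buys you (complete reducibility, plus the classification of irreducibles, plus Schur's lemma to separate isotypical types). The quaternionic case goes through verbatim because Schur's lemma holds for irreducible representations over $\H$ just as over $\C$, which is why the paper treats the two cases by a single appeal to semisimplicity.

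One small point of precision worth flagging: you establish that the isotypical components intersect \emph{pairwise} trivially, $V(r;\C)\cap V(s;\C)=0$ for $r\neq s$, and then conclude the sum is direct. Pairwise trivial intersection is in general weaker than directness of an internal sum (think of three distinct lines through the origin in $\R^2$). What you actually need is $V(r;\C)\cap\sum_{s\neq r}V(s;\C)=0$, and you already have the tool to get it: the Schur observation you state---that any irreducible subrepresentation of a sum of copies of irreducibles from a fixed set $S$ must be isomorphic to a member of $S$---applies equally well to $\sum_{s\neq r}V(s;\C)$, showing its irreducible constituents are all $\not\cong\irredC_r$, whence the intersection with $V(r;\C)$ is a subrepresentation with no irreducible constituent and so is zero by complete reducibility. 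Stated that way, the argument is airtight; as written, the leap from pairwise trivial intersection to directness is a gap, albeit an easily repaired one.
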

\begin{proof}
The group rings $\C[C_n]$ and $\H[C_n]$ are semisimple.
\end{proof}

\begin{proposition}
\label{prop:H-isotypical-as-C-isotypical}
For any quaternionic $C_n$-representation $W$, we have
\[W(r;\H)=W(-r;\H)=\begin{cases}
W(r;\C) & \text{if }2r\equiv 0\bmod n,\\
W(r;\C)\oplus W(-r;\C) & \text{otherwise}
\end{cases}\]
and therefore
\[|W(r;\C)|=\begin{cases} \hphantom{\frac{1}{2}}|W(r;\H)| & \text{if }2r\equiv 0\bmod n,\\ \frac{1}{2}|W(r;\H)| & \text{otherwise}. \end{cases}\]
\end{proposition}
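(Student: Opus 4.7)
The plan is to work out how the complex restriction of each irreducible quaternionic $C_n$-representation decomposes, then use the uniqueness of isotypical decomposition over both $\H$ and $\C$ to compare $W(r;\H)$ and $W(r;\C)$ directly. The equality $W(r;\H)=W(-r;\H)$ should fall out immediately from the classification of irreducibles over $\H$: by \autoref{thm:class-of-irred-H} we have $\irredH_r\cong\irredH_{-r}$, so the two sides of \autoref{def:isotypical} coincide.

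The heart of the argument uses \autoref{lem:irredH-as-irredC}, which gives $\represtriction{\irredH_s}\cong \irredC_s\oplus \irredC_{-s}$ as complex $C_n$-representations. I would first write the quaternionic isotypical decomposition from \autoref{prop:isotypical-decomposition},
\[W=\bigoplus_{s=0}^{\lfloor n/2\rfloor}W(s;\H),\qquad W(s;\H)\cong \irredH_s^{\oplus m_s},\]
and then restrict each summand to a complex representation to get $W(s;\H)\cong \irredC_s^{\oplus m_s}\oplus \irredC_{-s}^{\oplus m_s}$. By the uniqueness of the complex isotypical decomposition (\autoref{prop:isotypical-decomposition}), reading off the $\irredC_r$-isotypical piece of each $W(s;\H)$ then identifies $W(r;\C)$ as a subspace of $W(s_r;\H)$, where $s_r$ is the unique index in $\{0,1,\dots,\lfloor n/2\rfloor\}$ congruent to $\pm r\bmod n$; and $W(s_r;\H)$ is the same thing as $W(r;\H)$ because $\irredH_r\cong\irredH_{s_r}$.

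I would then split into the two cases. If $2r\equiv 0\bmod n$, then $\irredC_r=\irredC_{-r}$, so $W(s_r;\H)$ as a complex representation is $\irredC_r^{\oplus 2m_{s_r}}$, which is all $\irredC_r$-isotypical, giving $W(r;\H)=W(r;\C)$. Otherwise $\irredC_r\not\cong\irredC_{-r}$ and the complex decomposition of $W(s_r;\H)$ splits into an $\irredC_r$-isotypical part of dimension $m_{s_r}$ and an $\irredC_{-r}$-isotypical part of the same dimension; these are precisely $W(r;\C)$ and $W(-r;\C)$ by the argument of the preceding paragraph, so $W(r;\H)=W(r;\C)\oplus W(-r;\C)$.

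The dimension formula then follows by taking real dimensions on both sides: in the first case $|W(r;\C)|=|W(r;\H)|$ directly, and in the second case $|W(r;\C)|=|W(-r;\C)|=\frac{1}{2}|W(r;\H)|$ since the two complex isotypical components have equal multiplicity $m_{s_r}$. The only subtle point in this plan, and the place I would be most careful, is keeping track of the fact that ``$W(r;\H)$'' for general $r$ refers to the isotypical component for $\irredH_r$ regardless of whether $r$ lies in the canonical range $\{0,\ldots,\lfloor n/2\rfloor\}$; the identification $W(r;\H)=W(s_r;\H)$ is what makes the inclusion $W(r;\C)\subseteq W(r;\H)$ go through in both directions.
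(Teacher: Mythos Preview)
Your proposal is correct and follows essentially the same route as the paper. The paper's proof is a one-line sketch citing \autoref{lem:irredH-as-irredC}, the classification $\irredH_r\cong\irredH_s\iff r\equiv\pm s\bmod n$ from \autoref{thm:class-of-irred-H}, and the observation $r\equiv -r\bmod n\iff 2r\equiv 0\bmod n$; you have simply unpacked that sketch into a full argument via the isotypical decomposition and case analysis.
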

\begin{proof}
This can be seen by combining \autoref{lem:irredH-as-irredC}, the fact that $\irredH_r\cong\irredH_s$ if and only if $r\equiv\pm s\bmod n$ from \autoref{thm:class-of-irred-H}, and the fact that $r\equiv -r\bmod n$ if and only if $2r\equiv 0\bmod n$.
\end{proof}

\subsection{Fixed subspaces and restricted representations}
\label{subsec:fixed-restricted}

For any $G$-representation $V$ and any subgroup $H\leq G$, we can form the restricted representation $V|_{H}$. The following proposition describes how complex and quaternionic irreducible $C_n$-representations behave under restriction to a subgroup $dC_n\leq C_n$. Recall the notation ${}_n\irredC_r$ and ${}_n\irredH_r$ from \autoref{def:irred-C} and \autoref{def:irred-H}, and observe that there is a natural isomorphism $dC_n\cong C_{n/d}$ by sending $d\cdot\mathbf{1}_n$ to $\mathbf{1}_{n/d}$, so that a $dC_n$-representation can be treated as a $C_{n/d}$-representation.

\begin{proposition}
\label{prop:Cnd-restricted}
For any integers $n$ and $r$, and any $d\mid n$, we have that
\[{}_n\irredC_r|_{dC_n}\cong {}_{n/d}\irredC_r,\qquad {}_n\irredH_r|_{dC_n}\cong {}_{n/d}\irredH_r.\]
\end{proposition}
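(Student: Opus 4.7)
The plan is to unwind the definitions of ${}_n\irredC_r$ and ${}_n\irredH_r$ together with the standard identification $dC_n \cong C_{n/d}$, and verify that the resulting actions coincide. There is no real obstacle here: once the generators are matched up correctly, the computation reduces to a single identity between roots of unity.

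First I would fix the identification $dC_n \xrightarrow{\;\sim\;} C_{n/d}$ specified in the text, which sends the generator $d\cdot \mathbf{1}_n$ of $dC_n$ to the generator $\mathbf{1}_{n/d}$ of $C_{n/d}$. For the complex statement, I would then compute the action of the generator $d\cdot\mathbf{1}_n$ on ${}_n\irredC_r|_{dC_n}$ directly from \autoref{def:irred-C}: it acts on $\C$ by right multiplication by $\zeta_n^{rd}$. The key arithmetic observation is that
\[\zeta_n^{rd} = e^{2\pi i \, rd/n} = e^{2\pi i \, r/(n/d)} = \zeta_{n/d}^{r},\]
which is precisely how $\mathbf{1}_{n/d}$ acts on ${}_{n/d}\irredC_r$. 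Hence the identity map $\C \to \C$ is an isomorphism of $dC_n$-representations ${}_n\irredC_r|_{dC_n} \cong {}_{n/d}\irredC_r$ once we identify $dC_n$ with $C_{n/d}$ as above.

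For the quaternionic statement, exactly the same argument goes through word-for-word with $\H$ in place of $\C$: by \autoref{def:irred-H}, the generator $d\cdot\mathbf{1}_n$ acts on ${}_n\irredH_r$ by right multiplication on $\H$ by the complex number $\zeta_n^{rd}$, and by the same computation this equals $\zeta_{n/d}^r$, matching the action on ${}_{n/d}\irredH_r$. Since right multiplication by a complex scalar is left $\H$-linear, the identity map $\H \to \H$ is a left $\H$-linear $dC_n$-equivariant isomorphism. (Alternatively, one can derive the quaternionic case from the complex one by noting that restriction commutes with the extension functor $\repextension{}$, together with ${}_n\irredH_r = \repextension{{}_n\irredC_r}$ and ${}_{n/d}\irredH_r = \repextension{{}_{n/d}\irredC_r}$.)

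Because each case amounts to a single character-level check, the proof will be short; the only care needed is to make the identification of generators explicit so that the exponents of $\zeta_n$ and $\zeta_{n/d}$ are unambiguous.
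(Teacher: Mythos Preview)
Your proof is correct and essentially identical to the paper's: both compute that $d\cdot\mathbf{1}_n$ acts as $(\zeta_n^r)^d=\zeta_{n/d}^r$, match this with the action of $\mathbf{1}_{n/d}$ on ${}_{n/d}\irredC_r$, and note that the quaternionic case is the same argument verbatim.
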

\begin{proof}
In the representation ${}_n\irredC_r$, the element $\mathbf{1}_{n}\in C_n$ acts as $\zeta_n^r$. Therefore the element $d\cdot\mathbf{1}_n\in C_n$ acts as $(\zeta_n^r)^d=\zeta_{n/d}^r$. When we restrict ${}_n\irredC_r$ to the representation ${}_n\irredC_r|_{dC_n}$ and consider it as a $C_{n/d}$-representation, the element $\mathbf{1}_{n/d}\in C_{n/d}$ acts the way $d\cdot\mathbf{1}_n\in C_n$ does, so that ${}_n\irredC_r|_{dC_n}\cong{}_{n/d}\irredC_r$. 
(The argument is identical in the quaternionic case.)
\end{proof}

Now we can extend this subscript notation more generally. If $V$ is any $C_n$-representation, we will write ${}_nV$ to emphasize that it is a $C_n$-representation, and by ${}_{n/d}V$ we will mean $V|_{dC_n}$ considered as a $C_{n/d}$-representation by restriction to $dC_n\leq C_n$.

\begin{proposition}
\label{prop:dCn-fixed}
For any complex $C_n$-representation $V$, and any $d\mid n$, we have
\[({}_nV)^{dC_n}=({}_{n/d}V)^{C_{n/d}}.\]
\end{proposition}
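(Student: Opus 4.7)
The proposition is essentially a bookkeeping statement about the notation introduced just above, so the plan is to unwind the definitions and observe that both sides literally describe the same subset of the underlying vector space of $V$.

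First, I would recall that, by construction, ${}_{n/d}V$ denotes $V|_{dC_n}$ regarded as a $C_{n/d}$-representation via the isomorphism $\varphi\from dC_n \xrightarrow{\cong} C_{n/d}$ that sends $d\cdot\mathbf{1}_n \mapsto \mathbf{1}_{n/d}$. In particular, ${}_{n/d}V$ and ${}_nV$ share the same underlying complex vector space, and for any $v \in V$ and any $h \in dC_n$, the action of $h$ on ${}_nV$ equals the action of $\varphi(h)$ on ${}_{n/d}V$.

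Next, I would observe that the subgroup $dC_n \leq C_n$ is cyclic, generated by $d\cdot \mathbf{1}_n$, and similarly $C_{n/d}$ is generated by $\mathbf{1}_{n/d}$. Therefore, for $v \in V$,
\[
v \in ({}_nV)^{dC_n} \iff (d\cdot\mathbf{1}_n)\cdot v = v \iff \mathbf{1}_{n/d}\cdot v = v \iff v \in ({}_{n/d}V)^{C_{n/d}},
\]
where the middle equivalence uses the compatibility of the actions under $\varphi$. This gives the equality of subsets. I do not expect any obstacle here; the proposition is a direct consequence of the conventions set up in \autoref{prop:Cnd-restricted} and the discussion following it, and could reasonably be stated as an observation rather than a proposition requiring real proof.
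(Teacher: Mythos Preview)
Your proposal is correct and follows the same approach as the paper's proof, which simply notes that the action of $C_{n/d}$ on ${}_{n/d}V=V|_{dC_n}$ is precisely the action of $dC_n$ on ${}_nV$. Your version spells out this identification more explicitly via the generator, but the content is identical.
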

\begin{proof}
This follows directly from the definition, because the action of $C_{n/d}$ on ${}_{n/d}V=V|_{dC_n}$ is precisely the action of $dC_n$ on ${}_nV$.
\end{proof}

\subsection{Tensoring with a given irreducible}

\begin{proposition}
\label{prop:shifted-isotypical-components}
For any complex $C_n$-representation $V$, and any integers $r$ and $k$,
\[(\irredC_r\otimes_{\C}V)(k;\C)\cong \irredC_r\otimes_{\C}V(k-r;\C).\]
\end{proposition}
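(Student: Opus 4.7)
The plan is to reduce to the one-dimensional case, where the calculation is transparent, and then invoke the uniqueness of the isotypical decomposition to identify the result.

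First, I would establish the key lemma that $\irredC_r\otimes_\C\irredC_s\cong\irredC_{r+s}$ as complex $C_n$-representations. This is immediate from \autoref{def:irred-C}: the underlying vector space $\C\otimes_\C\C$ is one-dimensional over $\C$, and on a simple tensor the generator $\mathbf{1}_n$ acts by
\[z\otimes w\;\longmapsto\;(z\cdot\zeta_n^r)\otimes(w\cdot\zeta_n^s)=\zeta_n^{r+s}\cdot(z\otimes w),\]
so that the action is exactly that of $\irredC_{r+s}$, where the subscript is read modulo $n$ as usual.

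Next, I would apply \autoref{prop:isotypical-decomposition} to write $V=\bigoplus_{s=0}^{n-1}V(s;\C)$. Because $\otimes_\C$ distributes over direct sums, we obtain
\[\irredC_r\otimes_\C V\;\cong\;\bigoplus_{s=0}^{n-1}\irredC_r\otimes_\C V(s;\C).\]
By the key lemma, each summand $\irredC_r\otimes_\C V(s;\C)$ is a direct sum of copies of $\irredC_{r+s}$, and so is contained in the $(r+s)$-isotypical component of $\irredC_r\otimes_\C V$. Since these summands together exhaust $\irredC_r\otimes_\C V$, by the uniqueness of the isotypical decomposition (applied via \autoref{prop:isotypical-decomposition} on the left-hand side), we must have
\[(\irredC_r\otimes_\C V)(r+s;\C)\;\cong\;\irredC_r\otimes_\C V(s;\C)\]
for every $s$. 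Setting $s=k-r$ yields the statement.

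There is no real obstacle here: the result is a bookkeeping statement about the shift on weights induced by tensoring with a one-dimensional character, and the only mild subtlety is that subscripts on the $\irredC$ are periodic modulo $n$, which is already absorbed into the convention established in \autoref{def:irred-C}.
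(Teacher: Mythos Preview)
Your proof is correct and follows essentially the same approach as the paper: distribute the tensor product over the decomposition of $V$ into irreducibles, use the identity $\irredC_r\otimes_\C\irredC_s\cong\irredC_{r+s}$, and read off the isotypical components. The paper's proof is simply a terser statement of the same three ingredients.
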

\begin{proof}
Tensor products distribute over direct sums, $V$ is a direct sum of irreducibles $\irredC_s$, and
\[\irredC_r\otimes_{\C}\irredC_s\cong \irredC_k\iff s\equiv k-r\bmod n.\qedhere\]
\end{proof}

\begin{theorem}
\label{thm:trivial-sphere}
For any complex $C_n$-representation $V$, and any integer $r$,
\[|(\irredC_{r}\otimes_{\C} V)^{C_n}|=|V(-r;\C)|\]
and $\irredC_r\otimes_{\C} V(-r;\C)$ is a trivial $C_n$-representation of dimension $|V(-r;\C)|$, so that 
\[\S{\irredC_r\otimes_{\C} V(-r;\C)}=\S{|V(-r;\C)|}.\]
\end{theorem}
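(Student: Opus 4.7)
The plan is to reduce this to the previous proposition on shifted isotypical components by observing that the $C_n$-fixed subspace of any complex $C_n$-representation $W$ is exactly its trivial isotypical component $W(0;\C)$. Indeed, the fixed points form a subrepresentation isomorphic to a sum of copies of $\irredC_0$, and this is the largest such subrepresentation, so it coincides with $W(0;\C)$.

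With this in hand, I would apply \autoref{prop:shifted-isotypical-components} with $k=0$ to obtain
\[(\irredC_r\otimes_\C V)^{C_n}=(\irredC_r\otimes_\C V)(0;\C)\cong \irredC_r\otimes_\C V(-r;\C).\]
Taking real dimensions on both sides gives the first claim, since tensoring with the one-complex-dimensional representation $\irredC_r$ preserves complex (hence real) dimension: $|\irredC_r\otimes_\C V(-r;\C)|=|V(-r;\C)|$.

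For the triviality claim, I would decompose $V(-r;\C)$ as an internal direct sum of copies of $\irredC_{-r}$ (by definition of the isotypical component), distribute the tensor product over the sum, and use that $\irredC_r\otimes_\C \irredC_{-r}\cong \irredC_0$ as complex $C_n$-representations (since the tensor product of the actions multiplies the eigenvalues $\zeta_n^r\cdot\zeta_n^{-r}=1$). Thus $\irredC_r\otimes_\C V(-r;\C)$ is a direct sum of copies of $\irredC_0$, i.e., a trivial $C_n$-representation of real dimension $|V(-r;\C)|$.

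There is no real obstacle here; the only step requiring any care is recognizing $(-)^{C_n}$ as the trivial isotypical component, after which the result follows formally from the preceding proposition. Finally, since the one-point compactification of a trivial $C_n$-representation of real dimension $m$ is $\S{m}$ with trivial action, the identification $\S{\irredC_r\otimes_\C V(-r;\C)}=\S{|V(-r;\C)|}$ is immediate.
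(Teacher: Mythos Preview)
Your proof is correct and follows essentially the same approach as the paper: both invoke \autoref{prop:shifted-isotypical-components} with $k=0$ to identify $(\irredC_r\otimes_\C V)^{C_n}$ with $\irredC_r\otimes_\C V(-r;\C)$, then read off the dimension and triviality. The only cosmetic difference is that the paper infers triviality of $\irredC_r\otimes_\C V(-r;\C)$ directly from its being isomorphic to a fixed-point subspace, whereas you re-derive it by decomposing $V(-r;\C)$ into copies of $\irredC_{-r}$; either way is fine.
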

\begin{proof}
Using \autoref{prop:shifted-isotypical-components} with $k=0$, we conclude that
\[(\irredC_r\otimes V)^{C_n}=(\irredC_r\otimes V)(0;\C)\cong \irredC_r\otimes_{\C} V(-r;\C)\]
is fixed by $C_n$, i.e., it is a trivial representation. Moreover,
\[|\irredC_r\otimes_{\C} V(-r;\C)|=|V(-r;\C)|\]
because tensoring with $\irredC_{-r}$ over $\C$ doesn't change dimension. Therefore we can use the fact (rather, the good notation) that $\S{V}$ is just the sphere $\S{|V|}$ for a trivial $C_n$-representation $V$, so that 
\[\S{\irredC_r\otimes_{\C} V(-r;\C)}=\S{|V(-r;\C)|}.\qedhere\]
\end{proof}

%

\begin{remark}
In the proof of \autoref{thm:PH-even-monotone}, we will apply \autoref{thm:trivial-sphere} to a quaternionic $C_n$-representation $W$, while keeping in mind \autoref{prop:H-isotypical-as-C-isotypical}.
\end{remark}

\section{\texorpdfstring{Projective space of a $C_n$-representation}{Projective space of a C\_n-representation}}

\subsection{Definitions}

\begin{definition}
\label{def:projective-space}
For a complex vector space $V$, we define the complex projective space of $V$ to be the space of one-dimensional complex subspaces of $V$, i.e.,
\[\PC(V)=(V\setminus\{0\})/{\sim}\]
where $v\sim \lambda v$ for any $\lambda\in \C^\times$, and we write the equivalence class of $v$ in $\PC(V)$ as $\langle v\rangle$.

Similarly, for a left $\H$-module $W$, we define the projective space of $W$ to be the space of one-dimensional left $\H$-subspaces of $W$, i.e.,
\[\PH(W)=(W\setminus\{0\})/{\sim}\]
where $w\sim \lambda w$ for any $\lambda\in \H^\times$, and we write the equivalence class of $w$ in $\PH(W)$ as $\langle w\rangle$.
\end{definition}

\begin{remark}
We use the usual Euclidean topology on $\H$. If $W$ is a finite dimensional left $\H$-module, then it is isomorphic to $\H^n$ for some $n$, and we give $W$ that topology; if $W$ is countably infinite dimensional, we topologize it as the colimit of its finite dimensional sub-$\H$-modules.

Naturally, $\PH(W)$ has the quotient topology. Note that if $U$ is a sub-$\H$-module of $W$, there is an inclusion $\PH(U)\subset\PH(W)$. If $W$ is countably infinite dimensional, then the quotient topology of $\PH(W)$ is the same as the the topology it gets as the colimit of the subspaces $\PH(U)$ as $U$ ranges over the finite dimensional sub-$\H$-modules. See \cite[p.71]{lewis_complex}.
\end{remark}

\begin{remark}
\label{rem:induced-action-projective}
If $V$ is a complex $G$-representation, there is a natural $G$-action on $\PC(V)$ defined by $g\langle v\rangle=\langle gv\rangle$, which is well-defined since the action of $G$ is via linear maps. Similarly, there is a natural $G$-action on $\PH(W)$ where $W$ is a quaternionic $G$-representation.
\end{remark}

\begin{definition}
\label{def:fixed-point-pieces}
For any complex $C_n$-representation $V$ and quaternionic $C_n$-representation $W$, define the spaces
\[\PC[r](V)=\PC(V(r;\C)). \qquad\qquad\PH[r](W)=\begin{cases}
\PH(W(r;\H)) & \text{if }2r\equiv 0\bmod n,\\
\PC(W(r;\C)) & \text{otherwise}.
\end{cases}\]
These spaces have a \emph{trivial} $C_n$-action, i.e., $\PC[r](V)\subseteq \PC(V)^{C_n}$ and $\PH[r](W)\subseteq\PH(W)^{C_n}$, because for any given point in these spaces, the action of $C_n$ scales every non-zero coordinate by the same factor (namely, $\zeta_n^r$) which has no effect in projective space. See \autoref{thm:fixed-points-c} and \autoref{thm:fixed-points-h} for the full details.
\end{definition}

\begin{remark}
When $2r\equiv 0\bmod n$, we have that $W(r;\H)=W(r;\C)$ by \autoref{prop:H-isotypical-as-C-isotypical}, so that we could also say that $\PH[r](W)=\PH(W(r;\C))$ when $2r\equiv 0\bmod n$, but the notation $W(r;\H)$ emphasizes that it is something one can take the quaternionic projective space of.

When instead $2r\not\equiv0\bmod n$, the notation $\PH[r](W)$ is more than a little misleading because this is in fact a \emph{complex} projective space.
\end{remark}

\subsection{Cofiber sequences}

\begin{theorem}
\label{thm:cell-structure-c}
For any complex $C_n$-representation $V$ and any $k$, there is a cofiber sequence
\[\PC(V)_{+}\to\PC(V\oplus \irredC_k)_{+}\to \S{\irredC_{-k}\otimes_{\C} V}.\]
Thus to form $\PC(V\oplus \irredC_k)$ from $\PC(V)$, we attach a cell of the form $\unitdisk{\irredC_{-k}\otimes_{\C} V}$.
\end{theorem}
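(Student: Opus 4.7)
The plan is to exhibit $\PC(V\oplus\irredC_k)$ as obtained from $\PC(V)$ by attaching a single equivariant cell of the form $\unitdisk{\irredC_{-k}\otimes_\C V}$; collapsing $\PC(V)$ in the resulting pushout then gives the claimed cofiber term $\S{\irredC_{-k}\otimes_\C V}$. Concretely, I fix a $C_n$-invariant Hermitian inner product on $V$, declare the two summands of $V\oplus\irredC_k$ to be orthogonal with $1\in\irredC_k$ of unit length, and define the characteristic map
\[\Phi\from\unitdisk{\irredC_{-k}\otimes_\C V}\to\PC(V\oplus\irredC_k),\qquad \Phi(v)=\bigl\langle\bigl(v,\sqrt{1-|v|^2}\bigr)\bigr\rangle,\]
using the $\C$-linear identification $1\otimes v\leftrightarrow v$ between $\irredC_{-k}\otimes_\C V$ and $V$.

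The heart of the argument is to verify that $\Phi$ is $C_n$-equivariant when the domain carries the action of $\irredC_{-k}\otimes_\C V$. Writing $g=\mathbf{1}_n$, in $V\oplus\irredC_k$ we have
\[g\cdot\bigl(v,\sqrt{1-|v|^2}\bigr)=\bigl(gv,\,\zeta_n^k\sqrt{1-|v|^2}\bigr),\]
and rescaling this representative by $\zeta_n^{-k}$ yields $\bigl(\zeta_n^{-k}gv,\sqrt{1-|v|^2}\bigr)$, using that $g$ acts by isometries so $|\zeta_n^{-k}gv|=|v|$. Thus the induced action on the parameterizing copy of $V$ is $v\mapsto \zeta_n^{-k}gv$, which is precisely the action on $\irredC_{-k}\otimes_\C V$ under our identification.

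Next I would carry out the standard non-equivariant verifications: $\Phi$ restricted to the open disk is a homeomorphism onto $\PC(V\oplus\irredC_k)\setminus\PC(V)$, with inverse given on $\langle(u,z)\rangle$ (for $z\neq0$) by normalizing $u/z$ appropriately; and $\Phi$ restricted to the boundary $\unitsphere{\irredC_{-k}\otimes_\C V}$ sends $v\mapsto\langle(v,0)\rangle=\langle v\rangle\in\PC(V)$. This exhibits $\PC(V\oplus\irredC_k)$ as the pushout
\[\PC(V)\cup_{\unitsphere{\irredC_{-k}\otimes_\C V}}\unitdisk{\irredC_{-k}\otimes_\C V},\]
so that adding a disjoint basepoint and collapsing $\PC(V)_+$ yields the quotient $\unitdisk{\irredC_{-k}\otimes_\C V}/\unitsphere{\irredC_{-k}\otimes_\C V}\cong\S{\irredC_{-k}\otimes_\C V}$, giving the desired cofiber sequence.

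The main obstacle is simply bookkeeping the action through the several identifications --- in particular, recognizing that the factor $\zeta_n^{-k}$ introduced when normalizing a representative $(gv,\zeta_n^k\ast)$ to the slice $\{(v',1)\}$ is exactly what makes the parameterizing copy of $V$ isomorphic to $\irredC_{-k}\otimes_\C V$ rather than to $V$ itself. Once that is pinned down, the rest of the argument is a routine adaptation of the classical affine-chart decomposition of complex projective space.
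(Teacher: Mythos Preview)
Your proposal is correct and follows essentially the same approach as the paper: identify the complement of $\PC(V)$ in $\PC(V\oplus\irredC_k)$ via the affine chart where the $\irredC_k$-coordinate is nonzero, and compute that normalizing that coordinate introduces a factor of $\zeta_n^{-k}$, so the induced action on the parameter space is that of $\irredC_{-k}\otimes_\C V$. The paper's version is terser---it simply identifies the open complement $\{\langle v,1\rangle:v\in V\}$ rather than writing down an explicit characteristic map $\Phi$ as you do---but the key equivariance computation is identical.
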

\begin{proof}
Write an element of $\PC(V\oplus \irredC_k)$ as $\langle v, x\rangle$, where $v\in V$ and $x\in \irredC_r$ are not both $0$, and $\langle \lambda v,\lambda x\rangle=\langle v,x\rangle$ for any $\lambda\in\C^\times$. Then the complement of the image of $\PC(V)_{+}\to\PC(V\oplus \irredC_k)_{+}$ is $\{\langle v,1\rangle:v\in V\}$, which is clearly non-equivariantly homeomorphic to $V$, and hence also to the open unit disk of $V$. But the $C_n$-action on this space is different: if $\mathbf{1}_n$ is our generator of $C_n$, then
\[\mathbf{1}_n\cdot \langle v,1\rangle = \langle \mathbf{1}_n\cdot v,\mathbf{1}_n\cdot 1\rangle =\langle \mathbf{1}_n\cdot v,\zeta_n^k\rangle=\langle \zeta_n^{-k}(\mathbf{1}_n\cdot v),1\rangle\]
Thus, this is $C_n$-homeomorphic to the open unit disc of $\irredC_{-k}\otimes_{\C} V$. In other words, the cell we attached to go from $\PC(V)$ to $\PC(V\oplus \irredC_k)$ is $\unitdisk{\irredC_{-k}\otimes_{\C} V}$.
\end{proof}
\begin{remark}
In the case $V=0$, we have $\PC(V)=\varnothing$ and $\PC(V\oplus \irredC_k)=\pt$, so the disjoint basepoints are necessary to make the cofiber $\S{0}$ as desired.
\end{remark}

\begin{theorem}
\label{thm:cell-structure-h}
For any quaternionic $C_n$-representation $W$ and any $k$, there is a cofiber sequence
\[\PH(W)_{+}\to\PH(W\oplus \irredH_k)_{+}\to \S{\irredC_{-k}\otimes_{\C} W}.\]
Thus to form $\PH(W\oplus \irredH_k)$ from $\PH(W)$, we attach a cell of the form $\unitdisk{\irredC_{-k}\otimes_{\C} W}$.
\end{theorem}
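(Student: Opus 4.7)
The plan is to mirror the proof of \autoref{thm:cell-structure-c}, replacing complex projective space with quaternionic projective space and working with the left $\H$-module structure on $W\oplus\irredH_k$. Elements of $\PH(W\oplus\irredH_k)$ will be written $\langle w,x\rangle$ with $w\in W$ and $x\in\irredH_k=\H$ not both zero, subject to the equivalence $\langle\lambda w,\lambda x\rangle=\langle w,x\rangle$ for $\lambda\in\H^\times$. The image of $\PH(W)_{+}\hookrightarrow\PH(W\oplus\irredH_k)_{+}$ is exactly the subspace where $x=0$, so its open complement consists of the classes with $x\neq 0$, and the cofiber will be identified with $\S{\irredC_{-k}\otimes_\C W}$.

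Since $\H$ is a division ring, I can normalize any class $\langle w,x\rangle$ in the complement by taking $\lambda=x^{-1}$ to get the unique representative $\langle x^{-1}w,1\rangle$, giving a non-equivariant homeomorphism between the complement and $W$ via $w\mapsto\langle w,1\rangle$. The main step is then to transport the $C_n$-action across this homeomorphism. Letting $\mathbf{1}_n$ denote the generator of $C_n$, I compute that $\mathbf{1}_n\cdot\langle w,1\rangle=\langle \mathbf{1}_n w,\zeta_n^k\rangle=\langle\zeta_n^{-k}\mathbf{1}_n w,1\rangle$, using \autoref{def:irred-H} for the action on $\irredH_k$ and then rescaling by $\lambda=\zeta_n^{-k}\in\C^\times\subset\H^\times$.

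The only subtle issue is handling the non-commutativity of $\H$, but this does not cause trouble here: the rescaling scalar $\zeta_n^{-k}$ lies in $\C$, where it commutes with $\zeta_n^k$, so the equation $\lambda\zeta_n^k=1$ has the expected solution $\lambda=\zeta_n^{-k}$. With this in hand, the transported action $w\mapsto\zeta_n^{-k}\mathbf{1}_n w$ (where $\zeta_n^{-k}$ acts by left multiplication through $\C\subset\H$) agrees with the $C_n$-action on $\irredC_{-k}\otimes_\C\represtriction{W}$ described by $1\otimes w\mapsto\zeta_n^{-k}\otimes\mathbf{1}_n w=1\otimes(\zeta_n^{-k}\mathbf{1}_n w)$, because the $\C$-scalar action on $\represtriction{W}$ is by definition the restriction of the left $\H$-module action to $\C\subset\H$, and the $C_n$-action on $W$ is left $\H$-linear so it commutes with left multiplication by $\zeta_n^{-k}$. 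Hence the complement is $C_n$-homeomorphic to $\irredC_{-k}\otimes_\C W$, and taking one-point compactifications (using that the inclusion of $\PH(W)_{+}$ is a closed $C_n$-cofibration) gives the desired cofiber sequence, with attached cell $\unitdisk{\irredC_{-k}\otimes_\C W}$. The degenerate case $W=0$ works exactly as in the remark after \autoref{thm:cell-structure-c}, with the disjoint basepoints producing the expected cofiber $\S{0}$.
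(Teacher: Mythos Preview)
Your proof is correct and takes essentially the same approach as the paper: the paper's own proof simply states that the argument is essentially identical to \autoref{thm:cell-structure-c}, noting only that $W$ is a complex $C_n$-representation by restriction so that $\irredC_{-k}\otimes_{\C} W$ makes sense. You have written out exactly this argument in detail, including the one point that genuinely needs checking in the quaternionic setting---that the rescaling scalar $\zeta_n^{-k}$ lies in $\C$ and hence behaves well with respect to the left $\H$-module structure---which the paper leaves implicit.
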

\begin{proof}
The argument is essentially identical as for \autoref{thm:cell-structure-c}. Note that a quaternionic $C_n$-representation $W$ is a complex $C_n$-representation by restriction, so there is no problem forming the tensor product $\irredC_{-k}\otimes_{\C} W$, which is again a complex $C_n$-representation.
\end{proof}

\begin{theorem}
\label{thm:fixed-cell-structure-c}
For any complex $C_n$-representation $V$ and any $k$ and $r$, there is a cofiber sequence
\[\PC[r](V)_{+}\hookrightarrow\PC[r](V\oplus \irredC_k)_{+}\to \begin{cases}
\S{|V(r;\C)|} & \text{ if }k\equiv r\bmod n,\\
\pt & \text{ otherwise}.
\end{cases}\]
\end{theorem}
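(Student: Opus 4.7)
The plan is to split into cases based on whether $k \equiv r \bmod n$, and in each case reduce the claim to results already established earlier in the chapter.

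First I would unpack what $\PC[r](V\oplus\irredC_k)$ is in terms of isotypical components. By definition $\PC[r](V\oplus\irredC_k) = \PC((V\oplus\irredC_k)(r;\C))$. Since the isotypical decomposition respects direct sums, $(V\oplus\irredC_k)(r;\C) = V(r;\C) \oplus \irredC_k(r;\C)$, and by the classification of irreducibles $\irredC_k \cong \irredC_r$ iff $k \equiv r \bmod n$ (\autoref{thm:class-of-irred-C}). So $(V\oplus\irredC_k)(r;\C)$ equals $V(r;\C)$ in the case $k \not\equiv r \bmod n$, and equals $V(r;\C)\oplus\irredC_k$ in the case $k \equiv r \bmod n$.

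In the first case, the inclusion $\PC[r](V)_+ \hookrightarrow \PC[r](V\oplus\irredC_k)_+$ is the identity map of based spaces, so its cofiber is a point, as desired.

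In the second case, the inclusion is precisely the inclusion $\PC(V(r;\C))_+ \hookrightarrow \PC(V(r;\C)\oplus \irredC_k)_+$. I would apply \autoref{thm:cell-structure-c} directly with $V$ there replaced by $V(r;\C)$, yielding a cofiber sequence whose cofiber is $\S{\irredC_{-k}\otimes_\C V(r;\C)}$. It remains to identify this sphere with $\S{|V(r;\C)|}$. Since $V(r;\C)$ is isotypical of type $\irredC_r$ and $k \equiv r \bmod n$, we have $\irredC_{-k}\otimes_\C V(r;\C) = \irredC_{-r}\otimes_\C V(r;\C)$, which by \autoref{thm:trivial-sphere} (applied with the complex representation $V(r;\C)$, which equals its own $r$-isotypical component) is a trivial $C_n$-representation of dimension $|V(r;\C)|$. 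Hence its one-point compactification is just $\S{|V(r;\C)|}$.

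There is no real obstacle here; the statement is essentially a bookkeeping corollary of \autoref{thm:cell-structure-c} and \autoref{thm:trivial-sphere}, once one notices that passing from $V$ to $V\oplus\irredC_k$ only modifies the $r$-isotypical component when $k\equiv r\bmod n$. The mildest care is needed in the dimension computation at the end, to confirm that tensoring an $\irredC_r$-isotypical space by $\irredC_{-r}$ yields a trivial representation of the same real dimension, but this is immediate from \autoref{prop:shifted-isotypical-components} (with $k=0$) exactly as in the proof of \autoref{thm:trivial-sphere}.
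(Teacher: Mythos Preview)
Your proposal is correct and follows essentially the same approach as the paper's own proof: split into the two cases, observe that the $r$-isotypical component of $V\oplus\irredC_k$ is either $V(r;\C)$ or $V(r;\C)\oplus\irredC_r$, and in the nontrivial case apply \autoref{thm:cell-structure-c} followed by \autoref{thm:trivial-sphere} to identify the cofiber sphere.
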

\begin{proof}
When $k\equiv r\bmod n$, we have $\irredC_k=\irredC_r$, and
\[(V\oplus \irredC_r)(r;\C)\cong V(r;\C)\oplus \irredC_r\]
so that by \autoref{thm:cell-structure-c}, the cofiber of the inclusion is $\S{\irredC_{-r}\otimes_{\C} V(r;\C)}$, which is the same as $\S{|V(r;\C)|}$ by \autoref{thm:trivial-sphere}. When $k\not\equiv r\bmod n$, we have that
\[(V\oplus\irredC_{k})(r;\C)\cong V(r;\C)\]
so that the inclusion is just the identity map, and its cofiber is therefore just a point.
\end{proof}

\begin{theorem}
\label{thm:fixed-cell-structure-h}
For any quaternionic $C_n$-representation $W$ and any $k$ and $r$, there is a cofiber sequence
\[\PH[r](W)_{+}\hookrightarrow \PH[r](W\oplus \irredH_k)_{+}\to \begin{cases}
\S{|W(r;\C)|} & \text{ if }k\equiv\pm r\bmod n,\\
\pt & \text{ otherwise.}
\end{cases}\]
\end{theorem}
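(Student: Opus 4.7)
The plan is to reduce to the two cofiber sequences already established in \autoref{thm:cell-structure-c} and \autoref{thm:cell-structure-h}, by splitting into cases based on whether $2r \equiv 0 \bmod n$ (which is precisely the condition that determines whether $\PH[r](W)$ is a quaternionic or complex projective space, per \autoref{def:fixed-point-pieces}).

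Case 1, $2r \equiv 0 \bmod n$: Here $\PH[r](W) = \PH(W(r;\H))$, and the condition $k \equiv \pm r \bmod n$ collapses to $k \equiv r \bmod n$ since $-r \equiv r$. If $k \equiv r \bmod n$, then $\irredH_k \cong \irredH_r$ and $(W\oplus\irredH_k)(r;\H) \cong W(r;\H)\oplus\irredH_r$, so applying \autoref{thm:cell-structure-h} to the quaternionic representation $W(r;\H)$ with added irreducible $\irredH_r$ yields a cofiber $\S{\irredC_{-r}\otimes_{\C} W(r;\H)}$. Since $W(r;\H) = W(r;\C)$ by \autoref{prop:H-isotypical-as-C-isotypical} and $W(r;\C)$ is $\irredC_r$-isotypical, \autoref{thm:trivial-sphere} identifies this sphere with $\S{|W(r;\C)|}$. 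If $k \not\equiv r \bmod n$, then $(W\oplus\irredH_k)(r;\H) = W(r;\H)$, so the inclusion is the identity and the cofiber is a point.

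Case 2, $2r \not\equiv 0 \bmod n$: Here $\PH[r](W) = \PC(W(r;\C))$, so I want to compare $\PC(W(r;\C))$ with $\PC((W\oplus\irredH_k)(r;\C))$. As a complex representation, $\irredH_k \cong \irredC_k \oplus \irredC_{-k}$ by \autoref{lem:irredH-as-irredC}, so its $\irredC_r$-isotypical component is $\irredC_r$ when $k \equiv \pm r \bmod n$ and is zero otherwise. Thus when $k \equiv \pm r \bmod n$ we have $(W\oplus\irredH_k)(r;\C) \cong W(r;\C) \oplus \irredC_r$, and \autoref{thm:cell-structure-c} gives a cofiber $\S{\irredC_{-r}\otimes_{\C} W(r;\C)}$, which again is $\S{|W(r;\C)|}$ by \autoref{thm:trivial-sphere}. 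When $k \not\equiv \pm r \bmod n$, the representation $(W\oplus\irredH_k)(r;\C)$ equals $W(r;\C)$, so the cofiber is a point.

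The only genuinely delicate step is Case 2, where I must be careful that the extra summand contributed to $(W\oplus\irredH_k)(r;\C)$ is a single copy of $\irredC_r$ even though we are adding a quaternionic irreducible; this is the content of invoking \autoref{lem:irredH-as-irredC} and \autoref{prop:H-isotypical-as-C-isotypical} together. With that bookkeeping in hand, everything else is a direct appeal to the previously proven cofiber sequences, and the two cases assemble into the single uniform statement because the dimension $|W(r;\C)|$ of the resulting sphere is the same in both regimes.
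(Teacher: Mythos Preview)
Your proof is correct and follows essentially the same approach as the paper. The only cosmetic difference is the order of the case split: the paper first divides on whether $k\equiv\pm r\bmod n$ and then on whether $2r\equiv 0\bmod n$, whereas you do the reverse; and in the $2r\not\equiv 0$ subcase the paper cites \autoref{thm:fixed-cell-structure-c} while you unwind that result and cite \autoref{thm:cell-structure-c} and \autoref{thm:trivial-sphere} directly.
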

\begin{proof}
When $k\equiv \pm r\bmod n$, we have $\irredH_k=\irredH_r\cong\irredC_{r}\oplus\irredC_{-r}$ by \autoref{lem:irredH-as-irredC}. If $2r\not\equiv 0\bmod n$, then $\PH[r](W)=\PC(W(r;\C))$ and
\[(W\oplus \irredH_r)(r;\C)=(W\oplus \irredC_r\oplus \irredC_{-r})(r;\C)\cong W(r;\C)\oplus \irredC_r\]
so that the result follows from \autoref{thm:fixed-cell-structure-c}. If $2r\equiv 0\bmod n$, then $\PH[r](W)=\PH(W(r;\H))$ and
\[(W\oplus \irredH_r)(r;\H)\cong W(r;\H)\oplus \irredH_r\]
so that the result follows from \autoref{thm:cell-structure-h} and \autoref{thm:trivial-sphere}. When $k\not\equiv \pm r\bmod n$, the inclusion is just the identity map, and again this implies its cofiber is therefore just a point.
\end{proof}

\subsection{Fixed points}

\begin{theorem}
\label{thm:fixed-points-c}
For any complex $C_n$-representation $V$, 
\[\PC(V)^{C_n}=\bigsqcup_{r=0}^{n-1}\PC[r](V)\]
and
\[\PC(V)^{dC_n}=\PC({}_{n/d}V)^{C_{n/d}}.\]
\end{theorem}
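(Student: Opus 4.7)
The plan is to establish the two claimed equalities separately; both reduce to basic linear algebra together with the definitions, so neither should present a serious obstacle.

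For the first equality, the idea is to characterize a fixed point $\langle v\rangle \in \PC(V)^{C_n}$ as an eigenline for the $C_n$-action, and then recognize the eigenspaces as isotypical components. Since $C_n$ is cyclic, the line $\C v$ is $C_n$-invariant if and only if $v$ is an eigenvector for the generator $\mathbf{1}_n$. The relation $\mathbf{1}_n^n = \mathrm{id}$ forces each eigenvalue to be some $\zeta_n^r$, and by the definition of $\irredC_r$ (\autoref{def:irred-C}) together with \autoref{def:isotypical}, the $\zeta_n^r$-eigenspace of $\mathbf{1}_n$ acting on $V$ is exactly the isotypical component $V(r;\C)$. Thus $\langle v\rangle \in \PC(V)^{C_n}$ precisely when $v$ lies in some $V(r;\C)$, i.e., $\langle v\rangle \in \PC[r](V)$. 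Disjointness of the union follows from \autoref{prop:isotypical-decomposition}: a nonzero vector cannot lie in two distinct summands $V(r;\C)$ and $V(s;\C)$ without contradicting the internal direct-sum decomposition.

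For the second equality, the plan is to appeal directly to the definitions. The underlying topological space $\PC(V)$ depends only on $V$ as a complex vector space, so $\PC(V) = \PC({}_{n/d}V)$ as spaces. The $C_n$-action on $\PC(V)$ restricted to the subgroup $dC_n$ is induced from the action of $dC_n$ on $V$; reading this through the identification $dC_n \cong C_{n/d}$ yields precisely the $C_{n/d}$-action on $\PC({}_{n/d}V)$. The equality of fixed-point subspaces is then immediate, and may be viewed as a projective-space analog of \autoref{prop:dCn-fixed}.

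The main obstacle is essentially absent: both parts are routine. The only real conceptual content is the observation in the first equality that $C_n$-fixed lines correspond to the isotypical summands of $V$, after which everything else follows by unwinding notation.
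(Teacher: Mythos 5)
Your proof is correct and takes essentially the same route as the paper's: the paper's coordinate computation (choosing $V\cong\bigoplus_r V(r;\C)$ and showing a fixed $\langle v\rangle$ forces all but one coordinate to vanish) is exactly the hands-on version of your eigenline observation that the $\zeta_n^r$-eigenspace of $\mathbf{1}_n$ is $V(r;\C)$. Your second part likewise matches the paper's appeal to the analog of \autoref{prop:dCn-fixed}, just stated slightly more explicitly.
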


\begin{proof}
Choosing an isomorphism $V\cong\bigoplus_{r=0}^{n-1}V(r;\C)$, write $(v_0,v_1,\ldots,v_{n-1})$ for an element of $V$. The action of $C_n$ on $V$ is
\[\mathbf{1}_n\cdot (v_0,v_1,\ldots,v_{n-1})=(v_0,v_1\zeta_n,\ldots,v_{n-1}\zeta_n^{n-1}).\]
Any element of $\PC(V)$ is represented by a $v\in V\setminus\{0\}$ with $v_r= 1$ for some $r$. Then $\mathbf{1}_n\cdot\langle v\rangle=\langle v\rangle$ in $\PC(V)$ if and only if
\[(v_0, v_1\zeta_n,\ldots,\zeta_n^r,\ldots,v_{n-1}\zeta_n^{n-1})=(\lambda v_0,\lambda v_1,\ldots,\lambda,\ldots,\lambda v_{n-1})\]
for some $\lambda\in\C^\times$, which happens precisely when $\lambda =\zeta_n^r$. This forces $v_k=0$ for $k\neq r$, because 
\[v_k\zeta_n^k=\zeta_n^kv_k=\zeta_n^rv_k\]
implies $(\zeta_n^k-\zeta_n^r)v_k=0$, so that $v\in V(r;\C)\setminus\{0\}$. Moreover, for any $v\in V(r;\C)\setminus\{0\}$, we do indeed have
\[\mathbf{1}_n\cdot(0,0,\ldots,v,\ldots,0)=(0,0,\ldots,v\zeta_n^r,\ldots,0)=(0,0,\ldots,\zeta_n^rv,\ldots,0)=(0,0,\ldots,\lambda v,\ldots,0).\]
Thus, $\PC(V)^{C_n}$ is a disjoint union of the spaces $\PC[r](V)=\PC(V(r;\C))$. The more general claim for $\PC(V)^{dC_n}$ follows from a similar argument as the one in \autoref{prop:dCn-fixed}.
\end{proof}

\begin{remark}
At least when $V$ is of countable dimension, we can also see this from the cell structure of $\PC[r](V)$ via an inductive argument, since the claim is clearly true for $V=0$, and \autoref{thm:fixed-cell-structure-c} shows
how the cells that get added to $\PC[r](V)$ are of dimension $|V(r;\C)|$, which increases by 2 when a copy of $\irredC_r$ is added to $V$.
\end{remark}


\begin{theorem}
\label{thm:fixed-points-h}
For any quaternionic $C_n$-representation $W$, 
\[\PH(W)^{C_n}=\bigsqcup_{r=0}^{\lfloor n/2\rfloor}\PH[r](W)\]
and
\[\PH(W)^{dC_n}=\PH({}_{n/d}W)^{C_{n/d}}.\]
\end{theorem}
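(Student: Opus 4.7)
The plan is to mimic the proof of \autoref{thm:fixed-points-c} with a case split reflecting the two definitions of $\PH[r](W)$ in \autoref{def:fixed-point-pieces}. Using the isotypical decomposition $W = \bigoplus_{r=0}^{\lfloor n/2\rfloor} W(r;\H)$ from \autoref{prop:isotypical-decomposition}, write a representative of a class in $\PH(W)$ as a tuple $(w_0, w_1, \ldots)$ with $w_r \in W(r;\H)$. By definition, $\langle w\rangle$ is $C_n$-fixed iff there exists $\lambda\in\H^{\times}$ with $\lambda w_r = w_r \zeta_n^r$ for every $r$ simultaneously, where the right-hand side denotes coordinate-wise right multiplication by $\zeta_n^r$ on each copy of $\irredH_r = \H$.

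First I would show that only one component $w_r$ can be nonzero. If $w_r$ and $w_s$ were both nonzero, picking a nonzero quaternionic coordinate $q$ in $w_r$ and $q'$ in $w_s$ yields $\lambda = q\zeta_n^r q^{-1} = q'\zeta_n^s(q')^{-1}$, so $\zeta_n^r$ and $\zeta_n^s$ are conjugate in $\H$. Conjugate quaternions share a real part, so $\cos(2\pi r/n) = \cos(2\pi s/n)$, whence $s\equiv\pm r\bmod n$; since $r,s\in\{0,\ldots,\lfloor n/2\rfloor\}$ this forces $s=r$. Therefore $\PH(W)^{C_n} = \bigsqcup_r \PH(W(r;\H))^{C_n}$, and the task reduces to identifying each piece.

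When $2r\equiv 0\bmod n$ we have $\zeta_n^r = \pm 1$, which is central in $\H$, so $\lambda = \zeta_n^r$ satisfies $\lambda w_r = w_r\zeta_n^r$ for every $w_r$ and $\PH(W(r;\H))^{C_n} = \PH(W(r;\H)) = \PH[r](W)$. When $2r\not\equiv 0\bmod n$ the centralizer of $\zeta_n^r$ in $\H$ is the complex subfield $\C = \R + \R i$. For a representative $(v_1,\ldots,v_m) \in \H^m$ of a fixed class, the equation $\lambda v_i = v_i\zeta_n^r$ holding simultaneously with a single $\lambda$ forces $v_j^{-1}v_i$ to commute with $\zeta_n^r$ for all nonzero $v_i,v_j$; thus $v_i = v_{i_0}c_i$ for a fixed nonzero $v_{i_0}$ and some $c_i \in \C$. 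Then $\langle v\rangle_\H = \langle(c_1,\ldots,c_m)\rangle_\H$ admits a representative in $W(r;\C)\subseteq W(r;\H)$, and the natural map $\PC(W(r;\C)) \to \PH(W(r;\H))$ is injective (if $c' = \mu c$ with $\mu\in\H^\times$ and both $c,c'\in W(r;\C)$, comparing any nonzero coordinate forces $\mu\in\C^\times$). Hence $\PH(W(r;\H))^{C_n} = \PC(W(r;\C)) = \PH[r](W)$. Finally, $\PH(W)^{dC_n} = \PH({}_{n/d}W)^{C_{n/d}}$ is immediate (parallel to \autoref{prop:dCn-fixed}), since the $dC_n$-action on ${}_nW$ is literally the $C_{n/d}$-action on ${}_{n/d}W$ and passing to projective space commutes with restriction to a subgroup.

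The main obstacle is the non-central case $2r\not\equiv 0$: the naive claim that every fixed representative lies in the $\C$-linear subspace $W(r;\C)\subseteq W(r;\H)$ is false (left-multiplying a representative $c\in W(r;\C)$ by any $\mu \in \H^\times\setminus\C^\times$ produces another representative not in $W(r;\C)$), so the correct statement is that each fixed projective class \emph{admits} a representative in $W(r;\C)$. Establishing this hinges on the identification $Z_\H(\zeta_n^r) = \C$ for non-real $\zeta_n^r$, which is where the noncommutativity of $\H$ enters the argument.
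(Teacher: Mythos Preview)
Your argument is correct and follows the same overall plan as the paper: use the quaternionic isotypical decomposition, show that a $C_n$-fixed class lives in a single $W(r;\H)$, and then identify the fixed locus inside each piece. The main difference is in execution. To separate isotypical components, the paper normalizes a coordinate of $w_r$ to $1$, forcing $\lambda=\zeta_n^r$, and then computes with $w_k=z_k+s_kj$ to get $(\zeta_n^k-\zeta_n^r)z_k=0$ and $(\zeta_n^{-k}-\zeta_n^r)s_k=0$; you instead observe that two nonzero components would make $\zeta_n^r$ and $\zeta_n^s$ conjugate in $\H$, hence of equal real part, hence $r=s$. Both work, and yours is arguably cleaner.

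Where your write-up is genuinely more careful is the non-central case $2r\not\equiv 0\bmod n$. The paper's proof at that point asserts $w\zeta_n^r=\zeta_n^r w$ ``for any $w=z+sj\in W(r;\H)$'', which as written is only valid when $\zeta_n^r$ is central, and then concludes with ``$\PH[r](W)=\PH(W(r;\C))$'' without distinguishing the two cases of \autoref{def:fixed-point-pieces}. Your centralizer argument $Z_{\H}(\zeta_n^r)=\C$, together with the explicit check that the map $\PC(W(r;\C))\to\PH(W(r;\H))$ is injective and has image exactly the fixed locus, supplies precisely what the paper glosses over. Your closing remark about why one must say each fixed class \emph{admits} a representative in $W(r;\C)$ (rather than that every representative lies there) is the right diagnosis of the subtlety.
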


\begin{proof}
Choosing an isomorphism $W\cong\bigoplus_{r=0}^{\lfloor n/2\rfloor}W(r;\H)$, write $(w_0,w_1,\ldots,w_{\lfloor n/2\rfloor})$ for an element of $W$. The action of $C_n$ on $W$ is
\[\mathbf{1}_n\cdot (w_0,w_1,\ldots,w_{\lfloor n/2\rfloor})=(w_0,w_1\zeta_n,\ldots,w_{\lfloor n/2\rfloor}\zeta_n^{\lfloor n/2\rfloor}).\]
Any element of $\PH(W)$ is represented by a $w\in W\setminus\{0\}$ with $w_r= 1$ for some $r$. Then $\mathbf{1}_n\cdot\langle w\rangle=\langle w\rangle$ in $\PH(W)$ if and only if
\[(w_0,w_1\zeta_n ,\ldots,\zeta_n^r,\ldots,w_{\lfloor n/2\rfloor}\zeta_n^{\lfloor n/2\rfloor})=(\lambda v_0,\lambda v_1,\ldots,\lambda,\ldots,\lambda v_{n-1})\]
for some $\lambda\in\H^\times$, which happens precisely when $\lambda =\zeta_n^r$. This forces $w_k=0$ for $k\neq r$, because if we write $w_k=z_k+s_kj$, then
\[(z_k+s_kj)\zeta_n^k=\zeta_n^kz_k+\zeta_n^{-k}s_kj=\zeta_n^rz_k+\zeta_n^rs_kj=\zeta_n^r(z_k+s_kj)\]
implies $(\zeta_n^k-\zeta_n^r)z_k=0$ and $(\zeta_n^{-k}-\zeta_n^r)s_k=0$, so that $w\in W(r;\H)\setminus\{0\}$. However, for any $w=z+sj\in W(r;\H)\setminus\{0\}$, we have
\[\mathbf{1}_n\cdot(0,0,\ldots,w,\ldots,0)=(0,0,\ldots,w\zeta_n^r,\ldots,0)=(0,0,\ldots,\zeta_n^rw,\ldots,0)=(0,0,\ldots,\lambda w,\ldots,0)\] Thus, $\PH(W)^{C_n}$ is a disjoint union of the spaces $\PH[r](W)=\PH(W(r;\C))$. The more general claim for $\PC(V)^{dC_n}$ follows from a similar argument as the one in \autoref{prop:dCn-fixed}.
\end{proof}

%

\begin{remark}
At least when $W$ is of countable dimension, we can also see this from the cell structure of $\PH[r](W)$ via an inductive argument, since the claim is clearly true for $W=0$, and \autoref{thm:fixed-cell-structure-h} shows
how the cells that get added to $\PH[r](W)$ are of dimension $|W(r;\C)|$, which increases by either 4 or 2 when a copy of $\irredH_r$ is added to $W$, depending on whether $2r\equiv 0\bmod n$.
\end{remark}


\chapter{\texorpdfstring{Constructing $B_{G}\SU(2)$}{Constructing B\_GSU(2)}}

\section{Equivariant principal bundles and classifying spaces}

We recall some material from \cite[Chapter VII, p.59]{alaska}. We allow $\Pi$ to be any compact Lie group, but continue with our restriction that $G$ be a finite group. 

\begin{definition}
A \defstyle{principal $(G,\Pi)$-bundle} is a principal $\Pi$-bundle $p\from E\to B$ such that $E$ and $B$ are $G$-spaces, $p$ is a $G$-equivariant map, and the actions of $G$ and $\Pi$ on $E$ commute.
\end{definition}

\begin{remark}
\label{rem:lambda-action}
Recall that in a principal $\Pi$-bundle $p\from E\to B$, the space $E$ has a free right $\Pi$-action, which we write on the left as $(\pi,e)\mapsto e\pi^{-1}$. Thus, having the left action of $G$ and the right action of $\Pi$ on $E$ commute is equivalent to having a left $(
G\times\Pi)$-action on $E$, where $(g,\pi)\cdot e = ge\pi^{-1}$.
\end{remark}

By the end of this chapter, we will be most interested in the case where $G=C_n$ and $\Pi=\SU(2)$.

\begin{definition}
A principal $(G,\Pi)$-bundle $p\from E\to B$ is \defstyle{universal} if, for any $X$ with the homotopy type of a $G$-CW complex, pullback of $p$ along $G$-maps $X\to B$ gives a bijection from $[X,B]_{G}$ to the set of equivalence classes of $(G,\Pi)$-bundles on $X$. When this is the case, the $G$-space $B=E/\Pi$ is said to be an  \defstyle{(equivariant) classifying space} for principal $(G,\Pi)$-bundles.
\end{definition}

\begin{remark}
\label{rem:classifying-lambdas}
If  $\Lambda< G\times \Pi$ is a closed subgroup such that $\Lambda\cap \Pi$ is trivial, then \[\Lambda=\{(h,\rho(h)):h\in H\}\] for some subgroup $H\leq G$ and homomorphism $\rho\from H\to \Pi$. Such a $\Lambda$ acts on the total space $E$ of a principal $(G,\Pi)$-bundle as in \autoref{rem:lambda-action}.
\end{remark}

Recall that the universal principal $\Pi$-bundle $E\to B$, is characterized, up to equivalence, by its total space $E$ being contractible. The following result from \cite[Thm. 2.14, p.268]{lashof-bundles} generalizes this fact and allows us to recognize universal principal $(G,\Pi)$-bundles.

\begin{theorem}
\label{thm:classifying-lambda}
A principal $(G,\Pi)$-bundle $E\to B$ is universal if and only if $E^\Lambda$ is contractible for all closed subgroups $\Lambda< G\times \Pi$ such that $\Lambda\cap \Pi$ is trivial.
\end{theorem}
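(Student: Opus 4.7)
The plan is to reinterpret the statement in $(G\times\Pi)$-equivariant terms. Since the $\Pi$-action on $E$ is free and commutes with the $G$-action, $E$ is naturally a $(G\times\Pi)$-space whose isotropy subgroups all lie in the family $\mathcal{F}=\{\Lambda\leq G\times\Pi:\Lambda\cap\Pi=1\}$ of \autoref{rem:classifying-lambdas}. Moreover, a $(G,\Pi)$-bundle map from any principal $(G,\Pi)$-bundle $P\to X$ to $E\to B$ is the same data as a $(G\times\Pi)$-equivariant map $P\to E$, and such a map uniquely determines the underlying $G$-map $X\to B$ by passing to $\Pi$-quotients. Under this translation, universality of $E\to B$ becomes the statement that for every $(G,\Pi)$-bundle $P\to X$ with $X$ of the homotopy type of a $G$-CW complex, there exists a $(G\times\Pi)$-equivariant map $P\to E$, unique up to $(G\times\Pi)$-equivariant homotopy.

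For the ``if'' direction, I would assume $E^\Lambda$ is contractible for every $\Lambda\in\mathcal{F}$ and argue by induction on the cellular filtration of $P$. The freeness of the $\Pi$-action lifts any $G$-CW structure on $X$ to a $(G\times\Pi)$-CW structure on $P$ whose orbit cells have the form $(G\times\Pi)/\Lambda\times\diskletter^n$ with $\Lambda\in\mathcal{F}$, the specific $\Lambda$ recording the isomorphism class of $P$ restricted to the corresponding cell of $X$. Extending a partial $(G\times\Pi)$-equivariant map over such a cell reduces by adjunction to extending a map $\S{n-1}\to E^\Lambda$ across the disk $\diskletter^n$, which is possible because $E^\Lambda$ is contractible. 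Running the same argument with $P\times I$ in place of $P$ (using the product $(G\times\Pi)$-CW structure) yields uniqueness up to $(G\times\Pi)$-equivariant homotopy.

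For the ``only if'' direction, I would test universality against the pullback bundles $(G\times\Pi)/\Lambda\times\S{n}\to G/H\times\S{n}$ for each $\Lambda=\{(h,\rho(h)):h\in H\}\in\mathcal{F}$, with $\S{n}$ carrying the trivial $G$-action. Combining the bijection from the first paragraph with the adjunction $[G/H\times\S{n},B]_G\cong\pi_n(B^H)$ and the analogous identification of $(G\times\Pi)$-maps $(G\times\Pi)/\Lambda\times\S{n}\to E$ with $\pi_n(E^\Lambda)$, universality forces $\pi_n(E^\Lambda)=0$ for all $n\geq 0$. Upgrading weak contractibility to genuine contractibility requires only that $E^\Lambda$ be a CW complex, which holds after choosing a $(G\times\Pi)$-CW approximation of $E$. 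The main obstacle is the combinatorial bookkeeping in the ``if'' direction: one must verify that a $G$-CW structure on $X$ truly does lift to a $(G\times\Pi)$-CW structure on $P$ with orbit types drawn only from $\mathcal{F}$, and that the attaching maps are compatible with the bundle structure across the pushouts defining the cellular filtration. Once this bookkeeping is in place, the rest is standard equivariant obstruction theory.
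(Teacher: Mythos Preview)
The paper does not actually prove this theorem: it is quoted without proof from Lashof's \emph{Equivariant Bundles} \cite[Thm.~2.14, p.268]{lashof-bundles}, so there is no ``paper's own proof'' to compare against. Your outline is essentially Lashof's original argument (and the standard one found in later treatments): translate principal $(G,\Pi)$-bundles into $(G\times\Pi)$-spaces with isotropy in $\mathcal{F}$, then run equivariant obstruction theory cell by cell. The points you flag as bookkeeping---lifting a $G$-CW structure on $X$ to a $(G\times\Pi)$-CW structure on $P$ with orbit types in $\mathcal{F}$, and the equivalence between the bijection $[X,B]_G\cong\{\text{bundles}\}$ and the statement that $(G\times\Pi)$-maps $P\to E$ exist and are unique up to $(G\times\Pi)$-homotopy---are exactly the substantive steps in Lashof's proof, so you have correctly identified where the content lies. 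One small caution in the ``only if'' direction: to conclude genuine contractibility of $E^\Lambda$ from weak contractibility you invoke a $(G\times\Pi)$-CW approximation of $E$, but strictly speaking the theorem as stated is about $E$ itself, not a CW replacement; Lashof handles this by working throughout in the category where $E$ already has appropriate CW type, and you should do the same or note the hypothesis explicitly.
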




In particular, in a universal principal $(G,\Pi)$-bundle $E\to B$, we must have that $E$ itself is contractible, which leads us to the following corollary.

\begin{corollary}
\label{cor:universal-pi}
A universal $(G,\Pi)$-bundle $p:E\to B$ is also a universal $\Pi$-bundle.
\end{corollary}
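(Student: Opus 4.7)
The plan is very short, since the corollary is essentially a one-line consequence of the characterization theorem just proved. Recall the classical nonequivariant fact that a principal $\Pi$-bundle is universal if and only if its total space is contractible. The map $p\from E\to B$ is already a principal $\Pi$-bundle by definition of a $(G,\Pi)$-bundle (that is what the $\Pi$-action provides), so it suffices to forget the $G$-action and verify that $E$ is nonequivariantly contractible.

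To get this, I would apply \autoref{thm:classifying-lambda} with the trivial subgroup $\Lambda=\{(e_G,e_\Pi)\}\subseteq G\times\Pi$. This $\Lambda$ is closed, and $\Lambda\cap\Pi=\{e_\Pi\}$ is trivial, so the hypothesis of the theorem is satisfied. The conclusion delivers that $E^{\Lambda}$ is contractible, and $E^{\Lambda}=E$ because only the identity acts. Combined with the preceding paragraph, this finishes the proof.

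There is no real obstacle to clear; the one point to note carefully is the mild ambiguity in the word ``universal.'' As a $(G,\Pi)$-bundle, universality means classifying $(G,\Pi)$-bundles over spaces with the homotopy type of a $G$-CW complex, whereas as a plain $\Pi$-bundle it means classifying principal $\Pi$-bundles over ordinary CW complexes. The bridge between the two notions is precisely the nonequivariant contractibility criterion, and applying \autoref{thm:classifying-lambda} to the trivial $\Lambda$ is exactly what supplies it.
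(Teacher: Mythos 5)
Your argument is exactly the one the paper has in mind: it notes just before the corollary that taking $\Lambda$ trivial in \autoref{thm:classifying-lambda} forces $E$ itself to be contractible, and then the classical characterization of universal $\Pi$-bundles by contractibility of the total space finishes the job. Correct, and the same route as the paper.
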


This result from \cite[Theorem 10]{lashof-may-bundles} describes the $H$-fixed points of the equivariant classifying space $B(G,\Pi)$ for any subgroup $H\leq G$ as a union of classifying spaces.

\begin{theorem}
\label{thm:general-fixed-points}
For any subgroup $H\leq G$, we have 
\[B(G,\Pi)^H=\bigsqcup B(\Pi\cap N_{G\times\Pi}(\Lambda))\]
where the union runs over the $\Pi$-conjugacy classes of subgroups $\Lambda$ of $G\times\Pi$ such that $\Lambda\cap \Pi=1$ and the image of $\Lambda$ in $G$ under projection from $G\times\Pi$ is $H$.
\end{theorem}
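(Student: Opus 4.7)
The plan is to analyze $B(G,\Pi)^H$ by passing up to the total space $E=E(G,\Pi)$, where we have both the $G$-action and the commuting free $\Pi$-action, combined into a $(G\times\Pi)$-action as in \autoref{rem:lambda-action}. A point $[e]\in E/\Pi$ lies in $B(G,\Pi)^H$ precisely when for each $h\in H$ there is a (necessarily unique, by freeness of $\Pi$) element $\rho_e(h)\in\Pi$ with $he=e\rho_e(h)$. The assignment $h\mapsto\rho_e(h)$ is then a homomorphism $H\to\Pi$, and the $(G\times\Pi)$-stabilizer of $e$ is its graph $\Lambda_e=\{(h,\rho_e(h)):h\in H\}$, which automatically satisfies $\Lambda_e\cap\Pi=1$ and projects onto $H$ under $G\times\Pi\to G$. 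These are exactly the subgroups $\Lambda$ appearing in the statement, by \autoref{rem:classifying-lambdas}.

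Next I would examine the effect of replacing $e$ by $e\sigma^{-1}$ for $\sigma\in\Pi$: a direct calculation using $he=e\rho_e(h)$ shows that $\Lambda_{e\sigma^{-1}}=(1,\sigma)\Lambda_e(1,\sigma^{-1})$. Hence the $\Pi$-conjugacy class of $\Lambda_e$ is a well-defined invariant of $[e]\in B(G,\Pi)^H$, and this yields a (set-theoretic) partition of $B(G,\Pi)^H$ indexed by the $\Pi$-conjugacy classes appearing in the statement.

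Fixing a representative $\Lambda$ in such a class, the corresponding piece of $B(G,\Pi)^H$ is the image of $E^\Lambda\to E/\Pi$. Two elements $e,e\sigma^{-1}\in E^\Lambda$ descend to the same point of $B(G,\Pi)^H$, and the previous calculation shows that $e\sigma^{-1}\in E^\Lambda$ forces $\sigma\Lambda\sigma^{-1}=\Lambda$, i.e., $\sigma\in\Pi\cap N_{G\times\Pi}(\Lambda)$. Hence the piece is identified with the quotient $E^\Lambda/(\Pi\cap N_{G\times\Pi}(\Lambda))$. Since $\Pi$ acts freely on $E$, this quotient is by a free action, and \autoref{thm:classifying-lambda} tells us $E^\Lambda$ is contractible because $\Lambda\cap\Pi=1$. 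Therefore $E^\Lambda/(\Pi\cap N_{G\times\Pi}(\Lambda))$ is a model for $B(\Pi\cap N_{G\times\Pi}(\Lambda))$, as claimed.

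The main obstacle is upgrading the set-theoretic decomposition to a topological disjoint union: one must check that the assignment $[e]\mapsto[\Lambda_e]$ is locally constant on $B(G,\Pi)^H$, and that each map $E^\Lambda/(\Pi\cap N_{G\times\Pi}(\Lambda))\to B(G,\Pi)^H$ is an open embedding onto its image. This is a standard equivariant transversality/slice argument that uses the compactness of $\Pi$ together with the principal bundle structure, in the spirit of Lashof's treatment in \cite{lashof-bundles} and Lashof--May in \cite{lashof-may-bundles}; once this local structure is in hand, the components assemble into the disjoint union as stated.
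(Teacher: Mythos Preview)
The paper does not actually prove this theorem: it is quoted verbatim as \cite[Theorem 10]{lashof-may-bundles} and used as a black box, with no argument given beyond the citation. So there is no ``paper's own proof'' to compare against.

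That said, your sketch is the standard argument behind the Lashof--May result and is essentially correct. The passage from $[e]\in B(G,\Pi)^H$ to the graph subgroup $\Lambda_e\le G\times\Pi$, the check that replacing $e$ by $e\sigma^{-1}$ conjugates $\Lambda_e$ by $(1,\sigma)$, and the identification of each stratum with $E^\Lambda/(\Pi\cap N_{G\times\Pi}(\Lambda))$ are all exactly right; combining freeness of the residual $\Pi\cap N_{G\times\Pi}(\Lambda)$-action with the contractibility of $E^\Lambda$ from \autoref{thm:classifying-lambda} is the correct way to recognize the quotient as a classifying space. You are also honest about the one nontrivial point you do not fully carry out, namely that the decomposition is a genuine topological disjoint union (local constancy of $[e]\mapsto[\Lambda_e]$ and openness of each piece); this is indeed where the slice theorem for compact Lie group actions enters, and it is precisely what the cited references supply.
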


\begin{remark}
In fact, the classifying spaces occurring in this union are other equivariant classifying spaces, and $B(G,\Pi)^H$ has the structure of a $W_GH$-space, where $W_GH$ is the Weyl group $N_GH/H$ (see the paragraph following the theorem in \cite{lashof-may-bundles}). However, we do not have need of this additional information here.
\end{remark}

\begin{remark}
We can use this result to check our computations in \autoref{thm:fixed-points-c} and \autoref{thm:fixed-points-h}. Let $G=C_n$, $H=dC_n$, and $\Pi=\SO(2)\cong \S{1}$. By \autoref{rem:classifying-lambdas},  subgroups $\Lambda$ of the kind mentioned in \autoref{thm:general-fixed-points} correspond to homomorphisms $\rho\from dC_n\to \SO(2)$, which in turn correspond to a choice of $0\leq k < n/d$ for $\rho(d\cdot\mathbf{1}_n)=\zeta_{n/d}^{k}$, of which there are $n/d$. They are all in distinct conjugacy classes because $G\times\Pi$ is abelian. Moreover, because $G\times\Pi$ is abelian we have that $\Pi\cap N_{G\times\Pi}(\Lambda)=\Pi\cap G\times \Pi=\Pi$, so that $B_{C_n}\SO(2)^{dC_n}$ is confirmed to be (non-equivariantly) a disjoint union of $n/d$ copies of $B\SO(2)\cong\C\P^\infty$.

Now let $G=C_n$, $H=dC_n$, and $\Pi=\SU(2)$. By \autoref{rem:classifying-lambdas},  subgroups $\Lambda$ of the kind mentioned in \autoref{thm:general-fixed-points} correspond to homomorphisms $\rho\from dC_n\to \SU(2)$. Any matrix in $\SU(2)$ of finite order is necessarily diagonalizable, and conjugation by $\bigl(\begin{smallmatrix}
0 & -1\\ 1 & 0
\end{smallmatrix}\bigr)$ has the effect of switching the two entries on the diagonal, so that up to conjugacy such homomorphisms correspond to a choice of $0\leq k < \lfloor n/2d\rfloor$ for
\[\rho(d\cdot\mathbf{1}_n)=\begin{pmatrix}
\zeta_{n/d}^{k} & 0 \\ 0 & \zeta_{n/d}^{-k}
\end{pmatrix}\]
of which there are $\lfloor n/2d\rfloor$. Considering $\SO(2)$ as the subgroup of $\SU(2)$ consisting of the diagonal matrices, we have that the normalizer of the cyclic subgroup $\Lambda<C_n\times\SU(2)$ is
\[N_{C_n\times\SU(2)}(\Lambda)=\begin{cases}
\SU(2) & \text{if }\rho(d\cdot\mathbf{1}_n)=\bigl(\begin{smallmatrix}
\pm1 & 0 \\ 0 & \pm1
\end{smallmatrix}\bigr) \\
\SO(2) & \text{otherwise}
\end{cases}\]
so that $B_{C_n}\SU(2)^{dC_n}$ is confirmed to be (non-equivariantly) a disjoint union of $\lfloor n/2d\rfloor$ pieces, with either one or two copies of $B\SU(2)\cong\HP^\infty$ (two when $n/d$ is even), and the rest copies of $B\SO(2)\cong\C\P^\infty$.
\end{remark}


%
%

\section{\texorpdfstring{A model for $B_{G}\SU(2)$}{A model for B\_GSU(2)}}

\begin{definition}
A complete quaternionic $G$-universe $Q$ is a left $\H$-module that is a direct sum of a countably infinite number of copies of each irreducible quaternionic $G$-representation.
\end{definition}

\begin{theorem}
\label{classifying-space}
If $Q$ is a complete quaternionic $G$-universe, then $\PH(Q)$ is a model for $B_{G}\SU(2)$.
\end{theorem}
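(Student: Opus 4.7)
The plan is to exhibit $\unitsphere{Q}\to\PH(Q)$ as a universal principal $(G,\SU(2))$-bundle and then apply \autoref{thm:classifying-lambda}. First I would set up the principal-bundle structure: $\unitsphere{Q}$ carries the $G$-action inherited from $Q$, and $\SU(2)\cong S^3\subset\H^\times$ acts on $\unitsphere{Q}$ so that its orbits are precisely the nonzero quaternionic lines intersected with the unit sphere. This action commutes with the $G$-action because $G$ acts by left $\H$-linear maps, and the quotient is $\PH(Q)$ by definition of quaternionic projective space. Local triviality is standard, paralleling the classical principal bundle $S^{4n+3}\to\HP^n$.

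By \autoref{thm:classifying-lambda}, universality reduces to showing that $\unitsphere{Q}^\Lambda$ is contractible for every closed $\Lambda<G\times\SU(2)$ with $\Lambda\cap\SU(2)=1$. By \autoref{rem:classifying-lambdas}, such $\Lambda$ are parametrized as $\{(h,\rho(h)):h\in H\}$ for a subgroup $H\leq G$ and homomorphism $\rho\from H\to\SU(2)$. Unwinding the action definitions, $q\in Q$ is $\Lambda$-fixed precisely when a twisted equivariance condition relating $hq$ and multiplication by $\rho(h)$ holds for every $h\in H$. Denoting the resulting $\R$-linear subspace by $Q^\Lambda$, we have $\unitsphere{Q}^\Lambda=\unitsphere{Q^\Lambda}$; since the unit sphere of any infinite-dimensional real inner-product space (topologized as the colimit of its finite-dimensional subspheres) is contractible, it suffices to prove that $Q^\Lambda$ is infinite-dimensional.

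For this, the plan is to use $\rho$ to produce a one-dimensional quaternionic $H$-representation $M_\rho$, realized on $\H$ with its usual left $\H$-module structure and $H$-action given by right multiplication by $\rho(h)^{-1}$; this is a valid left $H$-action by $\H$-linear maps since right multiplication commutes with left scalar multiplication. A direct computation then identifies $(M_\rho)^\Lambda$ with the centralizer of $\rho(H)$ in $\H$, which is nonzero because it contains the central subfield $\R\subseteq\H$. By Frobenius reciprocity, $M_\rho$ is a summand of $V|_H$ for some irreducible quaternionic $G$-representation $V$; since $Q$ is a complete quaternionic $G$-universe, $V$ occurs in $Q$ with countably infinite multiplicity, so $M_\rho$ occurs in $Q|_H$ with countably infinite multiplicity as well, and hence $Q^\Lambda$ contains infinitely many copies of the nonzero space $(M_\rho)^\Lambda$.

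The main obstacle is keeping the conventions straight around the interaction of the left $\H$-module structure on $Q$ with the right $\SU(2)$-action and the resulting precise form of the twisted equivariance condition defining $Q^\Lambda$; once these are pinned down the remainder of the argument is formal, and the only substantive content is the observation that the centralizer in $\H$ of any subset of $\H^\times$ is nonzero because $\R\subseteq\H$ is central.
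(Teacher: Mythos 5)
Your proposal is correct and takes essentially the same route as the paper's own proof: both reduce, via \autoref{thm:classifying-lambda} and \autoref{rem:classifying-lambdas}, to showing that $Q^\Lambda$ is an infinite-dimensional real subspace of $Q$, and both achieve this by building the one-dimensional quaternionic $H$-representation on $\H$ with $h$ acting by $v\mapsto v\rho(h)^{-1}$, then noting that its $\Lambda$-fixed points contain the central copy of $\R\subset\H$. The only cosmetic difference is that you place this $H$-representation inside a restriction $V|_H$ of an irreducible $G$-representation via Frobenius reciprocity, while the paper induces it up to $\mathrm{ind}_H^G$ and observes that the fixed subspace survives the inclusion; these are equivalent bookkeeping.
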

\begin{proof}
We claim it is sufficient to prove that, for any closed $\Lambda<G\times\SU(2)$ such that $\Lambda\cap \SU(2)$ is trivial, there is a quaternionic $G$-representation $V$ for which $V^\Lambda$ is a non-trivial real subspace of $V$, where the action of $G\times \SU(2)$ on $V$ is via $(g,s)\cdot v = g(sv)$ for $g\in G$, $s\in \SU(2)$, and $v\in V$. 

Take this claim as given for now, and recall that \autoref{rem:classifying-lambdas} classifies such subgroups $\Lambda<G\times\SU(2)$. 

First, let  $\rho\from G\to \SU(2)$ be a homomorphism, and let $\Lambda=\{(g,\rho(g)):g\in G\}$. Define the quaternionic $G$-representation $V$ to be $\H$, with action $\varphi\from G\times V\to V$ defined by $\varphi(g,v)= v\rho(g)^{-1}$, which is left $\H$-linear as required. Then the action of $\Lambda$ on $V$ is
\[(g,\rho(g))\cdot v = g(\rho(g)v)=\rho(g)v\rho(g)^{-1},\]
so at least the real subspace of $V=\H$ spanned by the element $1\in\H$ is fixed. Thus, for such subgroups $\Lambda$, we have found an irreducible $G$-representation $V$ for which $V^\Lambda$ is a non-trivial real subspace of $V$.

More generally, if $\Lambda$ is formed by taking a subgroup $H<G$ and a homomorphism $\rho\from H\to \SU(2)$ so that $\Lambda=\{(h,\rho(h)):h\in H\}$, then we can similarly define the quaternionic $H$-representation $W$ to be $\H$, with action $\varphi\from H\times W\to W$ defined by $\varphi(h,w)= w\rho(h)^{-1}$, and consider the induced quaternionic $G$-representation $V=\mathrm{ind}_H^GV$, since its subspace $W$ is invariant under the action of $\Lambda$, and hence the fixed real subspace of $W$ is also a fixed real subspace of $V$.

Now we will prove our earlier claim that this is sufficient. Let $\unitsphere{Q}$ be the unit sphere of $Q$, and recall that the unit sphere in the quaternions is $\unitsphere{\H}=\SU(2)$. There is a left $G$-action on $\unitsphere{Q}$, which comes from $Q$ being a $G$-representation, and there is a right $\SU(2)$-action on $\unitsphere{Q}$, which comes from scalar multiplication, $(q,u)\mapsto u^{-1}q$ for $q\in\unitsphere{Q}$ and $u\in\SU(2)$. These actions commute because the left $G$-action on a quaternionic $G$-representation must be left $\H$-linear.

Observe that the quotient of $\unitsphere{Q}$ by the right $\SU(2)$-action is $\PH(Q)$, and the quotient map $p\from\unitsphere{Q}\to\PH(Q)$ is $G$-equivariant. Thus $p$ is a principal $(G,\SU(2))$-bundle, and by \autoref{rem:lambda-action}, we can consider $\unitsphere{Q}$ as a left $(G\times\SU(2))$-space, where $(g,s)\cdot q = g(sq)$ for $g\in G$, $s\in \SU(2)$, and $q\in\unitsphere{Q}$.

Consider $Q$ as a left $(G\times\SU(2))$-space with the same action. Then for any subgroup $\Lambda\leq G\times\SU(2)$,
\[\unitsphere{Q}^{\Lambda}=Q^\Lambda\cap \unitsphere{Q}=\unitsphere{Q^\Lambda}.\]
In fact, $Q^\Lambda$ is a real subspace of $Q$, because for any $q_1,q_2\in Q^\Lambda$ and any $(g,s)\in\Lambda$ we have
\[(g,s)\cdot (q_1+q_2) = g(s(q_1+q_2))=g(sq_1+sq_2)=g(sq_1)+g(sq_2)=(g,s)\cdot q_1 + (g,s)\cdot q_2=q_1+q_2\]
and for any $q\in Q^\Lambda$, any $r\in\mathbb{R}$, and any $(g,s)\in\Lambda$ we have
\[(g,s)\cdot (rq)=g(s(rq))=g((sr)q)=g((rs)q)=rg(sq)=rq\]
because the action of $G$ is left $\H$-linear, and real numbers commute with any quaternion.

If $Q^\Lambda$ is an infinite-dimensional real subspace of $Q$ for a given $\Lambda\leq G\times\SU(2)$, then $\unitsphere{Q}^\Lambda$ is an infinite-dimensional sphere, and hence contractible. Thus, if we prove that $Q^\Lambda$ is infinite-dimensional for all closed $\Lambda<G\times\SU(2)$ such that $\Lambda\cap \SU(2)$ is trivial, then \autoref{thm:classifying-lambda} implies that the map $p\from\unitsphere{Q}\to\PH(Q)$ is a universal principal $(G,\SU(2))$-bundle, and we can conclude that $\PH(Q)$ is a model for $B_{G}\SU(2)$. Therefore, we can conclude that given such a subgroup $\Lambda$, if there is any quaternionic $G$-representation $V$ such that $V^\Lambda$ is a non-trivial real subspace of $V$, then $Q^\Lambda$ is infinite-dimensional because $Q$ contains infinitely many isomorphic copies of $V$.
\end{proof}

\begin{remark}
Observe that non-equivariantly, $\PH(Q)$ is $\HP^\infty$, which is a model of $B\SU(2)$. This is consistent with \autoref{cor:universal-pi}.
\end{remark}

%
%

\section{\texorpdfstring{A properly even $C_n$-cell complex structure for $B_{C_n}\SU(2)$}{A properly even C\_n-cell complex structure for B\_\{C\_n\}SU(2)}}

\begin{definition}
Given a quaternionic $C_n$-representation $W$ of countable dimension, a \defstyle{split full flag} of $W$ is a collection $\flagstyle{U}$ of irreducible quaternionic subrepresentations $\{U_k\}_{k\geq 0}$ of $W$ such that $W=\bigoplus_{k\geq 0} U_k$. For any split full flag $\flagstyle{U}$ of $W$, define $\flagstyle{U}_k=\bigoplus_{i=0}^{k-1}U_i$, so that $\flagstyle{U}_{k+1}=U_k\oplus\flagstyle{U}_k$. Let
\[\pt=\PH(\flagstyle{U}_1)\subseteq\PH(\flagstyle{U}_2)\subseteq\cdots\subseteq \PH(W)\]
be the $\flagstyle{U}$-filtration of $\PH(W)$. Lastly, let $\cellstyle{u}_k$ be $\irredC_{-r}\otimes_{\C} \flagstyle{U}_{k}$ considered as a real representation, where $r$ is determined by $U_{k}\cong \irredH_r$.
\end{definition}
\begin{remark}
Note that $r$ is only determined up to sign modulo $n$, since $\irredH_r\cong\irredH_{-r}$, and even though $\irredC_{r}\not\cong\irredC_{-r}$ as complex $C_n$-representations, this does not introduce any ambiguity in $\cellstyle{u}_k$, which is considered as a real representation.
\end{remark}

\begin{proposition}
If $\flagstyle{U}$ is a split full flag of a quaternionic $C_n$-representation $W$, then the $\flagstyle{U}$-filtration of $\PH(W)$ defines a $C_n$-cell complex structure on $\PH(W)$, where exactly one cell $\unitdisk{\cellstyle{u}_k}$ is attached to $\PH(\flagstyle{U}_k)$ to construct $\PH(\flagstyle{U}_{k+1})$.
\end{proposition}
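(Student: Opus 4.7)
The plan is to prove this by a direct induction on $k$, with the only nontrivial input being the cofiber sequence of Theorem \ref{thm:cell-structure-h}. For the base case of the filtration, $\flagstyle{U}_1 = U_0$ is a single irreducible quaternionic $C_n$-representation, so as a left $\H$-module it is one-dimensional and $\PH(\flagstyle{U}_1) = \PH(U_0)$ is a single point. Under the trivial $C_n$-action this constitutes a discrete $C_n$-set, which is what is required for $X_0$ in the definition of a $C_n$-cell complex.

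For the inductive step, write $U_k \cong \irredH_r$ for the relevant integer $r$ (determined only up to sign modulo $n$, an ambiguity addressed below). Since $\flagstyle{U}_{k+1} = U_k \oplus \flagstyle{U}_k$, applying Theorem \ref{thm:cell-structure-h} with the quaternionic representation taken to be $\flagstyle{U}_k$ and the added irreducible equal to $\irredH_r$ yields the cofiber sequence
\[\PH(\flagstyle{U}_k)_+ \to \PH(\flagstyle{U}_{k+1})_+ \to \S{\irredC_{-r} \otimes_{\C} \flagstyle{U}_k},\]
so passing from $\PH(\flagstyle{U}_k)$ to $\PH(\flagstyle{U}_{k+1})$ attaches exactly one cell of the form $\unitdisk{\irredC_{-r} \otimes_{\C} \flagstyle{U}_k}$. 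The underlying real $C_n$-representation of this complex representation is by definition $\cellstyle{u}_k$, so the attached cell is $\unitdisk{\cellstyle{u}_k}$, which we may identify with $C_n \times_{C_n} \unitdisk{\cellstyle{u}_k}$, of the form required in the definition of a $C_n$-cell complex (with $K = C_n$).

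Finally, when $W$ is infinite-dimensional one needs $\PH(W)$ to be the colimit of the $\PH(\flagstyle{U}_k)$. This is handled by the topological remark following Definition \ref{def:projective-space}, which observes that for countably infinite-dimensional $W$, the topology on $\PH(W)$ agrees with the colimit topology coming from any exhaustion by finite-dimensional sub-$\H$-modules; the $\flagstyle{U}_k$ provide such an exhaustion. The potential ambiguity in the sign of $r$ is harmless because $\irredC_r$ and $\irredC_{-r}$ are isomorphic as real $C_n$-representations (complex conjugation intertwines the two actions), so $\cellstyle{u}_k$ is well-defined independent of this choice. There is no substantial obstacle in this proof: the statement is essentially a repackaging of Theorem \ref{thm:cell-structure-h} into the vocabulary of $C_n$-cell complexes, together with the easy identification of the base $\PH(\flagstyle{U}_1)$.
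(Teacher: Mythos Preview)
Your proof is correct and follows essentially the same approach as the paper, which simply states that the result ``follows directly from \autoref{thm:cell-structure-h}.'' You have merely spelled out the details (the base case, the identification of the attached cell as $\unitdisk{\cellstyle{u}_k}$, the colimit condition, and the sign ambiguity in $r$) that the paper leaves implicit or relegates to the surrounding remark.
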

\begin{proof}
This follows directly from \autoref{thm:cell-structure-h}.
\end{proof}



\begin{definition}
\label{def:PH-even-monotone}
Let $Q$ be a complete quaternionic $C_n$-universe, and let $\flagstyle{W}$ be the split full flag on $Q$ defined by $W_k=\irredH_k$, so that $\flagstyle{W}_k=\bigoplus_{i=0}^{k-1}\irredH_i$ and  \[\cellstyle{w}_k=\irredC_{-k}\otimes_{\C} \flagstyle{W}_k=\irredC_{-k}\otimes_{\C}\bigoplus_{i=0}^{k-1}\irredH_i\cong\bigoplus_{i=0}^{k-1}(\irredC_{i-k}\oplus\irredC_{-i-k}).\]
\end{definition}

\begin{lemma}
\label{lem:key-even-monotone-lemma}
With $\flagstyle{W}$ as in \autoref{def:PH-even-monotone}, we have
\[|\cellstyle{w}_k^{C_n}|=|\flagstyle{W}_k(k;\C)|=4\left\lfloor \frac{k}{n}\right\rfloor+ 2\left\lfloor\frac{2(k-\lfloor k/n\rfloor n)}{n+1}\right\rfloor.\]
This quantity is non-decreasing with $k$.
\end{lemma}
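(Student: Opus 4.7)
The plan is to establish the two equalities first, and then verify monotonicity by a direct case analysis.

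The first equality, $|\cellstyle{w}_k^{C_n}| = |\flagstyle{W}_k(k;\C)|$, is essentially immediate from \autoref{thm:trivial-sphere}. Recall that $\cellstyle{w}_k$ is $\irredC_{-k}\otimes_\C\flagstyle{W}_k$ viewed as a real $C_n$-representation, so applying \autoref{thm:trivial-sphere} with $V = \flagstyle{W}_k$ (restricted to a complex $C_n$-representation) and $r = -k$ gives $|(\irredC_{-k}\otimes_\C\flagstyle{W}_k)^{C_n}| = |\flagstyle{W}_k(k;\C)|$.

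For the closed form, I would use \autoref{lem:irredH-as-irredC} to write $\flagstyle{W}_k$ as a complex $C_n$-representation, namely $\flagstyle{W}_k \cong \bigoplus_{i=0}^{k-1}(\irredC_i \oplus \irredC_{-i})$. The summand $\irredC_i$ contributes to $\flagstyle{W}_k(k;\C)$ precisely when $i\equiv k\pmod n$, and the summand $\irredC_{-i}$ contributes precisely when $i\equiv -k\pmod n$. Writing $k = qn + s$ with $0\le s < n$, I would count, for each $i\in\{0,\ldots,k-1\}$, whether $i \equiv s$ or $i \equiv n-s$ modulo $n$, treating separately the subcases $s=0$, $0 < s < n/2$, $s = n/2$ (only possible for $n$ even), and $s > n/2$. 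In each case a short count produces the value $4q + 2\lfloor 2s/(n+1)\rfloor$: the first three cases (where $2s \equiv 0\pmod n$) give exactly $4q$, since the two residue conditions coincide but each matching $\irredH_i$ contributes two copies of $\irredC_k$; the case $s > n/2$ picks up one extra pair because the values $n-s, 2n-s, \ldots$ fit one more time in the range $\{0,\ldots,k-1\}$, giving the additional $+2$. Matching this against the formula $\lfloor 2s/(n+1)\rfloor$ (which is $0$ for $s \le n/2$ and $1$ for $s > n/2$, since $2s$ ranges in $[0, 2n-2]$) yields the stated value.

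For monotonicity, I would argue separately within a block and across a block boundary. As $k$ moves through $qn, qn+1, \ldots, qn+n-1$, the term $4q$ stays fixed while $2\lfloor 2s/(n+1)\rfloor$ is visibly non-decreasing in $s$. Passing from $k = qn + n - 1$ to $k = (q+1)n$, the value changes from $4q + 2\lfloor 2(n-1)/(n+1)\rfloor$ (which is $4q$ or $4q+2$) to $4(q+1) = 4q + 4$, an increase of at least $2$. The main obstacle is really just the careful bookkeeping in step two: one has to handle the degenerate indices $s = 0$ and $s = n/2$ correctly, because there the decomposition $\irredH_i \cong \irredC_i \oplus \irredC_{-i}$ collapses to $2\irredC_i$, doubling each contribution, and it is easy to miscount by a factor of two if one is not careful to separate these cases from the generic one.
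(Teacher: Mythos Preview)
Your proposal is correct and follows essentially the same route as the paper. The only organizational difference is that the paper first observes the periodicity $|\flagstyle{W}_{an}(k;\C)|=4a$ (via \autoref{prop:H-isotypical-as-C-isotypical}) to reduce to the range $0\le k\le n-1$ before doing the residue count, whereas you carry out the count directly for general $k=qn+s$; the underlying computation and the monotonicity argument are the same.
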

\begin{proof}
By \autoref{thm:trivial-sphere},
\[|\cellstyle{w}_k^{C_n}|=|(\irredC_{-k}\otimes_{\C}\flagstyle{W}_{k})^{C_n}|=|\flagstyle{W}_k(k;\C)|.\]
Observe that by \autoref{thm:class-of-irred-H}, $\flagstyle{W}_{an}=\bigoplus_{i=0}^{an-1}\irredH_i$ contains $a$ copies of $\irredH_i$ when $2i\equiv 0\bmod n$, and $2a$ copies of $\irredH_i$ otherwise. By \autoref{prop:H-isotypical-as-C-isotypical}, we conclude that $|\flagstyle{W}_{an}(k;\C)|=4a$ for any $k$ and $a$. Again using \autoref{thm:class-of-irred-H}, we have that for any $k$,
\[\flagstyle{W}_{k}\cong\flagstyle{W}_{\lfloor k / n\rfloor n}\oplus \flagstyle{W}_{k-\lfloor k / n\rfloor n},\]
so that 
\[|\flagstyle{W}_{k}(k;\C)|=4\lfloor k / n\rfloor+|\flagstyle{W}_{k-\lfloor k / n\rfloor n}(k;\C)|.\]
Because $k\equiv k-\lfloor k/n\rfloor n\bmod n$, we have that
\[\flagstyle{W}_{k-\lfloor k / n\rfloor n}(k;\C)=\flagstyle{W}_{k-\lfloor k / n\rfloor n}(k-\lfloor k / n\rfloor;\C),\] so now all that remains to compute the quantity $|\flagstyle{W}_{k}(k;\C)|$ in general is to do so in the case that $0\leq k\leq n-1$. It is straightforward to check that when $0\leq k\leq n-1$,
\[|\flagstyle{W}_{k}(k;\C)|=\left.\begin{cases}
0 & \text{if } k\leq \lfloor\frac{n}{2}\rfloor,\\
2 & \text{otherwise}
\end{cases}\right\}=2\bigl\lfloor\tfrac{2k}{n+1}\bigr\rfloor\]
Therefore the quantity
\begin{align*}
|\cellstyle{w}_k^{C_n}|=|\flagstyle{W}_k(k;\C)|&=4\left\lfloor \frac{k}{n}\right\rfloor+ \begin{cases}
0 & \text{if } k-\lfloor k/n\rfloor n\leq \lfloor\frac{n}{2}\rfloor,\\
2 & \text{otherwise}
\end{cases}\\
&=4\left\lfloor \frac{k}{n}\right\rfloor+ 2\left\lfloor\frac{2(k-\lfloor k/n\rfloor n)}{n+1}\right\rfloor
\end{align*}
is non-decreasing with $k$.
\end{proof}

\begin{theorem}
\label{thm:PH-even-monotone}
Let $Q$ be a complete quaternionic $C_n$-universe, and let $\flagstyle{W}$ be the split full flag on $Q$ from \autoref{def:PH-even-monotone}. Then the $\flagstyle{W}$-filtration of $\PH(Q)$ is a properly even $C_n$-cell complex structure.
\end{theorem}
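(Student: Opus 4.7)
The plan is to verify each of the four conditions (a)--(d) of \autoref{def:even-monotone}. For (a), all cells in the $\flagstyle{W}$-filtration have the form $C_n\times_{C_n}\unitdisk{\cellstyle{w}_k}=\unitdisk{\cellstyle{w}_k}$, so evenness amounts to requiring that $|\cellstyle{w}_k^{dC_n}|$ be even for every divisor $d$ of $n$. Since
\[\cellstyle{w}_k=\irredC_{-k}\otimes_{\C}\flagstyle{W}_k\]
carries a natural complex $C_n$-structure, each fixed subspace $\cellstyle{w}_k^{dC_n}$ is a complex subspace and hence automatically has even real dimension.

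For (b), the plan is to extend \autoref{lem:key-even-monotone-lemma} from $C_n$ to every subgroup $dC_n\leq C_n$. By \autoref{prop:Cnd-restricted} and the compatibility of restriction with tensor products,
\[\cellstyle{w}_k|_{dC_n}\cong{}_{n/d}\irredC_{-k}\otimes_{\C}{}_{n/d}\flagstyle{W}_k,\]
and combining \autoref{prop:dCn-fixed} with \autoref{thm:trivial-sphere} (applied to the group $C_{n/d}$) yields
\[|\cellstyle{w}_k^{dC_n}|=\bigl|({}_{n/d}\flagstyle{W}_k)(k;\C)\bigr|.\]
Repeating the computation in the proof of \autoref{lem:key-even-monotone-lemma} with $n$ replaced throughout by $n/d$ produces an analogous closed formula, which is non-decreasing in $k$ by exactly the same case analysis on $k\bmod (n/d)$. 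Hence, for all $j<k$ and every subgroup $T\leq C_n$, $|\cellstyle{w}_j^T|\leq|\cellstyle{w}_k^T|$, which is strictly stronger than the defining condition $\cellstyle{w}_j\ll\cellstyle{w}_k$.

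Conditions (c) and (d) are then short. For (c), \autoref{lem:key-even-monotone-lemma} itself already shows $|\cellstyle{w}_k^{C_n}|\to\infty$ as $k\to\infty$, so only finitely many $k$ can satisfy $|\cellstyle{w}_k^{C_n}|\leq N$ for any fixed $N$, let alone $|\cellstyle{w}_k^K|\leq N$ for all subgroups $K$. For (d), $\PH(\flagstyle{W}_1)$ is a single point with trivial $C_n$-action, and each subsequent filtration piece is obtained by attaching exactly one cell, so every $\PH(\flagstyle{W}_k)$ is a finite $C_n$-cell complex. The only genuine obstacle is the monotonicity claim underlying (b); however, once \autoref{prop:Cnd-restricted} reduces the question to the same combinatorial statement already handled in \autoref{lem:key-even-monotone-lemma}, no essentially new input is required, only a careful verification with $n/d$ in place of $n$.
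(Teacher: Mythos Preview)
Your proof is correct and follows essentially the same approach as the paper: verifying conditions (a)--(d) directly, with the key step for (b) being the reduction of $|\cellstyle{w}_k^{dC_n}|$ to the $C_{n/d}$-case via \autoref{prop:Cnd-restricted} and \autoref{prop:dCn-fixed}, so that \autoref{lem:key-even-monotone-lemma} applies with $n/d$ in place of $n$. Your phrasing of the monotonicity conclusion (that $|\cellstyle{w}_j^T|\leq|\cellstyle{w}_k^T|$ for all $j<k$ and all subgroups $T$) is slightly more precise than the paper's, which states only the consecutive case $\cellstyle{w}_k\ll\cellstyle{w}_{k+1}$, but the underlying argument is identical.
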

\begin{proof}
We know that the cell added to go from $\PH(\flagstyle{W}_k)$ to $\PH(\flagstyle{W}_{k+1})$ is $\unitdisk{\cellstyle{w}_k}$, and we can check each condition from \autoref{def:even-monotone}, although condition (b) is the only one requiring any work.
\begin{itemize}
\item[(a)] For any group $G$ and any finite-dimensional complex $G$-representation $V$, we must have that $|V^H|$ is even for any subgroup $H\leq G$, because $V^H$ will be a complex subrepresentation of $V$. Observe that $\cellstyle{w}_k$ is just $\bigoplus_{i=0}^{k-1}(\irredC_{i-k}\oplus\irredC_{-i-k})$ considered as a real $C_n$-representation, but that the aforementioned fact is still true about $\cellstyle{w}_k$. Thus, every cell $\unitdisk{\cellstyle{w}_k}$ is even.

\item[(b)] In order to prove the desired property of the filtration, it will suffice to show that $\cellstyle{w}_k\ll \cellstyle{w}_{k+1}$ for every positive integer $k$. Because $|\cellstyle{w}_k|=4k$, we have that \[|\cellstyle{w}_k^{nC_n}|=4k<4k+4=|\cellstyle{w}_{k+1}^{nC_n}|\]
for the trivial subgroup $nC_n\leq C_n$. Therefore, to prove that $\cellstyle{w}_k\ll \cellstyle{w}_{k+1}$ for a given $n$ and $k$, we must establish that $|\cellstyle{w}_k^{dC_n}|\leq|\cellstyle{w}_{k+1}^{dC_n}|$ for every $d\mid n$. 

We have established the result for any $n$ and $k$, with $d=1$, in \autoref{lem:key-even-monotone-lemma}.

Continuing the notational pattern established in \autoref{subsec:fixed-restricted}, we will now write ${}_n\cellstyle{w}_k$, instead of just $\cellstyle{w}_k$, to indicate which $C_n$ we are working over. Thus, we can write ${}_n\flagstyle{W}_k\cong\bigoplus_{i=0}^{k-1}{}_n\irredH_i$ and  ${}_n\cellstyle{w}_k={}_n\irredC_{-k}\otimes_{\C} {}_n\flagstyle{W}_k$, for example.

For $d\mid n$ with $d>1$, we have by \autoref{prop:dCn-fixed} that
\[|{}_n\cellstyle{w}_k^{dC_n}|=|{}_{n/d}\cellstyle{w}_k^{C_{n/d}}|\]
which reduces the problem to the case we have already shown.

%
%
%

\item[(c)] By our work for part (b), we know that for any positive integer $a$, $|\flagstyle{W}_{an}(i;\C)|\geq 2a$ for any $i$, so that the same is true of $\flagstyle{W}_k$ for any $k\geq an$, and thus $\cellstyle{w}_k=\irredC_{-k}\otimes_{\C}\flagstyle{W}_k$ contains at least $2a$ copies of $\irredC_{0}$, which implies $|\cellstyle{w}_k^{C_n}|\geq 2a$, and hence $|\cellstyle{w}_k^{dC_n}|\geq 2a$ as well. Thus, for any $N$, there are only finitely many cells $\unitdisk{\cellstyle{w}_k}$ with $|\cellstyle{w}_k^{dC_n}|\leq N$ for all subgroups $dC_n\leq C_n$.

\item[(d)] We start with exactly one point $\PH(\flagstyle{W}_1)=\pt$ and add one cell $\unitdisk{\cellstyle{w}_k}$ at a time, so each $\PH(\flagstyle{W}_k)$ is a finite $C_n$-cell complex.\qedhere

\end{itemize}
\end{proof}

\begin{remark}
If we define $\flagstyle{U}$ to be the split full flag on $Q$ defined by $U_k=\irredH_{k\bmod (\lfloor n/2\rfloor + 1)}$, so that figuratively, the irreducibles in $Q$ are put in order like $\irredH_0\oplus \irredH_1\oplus\cdots \oplus \irredH_{\lfloor\frac{n}{2}\rfloor}\oplus \irredH_0\oplus \irredH_1\oplus\cdots$,
then the $\flagstyle{U}$-filtration of $\PH(Q)$ is not properly even (except when $n=2$).
\end{remark}


\begin{remark}
\label{rem:proper-meaning}
The geometric meaning of a properly even cell structure is described here from \cite[p.72]{lewis_complex}. Though he states it for the complex projective space of a complex $C_p$-representation, it remains true in the quaternionic case.
\begin{quotation}
The $G$-fixed subspace of $\mathrm{P}(\Phi)$ is a disjoint union of complex projective spaces, one for each isomorphism class of irreducibles in $\Phi$. The (complex) dimension of the complex projective space in $\mathrm{P}(\Phi)^G$ associated to the irreducible $\phi$ is one less than the multiplicity of $\phi$ in $\Phi$. Thus, the effect of properly ordering the irreducibles is that the maximal dimension of the components of the $G$-fixed subspace of $\mathrm{P}(\{\phi_i\}_{0\leq i\leq k})$ increases as slowly as possible with increasing $k$.
\end{quotation}
\end{remark}


\chapter{\texorpdfstring{Additive Structure of $\HHreduced_{G}^{*}(B_{G}\SU(2)_{+}\coeffs*{A})$}{Additive Structure of H\_G*(B\_GSU(2)\_+;A)}}


\section{Dimensions of generators}

Here, we can record the following result about the additive structure of $\HHreduced_{G}^{*}(B_{G}\SU(2)_{+}\coeffs*{A})$ for those groups $G$ about which an even-dimensional freeness result has been established.

\begin{theorem}
\label{thm:additive}
Let $G=C_p$ for any prime $p$, or $C_{pq}$ for distinct odd primes $p,q$. As a module over $\HHreduced_{G}^{*}(\S{0}\coeffs*{A})$, the cohomology $\HHreduced_{G}^{*}(B_{G}\SU(2)_{+}\coeffs*{A})$ is free, and decomposes as a direct sum
\[\HHreduced_{G}^{*}(B_{G}\SU(2)_{+}\coeffs*{A})\cong \bigoplus_{k\geq 0}\Sigma^{\cellstyle{w}_k}\HHreduced_{G}^{*}(\S{0}\coeffs*{A})\]
where the $\cellstyle{w}_k$ are the representations introduced in \autoref{def:PH-even-monotone}.
\end{theorem}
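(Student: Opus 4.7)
The plan is to combine the geometric model for $B_{C_n}\SU(2)$ built in Chapter~4 with the even-dimensional freeness theorem. First, I would invoke \autoref{classifying-space} to replace $B_{C_n}\SU(2)$ by $\PH(Q)$ for $Q$ a complete quaternionic $C_n$-universe. Next, by \autoref{thm:PH-even-monotone}, the $\flagstyle{W}$-filtration of $\PH(Q)$ from \autoref{def:PH-even-monotone} endows $\PH(Q)$ with the structure of a properly even $C_n$-cell complex, whose $0$-skeleton is $X_0 = \PH(\flagstyle{W}_1) = \pt$ and which acquires exactly one cell $\unitdisk{\cellstyle{w}_k}$ at each subsequent stage $k \geq 1$. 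The restriction on $G$ stated in the theorem is precisely the hypothesis needed to apply the even-dimensional freeness theorem \autoref{thm:even-dim-free} to this complex.

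Applying \autoref{thm:even-dim-free} then yields immediately that $\HHreduced_G^*(B_G\SU(2)_+\coeffs*{A})$ is free over $\HHreduced_G^*(\S{0}\coeffs*{A})$ and decomposes as a direct sum with one summand $\HHreduced_G^*(X_{0+}\coeffs*{A}) = \HHreduced_G^*(\S{0}\coeffs*{A})$ from the base point together with one summand $\Sigma^V\HHreduced_G^*(G/H_+\coeffs*{A})$ for each attached cell $G \times_H \unitdisk{V}$. In our situation every cell has full isotropy, i.e.\ is of the form $G \times_G \unitdisk{\cellstyle{w}_k}$, so its contribution simplifies to $\Sigma^{\cellstyle{w}_k}\HHreduced_G^*(\S{0}\coeffs*{A})$ for $k \geq 1$. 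The remaining bookkeeping is to absorb the $X_0$ contribution into the sum as the $k = 0$ term: since $\cellstyle{w}_0 = \irredC_0 \otimes_\C \flagstyle{W}_0 = 0$, the suspension $\Sigma^{\cellstyle{w}_0}$ is the identity, so $\Sigma^{\cellstyle{w}_0}\HHreduced_G^*(\S{0}\coeffs*{A})$ reproduces the base point summand and reindexing over $k \geq 0$ gives the stated decomposition.

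There is no substantive obstacle to overcome at this stage; the entire difficulty of the theorem has been front-loaded into Chapter~4 (establishing that the $\flagstyle{W}$-filtration is in fact properly even, which rests on the dimension count in \autoref{lem:key-even-monotone-lemma}) and into the proofs of the even-dimensional freeness theorem itself, due to Lewis and Shulman for $C_p$ and to Basu and Ghosh for $C_{pq}$. Given those inputs, the theorem is essentially an immediate corollary.
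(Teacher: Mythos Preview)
Your proposal is correct and follows essentially the same approach as the paper's own proof, which simply cites \autoref{thm:even-dim-free} and \autoref{thm:PH-even-monotone} together. You have helpfully spelled out the bookkeeping the paper leaves implicit: that each cell has full isotropy $H=G$ so that $G/H_+=\S{0}$, and that the $k=0$ term absorbs the $X_0$ contribution since $\cellstyle{w}_0=0$.
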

\begin{proof}
We directly combine \autoref{thm:even-dim-free} about even-dimensional freeness and \autoref{thm:PH-even-monotone} proving that the $\flagstyle{W}$-filtration gives a properly even $C_n$-cell structure on $B_{C_n}\SU(2)$.
\end{proof}

In \autoref{fig:additive}, we have placed a red circle at the points $(|\cellstyle{w}_k^{C_p}|,|\cellstyle{w}_k|)$ for $1\leq k\leq 8$. That is, we have marked certain information about the dimensions $\alpha\in\RO(C_p)$ in which there is a generator of $\HHreduced_{C_p}^*(B_{C_p}\SU(2)_{+}\coeffs*{A})$ as an $\HHreduced_{C_p}^*(\S{0}\coeffs*{A})$-module. 

Observe that with a different choice of split full flag, we would end up with generators in different dimensions in $\RO(C_p)$. However, because the resulting $\HHreduced_{C_p}^*(\S{0}\coeffs*{A})$-module structure of $\HHreduced_{C_p}^*(B_{C_p}\SU(2)_{+}\coeffs*{A})$ would have to be isomorphic to the one described in \autoref{thm:additive}, the generators would have to be in dimensions $\alpha$ with the same values of $(|\alpha^{C_p}|,|\alpha|)$. That is, the resulting diagram would have the red circles in the same locations.

We see an interesting phenomenon for odd primes $p$, which is due to the following fact: even though the $C_p$-cell complex structure on $B_{C_p}\SU(2)$ adds the irreducible quaternionic representations ``in order'' $\irredH_0,\irredH_1,\irredH_2,\ldots$, this adds irreducible complex representations (of which there are an odd number) in pairs and ``out of order'', so that it takes $p$ cells until all irreducible complex representations are present in an equal amount.

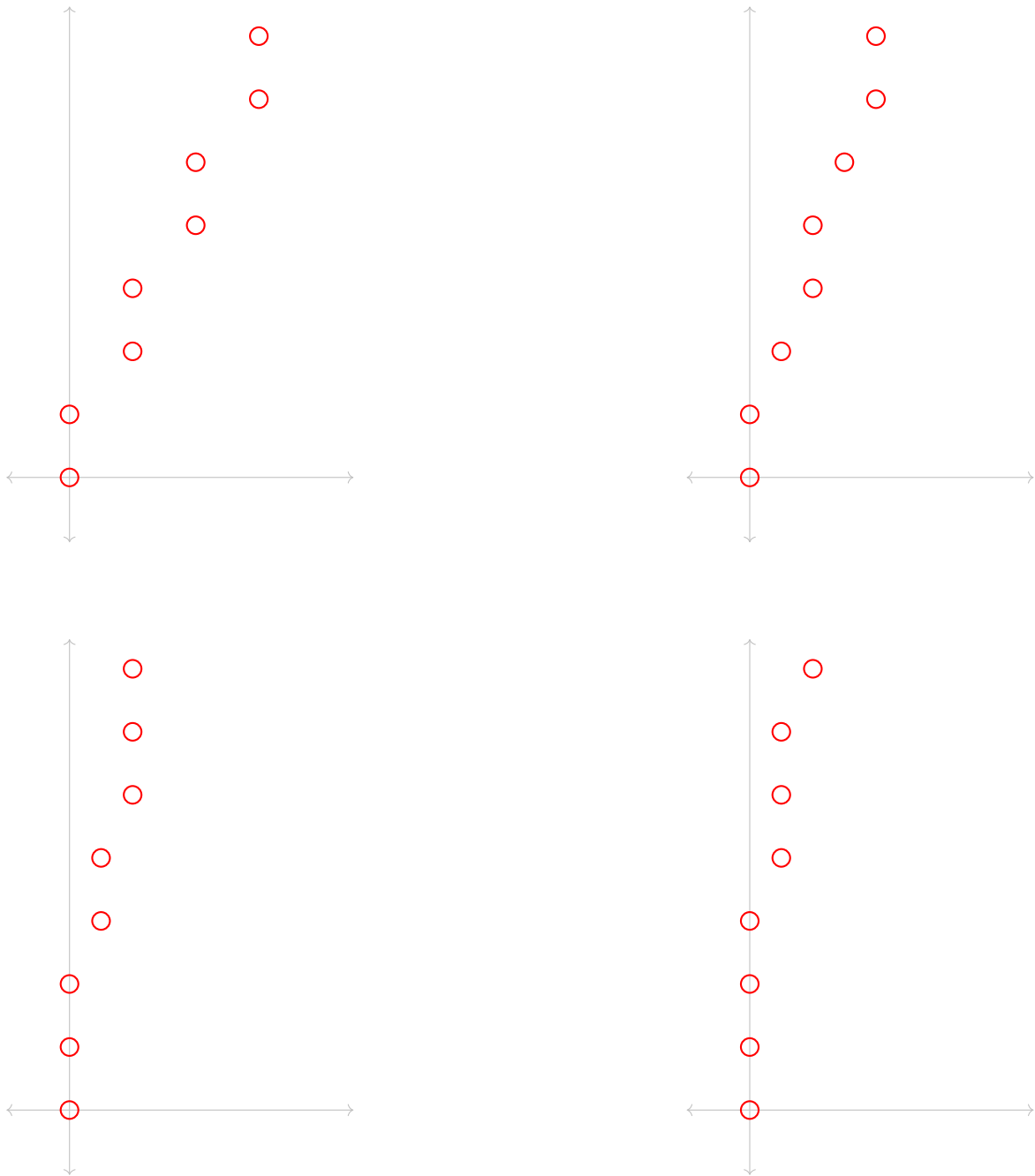
\begin{figure}
\begin{center}
\begin{tikzpicture}[xscale=0.95,yscale=0.95, every node/.style={fill opacity=0, text opacity=1,inner sep=2pt,outer sep=2pt}]
\path[use as bounding box] (-1.1,-1.1) rectangle (5.5,8.9);
\draw[gray!50,<->] (-1,0.03) -- (4.5,0.03);
\draw[gray!50,<->] (0,-1) -- (0,7.5);
\node at (2,8.5) {$p=2$};
\node at (0,7.9) {$|\alpha|$};
\node at (5.1,0) {$|\alpha^{C_2}|$};
\foreach \x in {0.25,0.5,...,4.25} {%
   \foreach \y in {0.25,0.5,...,7.25} {%
       \node[text=gray] at (\x,\y) {$\cdot$};
   }%
   \foreach \y in {-0.5,-0.25} {%
       \node[text=gray] at (\x,\y) {$\cdot$};
   }%
}%
\foreach \x in {-0.5,-0.25} {%
   \foreach \y in {0.25,0.5,...,7.25} {%
       \node[text=gray] at (\x,\y) {$\cdot$};
   }%
   \foreach \y in {-0.5,-0.25} {%
       \node[text=gray] at (\x,\y) {$\cdot$};
   }%
}%
\foreach \x in {0,1,...,7} {%
   \pgfmathsetmacro\result{floor(\x / 2)}
   \draw[thick,red] (\result,\x+0.03) circle (0.14);%
}%
\end{tikzpicture}\hfill \begin{tikzpicture}[xscale=0.95,yscale=0.95, every node/.style={fill opacity=0, text opacity=1}]
\path[use as bounding box] (-1.1,-1.1) rectangle (5.5,8.9);
\draw[gray!50,<->] (-1,0.03) -- (4.5,0.03);
\draw[gray!50,<->] (0,-1) -- (0,7.5);
\node at (2,8.5) {$p=3$};
\node at (0,7.9) {$|\alpha|$};
\node at (5.1,0) {$|\alpha^{C_p}|$};
\foreach \x in {0.25,0.75,...,4.25} {%
   \foreach \y in {0.25,0.75,...,7.25} {%
       \node[text=gray] at (\x,\y) {$\cdot$};
   }%
   \foreach \y in {-0.25} {%
       \node[text=gray] at (\x,\y) {$\cdot$};
   }%
}%
\foreach \x in {0.5,1,...,4} {%
   \foreach \y in {0.5,1,...,7} {%
       \node[text=gray] at (\x,\y) {$\cdot$};
   }%
   \foreach \y in {-0.5} {%
       \node[text=gray] at (\x,\y) {$\cdot$};
   }%
}%
\foreach \x in {-0.25} {%
   \foreach \y in {0.25,0.75,...,7.25} {%
       \node[text=gray] at (\x,\y) {$\cdot$};
   }%
   \foreach \y in {-0.25} {%
       \node[text=gray] at (\x,\y) {$\cdot$};
   }%
}%
\foreach \x in {-0.5} {%
   \foreach \y in {0.5,1,...,7} {%
       \node[text=gray] at (\x,\y) {$\cdot$};
   }%
   \foreach \y in {-0.5} {%
       \node[text=gray] at (\x,\y) {$\cdot$};
   }%
}%
\foreach \x in {0,1,...,7} {%
   \pgfmathsetmacro\result{floor(2 * \x / 3) / 2}
   \draw[thick,red] (\result,\x+0.03) circle (0.14);%
}%
\end{tikzpicture}
\end{center}
\begin{center}
\begin{tikzpicture}[xscale=0.95,yscale=0.95, every node/.style={fill opacity=0, text opacity=1}]
\path[use as bounding box] (-1.1,-1.1) rectangle (5.5,8.9);
\draw[gray!50,<->] (-1,0.03) -- (4.5,0.03);
\draw[gray!50,<->] (0,-1) -- (0,7.5);
\node at (2,8.5) {$p=5$};
\node at (0,7.9) {$|\alpha|$};
\node at (5.1,0) {$|\alpha^{C_p}|$};
\foreach \x in {0.25,0.75,...,4.25} {%
   \foreach \y in {0.25,0.75,...,7.25} {%
       \node[text=gray] at (\x,\y) {$\cdot$};
   }%
   \foreach \y in {-0.25} {%
       \node[text=gray] at (\x,\y) {$\cdot$};
   }%
}%
\foreach \x in {0.5,1,...,4} {%
   \foreach \y in {0.5,1,...,7} {%
       \node[text=gray] at (\x,\y) {$\cdot$};
   }%
   \foreach \y in {-0.5} {%
       \node[text=gray] at (\x,\y) {$\cdot$};
   }%
}%
\foreach \x in {-0.25} {%
   \foreach \y in {0.25,0.75,...,7.25} {%
       \node[text=gray] at (\x,\y) {$\cdot$};
   }%
   \foreach \y in {-0.25} {%
       \node[text=gray] at (\x,\y) {$\cdot$};
   }%
}%
\foreach \x in {-0.5} {%
   \foreach \y in {0.5,1,...,7} {%
       \node[text=gray] at (\x,\y) {$\cdot$};
   }%
   \foreach \y in {-0.5} {%
       \node[text=gray] at (\x,\y) {$\cdot$};
   }%
}%
\foreach \x in {0,1,...,7} {%
   \pgfmathsetmacro\result{floor(2 * \x / 5) / 2}
   \draw[thick,red] (\result,\x+0.03) circle (0.14);%
}%
\end{tikzpicture}\hfill
\begin{tikzpicture}[xscale=0.95,yscale=0.95, every node/.style={fill opacity=0, text opacity=1}]
\path[use as bounding box] (-1.1,-1.1) rectangle (5.5,8.9);
\draw[gray!50,<->] (-1,0.03) -- (4.5,0.03);
\draw[gray!50,<->] (0,-1) -- (0,7.5);
\node at (2,8.5) {$p=7$};
\node at (0,7.9) {$|\alpha|$};
\node at (5.1,0) {$|\alpha^{C_p}|$};
\foreach \x in {0.25,0.75,...,4.25} {%
   \foreach \y in {0.25,0.75,...,7.25} {%
       \node[text=gray] at (\x,\y) {$\cdot$};
   }%
   \foreach \y in {-0.25} {%
       \node[text=gray] at (\x,\y) {$\cdot$};
   }%
}%
\foreach \x in {0.5,1,...,4} {%
   \foreach \y in {0.5,1,...,7} {%
       \node[text=gray] at (\x,\y) {$\cdot$};
   }%
   \foreach \y in {-0.5} {%
       \node[text=gray] at (\x,\y) {$\cdot$};
   }%
}%
\foreach \x in {-0.25} {%
   \foreach \y in {0.25,0.75,...,7.25} {%
       \node[text=gray] at (\x,\y) {$\cdot$};
   }%
   \foreach \y in {-0.25} {%
       \node[text=gray] at (\x,\y) {$\cdot$};
   }%
}%
\foreach \x in {-0.5} {%
   \foreach \y in {0.5,1,...,7} {%
       \node[text=gray] at (\x,\y) {$\cdot$};
   }%
   \foreach \y in {-0.5} {%
       \node[text=gray] at (\x,\y) {$\cdot$};
   }%
}%
\foreach \x in {0,1,...,7} {%
   \pgfmathsetmacro\result{floor(2 * \x / 7) / 2}
   \draw[thick,red] (\result,\x+0.03) circle (0.14);%
}%
\end{tikzpicture}
\end{center}
\caption[Dimensions where additive generators of $\HHreduced_{C_p}^{*}(B_{C_p}\SU(2)_{+}\coeffs*{A})$ occur]{The equivariant dimensions $\alpha$ in which additive generators of the cohomology $\HHreduced_{C_p}^{*}(B_{C_p}\SU(2)_{+}\coeffs*{A})$ occur, plotted according to $|\alpha^{C_p}|$ and $|\alpha|$. Dimensions of elements of $\RO(C_p)$ are plotted with dots, whereas points not corresponding to any element of $\RO(C_p)$ are left blank. Observe the uneven stair pattern for odd primes $p$.}
\label{fig:additive}
\end{figure}

\section{Inverse limit}

\begin{theorem}
\label{thm:inverse-limit}
With $Q$ a complete quaternionic $C_p$-universe, and $\flagstyle{W}$ the split full flag of $Q$ defined in \autoref{thm:PH-even-monotone}, we have that
\[\HHreduced_{C_p}^*(\PH(Q)_{+}\coeffs*{A})=\varprojlim\limits_{n}\HHreduced_{C_p}^*(\PH(\flagstyle{W}_n)_{+}\coeffs*{A}).\]
\end{theorem}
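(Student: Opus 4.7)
The plan is to recognize this as an instance of the Milnor $\varprojlim^1$ short exact sequence for the filtration of $\PH(Q)$ by the finite subcomplexes $\PH(\flagstyle{W}_n)$, and to establish that the $\varprojlim^1$ term vanishes. By \autoref{thm:PH-even-monotone}, $\PH(Q)$ is the colimit $\operatorname{colim}_n \PH(\flagstyle{W}_n)$, with each inclusion $\PH(\flagstyle{W}_n) \hookrightarrow \PH(\flagstyle{W}_{n+1})$ a $C_p$-cofibration attaching the single cell $\unitdisk{\cellstyle{w}_n}$. Since $\RO(C_p)$-graded Bredon cohomology with coefficients in $A$ is represented by an Eilenberg-MacLane $C_p$-spectrum $HA$, and the mapping spectrum $F(\PH(Q)_+, HA)$ is the homotopy inverse limit of the spectra $F(\PH(\flagstyle{W}_n)_+, HA)$, we obtain for each $\alpha \in \RO(C_p)$ a Milnor short exact sequence of Mackey functors
\[0 \to \varprojlim_n{}^1 \HHreduced_{C_p}^{\alpha-1}(\PH(\flagstyle{W}_n)_+\coeffs*{A}) \to \HHreduced_{C_p}^\alpha(\PH(Q)_+\coeffs*{A}) \to \varprojlim_n \HHreduced_{C_p}^\alpha(\PH(\flagstyle{W}_n)_+\coeffs*{A}) \to 0,\]
with limits formed objectwise in $\mackey{C_p}$.

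The heart of the argument is verifying the Mittag-Leffler condition: that each restriction map $\HHreduced_{C_p}^*(\PH(\flagstyle{W}_{n+1})_+) \to \HHreduced_{C_p}^*(\PH(\flagstyle{W}_n)_+)$ is surjective. Applying \autoref{thm:cell-structure-h} with $W = \flagstyle{W}_n$ and $k = n$ produces the cofiber sequence
\[\PH(\flagstyle{W}_n)_+ \to \PH(\flagstyle{W}_{n+1})_+ \to \S{\cellstyle{w}_n},\]
and since $\PH(\flagstyle{W}_{n+1})$ is a finite properly even $C_p$-cell complex, \autoref{thm:even-dim-free} exhibits $\Sigma^{\cellstyle{w}_n}\HHreduced_{C_p}^*(\S{0}\coeffs*{A})$ as a direct summand of $\HHreduced_{C_p}^*(\PH(\flagstyle{W}_{n+1})_+)$. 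Consequently the map $\HHreduced_{C_p}^*(\S{\cellstyle{w}_n}) \to \HHreduced_{C_p}^*(\PH(\flagstyle{W}_{n+1})_+)$ in the associated long exact sequence is injective, so by exactness the connecting map $\HHreduced_{C_p}^*(\PH(\flagstyle{W}_n)_+) \to \HHreduced_{C_p}^{*+1}(\S{\cellstyle{w}_n})$ vanishes and the restriction is surjective.

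The main obstacle is justifying the Milnor sequence in the Mackey-functor-valued $\RO(C_p)$-graded setting; this reduces to the standard spectrum-level statement once one checks that the objectwise inverse limits in $\mackey{C_p}$ are compatible with evaluation at each orbit, which is immediate because limits in a diagram category are computed pointwise. With surjectivity of the transition maps in hand, $\varprojlim^1$ vanishes and the Milnor sequence collapses to the desired isomorphism.
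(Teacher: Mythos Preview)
Your argument is correct and follows essentially the same route as the paper: both invoke the Milnor $\varprojlim^1$ sequence and use \autoref{thm:even-dim-free} to deduce that the restriction maps $\HHreduced_{C_p}^*(\PH(\flagstyle{W}_{n+1})_+)\to\HHreduced_{C_p}^*(\PH(\flagstyle{W}_n)_+)$ are surjective, so that $\varprojlim^1$ vanishes. The only cosmetic difference is that the paper phrases surjectivity as ``the smaller cohomology is a direct summand of the larger with restriction as the projection,'' whereas you route through the long exact sequence of the cofiber pair; these are equivalent.
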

\begin{proof}
\autoref{thm:even-dim-free} implies that $\HHreduced_{C_p}^*(\PH(\flagstyle{W}_n)_{+}\coeffs*{A})$ is a direct summand of $\HHreduced_{C_p}^*(\PH(\flagstyle{W}_{n+1})_{+}\coeffs*{A})$ for every $n$, with the map \[\HHreduced_{C_p}^*(\PH(\flagstyle{W}_{n+1})_{+}\coeffs*{A})\to\HHreduced_{C_p}^*(\PH(\flagstyle{W}_n)_{+}\coeffs*{A})\]
being left inverse to the inclusion, and therefore surjective. Thus, $\underset{n}{\varprojlim}^1\HHreduced_{C_p}^*(\PH(\flagstyle{W}_n)_{+}\coeffs*{A})=0$, so that the map $\HHreduced_{C_p}^*(\PH(Q)_{+}\coeffs*{A})\to\varprojlim\limits_{n}\HHreduced_{C_p}^*(\PH(\flagstyle{W}_n)_{+}\coeffs*{A})$ is an isomorphism.
\end{proof}

\chapter{\texorpdfstring{Multiplicative Structure of $\HHreduced_{C_2}^{*}(B_{C_2}\SU(2)_{+}\coeffs*{A})$}{Multiplicative Structure of H\_\{C\_2\}*(B\_\{C\_2\}SU(2)\_+;A)}}
\label{chap:mult}

In this chapter, we will establish the complete multiplicative structure of $\HHreduced_{C_2}^{*}(B_{C_2}\SU(2)_{+}\coeffs*{A})$ in \autoref{thm:main}.


\section{Preliminaries and notation}

\label{sec:prelim}

In this section, we will set notation for important maps arising in the cohomology of $\PH(Q)$, where $Q$ is a complete quaternionic $C_p$-universe, for any prime $p$. This draws on knowledge about quaternionic $C_p$-representations and their projective spaces from Chapter 2, in particular the structure of the fixed points $\PH(Q)^{C_p}$. In the subsequent sections, we study the case of $p=2$ and establish \autoref{thm:main}.


\begin{definition}
\label{def:ch5-towhole}
The maps $\Htowhole{r}$ and $\Htowhole[n]{r}$ are defined to be the inclusions
\[
\begin{tikzcd}[row sep=1.6cm,column sep=1.6cm]
\PH[r](Q)_{+} \ar[hook]{r}{\Htowhole{r}} & \PH(Q)_{+},
\end{tikzcd}\quad\begin{tikzcd}[row sep=1.6cm,column sep=1.6cm]
\PH[r](\flagstyle{W}_n)_{+} \ar[hook]{r}{\Htowhole[n]{r}} & \PH(\flagstyle{W}_n)_{+}
\end{tikzcd}
\] arising from the inclusions of the isotypical components $Q(r;\H)\hookrightarrow Q$ and $\flagstyle{W}_n(r;\H)\hookrightarrow \flagstyle{W}_n$, respectively.
\end{definition}

\begin{definition}
\label{def:ch5-tonext}
The maps $\Htonext{n}$ and $\Htonext[r]{n}$ are defined to be the inclusions
\[\begin{tikzcd}[row sep=1.6cm,column sep=1.6cm]
\PH(\flagstyle{W}_n)_{+} \ar[hook]{r}{\Htonext{n}} & \PH(\flagstyle{W}_{n+1})_{+} ,
\end{tikzcd}\quad\begin{tikzcd}[row sep=1.6cm,column sep=1.6cm]
\PH[r](\flagstyle{W}_n)_{+} \ar[hook]{r}{\Htonext[r]{n}} & \PH[r](\flagstyle{W}_{n+1})_{+} 
\end{tikzcd}\]  arising from the inclusions of the subrepresentations $\flagstyle{W}_n\hookrightarrow \flagstyle{W}_{n+1}$ and their isotypical components, respectively.
\end{definition}

\begin{definition}
\label{def:ch5-tocofiber}
The maps $\Htocofiber{n}$ and $\Htocofiber[r]{n}$ are defined to be the cofibers of $\Htonext{n}$ and $\Htonext[r]{n}$, respectively, shown in the figure below together with the other maps defined so far, as well as another map $\Htowholecofiber[r]{n}$.\par
\begin{figure}[ht]
	\[
	\begin{tikzcd}[row sep=1.6cm,column sep=1.6cm]
	\PH(\flagstyle{W}_n)_{+} \ar[hook]{r}{\Htonext{n}} & \PH(\flagstyle{W}_{n+1})_{+} \ar{r}{\Htocofiber{n}} & \S{\cellstyle{w}_n}\\
	\PH[r](\flagstyle{W}_n)_{+} \ar[hook]{r}[swap]{\Htonext[r]{n}} \ar[hook]{u}{\Htowhole[n]{r}} & \PH[r](\flagstyle{W}_{n+1})_{+} \ar{r}[swap]{\Htocofiber[r]{n}} \ar[hook]{u}{\Htowhole[n+1]{r}} & \Hcofiber[r]{n} \ar[hook]{u}[swap]{\Htowholecofiber[r]{n}}
	\end{tikzcd}
	\]
	\caption{Two important cofiber sequences}
	\label{fig:cofiber}
\end{figure}
By \autoref{thm:cell-structure-h}, the cofiber of the inclusion $\Htonext{n}$ is $\S{\cellstyle{w}_{n}}$. Keeping in mind that $\flagstyle{W}_n(n;\C)=\flagstyle{W}_n(r;\C)$ when $n\equiv \pm r\bmod p$ due to \autoref{thm:class-of-irred-H}, by \autoref{thm:fixed-cell-structure-h} and \autoref{lem:key-even-monotone-lemma} we have that the cofiber of the inclusion $\Htonext[r]{n}$ is
\begin{align*}
\Hcofiber[r]{n}&=\begin{cases}
\S{|\flagstyle{W}_n(n;\C)|} & \text{if }n\equiv \pm r\bmod p,\\
\pt & \text{otherwise}
\end{cases}\\
&=\begin{cases}
\S{4\lfloor n/p\rfloor + 2\lfloor 2(n - \lfloor n/p\rfloor p)/(p+1)\rfloor} & \text{if }n\equiv \pm r\bmod p,\\
\pt & \text{otherwise}
\end{cases}
\end{align*}
where $\cellstyle{w}_{n}$ is as introduced in \autoref{def:PH-even-monotone}. Finally, the map $\Htowholecofiber[r]{n}$ comes from the general fact that an inclusion of pairs $(X,A)\hookrightarrow(Y,B)$ induces an inclusion $B/A\hookrightarrow Y/X$.
\end{definition}

\begin{definition}
\label{def:ch5-totop}
The maps $\Htotop{n}$ and $\Htotop[r]{n}$ are defined to be the inclusions
\[\begin{tikzcd}[row sep=1.6cm,column sep=1.6cm]
\PH(\flagstyle{W}_n)_{+} \ar[hook]{r}{\Htotop{n}} & \PH(Q)_{+} ,
\end{tikzcd}\quad\begin{tikzcd}[row sep=1.6cm,column sep=1.6cm]
\PH[r](\flagstyle{W}_n)_{+} \ar[hook]{r}{\Htotop[r]{n}} & \PH[r](Q)_{+} 
\end{tikzcd}\]  arising from the inclusions of the subrepresentations $\flagstyle{W}_n\hookrightarrow Q$ and their isotypical components, respectively.
\end{definition}




We have shown the commutative diagram of Mackey functors induced by all of these maps in \autoref{fig:cofiber-cohomology}. Note in particular that because the $\flagstyle{W}$-filtration is a properly even $C_p$-cell structure, the boundary maps in cohomology induced by the cofiber sequences in \autoref{fig:cofiber} will be zero, which gives the short exact sequences shown.
\begin{figure}[ht]
\[
\begin{tikzcd}[row sep=1.3cm,column sep=1.3cm]
{} & & \HHreduced_{G}^{\alpha}(\PH(Q)_{+}) \ar{d}[swap]{(\Htotop{n+1})^*} \ar{dr}{(\Htotop{n})^*}\\
0 \ar{r} & \HHreduced_G^{\alpha}(\S{\cellstyle{w}_n}) \ar{d}[swap]{(\Htowholecofiber[r]{n})^*} \ar{r}{(\Htocofiber{n})^*} & \HHreduced_G^{\alpha}(\PH(\flagstyle{W}_{n+1})_{+}) \ar{r}{(\Htonext{n})^*} \ar{d}{(\Htowhole[n+1]{r})^*} &  \HHreduced_G^{\alpha}(\PH(\flagstyle{W}_n)_{+}) \ar{d}{(\Htowhole[n]{r})^*} \ar{r} & 0\\
0\ar{r} & \HHreduced_G^{\alpha}(\Hcofiber[r]{n}) \ar{r}[swap]{(\Htocofiber[r]{n})^*} & \HHreduced_G^{\alpha}(\PH[r](\flagstyle{W}_{n+1})_{+}) \ar{r}[swap]{(\Htonext[r]{n})^*}  & \HHreduced_G^{\alpha}(\PH[r](\flagstyle{W}_n)_{+}) \ar{r} & 0\\
{} & & \HHreduced_{G}^{\alpha}(\PH[r](Q)_{+}) \ar{u}{(\Htotop[r]{n+1})^*} \ar{ur}[swap]{(\Htotop[r]{n})^*}
\end{tikzcd}
\]
\caption{Some important maps in cohomology}
\label{fig:cofiber-cohomology}
\end{figure}

\begin{definition}
Recall the symbols $\GG$ and $\Ge$ from \autoref{rem:dot-and-sun}. Define $\HtoGe$, $\HtoGe[n]$, $\HparttoGe[n]{r}$, and $\HcofibertoGe{n}$ to be the restriction maps in the Mackey functors
\begin{center}
\begin{tikzcd}
\HHreduced_{C_p}^{\alpha}(\PH(Q)_{+})(\GG) \ar[bend right=40]{d}[swap]{\HtoGe} \\
\HHreduced_{C_p}^{\alpha}(\PH(Q)_{+})(\Ge) \ar[bend right=40]{u}
\end{tikzcd}\qquad
\begin{tikzcd}
\HHreduced_{C_p}^{\alpha}(\PH(\flagstyle{W}_n)_{+})(\GG) \ar[bend right=40]{d}[swap,pos=0.48]{\HtoGe[n]} \\
\HHreduced_{C_p}^{\alpha}(\PH(\flagstyle{W}_n)_{+})(\Ge) \ar[bend right=40]{u}
\end{tikzcd}\qquad
\begin{tikzcd}
\HHreduced_{C_p}^{\alpha}(\PH[r](\flagstyle{W}_n)_{+})(\GG) \ar[bend right=40]{d}[swap]{\HparttoGe[n]{r}} \\
\HHreduced_{C_p}^{\alpha}(\PH[r](\flagstyle{W}_n)_{+})(\Ge) \ar[bend right=40]{u}
\end{tikzcd}\qquad
\begin{tikzcd}
\HHreduced_{C_p}^{\alpha}(\S{\cellstyle{w}_n})(\GG) \ar[bend right=40]{d}[swap]{\HcofibertoGe{n}} \\
\HHreduced_{C_p}^{\alpha}(\S{\cellstyle{w}_n})(\Ge) \ar[bend right=40]{u}
\end{tikzcd}
\end{center}
The dependence on $\alpha$ is left implicit. 
\end{definition}


\begin{remark}
For a complex $C_n$-representation $V$ and a quaternionic $C_n$-representation $W$, non-equivariantly we have that
\[\PC(V)\cong\C\P^{\frac{|V|}{2}-1},\qquad \qquad\PH(W)\cong \HP^{\frac{|W|}{4}-1}.\]
Therefore 
\begin{align*}
\PH(Q)&\cong\HP^\infty, & \PH(\flagstyle{W}_n)&\cong\HP^{n-1},\\
\PH[r](Q)&\cong \begin{cases}
\HP^{\infty} & \text{if }2r\equiv 0\bmod n,\\
\C\P^{\infty} & \text{otherwise},
\end{cases} & \PH[r](\flagstyle{W}_n)&\cong \begin{cases}
\HP^{\frac{|\flagstyle{W}_n(r;\C)|}{4}-1} & \text{if }2r\equiv 0\bmod n,\\
\C\P^{\frac{|\flagstyle{W}_n(r;\C)|}{2}-1} & \text{otherwise}.
\end{cases}
\end{align*}
The cohomology rings of complex and quaternionic projective spaces are classically known.
\end{remark}

\begin{definition}
\label{def:generators-x-xr}
We will make a choice of the following generators.
\begin{itemize}
\item Fix an element $x\in \Hhreduced^4(\PH(Q)_{+})$ for which $\Hhreduced^*(\PH(Q)_{+})\cong\Z[x]$.
\item If $2r\equiv 0\bmod p$, fix an element $x_r\in \Hhreduced^4(\PH[r](Q)_{+})$ for which $\Hhreduced^*(\PH[r](Q)_{+})\cong\Z[x_r]$.
\item If $2r\not\equiv0\bmod p$, fix an element $y_r\in\Hhreduced^2(\PH[r](Q)_{+})$ for which $\Hhreduced^*(\PH[r](Q)_{+})\cong\Z[y_r]$.
\end{itemize}
\end{definition}

\begin{remark}
\label{x-stuff}
In light of \autoref{nonequivariant}, we will view $x$ as an element of $\HHreduced_{C_p}^{*}(\PH(Q)_{+})(\Ge)$ when $|{*}|=4$, and view the map
\[(\Htotop{n})^*:\HHreduced_{C_p}^{*}(\PH(Q)_{+})(\Ge)\to\HHreduced_{G}^*(\PH(\flagstyle{W}_n)_{+})(\Ge)\]
as being the quotient map $\Z[x]\to\Z[x]/(x^{n})$.
\end{remark}

\begin{remark}
\label{xr-stuff}
In light of \autoref{thm:trivial-even-cohomology}, we will view $x_r$ as an element of $\HHreduced_{C_p}^{4}(\PH[r](Q)_{+})(\GG)$ when $2r\equiv0\bmod p$, and view the map
\[(\Htotop[r]{n})^*:\HHreduced_{C_p}^{*}(\PH[r](Q)_{+})\to\HHreduced_{G}^*(\PH[r](\flagstyle{W}_n)_{+})\]
as being the quotient map $\HHreduced_{G}^*(\S{0})\otimes\Z[x_r]\to\HHreduced_{G}^*(\S{0})\otimes\Z[x_r]/(x_r^{d})$ where $d$ is the appropriate power for that value of $n$. Similarly for $y_r$ when $2r\not\equiv0\bmod p$, except with $2$ instead of $4$.
\end{remark}

\section{\texorpdfstring{Some specifics for $p=2$}{Some specifics for p=2}}
\label{rem:cells-for-2}

In \autoref{sec:prelim}, we established notation for several important objects related to the cohomology of $\PH(Q)$, as well as maps between them, without making any restriction on the prime $p$. Starting with this section, we focus on the case $p=2$.

Observe that with $p=2$, we have
\[\flagstyle{W}_n=\bigoplus_{k=0}^{n-1}\irredH_k\cong\irredH_0^{\lceil n/2\rceil}\oplus\irredH_1^{\lfloor n/2\rfloor},\]
and therefore
\[\Hcofiberrep{n}=\irredC_{-n}\otimes_{\C}\flagstyle{W}_n\cong\irredH_0^{\lfloor n/2\rfloor}\oplus\irredH_1^{\lceil n/2\rceil}=4\lfloor\tfrac{n}{2}\rfloor+4\lceil\tfrac{n}{2}\rceil\signrep.\]
Consequently, when $n\equiv \pm r\bmod 2$ (indeed, when $n\equiv r\bmod 2$), we have
\[\Hcofiber[r]{n}=\S{|\flagstyle{W}_n(r;\C)|}=\left.\begin{cases}
\S{4\lceil n/2\rceil} & \text{if $n$ is even},\\
\S{4\lfloor n/2\rfloor} & \text{if $n$ is odd}
\end{cases}\right\}=\S{4\lfloor n/2\rfloor},\]
which we could also see by applying the formula from \autoref{def:ch5-tocofiber} with $p=2$. In this case, the map $\Htowholecofiber[r]{n}:\Hcofiber[r]{n}\hookrightarrow\S{\cellstyle{w}_n}$ induces multiplication by $\epsilon^{4\lceil n/2\rceil}$,  where $\epsilon$ is as in \autoref{def:epsilon-for-2}.

Lastly, for later convenience, we will also make the following definition.
\begin{definition}
\label{def:placeholder}
Let $\placeholder{r}=\begin{cases}
0 & \text{if }r\equiv 0\bmod 2,\\
\epsilon^4 & \text{if }r\equiv 1\bmod 2,
\end{cases}$ where $\epsilon$ is as in \autoref{def:epsilon-for-2}.
\end{definition}

\section{Outline of main theorem}

In the subsequent sections, we will prove the following result.

\begin{theorem}
\label{thm:main}
As an algebra over $\HHreduced_{C_2}^{*}(\S{0}\coeffs*{A})$, the cohomology $\HHreduced_{C_2}^{*}(B_{C_2}\SU(2)_{+}\coeffs*{A})$ is generated by two elements $\gen\in\HHreduced_{C_2}^{4\signrep}(B_{C_2}\SU(2)_{+}\coeffs*{A})$ and $\Gen\in\HHreduced_{C_2}^{4+4\signrep}(B_{C_2}\SU(2)_{+}\coeffs*{A})$, satisfying the relation
\[\gen^2=\epsilon^4\gen+\xi^2\Gen,\]
where $\epsilon\in\HHreduced_{C_2}^{\signrep}(\S{0}\coeffs*{A})$ and $\xi\in\HHreduced_{C_2}^{2\signrep-2}(\S{0}\coeffs*{A})$ are from \hyperref[def:epsilon-for-2]{Definitions \begin{NoHyper}\ref{def:epsilon-for-2}\end{NoHyper}} and \hyperref[def:xi-for-2]{\ref{def:xi-for-2}}, respectively.
\end{theorem}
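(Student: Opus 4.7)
The plan is to follow the template of Lewis's computation of $\HHreduced^{*}_{C_p}(B_{C_p}\U(1)_{+}\coeffs*{A})$ in \cite{lewis_complex}: construct $\gen$ and $\Gen$ explicitly, verify the single relation by injecting into a product of simpler cohomology rings via the multiplicative comparison theorem, and finally prove that these two classes multiplicatively generate everything by induction along the properly even $C_2$-cell filtration produced in \autoref{thm:PH-even-monotone}. The answer is dimensionally consistent: for $p=2$ one has $\cellstyle{w}_{2b}=4b+4b\signrep$ and $\cellstyle{w}_{2b+1}=4b+(4b+4)\signrep$, which are respectively the dimensions of $\Gen^{b}$ and $\gen\,\Gen^{b}$, so that the proposed relation $\gen^{2}=\epsilon^{4}\gen+\xi^{2}\Gen$ eliminates $\gen^{a}$ for $a\geq 2$ and leaves $\{\Gen^{b},\gen\,\Gen^{b}\}_{b\geq 0}$ as a spanning set whose size exactly matches the additive freeness result of \autoref{thm:additive}.

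I would construct $\gen$ and $\Gen$ by prescribing their images under the injection $\rho\oplus i_{0}^{*}\oplus i_{1}^{*}$ supplied by \autoref{thm:mult-comparison} and \autoref{rem:mult-comparison-useful}, where $i_{k}$ is the inclusion of the $k$-th component $\PH[k](Q)$ of the fixed-point set $B_{C_2}\SU(2)^{C_2}=\PH[0](Q)\sqcup\PH[1](Q)$. Using the identifications of \autoref{nonequivariant} and \autoref{thm:trivial-even-cohomology}, the codomain becomes a direct sum of a single copy of the non-equivariant ring $\Z[x]$ and two copies of the truncated polynomial $\HHreduced_{C_2}^{*}(\S{0}\coeffs*{A})$-algebras on $x_{0},x_{1}$, so specifying $\gen$ and $\Gen$ reduces to picking compatible triples in these three easily-understood rings. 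I would set $\rho(\Gen)=x^{2}$ and $\rho(\gen)=x$, and prescribe the images under $i_{k}^{*}$ as low-degree $\HHreduced_{C_2}^{*}(\S{0}\coeffs*{A})$-linear combinations of $1$ and $x_{k}$, with the parity shifts encoded by $\placeholder{r}$ of \autoref{def:placeholder} (which in turn come from the $\epsilon^{4\lceil n/2\rceil}$-twists computed in \autoref{rem:cells-for-2}). To promote this specification into actual cohomology classes, I would build compatible lifts on each finite skeleton $\PH(\flagstyle{W}_{n})$ using the cofiber sequences of \autoref{fig:cofiber}, checking consistency via the injectivity of \autoref{thm:mult-comparison}, and then take the inverse limit via \autoref{thm:inverse-limit}.

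The relation $\gen^{2}=\epsilon^{4}\gen+\xi^{2}\Gen$ can then be verified directly: both sides lie in $\HHreduced_{C_2}^{8\signrep}(B_{C_2}\SU(2)_{+}\coeffs*{A})$, whose grading is even, so \autoref{thm:mult-comparison} reduces the check to equality after $\rho$ and after each $i_{k}^{*}$. In each summand the calculation is a polynomial-ring identity: under $\rho$ one uses $\restriction{\rho}(\epsilon)=0$ and $\restriction{\rho}(\xi^{2})=\iota^{-4}$ from the definitions in \autoref{def:epsilon-for-2} and \autoref{def:xi-for-2}, while under $i_{k}^{*}$ the identity is a straightforward calculation in the truncated polynomial ring using the formulas for the prescribed images.

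Finally, I would show generation by proving inductively on $n$ that $\{\Gen^{b}:2b\leq n-1\}\cup\{\gen\Gen^{b}:2b+1\leq n-1\}$ generates $\HHreduced_{C_2}^{*}(\PH(\flagstyle{W}_{n})_{+}\coeffs*{A})$ as a module over $\HHreduced_{C_2}^{*}(\S{0}\coeffs*{A})$. The inductive step uses the short exact sequence
\[
0\to \HHreduced_{C_2}^{*}(\S{\cellstyle{w}_{n}}\coeffs*{A})\xrightarrow{(\Htocofiber{n})^{*}}\HHreduced_{C_2}^{*}(\PH(\flagstyle{W}_{n+1})_{+}\coeffs*{A})\xrightarrow{(\Htonext{n})^{*}}\HHreduced_{C_2}^{*}(\PH(\flagstyle{W}_{n})_{+}\coeffs*{A})\to 0
\]
from \autoref{fig:cofiber-cohomology}, which is split by the even-dimensional freeness of \autoref{thm:additive}. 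The old generators surject onto the quotient by induction, so it suffices to check that the newly-introduced monomial $\gen^{n\bmod 2}\Gen^{\lfloor n/2\rfloor}$ pulls back from a generator of the top summand $\HHreduced_{C_2}^{*}(\S{\cellstyle{w}_{n}}\coeffs*{A})$; applying \autoref{thm:mult-comparison} once more reduces this to a transparent calculation inside the truncated polynomial rings $\HHreduced_{C_2}^{*}(\PH[r](\flagstyle{W}_{n+1})_{+}\coeffs*{A})$. Passing to the inverse limit via \autoref{thm:inverse-limit} then yields the full result. The principal obstacle is this final inductive bookkeeping: the two fixed-point components $\PH[0]$ and $\PH[1]$ receive new cells at alternating steps of the filtration by \autoref{thm:fixed-cell-structure-h}, so at each stage one must verify that the new monomial maps to the appropriate top-cell generator on only one fixed-point component while simultaneously matching the non-equivariant picture via $\rho$, with all three images conspiring correctly in $\HHreduced_{C_2}^{*}(\S{\cellstyle{w}_{n}}\coeffs*{A})$.
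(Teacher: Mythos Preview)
Your proposal is correct and follows essentially the same approach as the paper: construct $\gen$ and $\Gen$ by prescribing their images under $\rho\oplus i_0^*\oplus i_1^*$ and inductively lifting along the filtration via the cofiber sequences of \autoref{fig:cofiber}, verify the relation by checking it under this injection, and prove generation by showing inductively that $\gen^{n\bmod 2}\Gen^{\lfloor n/2\rfloor}$ hits the top-cell generator $(\Htocofiber{n})^*(1)$ before passing to the inverse limit. The paper carries out exactly this plan, with the explicit prescriptions $(\Htowhole{r})^*(\gen)=\placeholder{r}+\xi^2 x_r$ and $(\Htowhole{r})^*(\Gen)=x_r(\epsilon^4+\xi^2 x_r)$ filling in the formulas you left unspecified.
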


Broadly speaking, the proof will proceed in four steps: 
\begin{itemize}
	\item construct the element $\gen$ (\autoref{subsection:construct-c}),
	\item construct the element $\Gen$ (\autoref{subsection:construct-C}),
	\item prove that together they generate $\HHreduced_{C_2}^{*}(B_{C_2}\SU(2)_{+}\coeffs*{A})$ (\autoref{subsection:proof-cC-generators}), and 
	\item establish the relation between them (\autoref{subsection:cC-relation}).
\end{itemize}

\section{\texorpdfstring{Construction of the generator $\gen$ in dimension $4\signrep$}{Construction of the generator c in dimension 4σ}}
\label{subsection:construct-c}

In this section, we will identify and construct the element $\gen$ in the cohomology Mackey functor $\HHreduced_{C_2}^{4\signrep}(\PH(Q)_{+})(\GG)$, one of the two elements we claim are generators in \autoref{thm:main}.

To identify the element $\gen$, we provide the information shown to be sufficient in \autoref{rem:mult-comparison-useful}, i.e., its images under $\rho$, $(\Htowhole{0})^*$, and $(\Htowhole{1})^*$. The element $\gen\in \HHreduced_{C_2}^{4\signrep}(\PH(Q)_{+})(\GG)$ has 
\begin{align*}
(\Htowhole{0})^*(\gen)&=\placeholder{0}+\xi^2x_0,\\
(\Htowhole{1})^*(\gen)&=\placeholder{1}+\xi^2x_1,\\
\HtoGe(\gen)&=x.
\end{align*}

To construct the element $\gen$, i.e., to prove that an element as specified above really does exist, our goal will be to inductively construct elements $\gen_n\in \HHreduced_{G}^{4\signrep}(\PH(\flagstyle{W}_n)_{+})(\GG)$, each such that
\begin{align*}
(\Htowhole[n]{0})^*(\gen_n)&=(\Htotop[0]{n})^*(\placeholder{0}+\xi^2x_0),\\
(\Htowhole[n]{1})^*(\gen_n)&=(\Htotop[1]{n})^*(\placeholder{1}+\xi^2x_1),\\
\HtoGe[n](\gen_n)&=(\Htotop{n})^*(x).
\end{align*}
Since the $\flagstyle{W}_n$ are a cofinal collection of finite-dimensional sub-$\H$-modules of $Q$, and since
\[(\Htotop[r]{n})^*\circ (\Htowhole{r})^*=(\Htowhole[n]{r})^*\circ (\Htotop{n})^*,\]
by \autoref{thm:inverse-limit} this process yields an element $\gen\in \HHreduced_{C_2}^{4\signrep}(\PH(Q)_{+})(\GG)$ with 
\begin{align*}
(\Htowhole{0})^*(\gen)&=\placeholder{0}+\xi^2x_0,\\
(\Htowhole{1})^*(\gen)&=\placeholder{1}+\xi^2x_1,\\
\HtoGe(\gen)&=x
\end{align*}
as desired.

First, \autoref{lem:gen-initial} will provide a base case by constructing an element $\gen_2$ with the specified properties, then \autoref{thm:gen} will perform the main work of proving the inductive step, that an element $\gen_n$ can always be lifted to an element $\gen_{n+1}$.

\begin{lemma}
\label{lem:gen-initial}
An element $\gen_2\in \HHreduced_{C_2}^{4\signrep}(\PH(\flagstyle{W}_2)_{+})(\GG)$ exists with the following properties:
\begin{align*}
(\Htowhole[2]{0})^*(\gen_2)&=(\Htotop[0]{2})^*(\placeholder{0}+\xi^2x_0),\\
(\Htowhole[2]{1})^*(\gen_2)&=(\Htotop[1]{2})^*(\placeholder{1}+\xi^2x_1),\\
\HtoGe[2](\gen_2)&=(\Htotop{2})^*(x).
\end{align*}
\end{lemma}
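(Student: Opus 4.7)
The plan is to construct $\gen_2$ directly from the Thom class of the single cell attached in going from $\PH(\flagstyle{W}_1)$ to $\PH(\flagstyle{W}_2)$. Since $\flagstyle{W}_1 = \irredH_0$ and $\flagstyle{W}_2 = \irredH_0 \oplus \irredH_1$, this cell is $\unitdisk{\cellstyle{w}_1}$ with $\cellstyle{w}_1 = \irredC_{-1}\otimes_{\C} \irredH_0 \cong 4\signrep$ as a real $C_2$-representation, and the cofiber sequence from \autoref{thm:cell-structure-h} becomes $\S{0} \xrightarrow{\Htonext{1}} \PH(\flagstyle{W}_2)_{+} \xrightarrow{\Htocofiber{1}} \S{4\signrep}$. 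I will then define $\gen_2 := (\Htocofiber{1})^{*}(u)$, where $u \in \HHreduced_{C_2}^{4\signrep}(\S{4\signrep})(\GG)$ is the image of $1 \in A(\GG)$ under the suspension isomorphism.

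The next step is to verify each required identity. For $(\Htowhole[2]{0})^{*}$, I observe that $\PH[0](\flagstyle{W}_2) = \PH(\irredH_0) = \{\langle 1, 0\rangle\}$ sits inside $\PH(\flagstyle{W}_1)$, so $\Htowhole[2]{0}$ factors through $\Htonext{1}$ and consequently $(\Htowhole[2]{0})^{*}(\gen_2) = 0$. This matches $(\Htotop[0]{2})^{*}(\placeholder{0} + \xi^2 x_0) = 0$, using $\placeholder{0} = 0$ together with the fact that $(\Htotop[0]{2})^{*}(x_0) = 0$ (the cohomology ring on a point being $\Z[x_0]/(x_0)$). For $(\Htowhole[2]{1})^{*}$, I will argue that the fixed point $\langle 0, 1\rangle \in \PH[1](\flagstyle{W}_2)$ sits at the origin of the attached $4\signrep$-cell, so that the composition $\pt_{+} \to \PH(\flagstyle{W}_2)_{+} \xrightarrow{\Htocofiber{1}} \S{4\signrep}$ is the inclusion of the non-basepoint fixed point of $\S{4\signrep}$; by \autoref{def:epsilon-for-2} this map represents $\epsilon^4$, giving $(\Htowhole[2]{1})^{*}(\gen_2) = \epsilon^4 = (\Htotop[1]{2})^{*}(\placeholder{1} + \xi^2 x_1)$.

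For the last identity, naturality of restriction yields $\HtoGe[2](\gen_2) = (\Htocofiber{1})^{*}(\HcofibertoGe{1}(u))$. Non-equivariantly, $\Htocofiber{1}$ is the collapse $\HP^1 \to \HP^1/\pt \cong S^4$, which is an isomorphism in $H^4$, while $\HcofibertoGe{1}(u)$ is the standard integral orientation class of $S^4$. Thus $\HtoGe[2](\gen_2)$ is a generator of $H^{4}(\HP^{1};\Z)$, and with the ambient sign choice for $x$ from \autoref{def:generators-x-xr} we obtain $\HtoGe[2](\gen_2) = (\Htotop{2})^{*}(x)$.

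The main (minor) obstacle is the geometric bookkeeping: verifying that under the affine chart $\{\langle q_0, 1\rangle : q_0 \in \H\}$ on the attached cell (with induced $C_2$-action $q_0 \mapsto -q_0$, realizing $\cellstyle{w}_1 \cong 4\signrep$), the fixed point $\langle 0, 1\rangle$ corresponds to the origin and hence lands at the non-basepoint fixed point of $\S{4\signrep}$ after the collapse, whereas $\langle 1, 0\rangle$ already lies in $\PH(\flagstyle{W}_1)$ and is sent to the basepoint. Once this identification is in place, the three verifications above proceed immediately.
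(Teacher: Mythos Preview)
Your proposal is correct and follows essentially the same strategy as the paper: define $\gen_2$ as the image under $(\Htocofiber{1})^*$ of the tautological class $1\in\HHreduced_{C_2}^{4\signrep}(\S{4\signrep})(\GG)\cong A(\GG)$, then verify the three identities. The only cosmetic difference is that where you argue geometrically via the affine chart $\{\langle q_0,1\rangle\}$ and the location of the two fixed points, the paper instead invokes the pre-established commutative square of cofiber sequences (the map $\Htowholecofiber[r]{1}\colon \Hcofiber[r]{1}\hookrightarrow\S{\cellstyle{w}_1}$ from \autoref{fig:cofiber}), noting that $\Hcofiber[0]{1}=\pt$ and $\Hcofiber[1]{1}=\S{0}$ with $\Htowholecofiber[1]{1}$ the inclusion $\S{0}\hookrightarrow\S{4\signrep}$ representing $\epsilon^4$; your geometric description is exactly what underlies this diagram.
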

\begin{proof}
The proof may be broken into sections.

\subsubsection{Define the element $\gen_2$}


First, note that $\S{\cellstyle{w}_1}=\S{4\signrep}$.

Let $\gen_2\in \HHreduced_{G}^{4\signrep}(\PH(\flagstyle{W}_2)_{+})(\GG)$ be the image of $1\in A(\GG)\cong \HHreduced_{G}^{4\signrep}(\S{4\signrep})(\GG)$ under $(\Htocofiber{1})^*$.

\subsubsection{Check the value of $(\Htowhole[2]{0})^*(\gen_2)$}

We have that $\Hcofiber[0]{1}=\pt$, so within \autoref{fig:cofiber-cohomology} we find the commutative square of Mackey functors
\begin{center}
\begin{tikzcd}[row sep=1.5cm,column sep=1.5cm]
\HHreduced_{C_2}^{4\signrep}(\S{4\signrep})
\ar{d}[swap]{(\Htowholecofiber[0]{1})^*}
\ar{r}{(\Htocofiber{1})^*}
&
\HHreduced_{C_2}^{4\signrep}(\PH(\flagstyle{W}_2)_{+})
\ar{d}{(\Htowhole[0]{2})^*}\\
\HHreduced_{C_2}^{4\signrep}(\pt)
\ar{r}[swap]{(\Htocofiber[0]{1})^*}
&
\HHreduced_{C_2}^{4\signrep}(\PH[0](\flagstyle{W}_2)_{+})
\end{tikzcd}
\end{center}
Since $\HHreduced_{C_2}^{4\signrep}(\pt)=0$, we have that $(\Htowhole[2]{0})^*(\gen_2)=0$.

Note that $(\Htotop[0]{2})^*(\placeholder{0}+\xi^2 x_0)=(\Htotop[0]{2})^*(\xi^2 x_0)=0$ by \autoref{xr-stuff}.


\subsubsection{Check the value of $(\Htowhole[2]{1})^*(\gen_2)$}

We have that $\Hcofiber[1]{1}=\S{0}$, so within \autoref{fig:cofiber-cohomology} we find the commutative square of Mackey functors
\begin{center}
\begin{tikzcd}[row sep=1.5cm,column sep=1.5cm]
\HHreduced_{C_2}^{4\signrep}(\S{4\signrep})
\ar{d}[swap]{(\Htowholecofiber[1]{1})^*}
\ar{r}{(\Htocofiber{1})^*}
&
\HHreduced_{C_2}^{4\signrep}(\PH(\flagstyle{W}_2)_{+})
\ar{d}{(\Htowhole[1]{2})^*}\\
\HHreduced_{C_2}^{4\signrep}(\S{0})
\ar{r}[swap]{(\Htocofiber[1]{1})^*}
&
\HHreduced_{C_2}^{4\signrep}(\PH[1](\flagstyle{W}_2)_{+})
\end{tikzcd}
\end{center}

Since $(\Htocofiber[1]{1})^*$ is an isomorphism of $\HHreduced_{C_2}^*(\S{0})$-algebras and $\Htowholecofiber[1]{1}$ is the inclusion $\S{0}\to\S{4\signrep}$, we have that $(\Htowhole[2]{1})^*(\gen_2)=\epsilon^4$ (see \autoref{def:epsilon-for-2}).

Note that $(\Htotop[1]{2})^*(\placeholder{1}+\xi^2x_1)=(\Htotop[1]{2})^*(\epsilon^4+\xi^2x_1)=\epsilon^4$ by \autoref{xr-stuff}.

%
%

\subsubsection{Check the value of $\rho_2(\gen_2)$}

Within \autoref{fig:cofiber-cohomology} we find the maps $(\Htocofiber{1})^*$ and $(\Htotop{2})^*$, which we expand into their $\GG$ and $\Ge$ levels.

\begin{center}
\begin{tikzcd}[row sep=0.2cm,column sep=1.7cm]
{}&\HHreduced_{C_2}^{4\signrep}(\PH(Q)_{+})(\GG) \ar[bend right=40]{dd}[swap]{\rho}\\
{}\\
{}&\HHreduced_{C_2}^{4\signrep}(\PH(Q)_{+})(\Ge) \ar[bend right=40]{uu} \ar[start anchor = {[yshift=-0.4cm]}, end anchor = {[yshift=0.4cm]}]{ddd}{(\Htotop{2})^*} \\
\strut & \\
\strut & \\
\HHreduced_{C_2}^{4\signrep}(\S{4\signrep})(\GG) \ar[bend right=40]{dd}[swap]{\HcofibertoGe{1}}& \HHreduced_{C_2}^{4\signrep}(\PH(\flagstyle{W}_2)_{+})(\GG) \ar[bend right=40]{dd}[swap]{\HtoGe[2]}\\
{} \ar[start anchor = {[xshift=1.2cm]}, end anchor = {[xshift=-1.7cm]}]{r}[swap]{(\Htocofiber{1})^*} & {} \\
\HHreduced_{C_2}^{4\signrep}(\S{4\signrep})(\Ge) \ar[bend right=40]{uu} & \HHreduced_{C_2}^{4\signrep}(\PH(\flagstyle{W}_2)_{+})(\Ge) \ar[bend right=40]{uu}
\end{tikzcd}
\end{center}

Non-equivariantly, we have \[\PH(Q)_{+}\cong\HP^\infty_{+},\qquad \PH(\flagstyle{W}_2)_{+}\cong\HP^1_{+}\cong\S{4}_{+},\qquad \S{4\signrep}\cong\S{4}\]
and $\Htocofiber{1}$ is just the map $\S{4}_{+}\to\S{4}$ that is the identity on $\S{4}$ while gluing the disjoint basepoint somewhere. 
By \autoref{nonequivariant}, the $\Ge$ level of this diagram just reflects what happens in non-equivariant cohomology:
\begin{center}
\begin{tikzcd}[row sep=1.7cm,column sep=1.7cm]
{} & \Hhreduced^{4}(\HP^\infty) \ar{d}{(\Htotop{2})^*}\\
\Hhreduced^{4}(\S{4}) \ar{r}[swap]{(\Htocofiber{1})^*} & \Hhreduced^{4}(\HP^1_{+})
\end{tikzcd}
\end{center}
namely, that $(\Htocofiber{1})^*$ sends the generator $1\in\Hhreduced^4(\S{4})$ to the generator $(\Htotop{2})^*(x)\in\Hhreduced^{4}(\HP^1_{+})$. 

Because $\HHreduced_{C_2}^{4\signrep}(\S{4\signrep})\cong \HHreduced_{C_2}^0(\S{0})\cong A$, we can see that $\HcofibertoGe{1}$ sends the element $1\in\HHreduced_{C_2}^{4\signrep}(\S{4\signrep})(\GG)$ to the element $1\in\HHreduced_{C_2}^{4\signrep}(\S{4\signrep})(\Ge)\cong\Hhreduced^4(\S{4})$ (see \autoref{def:important-mackeys}), so that
\[\HtoGe[2](\gen_2)=\HtoGe[2]((\Htocofiber{1})^*(1))=(\Htocofiber{1})^*(\HcofibertoGe{1})(1)=(\Htotop{2})^*(x).\qedhere\]
\end{proof}

Now we prove the inductive step, that an element $\gen_n$ with the desired properties can always be lifted to an element $\gen_{n+1}$ with the desired properties.

\begin{theorem}
\label{thm:gen}
If $\gen_n\in \HHreduced_{C_2}^{4\signrep}(\PH(\flagstyle{W}_n)_{+})(\GG)$ exists, then  $\gen_{n+1}\in\HHreduced_{C_2}^{4\signrep}(\PH(\flagstyle{W}_{n+1})_{+})(\GG)$ exists.
\end{theorem}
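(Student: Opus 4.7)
The plan is to leverage the short exact sequence at the top of \autoref{fig:cofiber-cohomology}, which lifts $\gen_n$ along $(\Htonext{n})^*$ to some $\gen_{n+1}$, and then to verify the three required images.

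The first key observation is that for $n \geq 2$, the Mackey functor $\HHreduced_{C_2}^{4\signrep}(\S{\cellstyle{w}_n})$ vanishes, forcing $(\Htonext{n})^*$ to be an isomorphism. Indeed, by the suspension isomorphism this group equals $\HHreduced_{C_2}^{\alpha}(\S{0}\coeffs*{A})$ for $\alpha = 4\signrep - \cellstyle{w}_n = -4\lfloor n/2\rfloor + (4 - 4\lceil n/2\rceil)\signrep$, and both $|\alpha^{C_2}| = -4\lfloor n/2\rfloor$ and $|\alpha| = 4 - 4n$ are strictly negative for $n \geq 2$. Inspection of \autoref{figure-HS0-2} then shows that the Mackey functor vanishes throughout this third-quadrant region. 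I would therefore define $\gen_{n+1}$ to be the unique preimage of $\gen_n$ under $(\Htonext{n})^*$.

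Next I would verify the three required images. For $\HtoGe[n+1]\gen_{n+1} = (\Htotop{n+1})^*(x)$: the $\Ge$-level analogue of the argument above (where the vanishing group reduces to $\Hhreduced^4(\S{4n}) = 0$) gives that $(\Htonext{n})^*|_{\Ge}$ is also an isomorphism, and both candidates pull back to the common value $\HtoGe[n]\gen_n = (\Htotop{n})^*(x)$, so they agree. For each equation $(\Htowhole[n+1]{r})^*\gen_{n+1} = (\Htotop[r]{n+1})^*(\placeholder{r} + \xi^2 x_r)$, the inductive hypothesis together with commutativity of \autoref{fig:cofiber-cohomology} forces the difference $e_r$ to lie in $\mathrm{image}\bigl((\Htocofiber[r]{n})^*\bigr)$; for $n \geq 4$ this image already vanishes since $\HHreduced_{C_2}^{4\signrep}(\Hcofiber[r]{n})$ again lands in the third quadrant of \autoref{figure-HS0-2}, so $e_r = 0$ automatically.

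The main obstacle is handling the two exceptional pairs $(n, r) \in \{(2, 0), (3, 1)\}$, where $\Hcofiber[r]{n} = \S{4}$ and $\HHreduced_{C_2}^{4\signrep}(\S{4}) \cong R$ has $\GG$-value $\Z$. My plan here is to exploit that the restriction map $R(\GG) \to R(\Ge)$ is the identity on $\Z$, hence injective, so it suffices to check $e_r|_{\Ge} = 0$ as a non-equivariant statement. A direct computation then shows that both $(\Htowhole[n+1]{r})^*|_{\Ge}\bigl((\Htotop{n+1})^*(x)\bigr)$ and $(\Htotop[r]{n+1})^*|_{\Ge}\bigl((\xi^2 x_r)|_{\Ge}\bigr)$ equal the generator of $\Hhreduced^4(\PH[r](\flagstyle{W}_{n+1})) \cong \Hhreduced^4(\HP^1) \cong \Z$, using that $\xi^2|_{\Ge}$ is a unit at the $\Ge$ level, while $\placeholder{r}|_{\Ge} = 0$ because $\epsilon$ lies in $\langle \Z\rangle$ whose $\Ge$-value is trivial.
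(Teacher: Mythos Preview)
Your proposal is correct and follows essentially the same route as the paper: use vanishing of $\HHreduced_{C_2}^{4\signrep}(\S{\cellstyle{w}_n})$ for $n\geq 2$ to get a unique lift, verify $\HtoGe[n+1]$ non-equivariantly, and in the exceptional cases $(n,r)\in\{(2,0),(3,1)\}$ exploit that the restriction $R(\GG)\to R(\Ge)$ is injective to reduce to a non-equivariant check. One small omission: your phrasing ``for $n\geq 4$ this image already vanishes'' followed by listing only $(2,0)$ and $(3,1)$ as exceptional leaves the pairs $(2,1)$ and $(3,0)$ unaddressed; but there $n\not\equiv r\bmod 2$, so $\Hcofiber[r]{n}=\pt$ and the image vanishes trivially --- the paper handles this by splitting into the cases $n\equiv r$ and $n\not\equiv r\bmod 2$ before doing the dimension count.
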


\begin{proof}
Suppose that a class $\gen_n\in \HHreduced_{C_2}^{4\signrep}(\PH(\flagstyle{W}_n)_{+})(\GG)$ has been defined with 
\begin{align*}
(\Htowhole[n]{0})^*(\gen_n)&=(\Htotop[0]{n})^*(\placeholder{0}+\xi^2x_0),\\
(\Htowhole[n]{1})^*(\gen_n)&=(\Htotop[1]{n})^*(\placeholder{1}+\xi^2x_1),\\
\HtoGe[n](\gen_n)&=(\Htotop{n})^*(x),
\end{align*}
for some $n\geq 2$.

%

\subsubsection{Existence and uniqueness of an element $\gen_{n+1}$}

We will lift the class $\gen_n\in  \HHreduced_{C_2}^{4\signrep}(\PH(\flagstyle{W}_n)_{+})(\GG)$ along the map $(\Htonext{n})^*$, which fits into an exact sequence of abelian groups
\begin{center}
\begin{tikzcd}[column sep=1cm]
0\ar{r} & \HHreduced_{C_2}^{4\signrep}(\S{\cellstyle{w}_n})(\GG)\ar{r}{(\Htocofiber{n})^*} & \HHreduced_{C_2}^{4\signrep}(\PH(\flagstyle{W}_{n+1})_{+})(\GG)\ar{r}{(\Htonext{n})^*} &\HHreduced_{C_2}^{4\signrep}(\PH(\flagstyle{W}_n)_{+})(\GG)\ar{r} & 0
\end{tikzcd}
\end{center}
From \autoref{rem:cells-for-2}, we have $|\cellstyle{w}_n|=4n\geq 8$ and $|\cellstyle{w}_n^{C_2}|=4\lfloor\frac{n}{2}\rfloor\geq 4$  because $n\geq 2$, and therefore
\[|4\signrep-\cellstyle{w}_n|\leq 4-8=-4,\qquad |(4\signrep-\cellstyle{w}_n)^{C_2}|\leq 0-4=-4\]
so that by considering \autoref{figure-HS0-2},
\[\HHreduced_{C_2}^{4\signrep}(\S{\cellstyle{w}_n})(\GG)\cong \HHreduced_{C_2}^{4\signrep-\cellstyle{w}_n}(\S{0})(\GG)=0.\]
Therefore $(\Htonext{n})^*$ is an isomorphism in dimension $4\signrep$, and there is exactly one lifting of $\gen_n$ along the map $(\Htonext{n})^*$, which we will define to be our element $\gen_{n+1}$.

\subsubsection{Check the value of $\HtoGe[n+1](\gen_{n+1})$}

For clarity, we expand the maps $(\Htonext{n})^*$, $(\Htotop{n})^*$, and $(\Htotop{n+1})^*$ into their $\GG$ and $\Ge$ levels.

\begin{center}
\begin{tikzcd}[row sep=0.2cm,column sep=1.7cm]
\HHreduced_{C_2}^{4\signrep}(\PH(Q)_{+})(\GG) \ar[bend right=40]{dd}[swap]{\HtoGe}\\
{}\\
\HHreduced_{C_2}^{4\signrep}(\PH(Q)_{+})(\Ge) \ar[bend right=40]{uu} \ar[start anchor = {[yshift=-0.4cm]}, end anchor = {[yshift=0.4cm]}]{ddd}[swap]{(\Htotop{n+1})^*} \ar[start anchor = {[xshift=1.2cm,yshift=-0.0cm]}, end anchor = {[xshift=-0.4cm,yshift=0.4cm]}]{dddr}{(\Htotop{n})^*} \\
& \strut \\
& \strut \\
 \HHreduced_{C_2}^{4\signrep}(\PH(\flagstyle{W}_{n+1})_{+})(\GG) \ar[bend right=40]{dd}[swap]{\HtoGe[n+1]} & \HHreduced_{C_2}^{4\signrep}(\PH(\flagstyle{W}_{n})(\GG) \ar[bend right=40]{dd}[swap]{\HtoGe[n]}\\
{} \ar[start anchor = {[xshift=1.5cm]}, end anchor = {[xshift=-1.3cm]}]{r}[swap]{(\Htonext{n})^*} & {} \\ \HHreduced_{C_2}^{4\signrep}(\PH(\flagstyle{W}_{n+1})_{+})(\Ge) \ar[bend right=40]{uu} & 
\HHreduced_{C_2}^{4\signrep}(\PH(\flagstyle{W}_{n})(\Ge) \ar[bend right=40]{uu} 
\end{tikzcd}
\end{center}

Non-equivariantly, we have \[\PH(Q)_{+}\cong\HP^\infty_{+},\qquad \PH(\flagstyle{W}_{n+1})_{+}\cong\HP^{n}_{+},\qquad \PH(\flagstyle{W}_{n})_{+}\cong\HP^{n-1}_{+}\]
and $\Htonext{n}$ is the inclusion map $\HP^{n-1}_{+}\to\HP^{n}_{+}$ that adds a $4n$-cell to $\HP^{n-1}_{+}$. By \autoref{nonequivariant}, the $\Ge$ level of this diagram reflects what happens in non-equivariant cohomology:
\begin{center}
\begin{tikzcd}[row sep=1.7cm,column sep=1.7cm]
\Hhreduced^{4}(\HP^\infty_{+}) \ar{d}[swap]{(\Htotop{n+1})^*} \ar{dr}{(\Htotop{n})^*}\\
\Hhreduced^{4}(\HP^{n}_{+}) \ar{r}[swap]{(\Htonext{n})^*}{\cong}  &  \Hhreduced^{4}(\HP^{n-1}_{+})
\end{tikzcd}
\end{center}
namely, that the map $(\Htonext{n})^*$ is an isomorphism in dimension $4$, and sends the generator $(\Htotop{n+1})^*(x)\in\Hhreduced^{4}(\HP^{n}_{+})$ to the generator $(\Htotop{n})^*(x)\in\Hhreduced^{4}(\HP^{n-1}_{+})$.

By the induction hypothesis we know that $\HtoGe[n](\gen_{n})=(\Htotop{n})^*(x)$, and because $\gen_{n+1}$ is a lift
of $\gen_n$ along $(\Htonext{n})^*$, we can conclude $\HtoGe[n+1](\gen_{n+1})=(\Htotop{n+1})^*(x)$, as desired.

\subsubsection{Check the value of $(\Htowhole[n+1]{r})^*(\gen_{n+1})$ for $n\not\equiv r\bmod 2$}

Since $n\not\equiv r\bmod 2$, we have that $\Hcofiber[r]{n}=\pt$, so that $\HHreduced_{C_2}^{4\signrep}(\Hcofiber[r]{n})=0$, and hence $(\Htonext[r]{n})^*$ is an isomorphism in dimension $4\signrep$. 
Now the relevant piece of \autoref{fig:cofiber-cohomology} is
\begin{center}
\begin{tikzcd}[row sep=1.3cm,column sep=1.3cm]
\HHreduced_{C_2}^{4\signrep}(\PH(\flagstyle{W}_{n+1})_{+}) \ar{r}{(\Htonext{n})^*}[swap]{} \ar{d}[swap]{(\Htowhole[n+1]{r})^*} &  \HHreduced_{C_2}^{4\signrep}(\PH(\flagstyle{W}_n)_{+}) \ar{d}{(\Htowhole[n]{r})^*} 
\\
\HHreduced_{C_2}^{4\signrep}(\PH[r](\flagstyle{W}_{n+1})_{+}) \ar{r}[swap]{(\Htonext[r]{n})^*}{\cong}  & \HHreduced_{C_2}^{4\signrep}(\PH[r](\flagstyle{W}_n)_{+})
\\
\HHreduced_{C_2}^{4\signrep}(\PH[r](Q)_{+}) \ar{u}{(\Htotop[r]{n+1})^*} \ar{ru}[swap]{(\Htotop[r]{n})^*}
\end{tikzcd}
\end{center}

By our inductive hypothesis, we know that $(\Htowhole[n]{r})^*(\gen_n)=(\Htotop[r]{n})^*(\placeholder{r}+\xi^2x_r)$, so we can conclude
\begin{align*}
(\Htonext[r]{n})^*\circ(\Htowhole[n+1]{r})^*(\gen_{n+1})&=(\Htowhole[n]{r})^*\circ(\Htonext{n})^*(\gen_{n+1})\\
(\Htonext[r]{n})^*\circ(\Htowhole[n+1]{r})^*(\gen_{n+1})&=(\Htowhole[n]{r})^*(\gen_{n})\\
(\Htonext[r]{n})^*\circ(\Htowhole[n+1]{r})^*(\gen_{n+1})&=(\Htotop[r]{n})^*(\placeholder{r}+\xi^2x_r)\\
(\Htowhole[n+1]{r})^*(\gen_{n+1})&=((\Htonext[r]{n})^*)^{-1}\circ(\Htotop[r]{n})^*(\placeholder{r}+\xi^2x_r)\\
(\Htowhole[n+1]{r})^*(\gen_{n+1})&=(\Htotop[r]{n+1})^*(\placeholder{r}+\xi^2x_r)
\end{align*}
which is what is desired.

\subsubsection{Check the value of $(\Htowhole[n+1]{r})^*(\gen_{n+1})$ for $n\equiv r\bmod 2$}

Since $n\equiv r\bmod 2$, we have that $\Hcofiber[r]{n}=\S{4\lfloor n/2\rfloor}$ by \autoref{rem:cells-for-2}. 
Because $n\geq 2$, we have $4\lfloor \frac{n}{2}\rfloor\geq 4$.

We now consider this section of \autoref{fig:cofiber-cohomology}.
\begin{center}
\begin{tikzcd}[row sep=1.3cm,column sep=1.3cm]
0\ar{r} & \HHreduced_{C_2}^{4\signrep}(\S{4\lfloor n/2\rfloor}) \ar{r}{(\Htocofiber[r]{n})^*} & \HHreduced_{C_2}^{4\signrep}(\PH[r](\flagstyle{W}_{n+1})_{+}) \ar{r}{(\Htonext[r]{n})^*}  & \HHreduced_{C_2}^{4\signrep}(\PH[r](\flagstyle{W}_n)_{+}) \ar{r} & 0\\
{} & & \HHreduced_{C_2}^{4\signrep}(\PH[r](Q)_{+}) \ar{u}{(\Htotop[r]{n+1})^*} \ar{ru}[swap]{(\Htotop[r]{n})^*}
\end{tikzcd}
\end{center}

If in fact $4\lfloor \frac{n}{2}\rfloor>4$, then it must be at least $8$, hence
\[\bigr|4\signrep-4\lfloor \tfrac{n}{2}\rfloor\bigr|\leq 4-8=-4,\qquad \bigl|(4\signrep-4\lfloor \tfrac{n}{2}\rfloor)^{C_2}\bigr|\leq 0-8=-8.\]
By considering \autoref{figure-HS0-2}, we see that
\[\HHreduced_{C_2}^{4\signrep}(\S{4\lfloor \frac{n}{2}\rfloor})\cong \HHreduced_{C_2}^{4\signrep-4\lfloor \frac{n}{2}\rfloor}(\S{0})=0,\]
so that $(\Htonext[r]{n})^*$ is an isomorphism, which implies $(\Htowhole[n+1]{r})^*(\gen_{n+1})$ is the desired value of $(\Htotop[r]{n+1})^*(\placeholder{r}+\xi^2x_r)$ by the same argument as in the previous section.

If instead $4\lfloor \frac{n}{2}\rfloor=4$, then it must be the case that $n=2$ or $3$, so that -- \emph{equivariantly}, since these spaces have a trivial $G$-action -- we have
\[\Hcofiber[r]{n}\cong\S{4},\qquad \PH[r](\flagstyle{W}_{n+1})_{+}\cong\HP^1_{+},\qquad \PH[r](\flagstyle{W}_n)_{+}\cong\HP^0_{+}\]
and hence our diagram is
\begin{center}
\begin{tikzcd}[row sep=1.3cm,column sep=1.3cm]
0\ar{r} & \HHreduced_{C_2}^{4\signrep}(\S{4}) \ar{r}{(\Htocofiber[r]{n})^*} & \HHreduced_G^{4\signrep}(\HP^1_{+}) \ar{r}{(\Htonext[r]{n})^*}  & \HHreduced_{C_2}^{4\signrep}(\HP^0_{+}) \ar{r} & 0\\
{} & & \HHreduced_{C_2}^{4\signrep}(\HP^\infty_{+}) \ar{u}{(\Htotop[r]{n+1})^*} \ar{ru}[swap]{(\Htotop[r]{n})^*}
\end{tikzcd}
\end{center}
Note that $|4\signrep-4|=0$ and $|(4\signrep-4)^{C_2}|=-4$, 
so by observing \autoref{figure-HS0-2}, we see that \[\HHreduced_{C_2}^{4\signrep}(\S{4})\cong \HHreduced_{C_2}^{4\signrep-4}(\S{0})\cong R.\]
By \autoref{def:xi-for-2}, we see that $\xi^2$ is the generator of $R(\GG)\cong\Z$. In non-equivariant cohomology, the map $(\Htocofiber[r]{n})^*:\Hhreduced^4(\S{4})\to\Hhreduced^4(\HP^1_{+})$ sends the generator $1$ to the generator $(\Htotop[r]{n+1})^*(x_r)$, so by \autoref{xr-stuff}, we conclude that the map 
\[(\Htocofiber[r]{n})^*\from\HHreduced_{C_2}^{4\signrep}(\S{4})\cong\HHreduced_{C_2}^{4\signrep-4}(\S{0})\otimes \Hhreduced^4(\S{4})\to\HHreduced_{C_2}^{4\signrep-4}(\S{0})\otimes\Hhreduced^4(\HP^1_{+})\subseteq \HHreduced_{C_2}^{4\signrep}(\HP^1_{+})\]
sends the generator $\xi^2\in\HHreduced_{C_2}^{4\signrep}(\S{4})(\GG)\cong R(\GG)$ to the element $(\Htotop[r]{n+1})^*(\xi^2x_r)\in\HHreduced_{C_2}^{4\signrep}(\HP^1_{+})(\GG)$.

By our induction hypothesis,
\[((\Htonext[r]{n})^*\circ(\Htowhole[n+1]{r})^*)(c_{n+1})=(\Htowhole[n]{r})^*(c_{n})=(\Htotop[r]{n})^*(\placeholder{r}+\xi^2x_r)\]
and therefore $(\Htowhole[n+1]{r})^*(c_{n+1})$ is a lift of $(\Htotop[r]{n})^*(\placeholder{r}+\xi^2x_r)$ along the map $(\Htonext[r]{n})^*$. Of course, the lift we would like it to be is precisely $(\Htotop[r]{n+1})^*(\placeholder{r}+\xi^2x_r)$.

Because $\PH[r](\flagstyle{W}_n)_{+}\cong\HP^0_{+}$, we have that $(\Htotop[r]{n})^*(x_r)=0$ (see \autoref{xr-stuff}). Thus $(\Htotop[r]{n})^*(\placeholder{r}+\xi^2x_r)=\placeholder{r}$, and therefore, \emph{a priori}, the value of a lift of $(\Htotop[r]{n})^*(\placeholder{r}+\xi^2x_r)$ along the map $(\Htonext[r]{n})^*$ could be $(\Htotop[r]{n+1})^*(\placeholder{r}+k\xi^2x_r)$ for any $k\in\Z$, since the image of $(\Htocofiber[r]{n})^*$ is the kernel of $(\Htonext[r]{n})^*$.

Consider the map $(\Htowhole[n+1]{r})^*$ in \autoref{fig:cofiber-cohomology}, expanded into its $\GG$ and $\Ge$ components.
\begin{center}
\begin{tikzcd}[row sep=0.2cm,column sep=1.7cm]
\HHreduced_{C_2}^{4\signrep}(\PH(\flagstyle{W}_{n+1})_{+})(\GG) \ar[bend right=40]{dd}[swap]{\HtoGe[n+1]}\\
{}\\
\HHreduced_{C_2}^{4\signrep}(\PH(\flagstyle{W}_{n+1})_{+})(\Ge) \ar[bend right=40]{uu} \ar[start anchor = {[yshift=-0.4cm]}, end anchor = {[yshift=0.4cm]}]{ddd}[swap]{(\Htowhole[n+1]{r})^*}\\
\strut\\
\strut\\
 \HHreduced_{C_2}^{4\signrep}(\PH[r](\flagstyle{W}_{n+1})_{+})(\GG) \ar[bend right=40]{dd}[swap]{\HparttoGe[n+1]{r}} \\
{} \\ \HHreduced_{C_2}^{4\signrep}(\PH[r](\flagstyle{W}_{n+1})_{+})(\Ge) \ar[bend right=40]{uu}
\end{tikzcd}
\end{center}
Again by \autoref{nonequivariant}, the $\Ge$ level of this diagram just reflects what happens in non-equivariant cohomology:
\begin{center}
\begin{tikzcd}[row sep=1.7cm,column sep=1.7cm]
\Hhreduced^{4}(\HP^n_{+}) \ar{d}[swap]{(\Htowhole[n+1]{r})^*}\\
\Hhreduced^{4}(\HP^{1}_{+})
\end{tikzcd}
\end{center}
namely, that the map $(\Htowhole[n+1]{r})^*$ is an isomorphism in dimension $4$, and sends the generator $(\Htotop{n+1})^*(x)\in\Hhreduced^{4}(\HP^{n}_{+})\cong\Z$ to the generator $(\HparttoGe[n+1]{r})^*((\Htotop[r]{n+1})^*(\xi^2x_r))\in\Hhreduced^{4}(\HP^{1}_{+})\cong\Z$.

However, we have already established that $(\HtoGe[n+1])^*(\gen_{n+1})=(\Htotop{n+1})^*(x)$, and therefore \begin{align*}
(\HparttoGe[n+1]{r})^*((\Htowhole[n+1]{r})^*(\gen_{n+1}))&=(\Htowhole[n+1]{r})^*((\HtoGe[n+1])^*(\gen_{n+1}))\\
&=(\Htowhole[n+1]{r})^*((\Htotop{n+1})^*(x))\\
&=(\HparttoGe[n+1]{r})^*((\Htotop[r]{n+1})^*(\xi^2x_r))
\end{align*}
Because $\HparttoGe[n+1]{r}$ is an isomorphism (see the definition of $R$ in \autoref{def:important-mackeys}), the value of $(\Htowhole[n+1]{r})^*(\gen_{n+1})$, which we showed must be $(\Htotop[r]{n+1})^*(\placeholder{r}+k\xi^2x_r)$ for some $k\in\Z$, must in fact be $(\Htotop[r]{n+1})^*(\placeholder{r}+\xi^2x_r)$, as desired.
\end{proof}

\section{\texorpdfstring{Construction of the generator $\Gen$ in dimension $4+4\signrep$}{Construction of the generator C in dimension 4+4σ}}
\label{subsection:construct-C}

In this section, we will identify and construct the element $\Gen$ in the cohomology Mackey functor $\HHreduced_{C_2}^{4+4\signrep}(\PH(Q)_{+})(\GG)$, the other of the two elements we claim are generators in \autoref{thm:main}.

To identify the element $\Gen$, we provide the information shown to be sufficient in \autoref{rem:mult-comparison-useful}, i.e., its images under $\rho$, $(\Htowhole{0})^*$, and $(\Htowhole{1})^*$. The element $\Gen\in \HHreduced_{C_2}^{4+4\signrep}(\PH(Q)_{+})(\GG)$ has 
\begin{align*}
(\Htowhole{0})^*(\Gen)&=x_0(\epsilon^4+\xi^2x_0),\\
(\Htowhole{1})^*(\Gen)&=x_1(\epsilon^4+\xi^2x_1),\\
\HtoGe(\Gen)&=x^2.
\end{align*}

To construct the element $\Gen$, i.e., to prove that an element as specified above really does exist, our goal will be to inductively construct elements $\Gen_n\in \HHreduced_{G}^{4+4\signrep}(\PH(\flagstyle{W}_n)_{+})(\GG)$, each such that
\begin{align*}
(\Htowhole[n]{0})^*(\Gen_n)&=(\Htotop[0]{n})^*(x_0(\epsilon^4+\xi^2x_0)),\\
(\Htowhole[n]{1})^*(\Gen_n)&=(\Htotop[1]{n})^*(x_1(\epsilon^4+\xi^2x_1)),\\
\HtoGe[n](\Gen_n)&=(\Htotop{n})^*(x^2).
\end{align*}
Since the $\flagstyle{W}_n$ are a cofinal collection of finite-dimensional sub-$\H$-modules of $Q$, and since
\[(\Htotop[r]{n})^*\circ (\Htowhole{r})^*=(\Htowhole[n]{r})^*\circ (\Htotop{n})^*,\]
by \autoref{thm:inverse-limit} this process yields an element $\Gen\in \HHreduced_{C_2}^{4+4\signrep}(\PH(Q)_{+})(\GG)$ with 
\begin{align*}
(\Htowhole{0})^*(\Gen)&=x_0(\epsilon^4+\xi^2x_0),\\
(\Htowhole{1})^*(\Gen)&=x_1(\epsilon^4+\xi^2x_1),\\
\HtoGe(\Gen)&=x^2
\end{align*}
as desired.

First, \autoref{lem:Gen-initial} will provide a base case by constructing an element $\Gen_3$ with the specified properties, then \autoref{thm:Gen} will perform the main work of proving the inductive step, that an element $\Gen_n$ can always be lifted to an element $\Gen_{n+1}$.

\begin{lemma}
\label{lem:Gen-initial}
An element $\Gen_3\in \HHreduced_{C_2}^{4+4\signrep}(\PH(\flagstyle{W}_3)_{+})$ exists with the following properties:
\begin{align*}
(\Htowhole[3]{0})^*(\Gen_3)&=(\Htotop[0]{3})^*(x_0(\epsilon^4+\xi^2 x_0)),\\
(\Htowhole[3]{1})^*(\Gen_3)&=(\Htotop[1]{3})^*(x_1(\epsilon^4+\xi^2 x_1)),\\
\HtoGe[3](\Gen_3)&=(\Htotop{3})^*(x^2).
\end{align*}
\end{lemma}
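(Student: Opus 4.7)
The plan is to mirror the template of \autoref{lem:gen-initial}: define $\Gen_3$ as the image under $(\Htocofiber{2})^*$ of a canonical class in the cohomology of the attaching sphere, then verify each of the three required identities by chasing the commutative squares drawn from \autoref{fig:cofiber-cohomology} with $n = 2$. By \autoref{rem:cells-for-2} with $p = 2$, we have $\Hcofiberrep{2} \cong 4 + 4\signrep$, so that $\HHreduced_{C_2}^{4+4\signrep}(\S{\Hcofiberrep{2}}) \cong \HHreduced_{C_2}^{0}(\S{0}) \cong A$. I will therefore set $\Gen_3 := (\Htocofiber{2})^*(1)$, where $1 \in A(\GG)$ is the unit of the Burnside ring, and verify that this choice produces the three prescribed images.

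The easiest of the checks is the value of $(\Htowhole[3]{1})^*(\Gen_3)$: since $2 \not\equiv 1 \bmod 2$, we have $\Hcofiber[1]{2} = \pt$, so $\HHreduced_{C_2}^{4+4\signrep}(\Hcofiber[1]{2}) = 0$ and the relevant commutative square forces $(\Htowhole[3]{1})^*(\Gen_3) = 0$. On the other side, $\PH[1](\flagstyle{W}_3) \cong \PH[1](\flagstyle{W}_2) \cong \HP^0$ is a point, so $(\Htotop[1]{3})^*(x_1) = 0$ by \autoref{xr-stuff}, and hence $(\Htotop[1]{3})^*(x_1(\epsilon^4 + \xi^2 x_1)) = 0$ as well. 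For the check of $\HtoGe[3](\Gen_3) = (\Htotop{3})^*(x^2)$, I will expand the commutative square between the $\GG$- and $\Ge$-levels and use \autoref{nonequivariant} to identify the $\Ge$-level with the non-equivariant cofiber sequence $\HP^1_{+} \to \HP^2_{+} \to \S{8}$. Since $\HcofibertoGe{2}$ sends $1 \in A(\GG)$ to $1 \in A(\Ge) \cong \Z$, and since the non-equivariant map $\Hhreduced^{8}(\S{8}) \to \Hhreduced^{8}(\HP^2_{+})$ carries the generator to $(\Htotop{3})^*(x^2)$, the identity follows.

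The main work lies in verifying the value of $(\Htowhole[3]{0})^*(\Gen_3)$. Because $2 \equiv 0 \bmod 2$, we have $\Hcofiber[0]{2} = \S{4}$, and by \autoref{rem:cells-for-2} the inclusion $\Htowholecofiber[0]{2}: \S{4} \hookrightarrow \S{4+4\signrep}$ induces multiplication by $\epsilon^{4}$ in cohomology. Consequently $(\Htowholecofiber[0]{2})^*(1) = \epsilon^{4}$ in $\HHreduced_{C_2}^{4+4\signrep}(\S{4})(\GG) \cong \HHreduced_{C_2}^{4\signrep}(\S{0})(\GG) \cong \langle \Z \rangle(\GG)$; applying $(\Htocofiber[0]{2})^*$ and invoking \autoref{thm:trivial-even-cohomology} to describe the cohomology of the trivial $C_2$-space $\PH[0](\flagstyle{W}_3) \cong \HP^1$, this image becomes $(\Htotop[0]{3})^*(\epsilon^{4} x_0)$. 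The claim then follows because the remaining term $(\Htotop[0]{3})^*(\xi^2 x_0^2)$ automatically vanishes: since $\PH[0](\flagstyle{W}_3) \cong \HP^1$ is only $4$-dimensional non-equivariantly, $x_0^2 = 0$ there. I do not anticipate a serious obstacle; all substantive work is bookkeeping between the $\GG$- and $\Ge$-levels of the various Mackey functors, and the choice $n = 3$ has been arranged so that the $\xi^2 x_0^2$ contribution to the fixed-point restriction trivializes on its own.
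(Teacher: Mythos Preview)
Your proposal is correct and follows essentially the same approach as the paper: you define $\Gen_3 = (\Htocofiber{2})^*(1)$ and verify the three required identities via the commutative squares of \autoref{fig:cofiber-cohomology}, using $\Hcofiber[1]{2} = \pt$, $\Hcofiber[0]{2} = \S{4}$ with $(\Htowholecofiber[0]{2})^*$ inducing multiplication by $\epsilon^4$, and the non-equivariant identification at the $\Ge$-level. The only difference from the paper is the order in which you present the three checks, which is immaterial.
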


\begin{proof}

The proof may be broken into sections.

\subsubsection{\texorpdfstring{Define the element $\Gen_3$}{Define the element C\_3}}


First, note that $\S{\cellstyle{w}_2}=\S{4+4\signrep}$.

Let $\Gen_3\in \HHreduced_{C_2}^{4+4\signrep}(\PH(\flagstyle{W}_3)_{+})(\GG)$ be the image of $1\in A(\GG)\cong \HHreduced_{C_2}^{4+4\signrep}(\S{4+4\signrep})(\GG)$ under $(\Htocofiber{2})^*$. 

\subsubsection{Check the value of $\Htowhole[3]{0}(\Gen_3)$}

We have that $\Hcofiber[0]{2}=\S{|\flagstyle{W}_2(0;\C)|}=\S{4}$, so within \autoref{fig:cofiber-cohomology} we find the commutative square of Mackey functors
\begin{center}
\begin{tikzcd}[row sep=1.5cm,column sep=1.5cm]
\HHreduced_{C_2}^{4+4\signrep}(\S{4+4\signrep})
\ar{d}[swap]{(\Htowholecofiber[0]{2})^*}
\ar{r}{(\Htocofiber{2})^*}
&
\HHreduced_{C_2}^{4+4\signrep}(\PH(\flagstyle{W}_3)_{+})
\ar{d}{(\Htowhole[3]{0})^*}\\
\HHreduced_{C_2}^{4+4\signrep}(\S{4})
\ar{r}[swap]{(\Htocofiber[0]{2})^*}
&
\HHreduced_{C_2}^{4+4\signrep}(\PH[0](\flagstyle{W}_3)_{+})
\end{tikzcd}
\end{center}

In non-equivariant cohomology, the map $(\Htocofiber[0]{2})^*:\Hhreduced^4(\S{4})\to\Hhreduced^4(\HP^1_{+})$ sends the generator $1$ to the generator $(\Htotop[0]{3})^*(x_0)$, so by \autoref{xr-stuff}, we conclude that the map 
\[(\Htocofiber[0]{2})^*:\HHreduced_{C_2}^{4+4\signrep}(\S{4})\cong\HH_{C_2}^{4\signrep}(\S{0})\otimes \Hhreduced^4(\S{4})\to\HH_{C_2}^{4\signrep}(\S{0})\otimes\Hhreduced^4(\HP^1_{+})\subseteq \HHreduced_{C_2}^{4+4\signrep}(\HP^1_{+})\]
sends the generator
$\epsilon^4\in\HHreduced_{C_2}^{4+4\signrep}(\S{4})\cong \langle\Z\rangle(\GG)$ to the element $(\Htotop[0]{3})^*(\epsilon^4x_0)\in\HHreduced_{C_2}^{4+4\signrep}(\PH[0](\flagstyle{W}_3)_{+})(\GG)$.

The map $(\Htowholecofiber[0]{2})^*$ sends $1\in\HHreduced_{C_2}^{4+4\signrep}(\S{4+4\signrep})$ to $\epsilon^4\in\HHreduced_{C_2}^{4+4\signrep}(\S{4})$, because the composition 
\[\HHreduced_{C_2}^{0}(\S{0})\cong\HHreduced_{C_2}^{4+4\signrep}(\S{4+4\signrep})\xrightarrow{(\Htowholecofiber[0]{2})^*}\HHreduced_{C_2}^{4+4\signrep}(\S{4})\cong\HHreduced_{C_2}^{4\signrep}(\S{0})\]
is the same as the map in cohomology induced by $\epsilon^4\from\S{0}\to\S{4\signrep}$ (see \autoref{def:epsilon-for-2}).

Thus, we have that $(\Htowhole[3]{0})^*(\Gen_3)=(\Htotop[0]{3})^*(\epsilon^4x_0)$.

Note that $(\Htotop[0]{3})^*(x_0(\epsilon^4+\xi^2x_0))=(\Htotop[0]{3})^*(\epsilon^4x_0)$ by \autoref{xr-stuff}, since $\PH[0](\flagstyle{W}_3)_{+}\cong\HP^1_{+}$.

\subsubsection{Check the value of $\Htowhole[3]{1}(\Gen_3)$}

We have that $\Hcofiber[1]{2}=\pt$, so within \autoref{fig:cofiber-cohomology} we find the commutative square of Mackey functors
\begin{center}
\begin{tikzcd}[row sep=1.5cm,column sep=1.5cm]
\HHreduced_{C_2}^{4+4\signrep}(\S{4+4\signrep})
\ar{d}[swap]{(\Htowholecofiber[1]{2})^*}
\ar{r}{(\Htocofiber{2})^*}
&
\HHreduced_{C_2}^{4+4\signrep}(\PH(\flagstyle{W}_3)_{+})
\ar{d}{(\Htowhole[1]{3})^*}\\
\HHreduced_{C_2}^{4+4\signrep}(\pt)
\ar{r}[swap]{(\Htocofiber[1]{2})^*}
&
\HHreduced_{C_2}^{4+4\signrep}(\PH[1](\flagstyle{W}_3)_{+})
\end{tikzcd}
\end{center}

Since $\HHreduced_{C_2}^{4+4\signrep}(\pt)=0$, we have that $(\Htowhole[3]{1})^*(\Gen_3)=0$.

Note that $(\Htotop[1]{3})^*(x_1(\epsilon^4+\xi^2 x_1))=0$ by \autoref{xr-stuff}, since $\PH[1](\flagstyle{W}_3)_{+}\cong \HP^0_{+}\cong\S{0}$.

\subsubsection{Check the value of $\HtoGe[3](\Gen_3)$}

Within \autoref{fig:cofiber-cohomology} we find the maps $(\Htocofiber{2})^*$ and $(\Htotop{3})^*$, which we expand into their $\GG$ and $\Ge$ levels.

\begin{center}
\begin{tikzcd}[row sep=0.2cm,column sep=1.7cm]
{}&\HHreduced_{C_2}^{4+4\signrep}(\PH(Q)_{+})(\GG) \ar[bend right=40]{dd}[swap]{\rho}\\
{}\\
{}&\HHreduced_{C_2}^{4+4\signrep}(\PH(Q)_{+})(\Ge) \ar[bend right=40]{uu} \ar[start anchor = {[yshift=-0.4cm]}, end anchor = {[yshift=0.4cm]}]{ddd}{(\Htotop{3})^*} \\
\strut & \\
\strut & \\
\HHreduced_{C_2}^{4+4\signrep}(\S{4+4\signrep})(\GG) \ar[bend right=40]{dd}[swap]{\HcofibertoGe{2}}& \HHreduced_{C_2}^{4+4\signrep}(\PH(\flagstyle{W}_3)_{+})(\GG) \ar[bend right=40]{dd}[swap]{\HtoGe[3]}\\
{} \ar[start anchor = {[xshift=1.2cm]}, end anchor = {[xshift=-1.7cm]}]{r}[swap]{(\Htocofiber{2})^*} & {} \\
\HHreduced_{C_2}^{4+4\signrep}(\S{4+4\signrep})(\Ge) \ar[bend right=40]{uu} & \HHreduced_{C_2}^{4+4\signrep}(\PH(\flagstyle{W}_3)_{+})(\Ge) \ar[bend right=40]{uu}
\end{tikzcd}
\end{center}

Non-equivariantly, we have \[\PH(Q)_{+}\cong\HP^\infty_{+},\qquad \PH(\flagstyle{W}_3)_{+}\cong\HP^2_{+},\qquad \S{4+4\signrep}\cong\S{8}\]
and $\Htocofiber{2}$ is just the map $\HP^2_{+}\to\S{8}$ that collapses the $4$-cell of $\HP^2_{+}$ and glues the disjoint basepoint somewhere. 
By \autoref{nonequivariant}, the $\Ge$ level of this diagram just reflects what happens in non-equivariant cohomology:
\begin{center}
\begin{tikzcd}[row sep=1.7cm,column sep=1.7cm]
{} & \Hhreduced^{8}(\HP^\infty) \ar{d}{(\Htotop{3})^*}\\
\Hhreduced^{8}(\S{8}) \ar{r}[swap]{(\Htocofiber{2})^*} & \Hhreduced^{8}(\HP^2_{+})
\end{tikzcd}
\end{center}
namely, that $(\Htocofiber{2})^*$ sends the generator $1\in\Hhreduced^8(\S{8})$ to the generator $(\Htotop{3})^*(x^2)\in\Hhreduced^{8}(\HP^2_{+})$. 

Because $\HHreduced_{C_2}^{4+4\signrep}(\S{4+4\signrep})\cong \HHreduced_{C_2}^0(\S{0})\cong A$, we see that $\HcofibertoGe{2}$ sends the element $1\in\HHreduced_{C_2}^{4+4\signrep}(\S{4+4\signrep})(\GG)$ to the element $1\in\HHreduced_{C_2}^{4+4\signrep}(\S{4+4\signrep})(\Ge)\cong\Hhreduced^8(\S{8})$ (see \autoref{def:important-mackeys}), so that
\[\HtoGe[3](\Gen_3)=\HtoGe[3]((\Htocofiber{2})^*(1))=(\Htocofiber{2})^*(\HcofibertoGe{2})(1)=(\Htotop{3})^*(x^2).\qedhere\]

\end{proof}

Now we prove the inductive step, that an element $\Gen_n$ with the desired properties can always be lifted to an element $\Gen_{n+1}$ with the desired properties.

\begin{theorem}
\label{thm:Gen}
If $\Gen_n\in \HHreduced_{C_2}^{4+4\signrep}(\PH(\flagstyle{W}_n)_{+})$ exists, then  $\Gen_{n+1}\in\HHreduced_{C_2}^{4+4\signrep}(\PH(\flagstyle{W}_{n+1})_{+})$ exists.
\end{theorem}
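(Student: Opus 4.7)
My approach will closely parallel that of \autoref{thm:gen}. First, I would lift $\Gen_n$ along the surjection $(\Htonext{n})^*$ in the top row of \autoref{fig:cofiber-cohomology} to obtain a class $\Gen_{n+1}$, and then verify the three required identities case by case, with an additional resolution step when the lift fails to be unique.

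Existence and uniqueness of the lift are controlled by the kernel $\HHreduced_{C_2}^{4+4\signrep}(\S{\Hcofiberrep{n}})(\GG)\cong\HHreduced_{C_2}^{4+4\signrep-\Hcofiberrep{n}}(\S{0})(\GG)$, whose associated dimension has $(|\alpha^{C_2}|,|\alpha|)=(4-4\lfloor n/2\rfloor,\,8-4n)$. For $n\geq 4$, both coordinates are at most $-4$, placing this in the lower-left quadrant of \autoref{figure-HS0-2} where the group vanishes, so the lift is unique. For $n=3$, the coordinates are $(0,-4)$ and the Mackey functor is $\langle\Z\rangle$, yielding a $\Z$'s worth of ambiguity at $\GG$-level (with none at $\Ge$-level, since $\langle\Z\rangle(\Ge)=0$).

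When the lift is unique (i.e., $n\geq 4$), verifying the three properties goes very much as in \autoref{thm:gen}. The $\HtoGe[n+1]$ identity follows by naturality once one notes that non-equivariantly $(\Htonext{n})^*_{\Ge}\colon\Hhreduced^8(\HP^n_{+})\to\Hhreduced^8(\HP^{n-1}_{+})$ is an isomorphism for $n\geq 3$. For $(\Htowhole[n+1]{r})^*$ when $n\not\equiv r\bmod 2$, the cofiber $\Hcofiber[r]{n}=\pt$ makes $(\Htonext[r]{n})^*$ an isomorphism, and the inductive hypothesis pins down the value. When $n\equiv r\bmod 2$, the cofiber cohomology $\HHreduced_{C_2}^{4+4\signrep}(\S{4\lfloor n/2\rfloor})$ lives at position $(4-4\lfloor n/2\rfloor,\,8-4\lfloor n/2\rfloor)$: it vanishes for $n\geq 6$ (again forcing an isomorphism), while for $n\in\{4,5\}$ it is the Mackey functor $R$ at $(-4,0)$. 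In that last case I would reuse the argument from the end of \autoref{thm:gen}: the restriction $\HparttoGe[n+1]{r}$ is an isomorphism on the $R$-summand, and combining this with the already-verified identity $\HtoGe[n+1](\Gen_{n+1})=(\Htotop{n+1})^*(x^2)$ forces the multiple of $(\Htotop[r]{n+1})^*(\xi^2x_r^2)$ in $(\Htowhole[n+1]{r})^*(\Gen_{n+1})$ to be exactly $1$, as desired.

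The main obstacle will be the case $n=3$. Here $\Hcofiber[1]{3}\cong\S{4}$, $\PH[1](\flagstyle{W}_4)\cong\HP^1$, and $\HHreduced_{C_2}^{4+4\signrep}(\HP^1_{+})(\GG)\cong\Z$ is generated by $(\Htotop[1]{4})^*(\epsilon^4x_1)$. The cofiber Mackey functor $\HHreduced_{C_2}^{4\signrep}(\S{0})$ sitting at position $(0,4)$ is $\langle\Z\rangle$, whose $\Ge$-level vanishes, so the $R$-pinning trick from the previous paragraph does not apply. To handle this, I would start with any lift $\tilde\Gen_4$ of $\Gen_3$. Commutativity in \autoref{fig:cofiber-cohomology} gives $(\Htonext[1]{3})^*\bigl((\Htowhole[4]{1})^*(\tilde\Gen_4)\bigr)=(\Htowhole[3]{1})^*(\Gen_3)=0$ (since $x_1=0$ in $\Hhreduced^*(\HP^0_{+})$), so by exactness $(\Htowhole[4]{1})^*(\tilde\Gen_4)$ is some integer multiple $k\cdot(\Htotop[1]{4})^*(\epsilon^4x_1)$. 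I would then adjust $\tilde\Gen_4$ by a suitable integer multiple of a generator of $(\Htocofiber{3})^*(\HHreduced_{C_2}^{4+4\signrep}(\S{\Hcofiberrep{3}}))$ to force $(\Htowhole[4]{1})^*(\Gen_4)=(\Htotop[1]{4})^*(\epsilon^4x_1)$ on the nose. The adjustment preserves $(\Htowhole[4]{0})^*(\tilde\Gen_4)$ because $\Hcofiber[0]{3}=\pt$ kills it through $(\Htowholecofiber[0]{3})^*$, and preserves $\HtoGe[4](\tilde\Gen_4)$ because $\HcofibertoGe{3}=0$ (the $\Ge$-level of $\langle\Z\rangle$ vanishes), yielding a $\Gen_4$ with all three desired identities.
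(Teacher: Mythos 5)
Your proof is correct and follows essentially the same route as the paper: lift $\Gen_n$ along $(\Htonext{n})^*$, show vanishing of $\HHreduced_{C_2}^{4+4\signrep}(\S{\Hcofiberrep{n}})$ for $n\geq 4$ and identify $\langle\Z\rangle$ for $n=3$, resolve the $n=3$ ambiguity via $(\Htowhole[4]{1})^*$ and the isomorphism $(\Htowholecofiber[1]{3})^*\colon\langle\Z\rangle\to\langle\Z\rangle$, treat $n\not\equiv r$ via $\Hcofiber[r]{n}=\pt$, and treat $n\equiv r$ with $n\in\{4,5\}$ via the $R$-Mackey-functor restriction being injective. One small slip: $\HHreduced_{C_2}^{4+4\signrep}(\HP^1_{+})(\GG)$ is $\Z\oplus\Z$ (a summand from $\HHreduced_{C_2}^{4+4\signrep}(\S{0})\otimes\Hhreduced^0$ and one from $\HHreduced_{C_2}^{4\signrep}(\S{0})\otimes\Hhreduced^4$), not $\Z$; what you actually need and use is that the \emph{kernel} of $(\Htonext[1]{3})^*$ at $\GG$-level is the $\Z$ generated by $(\Htotop[1]{4})^*(\epsilon^4 x_1)$, so the argument goes through.
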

\begin{proof}
Suppose that a class $\Gen_n\in \HHreduced_{C_2}^{4+4\signrep}(\PH(\flagstyle{W}_n)_{+})(\GG)$ has been defined with
\begin{align*}
(\Htowhole[n]{0})^*(\Gen_n)&=(\Htotop[0]{n})^*(x_0(\epsilon^4+\xi^2x_0))\\
(\Htowhole[n]{1})^*(\Gen_n)&=(\Htotop[1]{n})^*(x_1(\epsilon^4+\xi^2x_1))\\
\HtoGe[n](\Gen_n)&=(\Htotop{n})^*(x^2)
\end{align*}
for some $n\geq 3$.

\subsubsection{\texorpdfstring{Existence and uniqueness of an element $\Gen_{n+1}$}{Existence and uniqueness of an element C_\{n+1\}}}

We will lift the class $\Gen_n\in  \HHreduced_{C_2}^{4+4\signrep}(\PH(\flagstyle{W}_n)_{+})(\GG)$ along the map $(\Htonext{n})^*$, which fits into an exact sequence
\begin{center}
\begin{tikzcd}[column sep=0.9cm]
0\ar{r} & \HHreduced_{C_2}^{4+4\signrep}(\S{\cellstyle{w}_n})(\GG)\ar{r}{(\Htocofiber{n})^*} & \HHreduced_{C_2}^{4+4\signrep}(\PH(\flagstyle{W}_{n+1})_{+})(\GG)\ar{r}{(\Htonext{n})^*} &\HHreduced_{C_2}^{4+4\signrep}(\PH(\flagstyle{W}_n)_{+})(\GG)\ar{r} & 0
\end{tikzcd}
\end{center}
Note that \[\HH_{C_2}^{4+4\signrep}(\S{\cellstyle{w}_n})(\GG)\cong \HH_{C_2}^{4+4\signrep-\cellstyle{w}_n}(\S{0})(\GG).\]
If $n\geq 4$, we have $|\cellstyle{w}_n|\geq 16$ and $|\cellstyle{w}_n^{C_2}|\geq 8$, and therefore
\[|4+4\signrep-\cellstyle{w}_n|\leq 8-16=-8,\qquad |(4+4\signrep-\cellstyle{w}_n)^{C_2}|\leq 4-8=-4\]
so that by considering \autoref{figure-HS0-2}, $\HHreduced_{C_2}^{4+4\signrep}(\S{\cellstyle{w}_n})\cong\HHreduced_{C_2}^{4+4\signrep-\cellstyle{w}_n}(\S{0})=0$. Thus $(\Htonext{n})^*$ is an isomorphism in dimension $4+4\signrep$, hence there is exactly one lift of $\Gen_n$ along the map $(\Htonext{n})^*$, which we will define to be our element $\Gen_{n+1}$.

However, if $n=3$, then we have $\cellstyle{w}_3=4+8\signrep$, so that $|\cellstyle{w}_3|=12$ and $|\cellstyle{w}_3^{C_2}|=4$, and therefore \[|4+4\signrep-\cellstyle{w}_3|= 8-12=-4,\qquad |(4+4\signrep-\cellstyle{w}_3)^{C_2}|= 4-4=0\]
so that by considering \autoref{figure-HS0-2}, $\HHreduced_{C_2}^{4+4\signrep}(\S{\cellstyle{w}_3})\cong\HHreduced_{C_2}^{-4\signrep}(\S{0})=\langle\Z\rangle$, and there are infinitely many lifts of $\Gen_3$ along $(\Htonext{3})^*$. We will show that exactly one of these lifts has the desired value under $(\Htowhole[4]{1})^*$, and this will be our choice of $\Gen_4$.

Note that -- \emph{equivariantly}, since these spaces have a trivial $C_2$-action -- we have
\[\Hcofiber[1]{3}\cong\S{4},\qquad \PH[1](\flagstyle{W}_{4})_{+}\cong\HP^1_{+},\qquad \PH[1](\flagstyle{W}_3)_{+}\cong\HP^0_{+}\]
and consider this section of \autoref{fig:cofiber-cohomology}.
\begin{center}
\begin{tikzcd}[row sep=1.3cm,column sep=1.3cm]
0 \ar{r} & \HHreduced_{C_2}^{4+4\signrep}(\S{\cellstyle{w}_3}) \ar{d}[swap]{(\Htowholecofiber[1]{3})^*} \ar{r}{(\Htocofiber{3})^*} & \HHreduced_{C_2}^{4+4\signrep}(\PH(\flagstyle{W}_{4})_{+}) \ar{r}{(\Htonext{3})^*} \ar{d}{(\Htowhole[4]{1})^*} &  \HHreduced_{C_2}^{4+4\signrep}(\PH(\flagstyle{W}_3)_{+}) \ar{d}{(\Htowhole[3]{1})^*} \ar{r} & 0\\
0\ar{r} & \HHreduced_{C_2}^{4+4\signrep}(\S{4}) \ar{r}[swap]{(\Htocofiber[1]{3})^*} & \HHreduced_{C_2}^{4+4\signrep}(\PH[1](\flagstyle{W}_{4})_{+}) \ar{r}[swap]{(\Htonext[1]{3})^*}  & \HHreduced_{C_2}^{4+4\signrep}(\PH[1](\flagstyle{W}_3)_{+}) \ar{r} & 0
\end{tikzcd}
\end{center}
As we showed in the proof of \autoref{lem:Gen-initial}, we have $(\Htowhole[3]{1})^*(\Gen_3)=0$. By the exactness of the bottom row, this means that for any lift $\hat{\Gen}_3$ of $\Gen_3$ along the map $(\Htonext{3})^*$, we must have that $(\Htowhole[4]{1})^*(\hat{\Gen}_3)$ is in the image of $(\Htocofiber[1]{3})^*$.

In non-equivariant cohomology, the map $(\Htocofiber[1]{3})^*:\Hhreduced^4(\S{4})\to\Hhreduced^4(\HP^1_{+})$ sends the generator $1$ to the generator $(\Htotop[1]{4})^*(x_1)$, so by \autoref{xr-stuff}, we conclude that the map 
\[(\Htocofiber[1]{3})^*:\HHreduced_{C_2}^{4+4\signrep}(\S{4})\cong\HH_{C_2}^{4\signrep}(\S{0})\otimes \Hhreduced^4(\S{4})\to\HH_{C_2}^{4\signrep}(\S{0})\otimes\Hhreduced^4(\HP^1_{+})\subseteq \HHreduced_{C_2}^{4+4\signrep}(\HP^1_{+})\]
sends the generator
$\epsilon^4\in\HHreduced_{C_2}^{4+4\signrep}(\S{4})(\GG)\cong \langle\Z\rangle(\GG)$ to  $(\Htotop[1]{4})^*(\epsilon^4x_1)\in\HHreduced_{C_2}^{4+4\signrep}(\PH[1](\flagstyle{W}_4)_{+})(\GG)$. Since $(\Htowholecofiber[1]{3})^*$ is an isomorphism from
\[\HHreduced_{C_2}^{4+4\signrep}(\S{4+8\signrep})\cong\HHreduced_{C_2}^{-4\signrep}(\S{0})\cong\langle\Z\rangle\]
to
\[\HHreduced_{C_2}^{4\signrep}(\S{0})\cong\HHreduced_{C_2}^{4+4\signrep}(\S{4})\cong\langle\Z\rangle\]
(see the footnote at \cite[p.36]{megan}), each element in the image of $(\Htocofiber[1]{3})^*$ occurs as the image under $(\Htowhole[4]{1})^*$ of some lift of $\Gen_3$ . In particular, we can choose  $\Gen_4\in\HHreduced_{C_2}^{4+4\signrep}(\PH(\flagstyle{W}_4)_{+})(\GG)$ with $(\Htowhole[4]{1})^*(C_4)=(\Htotop[1]{4})^*(\epsilon^4x_1)$, which equals the desired value of $(\Htotop[1]{4})^*(x_1(\epsilon^4+\xi^2x_1))$ because $\PH[1](\flagstyle{W}_4)_{+}\cong\HP^1_{+}$.


\subsubsection{Check the value of $\HtoGe[n+1](\Gen_{n+1})$}

For clarity, we expand the maps $(\Htonext{n})^*$, $(\Htotop{n})^*$, and $(\Htotop{n+1})^*$ into their $\GG$ and $\Ge$ levels.

\begin{center}
\begin{tikzcd}[row sep=0.2cm,column sep=1.7cm]
\HHreduced_{C_2}^{4+4\signrep}(\PH(Q)_{+})(\GG) \ar[bend right=40]{dd}[swap]{\HtoGe}\\
{}\\
\HHreduced_{C_2}^{4+4\signrep}(\PH(Q)_{+})(\Ge) \ar[bend right=40]{uu} \ar[start anchor = {[yshift=-0.4cm]}, end anchor = {[yshift=0.4cm]}]{ddd}[swap]{(\Htotop{n+1})^*} \ar[start anchor = {[xshift=1.2cm,yshift=-0.0cm]}, end anchor = {[xshift=-0.4cm,yshift=0.4cm]}]{dddr}{(\Htotop{n})^*} \\
& \strut \\
& \strut \\
 \HHreduced_{C_2}^{4+4\signrep}(\PH(\flagstyle{W}_{n+1})_{+})(\GG) \ar[bend right=40]{dd}[swap]{\HtoGe[n+1]} & \HHreduced_{C_2}^{4+4\signrep}(\PH(\flagstyle{W}_{n})(\GG) \ar[bend right=40]{dd}[swap]{\HtoGe[n]}\\
{} \ar[start anchor = {[xshift=1.5cm]}, end anchor = {[xshift=-1.3cm]}]{r}[swap]{(\Htonext{n})^*} & {} \\ \HHreduced_{C_2}^{4+4\signrep}(\PH(\flagstyle{W}_{n+1})_{+})(\Ge) \ar[bend right=40]{uu} & 
\HHreduced_{C_2}^{4+4\signrep}(\PH(\flagstyle{W}_{n})(\Ge) \ar[bend right=40]{uu} 
\end{tikzcd}
\end{center}

Non-equivariantly, we have \[\PH(Q)_{+}\cong\HP^\infty_{+},\qquad \PH(\flagstyle{W}_{n+1})_{+}\cong\HP^{n}_{+},\qquad \PH(\flagstyle{W}_{n})_{+}\cong\HP^{n-1}_{+}\]
and $\Htonext{n}$ is just the inclusion map $\HP^{n-1}_{+}\to\HP^{n}_{+}$ that adds a $4n$-cell to $\HP^{n-1}_{+}$. By \autoref{nonequivariant}, the $\Ge$ level of this diagram just reflects what happens in non-equivariant cohomology:
\begin{center}
\begin{tikzcd}[row sep=1.7cm,column sep=1.7cm]
\Hhreduced^{8}(\HP^\infty_{+}) \ar{d}[swap]{(\Htotop{n+1})^*} \ar{dr}{(\Htotop{n})^*}\\
\Hhreduced^{8}(\HP^{n}_{+}) \ar{r}[swap]{(\Htonext{n})^*}{\cong}  &  \Hhreduced^{8}(\HP^{n-1}_{+})
\end{tikzcd}
\end{center}
namely, that the map $(\Htonext{n})^*$ is an isomorphism in dimension $8$, and sends the generator $(\Htotop{n+1})^*(x^2)\in\Hhreduced^{8}(\HP^{n}_{+})$ to the generator $(\Htotop{n})^*(x^2)\in\Hhreduced^{8}(\HP^{n-1}_{+})$.

By the induction hypothesis we know that $\HtoGe[n](\Gen_{n})=(\Htotop{n})^*(x^2)$, and because $\Gen_{n+1}$ is a lift of $\Gen_n$ along $(\Htonext{n})^*$, we can conclude $\HtoGe[n+1](\Gen_{n+1})=(\Htotop{n+1})^*(x^2)$, as desired.

\subsubsection{\texorpdfstring{Check the value of $(\Htowhole[n+1]{r})^*(\Gen_{n+1})$ for $n\not\equiv r\bmod 2$}{Check the value of (q\_{n+1}\^r)*(C\_\{n+1\}) for n != r mod 2}}

Since $n\not\equiv r\bmod 2$, we have that $\Hcofiber[r]{n}=\pt$, so that $\HHreduced_{C_2}^{4+4\signrep}(\Hcofiber[r]{n})=0$, and hence $(\Htonext[r]{n})^*$ is an isomorphism in dimension $4+4\signrep$.
Now the relevant piece of \autoref{fig:cofiber-cohomology} is
\begin{center}
\begin{tikzcd}[row sep=1.3cm,column sep=1.3cm]
\HHreduced_{C_2}^{4+4\signrep}(\PH(\flagstyle{W}_{n+1})_{+}) \ar{r}{(\Htonext{n})^*}[swap]{} \ar{d}[swap]{(\Htowhole[n+1]{r})^*} &  \HHreduced_{C_2}^{4+4\signrep}(\PH(\flagstyle{W}_n)_{+}) \ar{d}{(\Htowhole[n]{r})^*} 
\\
\HHreduced_{C_2}^{4+4\signrep}(\PH[r](\flagstyle{W}_{n+1})_{+}) \ar{r}[swap]{(\Htonext[r]{n})^*}{\cong}  & \HHreduced_{C_2}^{4+4\signrep}(\PH[r](\flagstyle{W}_n)_{+})
\\
\HHreduced_{C_2}^{4+4\signrep}(\PH[r](Q)_{+}) \ar{u}{(\Htotop[r]{n+1})^*} \ar{ru}[swap]{(\Htotop[r]{n})^*}
\end{tikzcd}
\end{center}
By our inductive hypothesis, we know that $(\Htowhole[n]{r})^*(\Gen_n)=(\Htotop[r]{n})^*(x_r(\epsilon^4+\xi^2x_r))$, so we can conclude
\begin{align*}
(\Htonext[r]{n})^*\circ(\Htowhole[n+1]{r})^*(\Gen_{n+1})&=(\Htowhole[n]{r})^*\circ(\Htonext{n})^*(\Gen_{n+1})\\
(\Htonext[r]{n})^*\circ(\Htowhole[n+1]{r})^*(\Gen_{n+1})&=(\Htowhole[n]{r})^*(\Gen_{n})\\
(\Htonext[r]{n})^*\circ(\Htowhole[n+1]{r})^*(\Gen_{n+1})&=(\Htotop[r]{n})^*(x_r(\epsilon^4+\xi^2x_r))\\
(\Htowhole[n+1]{r})^*(\Gen_{n+1})&=((\Htonext[r]{n})^*)^{-1}\circ(\Htotop[r]{n})^*(x_r(\epsilon^4+\xi^2x_r))\\
(\Htowhole[n+1]{r})^*(\Gen_{n+1})&=(\Htotop[r]{n+1})^*(x_r(\epsilon^4+\xi^2x_r))
\end{align*}
which is what is desired.

\subsubsection{\texorpdfstring{Check the value of $(\Htowhole[n+1]{r})^*(\Gen_{n+1})$ for $n\equiv r\bmod 2$}{Check the value of (q\_{n+1}\^r)*(C\_\{n+1\}) for n = r mod 2}}

We started with the assumption that $n\geq 3$, and we have already chosen our element $\Gen_4$ such that $(\Htowhole[4]{1})^*(\Gen_4)$ is the desired value. Therefore in this section we can assume $n\geq 4$.

Since $n\equiv r\bmod 2$, we have that $\Hcofiber[r]{n}=\S{|\flagstyle{W}_n(r;\C)|}=\S{4\lfloor n/2\rfloor}$ by \autoref{rem:cells-for-2}. Note that because $n\geq 4$, we have that $4\lfloor n/2\rfloor\geq 8$.

We now consider this section of \autoref{fig:cofiber-cohomology}.
\begin{center}
\begin{tikzcd}[row sep=1.3cm,column sep=1.1cm]
0\ar{r} & \HHreduced_{C_2}^{4+4\signrep}(\S{4\lfloor n/2\rfloor}) \ar{r}{(\Htocofiber[r]{n})^*} & \HHreduced_{C_2}^{4+4\signrep}(\PH[r](\flagstyle{W}_{n+1})_{+}) \ar{r}{(\Htonext[r]{n})^*}  & \HHreduced_{C_2}^{4+4\signrep}(\PH[r](\flagstyle{W}_n)_{+}) \ar{r} & 0\\
{} & & \HHreduced_{C_2}^{4+4\signrep}(\PH[r](Q)_{+}) \ar{u}{(\Htotop[r]{n+1})^*} \ar{ru}[swap]{(\Htotop[r]{n})^*}
\end{tikzcd}
\end{center}

If in fact $4\lfloor \frac{n}{2}\rfloor>8$, then it must be at least $12$, hence
\[\bigr|4+4\signrep-4\lfloor \tfrac{n}{2}\rfloor\bigr|\leq 8-12=-4,\qquad \bigl|(4+4\signrep-4\lfloor \tfrac{n}{2}\rfloor)^{G}\bigr|\leq 4-12=-8.\]
By considering \autoref{figure-HS0-2}, we see that
\[\HHreduced_{C_2}^{4+4\signrep}(\S{4\lfloor n/2\rfloor})\cong \HHreduced_{C_2}^{4+4\signrep-4\lfloor n/2\rfloor}(\S{0})=0,\]
so that $(\Htonext[r]{n})^*$ is an isomorphism, which implies that $(\Htowhole[n+1]{r})^*(\Gen_{n+1})$ is the desired value of $(\Htotop[r]{n+1})^*(x_r(\epsilon^4+\xi^2x_r^2))$ by the same argument as in the previous section.

If instead $4\lfloor \frac{n}{2}\rfloor=8$, then it must be the case that $n=4$ or $5$, so that --  \emph{equivariantly}, since these spaces have a trivial $C_2$-action --  we have
\[\Hcofiber[r]{n}\cong\S{8},\qquad \PH[r](\flagstyle{W}_{n+1})_{+}\cong\HP^2_{+},\qquad \PH[r](\flagstyle{W}_n)_{+}\cong\HP^1_{+}\]
and hence our diagram is
\begin{center}
\begin{tikzcd}[row sep=1.3cm,column sep=1.3cm]
0\ar{r} & \HHreduced_{C_2}^{4+4\signrep}(\S{8}) \ar{r}{(\Htocofiber[r]{n})^*} & \HHreduced_{C_2}^{4+4\signrep}(\HP^2_{+}) \ar{r}{(\Htonext[r]{n})^*}  & \HHreduced_{C_2}^{4+4\signrep}(\HP^1_{+}) \ar{r} & 0\\
{} & & \HHreduced_{C_2}^{4+4\signrep}(\HP^\infty_{+}) \ar{u}{(\Htotop[r]{n+1})^*} \ar{ru}[swap]{(\Htotop[r]{n})^*}
\end{tikzcd}
\end{center}
Note that $|4+4\signrep-8|=0$ and $|(4+4\signrep-8)^{C_2}|=-4$, so by observing \autoref{figure-HS0-2}, we see that \[\HHreduced_{C_2}^{4+4\signrep}(\S{8})\cong \HHreduced_{C_2}^{4+4\signrep-8}(\S{0})\cong R.\]
By \autoref{def:xi-for-2}, we see that $\xi^2$ is the generator of $R(\GG)\cong\Z$. In non-equivariant cohomology, the map $(\Htocofiber[r]{n})^*:\Hhreduced^8(\S{8})\to\Hhreduced^8(\HP^2_{+})$ sends the generator $1$ to the generator $(\Htotop[r]{n+1})^*(x_r^2)$, so by \autoref{xr-stuff}, we conclude that the map 
\[(\Htocofiber[r]{n})^*:\HHreduced_{C_2}^{4+4\signrep}(\S{8})\cong\HHreduced_{C_2}^{4+4\signrep-8}(\S{0})\otimes \Hhreduced^8(\S{8})\to\HHreduced_{C_2}^{4+4\signrep-8}(\S{0})\otimes\Hhreduced^8(\HP^2_{+})\subseteq \HHreduced_{C_2}^{4+4\signrep}(\HP^2_{+})\]
sends the generator $\xi^2\in\HHreduced_{C_2}^{4+4\signrep}(\S{8})(\GG)\cong R(\GG)$ to the element $(\Htotop[r]{n+1})^*(\xi^2x_r^2)\in\HHreduced_{C_2}^{4+4\signrep}(\HP^2_{+})(\GG)$.

By our induction hypothesis,
\[((\Htonext[r]{n})^*\circ(\Htowhole[n+1]{r})^*)(\Gen_{n+1})=(\Htowhole[n]{r})^*(\Gen_{n})=(\Htotop[r]{n})^*(x_r(\epsilon^4+\xi^2x_r))\]
and therefore $(\Htowhole[n+1]{r})^*(\Gen_{n+1})$ is a lift of $(\Htotop[r]{n})^*(x_r(\epsilon^4+\xi^2x_r))$ along the map $(\Htonext[r]{n})^*$. Of course, the lift we would like it to be is precisely $(\Htotop[r]{n+1})^*(x_r(\epsilon^4+\xi^2x_r))$.

Because $\PH[r](\flagstyle{W}_n)_{+}\cong\HP^1_{+}$, we have that $(\Htotop[r]{n})^*(x_r^2)=0$ by \autoref{xr-stuff}. Thus \[(\Htotop[r]{n})^*(x_r(\epsilon^4+\xi^2x_r))=\epsilon^4x_r,\] and therefore, \emph{a priori}, the value of a lift of $(\Htotop[r]{n})^*(x_r(\epsilon^4+\xi^2x_r))$ along the map $(\Htonext[r]{n})^*$ could be $(\Htotop[r]{n+1})^*(x_r(\epsilon^4+k\xi^2x_r))$ for any $k\in\Z$, since the image of $(\Htocofiber[r]{n})^*$ is the kernel of $(\Htonext[r]{n})^*$.

Consider the map $(\Htowhole[n+1]{r})^*$ in \autoref{fig:cofiber-cohomology}, expanded into its $\GG$ and $\Ge$ components.
\begin{center}
\begin{tikzcd}[row sep=0.2cm,column sep=1.7cm]
\HHreduced_{C_2}^{4+4\signrep}(\PH(\flagstyle{W}_{n+1})_{+})(\GG) \ar[bend right=40]{dd}[swap]{\HtoGe[n+1]}\\
{}\\
\HHreduced_{C_2}^{4+4\signrep}(\PH(\flagstyle{W}_{n+1})_{+})(\Ge) \ar[bend right=40]{uu} \ar[start anchor = {[yshift=-0.4cm]}, end anchor = {[yshift=0.4cm]}]{ddd}[swap]{(\Htowhole[n+1]{r})^*}\\
\strut\\
\strut\\
 \HHreduced_{C_2}^{4+4\signrep}(\PH[r](\flagstyle{W}_{n+1})_{+})(\GG) \ar[bend right=40]{dd}[swap]{\HparttoGe[n+1]{r}} \\
{} \\ \HHreduced_{C_2}^{4+4\signrep}(\PH[r](\flagstyle{W}_{n+1})_{+})(\Ge) \ar[bend right=40]{uu}
\end{tikzcd}
\end{center}
Again by \autoref{nonequivariant}, the $\Ge$ level of this diagram just reflects what happens in non-equivariant cohomology:
\begin{center}
\begin{tikzcd}[row sep=1.7cm,column sep=1.7cm]
\Hhreduced^{8}(\HP^n_{+}) \ar{d}[swap]{(\Htowhole[n+1]{r})^*}\\
\Hhreduced^{8}(\HP^{2}_{+})
\end{tikzcd}
\end{center}
namely, that the map $(\Htowhole[n+1]{r})^*$ is an isomorphism in dimension $8$, and sends the generator $(\Htotop{n+1})^*(x^2)\in\Hhreduced^{8}(\HP^{n}_{+})\cong\Z$ to the generator $(\HparttoGe[n+1]{r})^*((\Htotop[r]{n+1})^*(\xi^2x_r^2))\in\Hhreduced^{8}(\HP^{2}_{+})\cong\Z$.

However, we have already established that $(\HtoGe[n+1])^*(\Gen_{n+1})=(\Htotop{n+1})^*(x^2)$, and therefore
\begin{align*}
(\HparttoGe[n+1]{r})^*((\Htowhole[n+1]{r})^*(\Gen_{n+1}))&=(\Htowhole[n+1]{r})^*((\HtoGe[n+1])^*(\Gen_{n+1}))\\
&=(\Htowhole[n+1]{r})^*((\Htotop{n+1})^*(x^2))\\
&=(\HparttoGe[n+1]{r})^*((\Htotop[r]{n+1})^*(\xi^2x_r^2))
\end{align*}
Because $\HparttoGe[n+1]{r}$ is an isomorphism (see the definition of $R$ in \autoref{def:important-mackeys}), the value of $(\Htowhole[n+1]{r})^*(\Gen_{n+1})$, which we showed must be $(\Htotop[r]{n+1})^*(x_r(\epsilon^4+k\xi^2x_r^2))$ for some $k\in\Z$, must in fact be \[(\Htotop[r]{n+1})^*(x_r(\epsilon^4+\xi^2x_r^2)),\] as desired.
\end{proof}

\section{\texorpdfstring{Proof that $\gen$ and $\Gen$ generate $\HHreduced_{C_2}^{*}(B_{C_2}\SU(2)_{+}\coeffs*{A})$}{Proof that c and C generate H\_\{C\_2\}*(B\_\{C\_2\}SU(2)\_+;A)}}
\label{subsection:proof-cC-generators}

The elements $\gen_n$ and $\Gen_{n}$ constructed in \autoref{subsection:construct-c} and \autoref{subsection:construct-C} may seem to only be tools used to obtain the desired elements $\gen$ and $\Gen$. However, in proving that $\gen$ and $\Gen$ generate $\HHreduced_{C_2}^{*}(\PH(Q)_{+}\coeffs*{A})$, the elements $\gen_n$ and $\Gen_n$ will remain important for us because they generate $\HHreduced_{C_2}^*(\PH(\flagstyle{W}_n)_{+}\coeffs*{A})$, i.e., the cohomology of the $n$th piece of the filtration of $\PH(Q)$.

We will first establish this in \autoref{generate-finite-levels-2}, which comprises the main part of the work, and which again uses an inductive argument based on checking images under $\HtoGe[n]$, $(\Htowhole[n]{0})^*$, and $(\Htowhole[n]{1})^*$. Then in \autoref{thm:generate}, we use this to conclude that $\gen$ and $\Gen$ generate $\HHreduced_{C_2}^{*}(\PH(Q)_{+}\coeffs*{A})$, thereby proving the first half of our main \autoref{thm:main}.   The key observation in this latter step is that $\gen_n$ and $\Gen_n$ are the images of $\gen$ and $\Gen$, respectively, in $\HHreduced_{C_2}^*(\PH(\flagstyle{W}_n)_{+}\coeffs*{A})$.

\begin{lemma}
\label{generate-finite-levels-2}
For any $n\geq1$, the elements $(\Htotop{n})^*(\gen)=\gen_n$ and $(\Htotop{n})^*(\Gen)=\Gen_n$ together generate $\HHreduced_{C_2}^*(\PH(\flagstyle{W}_n)_{+}\coeffs*{A})$  as an $\HHreduced_{C_2}^{*}(\S{0}\coeffs*{A})$-algebra.
\end{lemma}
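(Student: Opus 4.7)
The plan is to induct on $n$, using the short exact sequence coming from the cofiber sequence $\PH(\flagstyle{W}_n)_+ \to \PH(\flagstyle{W}_{n+1})_+ \to \S{\cellstyle{w}_n}$ together with the multiplicative comparison theorem (\autoref{thm:mult-comparison}) to handle the ``new'' cohomology in each filtration step.

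For the base case $n=1$, the space $\PH(\flagstyle{W}_1)$ is a single point, so $\HHreduced_{C_2}^*(\PH(\flagstyle{W}_1)_+\coeffs*{A}) \cong \HHreduced_{C_2}^*(\S{0}\coeffs*{A})$ is trivially generated over itself. For the inductive step, assume $\gen_n$ and $\Gen_n$ generate $\HHreduced_{C_2}^*(\PH(\flagstyle{W}_n)_+\coeffs*{A})$. By \autoref{thm:even-dim-free}, the cofiber sequence above yields a short exact sequence
\[
0 \to \HHreduced_{C_2}^\alpha(\S{\cellstyle{w}_n}\coeffs*{A}) \xrightarrow{(\Htocofiber{n})^*} \HHreduced_{C_2}^\alpha(\PH(\flagstyle{W}_{n+1})_+\coeffs*{A}) \xrightarrow{(\Htonext{n})^*} \HHreduced_{C_2}^\alpha(\PH(\flagstyle{W}_n)_+\coeffs*{A}) \to 0.
\]
Given $y$ in the middle term, the inductive hypothesis writes $(\Htonext{n})^*(y) = P(\gen_n, \Gen_n)$ for some polynomial $P$ over $\HHreduced_{C_2}^*(\S{0}\coeffs*{A})$; by naturality $(\Htonext{n})^*(\gen_{n+1})=\gen_n$ and $(\Htonext{n})^*(\Gen_{n+1})=\Gen_n$, so $y - P(\gen_{n+1}, \Gen_{n+1})$ lies in the image of $(\Htocofiber{n})^*$. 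Since $\HHreduced_{C_2}^*(\S{\cellstyle{w}_n}\coeffs*{A})$ is a free $\HHreduced_{C_2}^*(\S{0}\coeffs*{A})$-module of rank one on a generator in dimension $\cellstyle{w}_n$, everything in that image is $\HHreduced_{C_2}^*(\S{0}\coeffs*{A})$-multiple of a single class $(\Htocofiber{n})^*(1_{\cellstyle{w}_n})$. So the induction reduces to exhibiting this one class as a monomial in $\gen_{n+1}, \Gen_{n+1}$.

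The natural candidate is $\gen_{n+1}^{n\bmod 2}\Gen_{n+1}^{\lfloor n/2 \rfloor}$, which by a direct dimension count lives in degree $(n\bmod 2)\cdot 4\signrep + \lfloor n/2\rfloor(4+4\signrep) = \cellstyle{w}_n$ (see \autoref{rem:cells-for-2}). To verify that it agrees with $(\Htocofiber{n})^*(1_{\cellstyle{w}_n})$ up to a unit in $A$, I would apply the multiplicative comparison theorem (\autoref{thm:mult-comparison}) to $\PH(\flagstyle{W}_{n+1})$, reducing the check to comparing images under $\HtoGe[n+1]$ and under each $(\Htowhole[n+1]{r})^*$ for $r\in\{0,1\}$. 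The image under $\HtoGe[n+1]$ is a non-equivariant computation on $\HP^n_+$: using \autoref{x-stuff} one sees $\HtoGe[n+1](\gen_{n+1}^{n\bmod 2}\Gen_{n+1}^{\lfloor n/2\rfloor}) = (\Htotop{n+1})^*(x^n)$, which is precisely the image of $1_{\cellstyle{w}_n}$ under the non-equivariant top-cell inclusion. The fixed-point computations use the explicit formulas for $(\Htowhole[n+1]{r})^*(\gen_{n+1})$ and $(\Htowhole[n+1]{r})^*(\Gen_{n+1})$ established in \autoref{thm:gen} and \autoref{thm:Gen}, together with the explicit multiplicative structure on $\HHreduced_{C_2}^*(\PH[r](\flagstyle{W}_{n+1})_+\coeffs*{A})$ coming from \autoref{thm:trivial-even-cohomology}.

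The main obstacle is this last fixed-point check: one must track how the relations $\gen_{n+1}$ and $\Gen_{n+1}$ satisfy upstairs (involving $\epsilon^4$ and $\xi^2$) combine when multiplied, and verify that the product $\gen_{n+1}^{n\bmod 2}\Gen_{n+1}^{\lfloor n/2\rfloor}$ restricts on $\PH[r](\flagstyle{W}_{n+1})_+$ to the same class as $(\Htocofiber{n})^*(1_{\cellstyle{w}_n})$, which by naturality of the cofiber-sequence diagram in \autoref{fig:cofiber-cohomology} equals $(\Htowholecofiber[r]{n})^*$ applied to the class of $1_{\cellstyle{w}_n}$. This requires splitting into cases based on the parity of $n$ versus $r$ (so that $\Hcofiber[r]{n}$ is either trivial or $\S{4\lfloor n/2\rfloor}$), but in each case reduces to the kind of computation already done in the constructions of $\gen_{n+1}$ and $\Gen_{n+1}$. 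Once this step is complete, the injectivity from \autoref{thm:mult-comparison} forces $\gen_{n+1}^{n\bmod 2}\Gen_{n+1}^{\lfloor n/2\rfloor}$ to coincide with $(\Htocofiber{n})^*(1_{\cellstyle{w}_n})$ up to a unit, completing the inductive step.
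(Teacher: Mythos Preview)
Your proposal is correct and follows essentially the same approach as the paper: induct on $n$, use the split short exact sequence to reduce to showing the single new class $(\Htocofiber{n})^*(1)$ lies in the subalgebra, propose the monomial $\gen_{n+1}^{\,n\bmod 2}\Gen_{n+1}^{\lfloor n/2\rfloor}$, and verify equality via the multiplicative comparison theorem by checking images under $\HtoGe[n+1]$ and $(\Htowhole[n+1]{r})^*$ with a parity case split. The paper in fact obtains exact equality (not merely up to a unit), but since the comparison map is injective your argument yields the same conclusion once the image checks are carried out.
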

\begin{proof}
This is obvious for $n=1$ since $\PH(\flagstyle{W}_1)_{+}\cong \S{0}$, so assume the result for $n$. 


From \autoref{fig:cofiber-cohomology}, we have a split short exact sequence
\[0\to\HHreduced_{C_2}^*(\S{\cellstyle{w}_n})\xrightarrow{\;\;(\Htocofiber{n})^*\;\;}\HHreduced_{C_2}^*(\PH(\flagstyle{W}_{n+1})_{+})\xrightarrow{(\Htonext{n})^*}\HHreduced_{C_2}^*(\PH(\flagstyle{W}_{n})_{+})\to0\]
and let $\nu=(\Htocofiber{n})^*(1)$ be the image of $1\in\HHreduced_{C_2}^{\cellstyle{w}_n}(\S{\cellstyle{w}_n})(\GG)\cong\HHreduced_{C_2}^0(S^0)(\GG)\cong A(\GG)$ under the map $(\Htocofiber{n})^*$. Because the elements
\[(\Htonext{n})^*((\Htotop{n+1})^*(\gen))=(\Htotop{n})^*(\gen),\qquad (\Htonext{n})^*((\Htotop{n+1})^*(\Gen))=(\Htotop{n})^*(\Gen)\]
already generate $\HHreduced_{C_2}^*(\PH(\flagstyle{W}_{n})_{+})$ by hypothesis, we only have to check that $(\Htotop{n+1})^*(\gen)$ and $(\Htotop{n+1})^*(\Gen)$ generate $\nu$. To demonstrate this, we claim in particular that
\[\nu=\begin{cases}
(\Htotop{n+1})^*(\Gen^{n/2}) & \text{if $n$ is even},\\
(\Htotop{n+1})^*(\gen\Gen^{(n-1)/2}) & \text{if $n$ is odd}.
\end{cases}\]
It suffices to check this under the maps $\HtoGe[n+1]$, $(\Htowhole[n+1]{0})^*$, and $(\Htowhole[n+1]{1})^*$.

\subsubsection{Compute the desired value of $\HtoGe[n+1](\nu)$}
First, note that in $\HHreduced_{C_2}^{*}(\PH(Q)_{+})(\Ge)$
\[\HtoGe(\Gen^k)=x^{2k},\qquad \HtoGe(\gen\Gen^k)=x^{2k+1},\]
so that in $\HHreduced_{C_2}^{*}(\PH(\flagstyle{W}_{n+1})_{+})(\Ge)$, we have
\[\HtoGe[n+1]((\Htotop{n+1})^*(\Gen^{k}))=(\Htotop{n+1})^*(x^{2k}),\qquad \HtoGe[n+1]((\Htotop{n+1})^*(\gen\Gen^k))=(\Htotop{n+1})^*(x^{2k+1}).\]
Therefore, in particular, we have for even $n$ that 
\[\HtoGe[n+1]((\Htotop{n+1})^*(\Gen^{n/2}))=(\Htotop{n+1})^*(x^{n})\]
and for odd $n$ that
\[\HtoGe[n+1]((\Htotop{n+1})^*(\gen\Gen^{(n-1)/2}))=(\Htotop{n+1})^*(x^{n}).\]
Thus, regardless of the parity of $n$, we must show that $\HtoGe[n+1](\nu)=(\Htotop{n+1})^*(x^n)$.

\subsubsection{Compute the desired value of $(\Htowhole[n+1]{0})^*(\nu)$}

In $\HHreduced_{C_2}^*(\PH[0](Q)_{+})(\GG)$, we have for any $k$ that
\begin{align*}
(\Htowhole{0})^*(\Gen^k)&=(x_0(\epsilon^4+\xi^2x_0))^k\\
&=x_0^k\sum_{s=0}^k\binom{k}{s}(\epsilon^4)^{k-s}(\xi^2x_0)^s\\
&=\epsilon^{4k}x_0^k+\binom{k}{1}\epsilon^{4k-4}\xi^2x_0^{k+1}+\cdots
\intertext{and therefore}
(\Htowhole{0})^*(\gen\Gen^k)&=(\xi^2x_0)(\Htowhole{0})^*(\Gen^k)  \\
&=\epsilon^{4k}\xi^2x_0^{k+1}+\binom{k}{1}\epsilon^{4k-4}\xi^4x_0^{k+2}+\cdots
\end{align*}
Because $\flagstyle{W}_{m}=\irredH_0^{\lceil m/2\rceil}\oplus\irredH_1^{\lfloor m/2\rfloor}$, we have that
\[\PH[0](\flagstyle{W}_m)_{+}=\PH(\flagstyle{W}_m(0;\H))_{+}\cong\HP^{\lceil m/2\rceil -1}_{+}.\] 
By \autoref{xr-stuff}, this implies that $(\Htotop[0]{m})^*(x_0^d)=0$ when $d\geq \lceil m/2\rceil$. Therefore, for even $n$, we have $(\Htotop[0]{n+1})^*(x_0^d)=0$ when $d\geq (n/2)+1$, and for odd $n$, we have $(\Htotop[0]{n+1})^*(x_0^d)=0$ when $d\geq (n+1)/2$. Now we can see that for even $n$, 
\begin{align*}
(\Htowhole[n+1]{0})^*((\Htotop{n+1})^*(\Gen^{n/2}))&=(\Htotop[0]{n+1})^*((\Htowhole{0})^*(\Gen^{n/2}))\\
&=(\Htotop[0]{n+1})^*\left(\epsilon^{2n}x_0^{n/2}+\binom{n/2}{1}\epsilon^{2n-4}\xi^2x_0^{(n/2)+1}+\cdots\right)\\
&=(\Htotop[0]{n+1})^*(\epsilon^{2n}x_0^{n/2})
\intertext{and for odd $n$,}
(\Htowhole[n+1]{0})^*((\Htotop{n+1})^*(\gen\Gen^{(n-1)/2}))&=(\Htotop[0]{n+1})^*((\Htowhole{0})^*(\gen\Gen^{(n-1)/2}))\\
&=(\Htotop[0]{n+1})^*\left(\epsilon^{2n-2}\xi^2x_0^{(n+1)/2}+\binom{(n-1)/2}{1}\epsilon^{2n-6}\xi^4x_0^{(n+3)/2}+\cdots\right)\\
&=0.
\end{align*}
Thus, we must show that
\[(\Htowhole[n+1]{0})^*(\nu)=\begin{cases}
(\Htotop[0]{n+1})^*(\epsilon^{2n}x_0^{n/2}) & \text{if $n$ is even,}\\
0 & \text{if $n$ is odd.}
\end{cases}\]

\subsubsection{Compute the desired value of $(\Htowhole[n+1]{1})^*(\nu)$}

In $\HHreduced_{C_2}^*(\PH[1](Q)_{+})(\GG)$, we have for any $k$ that
\begin{align*}
(\Htowhole{1})^*(\Gen^k)&=(x_1(\epsilon^4+\xi^2x_1))^k\\
&=x_1^k\sum_{s=0}^k\binom{k}{s}(\epsilon^4)^{k-s}(\xi^2x_1)^s\\
&=\epsilon^{4k}x_1^k+\binom{k}{1}\epsilon^{4k-4}\xi^2x_1^{k+1}+\cdots
\intertext{and therefore}
(\Htowhole{1})^*(\gen\Gen^k)&=(\epsilon^4+\xi^2x_1)(\Htowhole{1})^*(\Gen^k)  \\
&=\epsilon^{4k+4}x_1^{k}+\binom{k+1}{1}\epsilon^{4k}\xi^2x_1^{k+1}+\cdots
\end{align*}
Because $\flagstyle{W}_{m}=\irredH_0^{\lceil m/2\rceil}\oplus\irredH_1^{\lfloor m/2\rfloor}$, we have that
\[\PH[1](\flagstyle{W}_m)_{+}=\PH(\flagstyle{W}_m(1;\H))_{+}\cong\HP^{\lfloor m/2\rfloor -1}_{+}.\] 
By \autoref{xr-stuff}, this implies that $(\Htotop[1]{m})^*(x_1^d)=0$ when $d\geq \lfloor m/2\rfloor$. Therefore, for even $n$, we have $(\Htotop[1]{n+1})^*(x_1^d)=0$ when $d\geq n/2$, and for odd $n$, we have $(\Htotop[1]{n+1})^*(x_0^d)=0$ when $d\geq (n+1)/2$. Now we can see that for even $n$, 
\begin{align*}
(\Htowhole[n+1]{1})^*((\Htotop{n+1})^*(\Gen^{n/2}))&=(\Htotop[1]{n+1})^*((\Htowhole{1})^*(\Gen^{n/2}))\\
&=(\Htotop[1]{n+1})^*\left(\epsilon^{2n}x_1^{n/2}+\binom{n/2}{1}\epsilon^{2n-4}\xi^2x_1^{(n/2)+1}+\cdots\right)\\
&=0
\intertext{and for odd $n$,}
(\Htowhole[n+1]{1})^*((\Htotop{n+1})^*(\gen\Gen^{(n-1)/2}))&=(\Htotop[1]{n+1})^*((\Htowhole{1})^*(\gen\Gen^{(n-1)/2}))\\
&=(\Htotop[1]{n+1})^*\left(\epsilon^{2n+2}x_1^{(n-1)/2}+\binom{(n+1)/2}{1}\epsilon^{2n-2}\xi^2x_1^{(n+1)/2}+\cdots\right)\\
&=(\Htotop[1]{n+1})^*(\epsilon^{2n+2}x_1^{(n-1)/2}).
\end{align*}
Thus, we will want to show that
\[(\Htowhole[n+1]{1})^*(\nu)=\begin{cases}
0 & \text{if $n$ is even,}\\
(\Htotop[1]{n+1})^*(\epsilon^{2n+2}x_1^{(n-1)/2}) & \text{if $n$ is odd.}
\end{cases}\]

\subsubsection{Check that $\HtoGe[n+1](\nu)$ equals the desired value}
Within \autoref{fig:cofiber-cohomology} we find the maps $(\Htocofiber{n})^*$ and $(\Htotop{n+1})^*$, which we expand into their $\GG$ and $\Ge$ levels.

\begin{center}
\begin{tikzcd}[row sep=0.2cm,column sep=1.7cm]
{}&\HHreduced_{C_2}^{\cellstyle{w}_n}(\PH(Q)_{+})(\GG) \ar[bend right=40]{dd}[swap]{\rho}\\
{}\\
{}&\HHreduced_{C_2}^{\cellstyle{w}_n}(\PH(Q)_{+})(\Ge) \ar[bend right=40]{uu} \ar[start anchor = {[yshift=-0.4cm]}, end anchor = {[yshift=0.4cm]}]{ddd}{(\Htotop{n+1})^*} \\
\strut & \\
\strut & \\
\HHreduced_{C_2}^{\cellstyle{w}_n}(\S{\cellstyle{w}_n})(\GG) \ar[bend right=40]{dd}[swap]{\HcofibertoGe{n}}& \HHreduced_{C_2}^{\cellstyle{w}_n}(\PH(\flagstyle{W}_{n+1})_{+})(\GG) \ar[bend right=40]{dd}[swap]{\HtoGe[n+1]}\\
{} \ar[start anchor = {[xshift=1.2cm]}, end anchor = {[xshift=-1.7cm]}]{r}[swap]{(\Htocofiber{n})^*} & {} \\
\HHreduced_{C_2}^{\cellstyle{w}_n}(\S{\cellstyle{w}_n})(\Ge) \ar[bend right=40]{uu} & \HHreduced_{C_2}^{\cellstyle{w}_n}(\PH(\flagstyle{W}_{n+1})_{+})(\Ge) \ar[bend right=40]{uu}
\end{tikzcd}
\end{center}
Non-equivariantly, we have \[\PH(Q)_{+}\cong\HP^\infty_{+},\qquad \PH(\flagstyle{W}_{n+1})_{+}\cong\HP^n_{+},\qquad \S{\cellstyle{w}_n}\cong\S{4n}\]
and $\Htocofiber{n}$ is just the map $\HP^n_{+}\to\S{4n}$ that collapses the $4k$-cells of $\HP^n_{+}$ for $k<n$, and glues the disjoint basepoint somewhere. 
By \autoref{nonequivariant}, the $\Ge$ level of this diagram just reflects what happens in non-equivariant cohomology:
\begin{center}
\begin{tikzcd}[row sep=1.7cm,column sep=1.7cm]
{} & \Hhreduced^{4n}(\HP^\infty) \ar{d}{(\Htotop{n+1})^*}\\
\Hhreduced^{4n}(\S{4n}) \ar{r}[swap]{(\Htocofiber{n})^*} & \Hhreduced^{4n}(\HP^n_{+})
\end{tikzcd}
\end{center}
namely, that $(\Htocofiber{n})^*$ sends the generator $1\in\Hhreduced^{4n}(\S{4n})$ to the generator $(\Htotop{n+1})^*(x^n)\in\Hhreduced^{4n}(\HP^n_{+})$. 

Because $\HHreduced_{C_2}^{\cellstyle{w}_n}(\S{\cellstyle{w}_n})\cong \HHreduced_{C_2}^0(\S{0})\cong A$, we can see that $\HcofibertoGe{n}$ sends the element $1\in\HHreduced_{C_2}^{\cellstyle{w}_n}(\S{\cellstyle{w}_n})(\GG)$ to the element $1\in\HHreduced_{C_2}^{\cellstyle{w}_n}(\S{\cellstyle{w}_n})(\Ge)\cong\Hhreduced^{4n}(\S{4n})$ (see \autoref{def:important-mackeys}), so that
\[\HtoGe[n+1](\nu)=\HtoGe[n+1]((\Htocofiber{n})^*(1))=(\Htocofiber{n})^*(\HcofibertoGe{n})(1)=(\Htotop{n+1})^*(x^n).\]

\subsubsection{Check that $(\Htowhole[n+1]{0})^*(\nu)$ and $(\Htowhole[n+1]{1})^*(\nu)$ equal the desired values for even $n$}

When $n$ is even, we have that
\begin{align*}
\flagstyle{W}_{n+1}&=\irredH_0^{(n/2)+1}\oplus\irredH_1^{n/2},& \PH[0](\flagstyle{W}_{n+1})&\cong\HP^{n/2} & \PH[1](\flagstyle{W}_{n+1})&\cong\HP^{(n/2)-1} \\
\S{\cellstyle{w}_n}&=\S{2n+2n\signrep}, & \Hcofiber[0]{n}&=\S{2n},& \Hcofiber[1]{n}&=\pt
\end{align*}
Therefore, in \autoref{fig:cofiber-cohomology}, we have the following two diagrams (the top for $r=0$, the bottom for $r=1$):
\[
\begin{tikzcd}[row sep=1.3cm,column sep=1.3cm]
\HHreduced_{C_2}^{2n+2n\signrep}(\S{2n+2n\signrep}) \ar{d}[swap]{(\Htowholecofiber[0]{n})^*} \ar{r}{(\Htocofiber{n})^*} & \HHreduced_{C_2}^{2n+2n\signrep}(\PH(\flagstyle{W}_{n+1})_{+}) \ar{d}{(\Htowhole[n+1]{0})^*}\\
\HHreduced_{C_2}^{2n+2n\signrep}(\S{2n}) \ar{r}[swap]{(\Htocofiber[0]{n})^*} & \HHreduced_{C_2}^{2n+2n\signrep}(\HP^{n/2}_{+})
\end{tikzcd}\]

\[\begin{tikzcd}[row sep=1.3cm,column sep=1.3cm]
\HHreduced_{C_2}^{2n+2n\signrep}(\S{2n+2n\signrep}) \ar{d}[swap]{(\Htowholecofiber[1]{n})^*} \ar{r}{(\Htocofiber{n})^*} & \HHreduced_{C_2}^{2n+2n\signrep}(\PH(\flagstyle{W}_{n+1})_{+}) \ar{d}{(\Htowhole[n+1]{1})^*}\\
\HHreduced_{C_2}^{2n+2n\signrep}(\pt) \ar{r}[swap]{(\Htocofiber[1]{n})^*} & \HHreduced_{C_2}^{2n+2n\signrep}(\HP^{(n/2)-1}_{+})
\end{tikzcd}
\]
For the $r=1$ case, note that since $\HHreduced_{C_2}^{2n+2n\signrep}(\pt)=0$, we have that $(\Htowhole[n+1]{1})^*(\nu)=0$.

For the $r=0$ case, note that in non-equivariant cohomology, the map $(\Htocofiber[0]{n})^*\from\Hhreduced^{2n}(\S{2n})\to\Hhreduced^{2n}(\HP^{n/2}_{+})$ sends the generator $1$ to the generator $(\Htotop[0]{n+1})^*(x_0^{n/2})$, so by \autoref{xr-stuff}, we conclude that the map 
\[(\Htocofiber[0]{n})^*\from\HHreduced_{C_2}^{2n+2n\signrep}(\S{2n})\cong\HH_{C_2}^{2n\signrep}(\S{0})\otimes \Hhreduced^{2n}(\S{2n})\to\HH_{C_2}^{2n\signrep}(\S{0})\otimes\Hhreduced^{2n}(\HP^{n/2}_{+})\subseteq \HHreduced_{C_2}^{2n+2n\signrep}(\HP^{n/2}_{+})\]
sends the generator
$\epsilon^{2n}\in\HHreduced_{C_2}^{2n+2n\signrep}(\S{2n})(\GG)\cong \langle\Z\rangle(\GG)$ to the element \[(\Htotop[0]{n+1})^*(\epsilon^{2n}x_0^{n/2})\in\HHreduced_{C_2}^{2n+2n\signrep}(\PH[0](\flagstyle{W}_{n+1})_{+})(\GG).\]

The map $(\Htowholecofiber[0]{n})^*$ sends $1\in\HHreduced_{C_2}^{2n+2n\signrep}(\S{2n+2n\signrep})$ to $\epsilon^{2n}\in\HHreduced_{C_2}^{2n+2n\signrep}(\S{2n})$, because the composition 
\[\HHreduced_{C_2}^{0}(\S{0})\cong\HHreduced_{C_2}^{2n+2n\signrep}(\S{2n+2n\signrep})\xrightarrow{(\Htowholecofiber[0]{n})^*}\HHreduced_{C_2}^{2n+2n\signrep}(\S{2n})\cong\HHreduced_{C_2}^{2n\signrep}(\S{0})\]
is the same as the map in cohomology induced by $\epsilon^{2n}\from\S{0}\to\S{2n\signrep}$ (see \autoref{def:epsilon-for-2}).

Thus, we have that $(\Htowhole[n+1]{0})^*(\nu)=(\Htotop[0]{n+1})^*(\epsilon^{2n}x_0^{n/2})$.

\subsubsection{Check that $(\Htowhole[n+1]{0})^*(\nu)$ and $(\Htowhole[n+1]{1})^*(\nu)$ equal the desired values for odd $n$}

When $n$ is odd, we have that
\begin{align*}
\flagstyle{W}_{n+1}&=\irredH_0^{(n+1)/2}\oplus\irredH_1^{(n+1)/2},& \PH[0](\flagstyle{W}_{n+1})&\cong\HP^{(n-1)/2} & \PH[1](\flagstyle{W}_{n+1})&\cong\HP^{(n-1)/2} \\
\S{\cellstyle{w}_n}&=\S{(2n-2)+(2n+2)\signrep}, & \Hcofiber[0]{n}&=\pt,& \Hcofiber[1]{n}&=\S{2n-2}
\end{align*}
Therefore, in \autoref{fig:cofiber-cohomology}, we have the following two diagrams (the top for $r=0$, the bottom for $r=1$):
\[
\begin{tikzcd}[row sep=1.3cm,column sep=1.3cm]
\HHreduced_{C_2}^{(2n-2)+(2n+2)\signrep}(\S{(2n-2)+(2n+2)\signrep}) \ar{d}[swap]{(\Htowholecofiber[0]{n})^*} \ar{r}{(\Htocofiber{n})^*} & \HHreduced_{C_2}^{(2n-2)+(2n+2)\signrep}(\PH(\flagstyle{W}_{n+1})_{+}) \ar{d}{(\Htowhole[n+1]{0})^*}\\
\HHreduced_{C_2}^{(2n-2)+(2n+2)\signrep}(\pt) \ar{r}[swap]{(\Htocofiber[0]{n})^*} & \HHreduced_{C_2}^{(2n-2)+(2n+2)\signrep}(\HP^{(n-1)/2}_{+})
\end{tikzcd}\]

\[\begin{tikzcd}[row sep=1.3cm,column sep=1.3cm]
\HHreduced_{C_2}^{(2n-2)+(2n+2)\signrep}(\S{(2n-2)+(2n+2)\signrep}) \ar{d}[swap]{(\Htowholecofiber[1]{n})^*} \ar{r}{(\Htocofiber{n})^*} & \HHreduced_{C_2}^{(2n-2)+(2n+2)\signrep}(\PH(\flagstyle{W}_{n+1})_{+}) \ar{d}{(\Htowhole[n+1]{1})^*}\\
\HHreduced_{C_2}^{(2n-2)+(2n+2)\signrep}(\S{2n-2}) \ar{r}[swap]{(\Htocofiber[1]{n})^*} & \HHreduced_{C_2}^{(2n-2)+(2n+2)\signrep}(\HP^{(n-1)/2}_{+})
\end{tikzcd}
\]
For the $r=0$ case, note that since $\HHreduced_{C_2}^{(2n-2)+(2n+2)\signrep}(\pt)=0$, we have that $(\Htowhole[n+1]{0})^*(\nu)=0$.

For the $r=1$ case, note that in non-equivariant cohomology, the map $(\Htocofiber[1]{n})^*:\Hhreduced^{2n-2}(\S{2n-2})\to\Hhreduced^{2n-2}(\HP^{(n-1)/2}_{+})$ sends the generator $1$ to the generator $(\Htotop[1]{n+1})^*(x_1^{(n-1)/2})$, so by \autoref{xr-stuff}, we conclude that the map
{\small 
\[\mkern-90mu(\Htocofiber[1]{n})^*:\HHreduced_{C_2}^{(2n-2)+(2n+2)\signrep}(\S{2n-2})\cong\HH_{C_2}^{(2n+2)\signrep}(\S{0})\otimes \Hhreduced^{2n-2}(\S{2n-2})\to\]
\[\HH_{C_2}^{(2n+2)\signrep}(\S{0})\otimes\Hhreduced^{2n-2}(\HP^{(n-1)/2}_{+})\subseteq \HHreduced_{C_2}^{(2n-2)+(2n+2)\signrep}(\HP^{(n-1)/2}_{+})\]
}
sends the generator
$\epsilon^{2n+2}\in\HHreduced_{C_2}^{(2n-2)+(2n+2)\signrep}(\S{2n-2})(\GG)\cong \langle\Z\rangle(\GG)$ to the element \[(\Htotop[1]{n+1})^*(\epsilon^{2n+2}x_1^{(n-1)/2})\in\HHreduced_{C_2}^{(2n-2)+(2n+2)\signrep}(\PH[1](\flagstyle{W}_{n+1})_{+})(\GG).\]

The map $(\Htowholecofiber[1]{n})^*$ sends $1\in\HHreduced_{C_2}^{(2n-2)+(2n+2)\signrep}(\S{(2n-2)+(2n+2)\signrep})$ to $\epsilon^{2n+2}\in\HHreduced_{C_2}^{(2n-2)+(2n+2)\signrep}(\S{2n-2})$, because the composition 
\[\HHreduced_{C_2}^{0}(\S{0})\cong\HHreduced_{C_2}^{(2n-2)+(2n+2)\signrep}(\S{(2n-2)+(2n+2)\signrep})\xrightarrow{(\Htowholecofiber[1]{n})^*}\HHreduced_{C_2}^{(2n-2)+(2n+2)\signrep}(\S{2n-2})\cong\HHreduced_{C_2}^{(2n+2)\signrep}(\S{0})\]
is the same as the map in cohomology induced by $\epsilon^{2n+2}\from\S{0}\to\S{(2n+2)\signrep}$ (see \autoref{def:epsilon-for-2}). 

Thus, we have that $(\Htowhole[n+1]{1})^*(\nu)=(\Htotop[1]{n+1})^*(\epsilon^{2n+2}x_1^{(n+1)/2})$.
\end{proof}

Having established that the elements $\gen_n$ and $\Gen_n$ generate the cohomology of the $n$th piece of the filtration, we are now in a position to prove the first part of \autoref{thm:main}.

\begin{theorem}
\label{thm:generate}
The elements $\gen$ and $\Gen$ generate $\HHreduced_{C_2}^{*}(B_{C_2}\SU(2)_{+}\coeffs*{A})$ as an $\HHreduced_{C_2}^{*}(\S{0}\coeffs*{A})$-algebra.
\end{theorem}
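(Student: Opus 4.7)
The plan is to leverage \autoref{generate-finite-levels-2}, which shows that $\gen_n$ and $\Gen_n$ generate $\HHreduced_{C_2}^{*}(\PH(\flagstyle{W}_n)_{+})$ for each $n$, together with the inverse limit identification from \autoref{thm:inverse-limit}. The essential observation will be that in any fixed $\RO(C_2)$-degree $\alpha$, the restriction map $(\Htotop{N})^{*}\from \HHreduced_{C_2}^{\alpha}(\PH(Q)_{+}) \to \HHreduced_{C_2}^{\alpha}(\PH(\flagstyle{W}_N)_{+})$ becomes an isomorphism once $N$ is large enough, so any polynomial representative at the finite stage can be lifted back to $\PH(Q)$.

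First, I would fix $\alpha \in \RO(C_2)$ and an arbitrary class $\eta \in \HHreduced_{C_2}^{\alpha}(\PH(Q)_{+})$, and use the short exact sequences
\[0 \to \HHreduced_{C_2}^{\alpha}(\S{\cellstyle{w}_n}) \to \HHreduced_{C_2}^{\alpha}(\PH(\flagstyle{W}_{n+1})_{+}) \xrightarrow{(\Htonext{n})^{*}} \HHreduced_{C_2}^{\alpha}(\PH(\flagstyle{W}_n)_{+}) \to 0\]
from \autoref{fig:cofiber-cohomology} to argue that $(\Htonext{n})^{*}$ is an isomorphism in degree $\alpha$ for all sufficiently large $n$. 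By \autoref{rem:cells-for-2}, we have $\cellstyle{w}_n = 4\lfloor n/2\rfloor + 4\lceil n/2\rceil\signrep$, so both $|\alpha - \cellstyle{w}_n|$ and $|(\alpha - \cellstyle{w}_n)^{C_2}|$ become arbitrarily negative as $n$ grows, which by inspection of \autoref{figure-HS0-2} forces $\HHreduced_{C_2}^{\alpha}(\S{\cellstyle{w}_n}) \cong \HHreduced_{C_2}^{\alpha - \cellstyle{w}_n}(\S{0})$ to vanish eventually. Applied degreewise, \autoref{thm:inverse-limit} then upgrades this to the statement that $(\Htotop{N})^{*}$ itself is an isomorphism in degree $\alpha$ for some $N = N(\alpha)$, since the inverse system becomes eventually constant.

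Next, \autoref{generate-finite-levels-2} lets me write $(\Htotop{N})^{*}(\eta) = P(\gen_N, \Gen_N)$ for a polynomial $P$ with coefficients in $\HHreduced_{C_2}^{*}(\S{0})$; this sum is automatically finite because only finitely many monomials $\gen^{a}\Gen^{b}$ can land in any given total $\RO(C_2)$-degree once the coefficient degree is specified. Since $(\Htotop{N})^{*}$ is a ring homomorphism sending $\gen \mapsto \gen_N$ and $\Gen \mapsto \Gen_N$ by the very construction of $\gen$ and $\Gen$ in \autoref{subsection:construct-c} and \autoref{subsection:construct-C}, the class $P(\gen, \Gen) \in \HHreduced_{C_2}^{\alpha}(\PH(Q)_{+})$ shares the same image as $\eta$ under $(\Htotop{N})^{*}$, and the injectivity from the previous step forces $\eta = P(\gen, \Gen)$.

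The hard part will be establishing the eventual vanishing of $\HHreduced_{C_2}^{\alpha - \cellstyle{w}_n}(\S{0})$, which requires tracking both the underlying dimension and the fixed-point dimension as $n$ grows and confirming, via a short case analysis against the regions displayed in \autoref{figure-HS0-2}, that once $n$ is large the bidegree exits every nonzero region of the cohomology of a point. All other steps amount to formal bookkeeping once this growth estimate is in hand.
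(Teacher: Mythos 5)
Your proposal is correct and follows essentially the same strategy as the paper's proof: fix the degree $\alpha$, observe that the direct-sum decomposition forces $(\Htonext{n})^*$ to be an isomorphism in degree $\alpha$ once $n$ is large, use \autoref{thm:inverse-limit} to pass to an isomorphism $(\Htotop{N})^*$, and then transport a polynomial expression in $\gen_N,\Gen_N$ back to $\gen,\Gen$. If anything, your vanishing argument --- tracking both $|\alpha-\cellstyle{w}_n|\to-\infty$ and $|(\alpha-\cellstyle{w}_n)^{C_2}|\to-\infty$ to land in the entirely-zero third quadrant of \autoref{figure-HS0-2} --- is slightly more careful than the paper's one-liner that $(\Htonext{n})^*$ is an isomorphism whenever $|\alpha|<4n$, which really also needs the second coordinate to have dropped below zero (or an appeal to evenness to rule out the fourth-quadrant $\langle\Z/2\rangle$'s).
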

\begin{proof}
We have already done the main work in \autoref{generate-finite-levels-2}, where we proved that $(\Htotop{n})^*(\gen)=\gen_n$ and $(\Htotop{n})^*(\Gen)=\Gen_n$ generate $\HHreduced_{C_2}^*(\PH(\flagstyle{W}_n)_{+})$ for every $n\geq 1$. This implies that, if $Y$ is the sub-$\HHreduced_{C_2}^{*}(\S{0})$-algebra of $\HHreduced_{C_2}^{*}(B_{C_2}\SU(2)_{+})$ generated by $\gen$ and $\Gen$, then the composite map 
\[Y\hookrightarrow \HHreduced_{C_2}^{*}(B_{C_2}\SU(2)_{+})\xrightarrow{(\Htotop{n})^*} \HHreduced_{C_2}^{*}(\PH(\flagstyle{W}_n)_{+})\]
is surjective for every $n$. 

By combining \autoref{thm:even-dim-free} and \autoref{thm:PH-even-monotone} (cf. \autoref{thm:additive}), we see that as modules over $\HHreduced_{C_p}^*(\S{0}\coeffs*{A})$, we have
\[\HHreduced_{C_2}^{*}(\PH(\flagstyle{W}_{n+1})_{+})\cong \HHreduced_{C_2}^{*}(\PH(\flagstyle{W}_n)_{+})\oplus\Sigma^{\cellstyle{w}_{n}}\HHreduced_{C_2}^{*}(\S{0}\coeffs*{A}),\]
i.e., there is a single generator added in dimension $\cellstyle{w}_n$. Therefore, the quotient map
\[(\Htonext{n})^*\from \HHreduced_{C_2}^{*}(\PH(\flagstyle{W}_{n+1})_{+})\to \HHreduced_{C_2}^{*}(\PH(\flagstyle{W}_{n})_{+})\] is an isomorphism outside of dimension $\cellstyle{w}_n$, and in particular, it is an isomorphism in dimension $\alpha$ whenever $|\alpha|<|\cellstyle{w}_n|=4n$. Because \autoref{thm:inverse-limit} shows that the natural map \[\HHreduced_{C_2}^{*}(B_{C_2}\SU(2)_{+})\to\varprojlim\HHreduced_{C_2}^{*}(\PH(\flagstyle{W}_n)_{+})\]
is an isomorphism, we conclude that the map $(\Htotop{n})^*\from \HHreduced_{C_2}^{*}(B_{C_2}\SU(2)_{+})\to\HHreduced_{C_2}^{*}(\PH(\flagstyle{W}_n)_{+})$ is an isomorphism in dimension $\alpha$ for all $n\geq |\alpha|/4$.

Because the composite map 
\[Y\hookrightarrow \HHreduced_{C_2}^{*}(B_{C_2}\SU(2)_{+})\xrightarrow{(\Htotop{n})^*} \HHreduced_{C_2}^{*}(\PH(\flagstyle{W}_n)_{+})\]
is surjective for every $n$, we conclude that for any $\alpha\in\RO(C_2)$, there is an $n$ for which this composite map is surjective:
\[Y^\alpha\hookrightarrow \HHreduced_{C_2}^{\alpha}(B_{C_2}\SU(2)_{+})\xrightarrow[\cong]{(\Htotop{n})^*} \HHreduced_{C_2}^{\alpha}(\PH(\flagstyle{W}_n)_{+})\]
Since it is also evidently injective, it must be an isomorphism, so that $Y^\alpha$ is in fact equal to  $\HHreduced_{C_2}^{\alpha}(B_{C_2}\SU(2)_{+})$, and thus $Y=\HHreduced_{C_2}^{*}(B_{C_2}\SU(2)_{+})$.
%
%
%
\end{proof}

\section{\texorpdfstring{Relation between the generators $\gen$ and $\Gen$}{Relation between the generators c and C}}
\label{subsection:cC-relation}

Finally, in this section we establish the remainder of our main result \autoref{thm:main}, namely, the relation between the generators $\gen$ and $\Gen$.

\begin{theorem}
The elements $\gen$ and $\Gen$ satisfy the relation $\gen^2=\epsilon^4\gen+\xi^2\Gen$.
\end{theorem}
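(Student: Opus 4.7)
The plan is to apply the multiplicative comparison theorem (\autoref{thm:mult-comparison}) to reduce verifying the relation $\gen^2 = \epsilon^4\gen + \xi^2\Gen$ in $\HHreduced_{C_2}^{8\signrep}(B_{C_2}\SU(2)_{+}\coeffs*{A})(\GG)$ to checking equality of its two sides under $\HtoGe$, $(\Htowhole{0})^*$, and $(\Htowhole{1})^*$. This is legitimate because the equivariant dimension $8\signrep$ is even in the sense of \autoref{def:even-rep} (both $|8\signrep| = 0$ and $|(8\signrep)^{C_2}|=0$ are even), and $B_{C_2}\SU(2)$ carries the properly even $C_2$-cell structure from \autoref{thm:PH-even-monotone}, so $\rho\oplus i^*$ is injective in this dimension.

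First I would check the relation under $\HtoGe$. Since $\HtoGe(\epsilon) = 0$ (because $\epsilon\in\HHreduced_{C_2}^{\signrep}(\S{0})(\GG)$ lies in $\langle\Z\rangle$, whose $\Ge$-value is zero), the middle term $\HtoGe(\epsilon^4 \gen)$ vanishes. Using $\HtoGe(\gen)=x$, $\HtoGe(\Gen)=x^2$, and that $\HtoGe(\xi^2)$ is a unit in the $\Ge$-level (by \autoref{def:xi-for-2} and \autoref{nonequivariant}), both $\HtoGe(\gen^2)$ and $\HtoGe(\epsilon^4\gen + \xi^2\Gen)$ identify with $x^2$ in $\Hhreduced^8(\HP^\infty;\Z)$.

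Next I would compute both sides under $(\Htowhole{r})^*$ for $r\in\{0,1\}$ using the known images of $\gen$ and $\Gen$ from \autoref{subsection:construct-c} and \autoref{subsection:construct-C}. For instance, in the $r=0$ case, $(\Htowhole{0})^*(\gen^2) = (\xi^2 x_0)^2 = \xi^4 x_0^2$, while
\[(\Htowhole{0})^*(\epsilon^4\gen + \xi^2\Gen) = \epsilon^4\xi^2 x_0 + \xi^2 \cdot x_0(\epsilon^4 + \xi^2 x_0) = 2\epsilon^4\xi^2 x_0 + \xi^4 x_0^2.\]
A similar computation in the $r=1$ case gives $\epsilon^8 + 2\epsilon^4\xi^2 x_1 + \xi^4 x_1^2$ on both sides. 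So the verification reduces in both cases to showing that $2\epsilon^4\xi^2 = 0$ in $\HHreduced_{C_2}^{8\signrep - 4}(\S{0}\coeffs*{A})(\GG)$, after which multiplying by $x_r$ yields $0$ in the cohomology of $\PH[r](Q)_{+}$ (which is free over $\HHreduced_{C_2}^*(\S{0})$ by \autoref{thm:trivial-even-cohomology}).

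The main obstacle, and the crux of the verification, is establishing this 2-torsion identity. The key observation is that $\epsilon^4\xi^2$ sits in equivariant degree $8\signrep - 4$ with $|8\signrep - 4| = 4$ and $|(8\signrep-4)^{C_2}| = -4$, which by inspection of \autoref{figure-HS0-2} is a position where the Mackey functor $\HHreduced_{C_2}^{8\signrep-4}(\S{0}\coeffs*{A})$ is $\langle \Z/2\rangle$. Hence $\epsilon^4\xi^2$ is automatically $2$-torsion, the spurious factor of $2$ vanishes, and the relation holds after restriction under $\rho\oplus(\Htowhole{0})^*\oplus(\Htowhole{1})^*$, completing the proof.
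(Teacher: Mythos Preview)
Your proof is correct and follows essentially the same route as the paper: apply the multiplicative comparison theorem and verify the relation under $\HtoGe$, $(\Htowhole{0})^*$, and $(\Htowhole{1})^*$, using the known images of $\gen$ and $\Gen$. You are in fact more explicit than the paper about the key point that $2\epsilon^4\xi^2=0$ (the paper simply passes from $2\epsilon^4\xi^2 x_r + \cdots$ to $\cdots$ without comment), and your justification via \autoref{figure-HS0-2} is correct. One small typo: you write $|8\signrep|=0$, but $|8\signrep|=8$; since $8$ is still even, this does not affect the argument.
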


\begin{proof}
It suffices to check this under the maps $\HtoGe$, $(\Htowhole{0})^*$, and $(\Htowhole{1})^*$. We have
\begin{align*}
\HtoGe(\gen^2) & = \HtoGe(\gen)^2  & \HtoGe(\epsilon^4\gen+\xi^2\Gen) & = 0+\rho(\xi^2\Gen)\\
&= x^2             &    &=x^2
\end{align*}
and
\begin{align*}
(\Htowhole{0})^*(\gen^2) & = (\Htowhole{0})^*(\gen)^2   &   (\Htowhole{0})^*(\epsilon^4\gen+\xi^2\Gen) & = \epsilon^4\xi^2x_0 + \xi^2x_0(\epsilon^4+\xi^2x_0)\\
&=(\xi^2x_0)^2      &   &=2\epsilon^4\xi^2x_0+\xi^4x_0^2\\
&=\xi^4x_0^2        &   &=\xi^4x_0^2
\end{align*}
and
\begin{align*}
(\Htowhole{1})^*(\gen^2) & = (\Htowhole{1})^*(\gen)^2 & (\Htowhole{1})^*(\epsilon^4\gen+\xi^2\Gen) & = \epsilon^4(\epsilon^4+\xi^2x_1) + \xi^2x_1(\epsilon^4+\xi^2x_1)\\
&=(\epsilon^4+\xi^2x_0)^2 & &=\epsilon^8+2\epsilon^4\xi^2x_1+\xi^4x_1^2\\
&=\epsilon^8+2\epsilon^4\xi^2x_1+\xi^4x_1^2 & &=\epsilon^8+\xi^4x_1^2\\
&=\epsilon^8+\xi^4x_1^2 & &\qedhere
\end{align*}
\end{proof}

\nocite{*}
\printbibliography
\addcontentsline{toc}{chapter}{Bibliography}
\end{document}